\documentclass[10pt,a4paper]{article}

\usepackage{a4wide}
\usepackage{amsthm}
\usepackage{amsfonts}
\usepackage{amsmath}
\usepackage[english]{babel}
\usepackage[all]{xy}
\usepackage{amssymb}
\usepackage{graphicx}

\usepackage{makeidx}
\makeindex

\DeclareGraphicsRule{*}{mps}{*}{}

\newcommand{\N}{\mathbb{N}}
\newcommand{\R}{\mathbb{R}}
\newcommand{\Q}{\mathbb{Q}}
\newcommand{\Z}{\mathbb{Z}}
\newcommand{\C}{\mathbb{C}}
\newcommand{\F}{\mathbb{F}}

\newcommand{\lie}{\mathfrak{g}}
\newcommand{\lien}{\mathfrak{n}}
\newcommand{\lieh}{\mathfrak{h}}

\newcommand{\semi}{\rtimes}
\newcommand{\bs}{\backslash}
\newcommand{\ad}{{\rm ad}}

\DeclareMathOperator{\GL}{GL}
\DeclareMathOperator{\Fix}{Fix}
\DeclareMathOperator{\Aut}{Aut}
\DeclareMathOperator{\Out}{Out}
\DeclareMathOperator{\Inn}{Inn}
\DeclareMathOperator{\Aff}{Aff}
\DeclareMathOperator{\aff}{aff}
\DeclareMathOperator{\End}{End}
\DeclareMathOperator{\Ext}{Ext}

\DeclareMathOperator{\Isom}{Isom}
\DeclareMathOperator{\Ker}{Ker}
\newdir{ >}{{}*!/-7pt/@{>}}

\author{Karel Dekimpe\thanks{Supported by long term structural funding -- Methusalem grant of the Flemish Government. }\\
KU Leuven Campus Kulak Kortrijk, E. Sabbelaan 53, 8500 Kortrijk, Belgium}
\title{\bf A Users' Guide to  Infra-nilmanifolds and Almost--Bieberbach groups}
\date{\today}
\newtheorem{Def}{Definition}[section]
\newtheorem{Ex}{Example}[section]
\newtheorem{Cor}[Def]{Corollary}
\newtheorem{Thm}[Def]{Theorem}
\newtheorem{Prop}[Def]{Proposition}
\newtheorem{Lem}[Def]{Lemma}
\newtheorem*{Rmk}{Remark}
\newtheorem*{Prop*}{Proposition}
\newtheorem*{Lem*}{Lemma}

\newtheorem*{con}{Conjecture}

\hyphenation{ho-lo-no-my}
\hyphenation{al-most--Bie-ber-bach}

\begin{document}
\maketitle
\begin{abstract}
The aim of this text is to provide a clear description of the theory of Infra-nilmanifolds and their fundamental groups, the 
almost-Bieberbach groups. For most of the proofs of the results, we refer to the literature. Nevertheless, at certain places 
we offer a somewhat different and new approach and in these cases we provide full proofs. We have tried to keep the prerequisites as minimal as possible. We also provide completely worked out examples, with explicit computations.
\end{abstract}

\section{Flat manifolds and crystallographic groups}
The class of infra-nilmanifolds is a natural generalization of the class of flat manifolds. So before entering the world of  infra-nilmanifolds, let us first explore that of the flat manifolds. In this section we will describe things in quite some detail, since a good understanding of this part is indispensable for the rest of the text. The material in this section is standard and can be found  in e.g.\ \cite{char86-1, szcz12-1, wolf77-1}. We also refer to these references for all the proofs of the results mentioned in this section.

\medskip

Let $\R^n$ be the $n$-dimensional Euclidean space. By $\Isom(\R^n)$, we will denote the group of isometries of Euclidean $n$-space . Any element $f\in \Isom(\R^n)$ can be written as a composition of an orthogonal map $A:\R^n\rightarrow \R^n$ 
and a translation $t_a: \R^n\rightarrow \R^n: r \mapsto a+ r$, for some $a\in \R^n$. The orthogonal map $A$ is a linear map 
and so we can identify $A$ with its matrix representation with respect to the standard orthogonal basis of $\R^n$. After this identification, we can say that $A\in O(n)$, where $O(n)$ is the group of orthogonal $n\times n$ matrices.
It follows that any element $f$ of $\Isom(\R^n)$ can be identified with a tuple $(a,A)\in \R^n \times O(n)$. Now,
let $f=(a,A)$ and $g=(b,B)$ be two isometries, then 
\[ (f\circ g)(r)= f(b+Br)= a+Ab +ABr,\]
from which it follows that $f\circ g$ is represented by the tuple $(a+Ab, AB)$. As a consequence, it follows that 
$\Isom(\R^n)$ is the semidirect product group $\R^n\semi O(n)$. 

\medskip

Recall that to construct the semidirect product\index{semidirect product} of two groups $G$ and $H$, one first chooses a group homomorphism 
$\varphi: H \to \Aut(G)$, then the semidirect product  $G\semi_\varphi H$, or just $G \semi H$ when $\varphi$ is 
clear from the context, is the group where the underlying set is the Cartesian product $G\times H$ and where the 
product is given by 
\[ \forall g_1,g_2 \in G,\; \forall h_1,h_2 \in H:\; (g_1,h_1) (g_2,h_2)= (g_1\varphi(h_1)(g_2) , h_1h_2).\]
It is an easy exercise to show that this construction indeed yields a group. When $\varphi:H\to \Aut(G): h\mapsto 1_G$ is the 
trivial homomorphism, the result of forming the  semidirect product $G\semi H$ is the ordinary direct product $G\times H$ of two groups. Of course, when $G$ and $H$ are Lie groups (or topological groups), one requires the usual continuity in this construction. Note that both $G$ and $H$ can be seen as subgroups of $G\semi H$ by identifying them with 
$G\times \{1\}$ and $\{1\} \times H$ respectively. Under this identification, we have that $G$ is a normal subgroup of $G\semi H$, while in general $H$ is not normal. The projection $p:G\semi H\to H$ is a group homomorphism, with kernel $G$.

\medskip

In the situation above, any orthogonal map $A$ can be seen as a (continuous) homomorphism of the abelian group $\R^n$ and so 
there is an inclusion $O(n)\subseteq \Aut(\R^n)$ and it is this inclusion which is used as the homomorphism $\varphi$ to 
construct the semidirect product $\R^n\semi O(n)$. Analogously, we can construct the affine group $\Aff(\R^n) = \R^n \semi \GL(\R^n)$. So elements of $\Aff(\R^n)$ are tuples $(a,A)\in \R^n\times \GL(\R^n)$ and the product is given by 
\[ \forall a,b\in \R^n, \forall A,B \in \GL(\R^n):\;(a,A) (b,B) = (a + A b , AB).\]
It is obvious that $\Isom(\R^n) \subseteq \Aff(\R^n)$. Of course, also elements of the affine group $\Aff(\R^n)$ can be seen as 
maps of $\R^n$, where just as for the isometry group we have that $(a,A)(r)= a+Ar$ for all $a,r\in\R^n$ and $A \in \GL(\R^n)$.
It follows that there is a left action of $\Aff(\R^n)$ on $\R^n$ given by 
\[ \Aff(\R^n) \times \R^n \to \R^n: \left( (a,A),r\right) \mapsto (a,A) \cdot r = a + A r.\]
As a consequence, also any subgroup $E$ of $\Aff(\R^n)$ (or of $\Isom(\R^n)$) acts on $\R^n$. 

\begin{Rmk}
The groups $\Aff(\R^n)$ and $\Isom(\R^n)$ are Lie groups (using the usual smooth manifold structures on  the underlying sets
$\R^n \times O(n)$ and $\R^n \times \GL(\R^n)$).
\end{Rmk}

As mentioned above when introducing the semidirect product of two groups, we can view $\R^n$ as a normal subgroup of $\Isom(\R^n)$ and $\Aff(\R^n)$. In both cases, this is exactly the group of translations. When $G\subseteq \Aff(\R^n)$, we 
will then write $G\cap \R^n$ for the group of translations in $G$, which is then a normal subgroup of $G$. We will
use $r:\Aff(\R^n)=\R^n\semi \GL(\R^n) \to \GL(\R^n)$ to denote the projection on the second factor. For any 
$G\subseteq \Aff(\R^n)$, we will call $r(G)$ the linear part of $G$ and $G\cap \R^n$ the translational part.
We have that $r(G)\cong G/(G\cap \R^n)$.

\medskip

A flat manifold will be constructed as an orbit space $E\backslash \R^n$, where $E\subseteq \Isom(\R^n)$ is acting properly discontinuously, freely and cocompactly on $\R^n$. For completeness, let us recall the notions we've just mentioned.

\begin{Def} Let $G$ be a group acting (continuously) on a locally compact topological space $X$. 
\begin{enumerate}
\item We say that $G$ acts properly discontinuously\index{properly discontinuous}\index{action!properly discontinuous} on $X$ if for every compact subset $K\subseteq X$ it holds that 
\[ \# \{ g\in G\;|\; g \cdot K \cap K \neq \emptyset \}< \infty.\] 
\item The action of $G$ on $X$ is cocompact\index{cocompact}\index{action!cocompact} if the orbit space $G\backslash X$ is compact.
\item The action is free\index{free}\index{action!free} if $\forall x \in X: \; \{g\in G\;|\; g\cdot x =x \}= \{1\}$, i.e.\ the stabilizer of any $x\in X$ is trivial.
\end{enumerate}
\end{Def}

We are now ready to introduce the class of crystallographic groups
\begin{Def}
A $n$--dimensional crystallographic group\index{crystallographic group}, is a cocompact and discrete subgroup of $\Isom(\R^n)$. 
A Bieberbach group\index{Bieberbach group} is a torsion free crystallographic group.
\end{Def}

\begin{Rmk}
A subgroup $\Gamma$ of $\Isom(\R^n)$ is cocompact, if and only if the action of $\Gamma$ on $\Isom(\R^n)$ by left translations is cocompact, i.e.\ the space of cosets $\Gamma\backslash \Isom(\R^n)$ is compact.
\end{Rmk}

The following proposition gives rise to an alternative description of a crystallographic (Bieberbach) group in terms 
of the induced action on $\R^n$.

\begin{Prop}
Let $\Gamma$ be a subgroup of $\Isom(\R^n)$. Then we have
\begin{enumerate}
\item $\Gamma$ is a discrete subgroup of $\Isom(\R^n)$ $\Leftrightarrow$ The action of $\Gamma$ on $\R^n$ is properly discontinuous.
\item $\Gamma$ is cocompact in $\Isom(\R^n)$ $\Leftrightarrow $ The action of $\Gamma$ on $\R^n$ is cocompact.
\item If $\Gamma$ is a discrete subgroup of $\Isom(\R^n)$, then 
$\Gamma$ is torsion free $\Leftrightarrow$  The action of $\Gamma$ on $\R^n$ is free.
\end{enumerate}
\end{Prop}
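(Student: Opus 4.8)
The plan is to reduce all three equivalences to one basic fact: $\R^n$ is the homogeneous space $\Isom(\R^n)/O(n)$ with compact point stabilizer $O(n)$. Concretely, I would work with the orbit map $\pi : \Isom(\R^n) \to \R^n : g \mapsto g \cdot 0$, which is continuous, surjective, open (it is the quotient map for the right $O(n)$-action), and, most importantly, \emph{proper}: since $g = (a,A)$ has $\pi(g) = a$, we get $\pi^{-1}(K) \subseteq K \times O(n)$ for every $K \subseteq \R^n$, so $\pi^{-1}(K)$ is a closed subset of a compact set whenever $K$ is compact, using that $O(n)$ is compact. Everything will follow from this together with the decomposition $\Isom(\R^n) = \R^n \semi O(n)$.

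For (1), in the direction ``$\Gamma$ discrete $\Rightarrow$ action properly discontinuous'' I would fix a compact $K \subseteq \overline{B(0,R)}$ and show that $L := \{ g \in \Isom(\R^n) \mid gK \cap K \neq \emptyset\}$ is compact: if $g = (a,A)$ lies in $L$, picking $k \in K$ with $a + Ak \in K$ gives $|a| \le |a + Ak| + |Ak| \le 2R$, so $L \subseteq \overline{B(0,2R)} \times O(n)$, and a short compactness argument shows $L$ is also closed; then $\Gamma \cap L$ is a discrete subset of a compact space, hence finite, which is exactly the required statement. For the converse, if $\Gamma$ is not discrete there is a sequence of pairwise distinct $\gamma_i \in \Gamma$ with $\gamma_i \to 1$; then $\gamma_i \cdot 0 \to 0$, so with $K = \overline{B(0,1)}$ one has $\gamma_i \cdot 0 \in \gamma_i K \cap K$ for all large $i$, contradicting proper discontinuity.

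For (2), I would pass to the induced continuous surjection $\bar\pi : \Gamma\backslash\Isom(\R^n) \to \Gamma\backslash\R^n$. If the left-hand side is compact, its continuous image $\Gamma\backslash\R^n$ is compact. Conversely, using that the quotient map $q : \R^n \to \Gamma\backslash\R^n$ is open and $\R^n$ is locally compact, I would cover $\Gamma\backslash\R^n$ by images of relatively compact open sets, extract a finite subcover, and so obtain a compact $D \subseteq \R^n$ with $\Gamma \cdot D = \R^n$; then $\pi^{-1}(D)$ is compact by properness of $\pi$, and it surjects onto $\Gamma\backslash\Isom(\R^n)$ because $\Gamma \cdot \pi^{-1}(D) = \pi^{-1}(\Gamma \cdot D) = \Isom(\R^n)$.

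For (3), the direction ``action free $\Rightarrow$ $\Gamma$ torsion free'' needs no discreteness: every isometry of $\R^n$ is affine, so a finite-order element fixes the centroid of any of its orbits, whence a nontrivial torsion element of $\Gamma$ would have nontrivial stabilizer. For the converse I would use discreteness: given $x \in \R^n$, the $\Gamma$-stabilizer $\Gamma_x$ of $x$ is contained in the $\Isom(\R^n)$-stabilizer of $x$, which is a conjugate of $O(n)$ and hence compact, so $\Gamma_x$ is a discrete subgroup of a compact group, hence finite, hence trivial by torsion-freeness. I expect the only genuine obstacle to lie in part (2): extracting from compactness of the orbit space an honest compact set $D$ with $\Gamma \cdot D = \R^n$ really uses openness of $q$ and local compactness of $\R^n$. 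One should also keep in mind throughout part (1) that ``properly discontinuous'' is a condition on \emph{all} compact subsets, not merely discreteness of orbits — that is exactly why the compactness of $L$ is the heart of the matter there.
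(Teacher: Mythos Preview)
Your argument is correct. Note, however, that the paper does not actually prove this proposition: the opening paragraph of Section~1 states that all results in that section are standard and defers to \cite{char86-1, szcz12-1, wolf77-1} for proofs. So there is no ``paper's own proof'' to compare against here.

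That said, your approach via the proper orbit map $\pi:\Isom(\R^n)\to\R^n$ and the compactness of $O(n)$ is exactly the standard one found in those references, and all three parts go through as you describe. Two minor points worth making explicit if you write this up fully: in part~(1), concluding that the discrete set $\Gamma\cap L$ is finite from compactness of $L$ tacitly uses that $\Gamma$ is \emph{closed} in $\Isom(\R^n)$ (true because discrete subgroups of locally compact Hausdorff groups are closed); and in part~(3), the centroid argument uses precisely that isometries are affine and hence preserve barycenters, which you noted. Neither is a gap, just a place where a reader might pause.
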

From this proposition it follows that a $n$-dimensional crystallographic group $\Gamma$ is a subgroup of $\Isom(\R^n)$ which acts properly discontinuously and cocompactly on $\R^n$. A $n$-dimensional Bieberbach group is a $n$-dimensional   crystallographic group for which the action on $\R^n$ is also free.

\medskip

In case $\Gamma$ is a $n$-dimensional Bieberbach group, the quotient space $M=\Gamma\backslash \R^n$ is a manifold and as $\Gamma\subseteq \Isom (\R^n)$, the manifold locally inherits the metric structure of Euclidean space $\R^n$. It follows that $M$ is a compact Riemannian manifold with constant curvature 0, i.e.\ a compact flat manifold\index{flat manifold}. Conversely, any compact flat $n$-dimensional  manifold can be obtained as a quotient space $\Gamma\backslash \R^n$, where $\Gamma$ is a $n$-dimensional Bieberbach group. Moreover, for Bieberbach groups, the natural projection $p:\R^n \to \Gamma\backslash 
\R^n$ is a covering  with $\Gamma$ as its group of deck transformations and so the fundamental group of the 
flat manifold $\Gamma\backslash \R^n$ is exactly $\Gamma$.

\medskip

Before, we proceed let us present some examples of crystallographic and Bieberbach groups.
\begin{enumerate}
\item Let $\Gamma_1=\{ (z,I_n)\in \Aff(\R^n)\;|\; z\in \Z^n\}$ ($I_n$ will be used to denote the $n\times n$-identity matrix), then $\Gamma_1\cong \Z^n$ consists of pure translations, is 
a discrete and cocompact subgroup of $\Isom(\R^n)$ and hence is a Bieberbach group. The corresponding manifold
$\Gamma_1\backslash \R^n$ is the $n$-dimensional torus.
\item Let $\Gamma_2=\{( z, \pm I_n)\in \Aff(\R^n)\;|\;z \in \Z^n\}$. Then $\Gamma_2$ contains $\Gamma_1$ as a subgroup of
index 2, from which it easily follows that also $\Gamma_2$ is a crystallographic group. However, $\Gamma_2$ is not 
torsion free, so $\Gamma_2$ is not a Bieberbach group.
\item \label{Klein}Let $\Gamma_3$ be the subgroup of $\Isom(\R^2)$, which is 
generated by 
\[ a= (e_1, I_2),\; b=(e_2, I_2) \mbox{ and }\alpha= \left(\left(\begin{array}{c} 0 \\ \frac12 \end{array}\right)\;,
\left( \begin{array}{cc} -1 & 0 \\ 0 & 1 \end{array}\right) \right),\]
where $e_1=(1,0)^T$ and $e_2=(0,1)^T$ (we are  writing elements of $\R^n$ as column vectors).
Note that $\alpha^2=b$. The group generated by $a$ and $b$ is isomorphic to $\Z^2$ and is a subgroup 
of index 2 in $\Gamma_3$. Using this, it is not so difficult to see that $\Gamma_3$ is a crystallographic group. Moreover,
$\Gamma_3$ is torsion free and hence a Bieberbach group. We leave it to the reader to check that 
$\Gamma_3\backslash \R^2$ is the Klein bottle.
\end{enumerate}

\medskip

The structure of crystallographic groups is well understood by the Bieberbach theorems\index{Bieberbach Theorems} (\cite{bieb11-1}, \cite{bieb12-1}, \cite{frob11-1}), which we recall here

\begin{Thm}[\bf First Bieberbach Theorem] 
Let $\Gamma\subseteq \Isom(\R^n)$ be a $n$-dimensional crystallographic group, then the group of translations 
$T=\Gamma\cap \R^n$ of $\Gamma$ is a lattice in $\R^n$ and is of finite index in $\Gamma$.
\end{Thm}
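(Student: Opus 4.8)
The plan is to prove the First Bieberbach Theorem in two stages: first show that the linear part $r(\Gamma) \subseteq O(n)$ is finite, and then deduce that the translational part $T = \Gamma \cap \R^n$ is a lattice of finite index. For the second stage, once we know $r(\Gamma)$ is finite, finiteness of the index $[\Gamma : T]$ is immediate since $\Gamma/T \cong r(\Gamma)$. To see that $T$ is then a lattice in $\R^n$: $T$ is a discrete subgroup of the abelian group $\R^n$ (discreteness follows from the proper discontinuity of the $\Gamma$-action on $\R^n$, via the proposition recalled above), hence $T \cong \Z^k$ spanning a $k$-dimensional subspace $V \subseteq \R^n$ for some $k \le n$; and since $T$ is normal in $\Gamma$ with finite quotient, $V$ is $r(\Gamma)$-invariant and $\Gamma$ acts on $\R^n/V$ through a finite group of isometries, forcing the image of $\Gamma$ in $\Isom(\R^n/V)$ to have bounded orbits — but cocompactness of the $\Gamma$-action on $\R^n$ then forces $V = \R^n$, i.e.\ $k = n$, so $T$ is a genuine lattice.

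The heart of the matter — and the main obstacle — is the first stage: proving $r(\Gamma)$ is finite. Since $r(\Gamma)$ is a subgroup of the compact group $O(n)$, it suffices to show $r(\Gamma)$ is discrete in $O(n)$, for a discrete subgroup of a compact group is finite. The standard approach (due to Bieberbach) is the commutator trick. For $A \in O(n)$ close to the identity and any translation $t_b \in T$, one computes the commutator $[(a,A), (b,I_n)] = (Ab - b + \text{(lower order)}, I_n)$, which is again a pure translation — more precisely, $(a,A)(b,I_n)(a,A)^{-1}(b,I_n)^{-1} = ((A - I_n)b, I_n)$. Iterating this commutator with a fixed $\gamma = (a,A)$ produces the sequence of translations by $(A-I_n)b, (A-I_n)^2 b, \dots$; since $\|A - I_n\|$ is small, these translation vectors shrink geometrically, yet they all lie in the discrete group $T$, hence must eventually be zero, giving $(A - I_n)b = 0$. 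Running this over a $\Z$-basis $b_1,\dots,b_k$ of $T$ shows $A$ fixes the span $V$ of $T$ pointwise; combined with $V = \R^n$ (established above, or handled simultaneously) this yields $A = I_n$. Therefore the identity component's worth of elements of $r(\Gamma)$ near $I_n$ is trivial, so $r(\Gamma)$ is discrete in $O(n)$ and hence finite.

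There is a subtle circularity to manage: the argument that $V = \R^n$ above used that $r(\Gamma)$ is finite, while the commutator argument concludes $A = I_n$ using $V = \R^n$. The clean way to organize this is to first run the commutator argument to show that any $A \in r(\Gamma)$ sufficiently near $I_n$ fixes $V = \operatorname{span}_\R(T)$ pointwise; this already shows that the subgroup $\{(a,A) \in \Gamma : A|_V = \mathrm{id}\}$ has finite index is not yet available, so instead one argues that the set of such $A$ forms a neighborhood of $I_n$ in $r(\Gamma)$ consisting of elements acting trivially on $V$, hence by a further short argument (using that $\Gamma$ acts cocompactly and that elements of $\Gamma$ with linear part fixing $V$ translate along $V$ modulo a bounded amount) one gets $V = \R^n$, at which point those $A$ equal $I_n$ and discreteness of $r(\Gamma)$ follows. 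I would present the full details of the commutator computation carefully — tracking that $\|(A-I_n)b\| \le \|A - I_n\|\,\|b\|$ so iterated commutators have translation length tending to $0$ — as that quantitative estimate against the uniform positive lower bound on nonzero elements of the discrete group $T$ is precisely where the theorem is won. For the complete arguments I refer to \cite{char86-1, szcz12-1, wolf77-1}.
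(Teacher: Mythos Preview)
The paper does not prove this theorem at all---it explicitly refers the reader to \cite{char86-1, szcz12-1, wolf77-1} for every proof in Section~1---so there is no argument of the paper's to compare against. What can be assessed is whether your sketch stands on its own.

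Your two-stage plan and the commutator computation $[(a,A),(b,I_n)]=((A-I_n)b,I_n)$ are correct, and the deduction that $A$ near $I_n$ fixes $V=\operatorname{span}_\R(T)$ is fine (using that $A\in O(n)$ is semisimple, so $(A-I_n)^m b=0$ forces $(A-I_n)b=0$). The genuine gap is exactly the circularity you flag: your commutator trick tells you nothing about the action of $A$ on $V^\perp$, so if a priori $T$ were trivial (hence $V=0$), the argument yields no constraint on $r(\Gamma)$ whatsoever. Your proposed escape---``elements of $\Gamma$ with linear part fixing $V$ translate along $V$ modulo a bounded amount''---is not correct as stated: for $(a,A)\in\Gamma$ with $A|_V=\mathrm{id}$ there is no evident bound on the $V^\perp$-component of $a$, and the ``further short argument'' you gesture at would in fact need substantial additional input.

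The standard way to break the circularity (as in the references you cite) is to run the commutator trick between two \emph{general} elements $(a,A),(b,B)\in\Gamma$ with $A,B$ both close to $I_n$. The commutator then has rotational part $[A,B]$, which is quadratically closer to $I_n$, and one shows iterated commutators tend to the identity in $\Isom(\R^n)$; discreteness forces them to be eventually trivial. This yields directly that the identity component of $\overline{r(\Gamma)}\subseteq O(n)$ is abelian (a torus), and a second step shows it is in fact trivial---all without any prior knowledge of $T$. Once $r(\Gamma)$ is finite, your Stage~2 argument for $V=\R^n$ goes through cleanly.
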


By a lattice of $\R^n$ we mean a cocompact and discrete subgroup of $\R^n$. Lattices of $\R^n$ are isomorphic to $\Z^n$ and each minimal generating set of a lattice of $\R^n$ is in fact a basis of the vector space $\R^n$.

\begin{Def}
Let $\Gamma\subseteq \Isom(\R^n)$ be a $n$-dimensional crystallographic group, then its linear part $r(\Gamma)$ is called the holonomy group\index{holonomy group} of $\Gamma$.
\end{Def}
The term holonomy is well chosen, because when $\Gamma$ is a Bieberbach group, the holonomy group  of $\Gamma$ is really the holonomy group of the flat manifold $M=\Gamma\backslash \R^n$. This fact will not be used further in this text, but the interested reader can consult \cite{char86-1} or \cite{wolf77-1} for more details.

\medskip

The first Bieberbach theorem states that a $n$-dimensional crystallographic group $\Gamma$ fits is a short exact 
sequence
\[ 0 \to t(\Gamma)\cong \Z^n \to \Gamma \to r(\Gamma)=F \to 1,\]
where $F$ is a finite group. Of course, being isomorphic to $\Z^n$, the translation group $t(\Gamma)$ is abelian. It is not so difficult to show that $t(\Gamma)$ is maximal abelian in $\Gamma$. 

\medskip

\begin{Thm}[\bf Second Bieberbach Theorem]
Let $\varphi:\Gamma \to \Gamma'$ be an isomorphism between two crystallographic groups, then $\Gamma$ and $\Gamma'$ are of the same dimension, say $n$, and moreover, there exists an affine map $\alpha\in\Aff(\R^n)$ such that 
\[ \forall \gamma \in \Gamma:\; \varphi(\gamma) = \alpha \circ \gamma \circ \alpha^{-1}.\]
\end{Thm}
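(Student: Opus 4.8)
The plan is to reduce everything to a soft cohomological fact about the finite holonomy group, in three stages: first recognise the translation lattices from the abstract group structure (which already yields $n=n'$), then kill the linear ambiguity of $\varphi$ on the translations, and finally absorb the leftover ambiguity into a translation.

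\emph{Stage 1: the translation lattice is intrinsic.} I would first show that for an $n$-dimensional crystallographic group $\Gamma$ the subgroup $t(\Gamma)=\Gamma\cap\R^n$ is the unique maximal abelian normal subgroup. It is abelian and normal; conversely, if $N\trianglelefteq\Gamma$ is abelian then $N\cap t(\Gamma)$ has finite index in $t(\Gamma)$ (since $[\Gamma:t(\Gamma)]<\infty$ by the First Bieberbach Theorem), hence is a full-rank sublattice spanning $\R^n$. As every element of $N$ centralises $N\cap t(\Gamma)$, and as $(a,A)\,t_v\,(a,A)^{-1}=t_{Av}$ in $\Aff(\R^n)$, an element of $N$ with nontrivial linear part would move a spanning vector; hence $N\subseteq t(\Gamma)$. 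Therefore any isomorphism $\varphi\colon\Gamma\to\Gamma'$ carries $t(\Gamma)$ onto $t(\Gamma')$, so $\Z^{n}\cong t(\Gamma)\cong t(\Gamma')\cong\Z^{n'}$, forcing $n=n'$; henceforth both $\Gamma$ and $\Gamma'$ are regarded as subgroups of $\Isom(\R^n)\subseteq\Aff(\R^n)$.

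\emph{Stage 2: straighten $\varphi$ on translations.} The restriction $\varphi|_{t(\Gamma)}$ is an isomorphism between two lattices of $\R^n$; since a minimal generating set of such a lattice is a basis of $\R^n$, it extends uniquely to a linear map $D\in\GL(\R^n)$ with $\varphi(t_v)=t_{Dv}$ for every $v$ in the lattice $t(\Gamma)$. Let $\psi$ be the composite of $\varphi$ with conjugation by $(0,D)^{-1}$; then $\psi\colon\Gamma\to\Gamma'':=(0,D)^{-1}\Gamma'(0,D)$ is an isomorphism onto a subgroup of $\Aff(\R^n)$ which restricts to the identity on $\Lambda:=t(\Gamma)$. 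Because $\psi$ fixes $\Lambda$ elementwise, for each $\gamma\in\Gamma$ and $\lambda\in\Lambda$ we have $\psi(\gamma)\lambda\psi(\gamma)^{-1}=\psi(\gamma\lambda\gamma^{-1})=\gamma\lambda\gamma^{-1}$, so $\psi(\gamma)$ conjugates $\Lambda$ exactly as $\gamma$ does; since this conjugation action is recorded by the linear part and $\Lambda$ spans $\R^n$, we get $r(\psi(\gamma))=r(\gamma)$. Thus $\psi$ preserves linear parts, and we may write $\psi(\gamma)=t_{\delta(\gamma)}\,\gamma$ for a well-defined map $\delta\colon\Gamma\to\R^n$ that vanishes on $\Lambda$.

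\emph{Stage 3: the averaging argument.} Expanding $\psi(\gamma_1\gamma_2)=\psi(\gamma_1)\psi(\gamma_2)$ and using $r\circ\psi=r$ gives $\delta(\gamma_1\gamma_2)=\delta(\gamma_1)+r(\gamma_1)\,\delta(\gamma_2)$; since $\delta$ vanishes on $\Lambda$ and translations act trivially on $\R^n$, this $\delta$ factors through a cocycle $\bar\delta\colon F\to\R^n$ of the finite holonomy group $F=\Gamma/\Lambda$. The one genuinely non-formal ingredient is that such a cocycle is a coboundary, i.e.\ $H^1(F,\R^n)=0$: setting $d:=\frac1{|F|}\sum_{x\in F}\bar\delta(x)$ and summing the cocycle identity over $F$ yields $\delta(\gamma)=(I_n-r(\gamma))\,d$ for all $\gamma$. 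On the other hand, conjugation by $(d,I_n)$ fixes every linear part and sends $(a,A)$ to $(a+(I_n-A)d,\,A)$, hence equals $\psi$. Composing back with conjugation by $(0,D)$ gives $\varphi(\gamma)=\alpha\,\gamma\,\alpha^{-1}$ with $\alpha=(0,D)(d,I_n)=(Dd,D)\in\Aff(\R^n)$, as claimed. The main obstacle is thus not a single hard computation but organising these reductions cleanly; the only places where the hypotheses bite are the First Bieberbach Theorem in Stage 1 (finite index of $t(\Gamma)$) and the finiteness of $F$ in Stage 3 (division by $|F|$) — the latter also explaining why $\alpha$ need only be affine and not an isometry, since $D$ is generally not orthogonal.
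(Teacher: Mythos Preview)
The paper does not supply its own proof of this theorem; it is stated in the background section with a blanket reference to \cite{char86-1,szcz12-1,wolf77-1} for all proofs. Your three-stage strategy (characteristic translation lattice, linear correction, cohomological averaging over the finite holonomy) is precisely the standard argument found in those references, and Stages~2 and~3 are carried out correctly.

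There is, however, a genuine slip in Stage~1. You assert that for any abelian normal $N\trianglelefteq\Gamma$ the intersection $N\cap t(\Gamma)$ has finite index \emph{in $t(\Gamma)$}, citing $[\Gamma:t(\Gamma)]<\infty$. The index inequality runs the other way: finiteness of $[\Gamma:t(\Gamma)]$ yields $[N:N\cap t(\Gamma)]<\infty$, not $[t(\Gamma):N\cap t(\Gamma)]<\infty$; for a counterexample take $\Gamma=\Z^n$ and $N$ any proper direct summand. So you cannot conclude that $N\cap t(\Gamma)$ spans $\R^n$, and the argument as written does not close. The repair is short and uses normality rather than an index bound: for $(a,A)\in N$ and any $v\in t(\Gamma)$ the commutator $[t_v,(a,A)]=t_{(I-A)v}$ lies in $N$ (normality); since $N$ is abelian, $(a,A)$ centralises this translation, forcing $A(I-A)v=(I-A)v$, i.e.\ $(I-A)^2v=0$ for every $v$ in the spanning lattice $t(\Gamma)$. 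Hence $(I-A)^2=0$, and because $A\in O(n)$ is diagonalisable this gives $A=I$, so $N\subseteq t(\Gamma)$. With this correction your proof is complete.
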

Note that $\alpha \circ \gamma \circ \alpha^{-1}$ is a conjugation inside $\Aff(\R^n)$. This makes sense because both 
$\Gamma$ and $\Gamma'$ are subsets of $\Isom(\R^n)$ and so also of $\Aff(\R^n)$.

\medskip

Geometrically, this theorem says that a compact flat manifold is, up to an affine equivalence, completely determined by its fundamental group. The last Bieberbach theorem then states that, for a fixed $n$, there are up to affine equivalence only finitely many $n$-dimensional compact flat manifolds.

\begin{Thm}[\bf Third Bieberbach Theorem]
For any positive integer $n$, there are,
up to isomorphism (or up to affine conjugation), only finitely many $n$-dimensional crystallographic groups.
\end{Thm}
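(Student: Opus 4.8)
The plan is to recast the classification as a problem in the theory of group extensions and then establish finiteness at each of the three levels that appear. By the Second Bieberbach Theorem two $n$-dimensional crystallographic groups are isomorphic precisely when they are conjugate in $\Aff(\R^n)$, so it suffices to bound the number of isomorphism classes of $n$-dimensional crystallographic groups. Given such a $\Gamma$, the First Bieberbach Theorem provides the short exact sequence
\[ 0 \to t(\Gamma)\cong \Z^n \to \Gamma \to F=r(\Gamma) \to 1, \]
with $F$ finite, and the conjugation action of $\Gamma$ on its translation lattice descends to a homomorphism $\phi\colon F\to\Aut(\Z^n)=\GL(n,\Z)$ recording the action of the holonomy group $F$ on $t(\Gamma)$. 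Since $t(\Gamma)$ is maximal abelian in $\Gamma$, the map $\phi$ is injective: an element of $F$ acting trivially on $t(\Gamma)$ has a preimage commuting with all of $t(\Gamma)$, hence lying in $t(\Gamma)$. Therefore $F$ is a finite subgroup of $\GL(n,\Z)$, the lattice $\Z^n$ acquires a $\Z F$-module structure, and the extension above has a class $c\in H^2(F;\Z^n)$. Conversely the pair $(F,c)$ determines $\Gamma$ up to isomorphism, so the theorem follows once we show that only finitely many finite subgroups $F\subseteq\GL(n,\Z)$ occur up to conjugacy and that, for each fixed $F$, the group $H^2(F;\Z^n)$ is finite.

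First I would bound the order of a finite subgroup $F\subseteq\GL(n,\Z)$: for an odd prime $p$ the reduction map $\GL(n,\Z)\to\GL(n,\Z/p\Z)$ is injective on $F$, since by Minkowski's classical argument a matrix of finite order that is congruent to the identity modulo an odd prime must be the identity; hence $|F|\le|\GL(n,\Z/p\Z)|$. With $|F|$ bounded there are finitely many isomorphism types of abstract group $F$, and for each of them the Jordan--Zassenhaus theorem gives only finitely many isomorphism classes of $\Z F$-lattices of $\Z$-rank $n$, in particular only finitely many conjugacy classes of subgroups of $\GL(n,\Z)$ isomorphic to $F$. Next, fixing such an $F$ so that $\Z^n$ becomes a fixed $\Z F$-module, the group $H^2(F;\Z^n)$ is finitely generated --- it is computed from a resolution of $\Z$ by finitely generated free $\Z F$-modules, which exists because $\Z F$ is Noetherian --- and it is annihilated by $|F|$ via the restriction-corestriction argument, hence it is finite. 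So each admissible $F$ carries only finitely many classes $c$, and since conjugate subgroups produce the same isomorphism classes of $\Gamma$, in total only finitely many pairs $(F,c)$ and therefore only finitely many isomorphism classes of $n$-dimensional crystallographic groups arise. The ``up to affine conjugation'' version then follows at once from the Second Bieberbach Theorem.

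The main obstacle is the Jordan--Zassenhaus step. Bounding $|F|$ and proving $H^2(F;\Z^n)$ finite are elementary, but controlling the conjugacy classes of a fixed finite group inside $\GL(n,\Z)$ is a genuine statement of integral representation theory: it ultimately rests on the finiteness of the class numbers and of the unit groups of the orders occurring in the semisimple algebra $\Q F$. In the exposition I would simply cite the Jordan--Zassenhaus theorem; a self-contained alternative is to run Bieberbach's reduction-theoretic argument directly on the lattices in $\R^n$ invariant under a prescribed finite linear group, using reduction theory of positive definite quadratic forms to extract finitely many $\GL(n,\Z)$-orbits, but this carries the same arithmetic content in a different form.
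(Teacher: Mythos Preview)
The paper does not prove this theorem at all: the opening of Section~1 explicitly defers all proofs in that section to the references \cite{char86-1,szcz12-1,wolf77-1}, and the Third Bieberbach Theorem is simply stated. So there is no ``paper's own proof'' to compare against; what one can compare to is the classical argument found in those references, and your outline is precisely that argument (essentially Zassenhaus's proof): reduce to the extension data $(F\hookrightarrow\GL(n,\Z),\,c\in H^2(F;\Z^n))$, bound $|F|$ by Minkowski, invoke Jordan--Zassenhaus for finiteness of the integral representation, and use $|F|\cdot H^2(F;\Z^n)=0$ together with finite generation to finish. Your identification of Jordan--Zassenhaus as the substantive input is exactly right.

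One phrasing to tighten: ``the pair $(F,c)$ determines $\Gamma$ up to isomorphism'' is stronger than what you need and not literally what you prove. What is true (and sufficient) is that every isomorphism class of $n$-dimensional crystallographic groups arises from \emph{some} such pair, so the number of isomorphism classes is bounded above by the number of pairs; different pairs may well yield isomorphic groups (indeed the action of $N_{\GL(n,\Z)}(F)/F$ and of $\Aut(F)$ identifies some classes). Since you only need an upper bound, this does not affect the argument, but the sentence as written overclaims.
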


In \cite{zass48-1}, Zassenhaus proved a converse to the first Bieberbach theorem, which results in a complete algebraic characterization of the $n$-dimensional crystallographic groups:
\begin{Thm}[\bf Algebraic characterization of crystallographic groups]
Let $\Gamma$ be a $n$-dimensional crystallographic group, then $\Gamma$ fits in a short exact sequence
\[ 0 \to \Z^n \stackrel{i}{\to} \Gamma \to F \to 1, \]
where $F$ is finite and $i(\Z^n)$ is maximal abelian in $\Gamma$. Conversely, if $\Gamma$ is an abstract 
group fitting in a short exact sequence as above, where $F$ is finite and $i(\Z^n)$ is maximal abelian in $\Gamma$, then 
there exists an embedding $j:\Gamma \to \Isom(\R^n)$ such that $j(\Gamma)$ is a $n$-dimensional crystallographic group.
\end{Thm}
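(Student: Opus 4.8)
\medskip
\noindent\emph{Sketch of the intended proof.} The plan is to treat the two implications separately; the direct one is essentially a repackaging of the First Bieberbach Theorem, and the converse carries all the weight. For the direct implication, take $i$ to be the inclusion of the translation lattice $T=\Gamma\cap\R^n$ into $\Gamma$: by the First Bieberbach Theorem $T$ is a lattice, so $T\cong\Z^n$, and $F=\Gamma/T\cong r(\Gamma)$ is finite, which gives the short exact sequence. Maximality of $T$ among the abelian subgroups of $\Gamma$ is a one-line computation: if an abelian subgroup $A$ contains $T$, then every $a=(v,M)\in A$ commutes with every translation $(w,I_n)\in T$; since $(v,M)(w,I_n)(v,M)^{-1}=(Mw,I_n)$ and $T$ spans $\R^n$, this forces $M=I_n$, so $A\subseteq\Gamma\cap\R^n=T$ and hence $A=T$.

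For the converse, suppose $\Gamma$ fits in $0\to\Z^n\stackrel{i}{\to}\Gamma\to F\to 1$ with $F$ finite and $i(\Z^n)$ maximal abelian. First, conjugation in $\Gamma$ on the normal subgroup $i(\Z^n)\cong\Z^n$ defines a homomorphism $\psi\colon F\to\Aut(\Z^n)=\GL(n,\Z)$, and I claim $\psi$ is injective. Indeed, if $\gamma\in\Gamma$ maps to $f\in\Ker\psi$, then $\gamma$ centralises $i(\Z^n)$, so $\langle i(\Z^n),\gamma\rangle$ is an abelian subgroup containing $i(\Z^n)$; maximality forces it to equal $i(\Z^n)$, so $\gamma\in i(\Z^n)$ and $f=1$. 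Thus $F$ is identified with a finite subgroup of $\GL(n,\Z)$, and $\Z^n\hookrightarrow\R^n$ is an $F$-equivariant inclusion of $F$-modules, where $F$ acts on $\R^n$ through $\psi\colon F\to\GL(n,\R)$.

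The crux is to realise the extension geometrically. Push it out along $\Z^n\hookrightarrow\R^n$ to obtain an extension $0\to\R^n\to\Gamma'\to F\to 1$ together with a map $\Gamma\to\Gamma'$ inducing the identity on $F$ and the inclusion $\Z^n\hookrightarrow\R^n$ on kernels. The class of this new extension is the image of the class of $\Gamma$ under $H^2(F;\Z^n)\to H^2(F;\R^n)$, and $H^2(F;\R^n)=0$, since $|F|$ annihilates $H^2(F;-)$ while acting invertibly on the $\Q$-vector space $\R^n$; hence the pushed-out extension splits, $\Gamma'\cong\R^n\semi_\psi F\subseteq\R^n\semi\GL(n,\R)=\Aff(\R^n)$. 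Since $F$ is finite, averaging an inner product over $\psi(F)$ and conjugating by a suitable $P\in\GL(n,\R)$ puts $P\psi(F)P^{-1}\subseteq O(n)$, so conjugating $\Gamma'$ by $(0,P)$ lands it inside $\R^n\semi O(n)=\Isom(\R^n)$. Composing $\Gamma\to\Gamma'$ with this conjugation gives $j\colon\Gamma\to\Isom(\R^n)$, which is injective by the five lemma. Its translational part $j(\Gamma)\cap\R^n$ equals $j(i(\Z^n))=P\,\Z^n=:L$ (an element of $\Gamma$ maps into $\R^n$ exactly when it maps to $1$ in $F$), and $L$ is a lattice of $\R^n$ of index $|F|$ in $j(\Gamma)$. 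Writing $j(\Gamma)$ as a finite union of cosets of $L$, each a discrete and closed subset of $\R^n\times O(n)$ with constant $O(n)$-coordinate, shows $j(\Gamma)$ is discrete in $\Isom(\R^n)$; and since $L\subseteq j(\Gamma)$ already acts cocompactly on $\R^n$ (the quotient being a torus), so does $j(\Gamma)$. By the Proposition, $j(\Gamma)$ is therefore an $n$-dimensional crystallographic group.

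The main obstacle is this last step, and inside it the idea of leaving the category of lattices: one enlarges the coefficients from $\Z^n$ to $\R^n$ precisely so that the classifying $2$-cocycle becomes a coboundary, using the vanishing of the cohomology of a finite group with uniquely divisible coefficients. Once the extension is split, the remaining verifications --- injectivity of $j$, discreteness and cocompactness of $j(\Gamma)$ --- are routine, relying on the dictionary from the Proposition between discreteness/cocompactness of subgroups of $\Isom(\R^n)$ and the corresponding properties of their action on $\R^n$.
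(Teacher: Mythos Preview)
Your proof is correct. The paper does not actually give its own proof of this theorem; it attributes the converse to Zassenhaus and refers to the literature for all proofs in Section~1. However, later in the paper (Theorem~\ref{algchar}) the author proves the almost--crystallographic generalization, and your argument is precisely the abelian specialization of that proof: the pushout along $\Z^n\hookrightarrow\R^n$ is the abelian instance of passing to the real Mal'cev completion $N^\R$, your vanishing $H^2(F;\R^n)=0$ is the abelian case of Lemma~\ref{split}, and your final conjugation into $O(n)$ corresponds to the paper's choice of a maximal compact $C\subseteq\Aut(N^\R)$ containing $\tilde\varphi'(F)$. The remaining verifications (injectivity, discreteness, cocompactness) are handled identically. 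So while there is no proof in the paper to compare against line by line, your approach aligns exactly with the method the paper uses in the nilpotent setting.
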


A little bit more can be said: when $\Gamma$ is a crystallographic group, then $t(\Gamma)$ is the unique normal 
abelian subgroup of $\Gamma$ which is also maximal abelian. Hence, as $t(\Gamma)$ is completely determined by algebraic properties, $t(\Gamma)$ is a characteristic subgroup of $\Gamma$.

By choosing a free generating set of the free abelian group $t(\Gamma)\cong \Z^n$, the short exact sequence
$0\to \Z^n \to \Gamma \to F\to 1$ (where $F=r(\Gamma)$) induces a representation 
$\varphi:F \to \Aut(\Z^n)$ by conjugation in $\Gamma$. This representation is often referred to as the holonomy representation\index{holonomy representation} of $\Gamma$. As $\Aut(\Z^n)\subseteq \GL(\Q^n)\subseteq\GL(\R^n)$ we can also via $\varphi$ as 
a representation into $\GL(\Q^n)$ (we then talk about the rational holonomy representation\index{rational holonomy representation}\label{ratholon}) or into $\GL(\R^n)$ (the real 
holonomy representation\index{real holonomy representation}). Note that also $r:\Gamma \to O(n) \subseteq \GL(\R^n)$ induces a representation 
$\bar{r}: \Gamma/t(\Gamma)=F \to \GL(\R^n)$. One can see that $\bar{r}$ and $\varphi$ are conjugate inside $\GL(\R^n)$.

\medskip

As a flat manifold $M$ is, up to affine equivalence, completely determined by its fundamental group, a Bieberbach group $\Gamma$, it should not come as a surprise that much of the geometry (or topology) of the manifold $M$ can be studied 
on the algebraic level. Very often, the holonomy representation plays a crucial role. We refer the reader to the book \cite{szcz12-1} to see quite some examples of this. In later sections of this article we will see that this is also the case 
for infra-nilmanifolds.

\section{Nilpotent Groups}\label{voorbeeldsec}
When moving from the class of flat manifolds (crystallographic groups) to the class of infra-nilmanifolds (almost crystallographic groups), we will  replace the abelian groups $\Z^n$ and $\R^n$ by their nilpotent analogues. It is therefore useful to spend some time describing these nilpotent groups, as in the previous section, this material is standard and we omit all proofs. For discrete nilpotent groups we refer the reader to 
\cite{hall69-1, km79-1,  khuk93-1, sega83-1} for more details, while for Lie group aspects  the references \cite{ov93-1,ov00-1,ov94-1} are a good starting point for more information. The book \cite{sega83-1} contains a complete description, with full proofs of everything we will say about rational Mal'cev completions.

\medskip

Let $G$ be any group, then we define the terms of the upper central series\index{upper central series} $Z_i(G)\subseteq G$ ($i\geq 0$) and those of the 
lower central series\index{lower central series} $\gamma_i(G)\subseteq G$ ($i\geq 1$) inductively as follows:
\[ Z_0(G) =1,\;\; Z_{i+1}(G) \mbox{ is determined by } Z_{i+1}(G)/Z_i(G)= Z( G/Z_i(G)),\]
\[ \gamma_1(G)= G,\;\; \gamma_{i+1}(G)= [ G, \gamma_i(G)] .\]
Note that a group is abelian $\Leftrightarrow Z_1(G)=G \Leftrightarrow \gamma_2(G) = 1$. 
As a generalization of abelian groups, we can consider groups for which the upper central series eventually stabilizes at $G$ 
or where the lower central series eventually stabilizes at the trivial group. The following lemma shows that both points 
of view lead to the same class of groups.
\begin{Lem}
Let $G$ be a group, then  for any positive integer $c$ we have that 
\[ Z_c(G)=G  \Leftrightarrow \gamma_{c+1}(G)=1.\]
\end{Lem}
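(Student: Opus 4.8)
The plan is to prove both implications by induction on $c$, exploiting the general principle that $\gamma_{i+1}(G) \subseteq Z_{c-i}(G)$ whenever $\gamma_{c+1}(G) = 1$, and dually $Z_i(G)$ centralizes $\gamma_{c+1-i}(G)$ whenever $Z_c(G) = G$. Concretely, I would first establish the following auxiliary fact, valid for \emph{every} group $G$ and all integers $i \geq 0$, $j \geq 1$:
\[ [Z_i(G), \gamma_j(G)] \subseteq Z_{i-j}(G) \quad \text{(with the convention } Z_k(G) = 1 \text{ for } k \leq 0\text{)}. \]
This is proved by induction on $j$: for $j = 1$ it is the definition of $Z_i(G)$ together with the observation that $[Z_i(G),G] \subseteq Z_{i-1}(G)$, which follows because $Z_i(G)/Z_{i-1}(G)$ is central in $G/Z_{i-1}(G)$. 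For the inductive step one uses $\gamma_{j+1}(G) = [G,\gamma_j(G)]$ together with the Hall--Witt identity (the three-subgroup lemma): since $[G, \gamma_j(G), Z_i(G)]$ and $[\gamma_j(G), Z_i(G), G]$ both lie in $Z_{i-j-1}(G)$ by the induction hypothesis (using $[\gamma_j(G), Z_i(G)] \subseteq Z_{i-j}(G)$ and then bracketing with $G$), the three-subgroup lemma forces $[Z_i(G), G, \gamma_j(G)] = [Z_i(G), \gamma_{j+1}(G)]$ into $Z_{i-j-1}(G)$ as well.

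With this in hand, both directions of the lemma are immediate. Suppose $Z_c(G) = G$. Taking $i = c$ and $j = c$ in the auxiliary fact gives $[G, \gamma_c(G)] = [Z_c(G), \gamma_c(G)] \subseteq Z_0(G) = 1$, i.e.\ $\gamma_{c+1}(G) = 1$. Conversely, suppose $\gamma_{c+1}(G) = 1$. I claim $\gamma_{c+1-i}(G) \subseteq Z_i(G)$ for all $0 \leq i \leq c$, proved by induction on $i$: the case $i = 0$ is the hypothesis $\gamma_{c+1}(G) = 1 = Z_0(G)$. Assuming $\gamma_{c-i}(G) \subseteq Z_i(G)$ wait---let me index carefully: assuming $\gamma_{c+1-i}(G) \subseteq Z_i(G)$, we must show $\gamma_{c-i}(G) \subseteq Z_{i+1}(G)$. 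Since $[G, \gamma_{c-i}(G)] = \gamma_{c+1-i}(G) \subseteq Z_i(G)$, the image of $\gamma_{c-i}(G)$ in $G/Z_i(G)$ is central, hence lies in $Z(G/Z_i(G)) = Z_{i+1}(G)/Z_i(G)$, so $\gamma_{c-i}(G) \subseteq Z_{i+1}(G)$. Setting $i = c$ yields $\gamma_1(G) = G \subseteq Z_c(G)$, whence $Z_c(G) = G$.

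The main obstacle is the auxiliary commutator containment, and specifically getting the bookkeeping of the subscripts right in the application of the three-subgroup lemma; this is the one genuinely non-formal ingredient. Everything else is a routine unwinding of the two definitions. (One could alternatively prove the forward direction $Z_c(G) = G \Rightarrow \gamma_{c+1}(G) = 1$ directly by an analogous downward induction showing $\gamma_{i+1}(G) \subseteq Z_{c-i}(G)$, which also follows cleanly from the auxiliary fact by taking $i = c$ there and iterating; I would present whichever of the two symmetric arguments reads more transparently.)
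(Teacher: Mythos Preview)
The paper does not prove this lemma; Section~2 explicitly says the material is standard and all proofs are omitted, with references to \cite{hall69-1,km79-1,khuk93-1,sega83-1}. So there is no in-paper proof to compare against.

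Your overall strategy is fine, and the direct induction for the reverse implication ($\gamma_{c+1}(G)=1 \Rightarrow Z_c(G)=G$) is correct as written. However, the three-subgroup-lemma step in your auxiliary fact is misstated in a way that makes it circular. With the left-normed convention $[A,B,C]=[[A,B],C]$, you have
\[ [G,\gamma_j(G),Z_i(G)] = [\gamma_{j+1}(G),Z_i(G)] = [Z_i(G),\gamma_{j+1}(G)], \]
which is precisely the containment you are trying to \emph{conclude}, not one you may assume. Likewise, your asserted equality $[Z_i(G),G,\gamma_j(G)] = [Z_i(G),\gamma_{j+1}(G)]$ is false: the left-hand side is $[[Z_i(G),G],\gamma_j(G)]$. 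The correct labeling is that the two \emph{known} triples are $[\gamma_j(G),Z_i(G),G]\subseteq [Z_{i-j}(G),G]\subseteq Z_{i-j-1}(G)$ and $[Z_i(G),G,\gamma_j(G)]\subseteq [Z_{i-1}(G),\gamma_j(G)]\subseteq Z_{i-j-1}(G)$ (the latter by the induction hypothesis with $i$ replaced by $i-1$), and the three-subgroup lemma then gives $[G,\gamma_j(G),Z_i(G)]\subseteq Z_{i-j-1}(G)$.

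That said, the auxiliary fact is unnecessary. The symmetric elementary argument you sketch in your final parenthetical --- proving $\gamma_{i+1}(G)\subseteq Z_{c-i}(G)$ by induction on $i$ directly from $[G,Z_k(G)]\subseteq Z_{k-1}(G)$ --- is the standard textbook route for both directions and avoids the three-subgroup lemma entirely. I would drop the auxiliary fact and present the two mirror inductions.
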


\begin{Def}
A group $G$ is nilpotent\index{nilpotent group} of class $\leq c$ if and only if $Z_c(G)=G$ (if and only if $\gamma_{c+1}(G)=1$). We say that 
$G$ is nilpotent of class $c$ if $c$ is the smallest positive integer for which $Z_c(G)=G$ ($\gamma_{c+1}(G)=1$). 
\end{Def}
It follows that the abelian groups are exactly the nilpotent groups of class 1.

\begin{Ex} Let
\[ H=\left\{ \left( \begin{array}{ccc}
1 & y & z \\
0 & 1 & x \\
0& 0 & 1
\end{array} \right) \;|\; x,y,z \in \Z\right\}\]
be the group of upper triangular $3\times 3$ integral matrices. Then, $H$ is a nilpotent group of class $2$. The group $H$ is often referred to as the (integral) Heisenberg group.
More generally, for any commutative ring $R$ with 1, the group of upper triangular $n\times n$ matrices with entries in $R$ and 1's everywhere on the diagonal, 
say $UT_n(R)$,  is a nilpotent group of class $\leq n-1$. (E.g.\ see \cite[page 8]{sega83-1})
\end{Ex}
It is easy to prove that subgroups and quotient groups of nilpotent groups are also nilpotent (of at most the same class as the original group). 

\medskip

In the setting of crystallographic groups, the abelian groups $\Z^n$ play a crucial role, since any crystallographic group contains such a group as a characteristic subgroup of finite index. The groups $\Z^n$ are exactly the finitely generated and torsion free abelian groups. In the generalized setting of almost--crystallographic groups, the role of those $\Z^n$ will be replaced by the finitely generated and torsion free nilpotent groups. 

\medskip

So, since our aim is to discuss infra-nilmanifolds and almost--crystallographic groups in detail, it is necessary to obtain a good understanding of those finitely generated torsion free nilpotent groups.

\begin{Lem} Let $N$ be a nilpotent group, then
\begin{enumerate}
\item If $N$ is finitely generated, then any subgroup of $N$ is finitely generated (since $N$ is a polycyclic group).
\item If $N$ is torsion free, then for any positive integer $i$ also  $N/Z_i(N)$ is torsion free
\end{enumerate}
\end{Lem}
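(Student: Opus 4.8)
For part (1), the plan is to follow the parenthetical hint. I would first show that a finitely generated nilpotent group $N$ is polycyclic, by induction on its nilpotency class $c$. If $c\le 1$, then $N$ is finitely generated abelian, hence polycyclic. If $c\ge 2$, then $\gamma_c(N)\subseteq Z(N)$ since $[\gamma_c(N),N]=\gamma_{c+1}(N)=1$, so $\gamma_c(N)$ is abelian; it is moreover finitely generated, being generated by the finitely many left-normed commutators of weight $c$ in a fixed finite generating set of $N$. This last fact is the only non-formal point of part (1); it follows from the commutator identities $[xy,z]=[x,z]^y[y,z]$, $[x,yz]=[x,z][x,y]^z$ together with the centrality of $\gamma_c(N)$. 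Hence $\gamma_c(N)$ is polycyclic, while $N/\gamma_c(N)$ is finitely generated nilpotent of class $\le c-1$ and so polycyclic by the induction hypothesis; as an extension of a polycyclic group by a polycyclic group is polycyclic (concatenate their cyclic subnormal series), $N$ is polycyclic. Finally, I would note that every subgroup of a polycyclic group is polycyclic, hence finitely generated: if $1=G_0\triangleleft\cdots\triangleleft G_m=G$ has cyclic quotients and $H\le G$, then $H\cap G_{m-1}$ is polycyclic by induction on $m$ and $H/(H\cap G_{m-1})\cong HG_{m-1}/G_{m-1}\le G/G_{m-1}$ is cyclic.

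For part (2), I would first reduce to the case $i=1$. From the definition of the upper central series one has $Z(N/Z_{i-1}(N))=Z_i(N)/Z_{i-1}(N)$, hence $\bigl(N/Z_{i-1}(N)\bigr)/Z\bigl(N/Z_{i-1}(N)\bigr)\cong N/Z_i(N)$. So, arguing by induction on $i$ (the case $i=0$ being immediate since $Z_0(N)=1$), it suffices to prove the claim: for every torsion-free nilpotent group $G$, the quotient $G/Z(G)$ is again torsion-free — applied to $G=N/Z_{i-1}(N)$, which is nilpotent as a quotient of $N$ and torsion-free by the induction hypothesis. Equivalently, one must show that $Z(G)$ is \emph{isolated} in a torsion-free nilpotent $G$, i.e.\ $x^n\in Z(G)$ with $n\ge 1$ forces $x\in Z(G)$.

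This claim is the heart of the argument, and I would prove it by induction on the nilpotency class $c$ of $G$. For $c\le 1$ it is trivial. For $c\ge 2$, fix $x$ with $x^n\in Z(G)$ and an arbitrary $g\in G$; it is enough to show $[x,g]=1$. Set $u_0=g$ and $u_{k+1}=[u_k,x]$, so $u_k\in\gamma_{k+1}(G)$ and $u_k=1$ for $k\ge c$. Hall's commutator collection formula expresses $[g,x^n]$ as $u_1^{n}$ times a product — with integer exponents $\binom{n}{k}$ and suitable correction terms — of left-normed commutators in $g$ and $x$ of weight $\ge 3$; since $x^n\in Z(G)$, this product equals $1$. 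One then removes these commutators from the highest weight downward: the top surviving commutator lies in $\gamma_c(G)\subseteq Z(G)$, which is torsion-free, and occurs to a power $\binom{n}{k}$ which is either $0$ (so the term is absent) or nonzero (so torsion-freeness makes that commutator trivial); iterating down the weights leaves $u_1^{n}=1$, whence $[x,g]=u_1=1$, again by torsion-freeness. As $g$ was arbitrary, $x\in Z(G)$.

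The step I expect to be the main obstacle is precisely this last one: organising the collection formula and its correction terms so that the downward induction on weight genuinely goes through. (Conceptually the cleanest route is to observe that it suffices to work inside the finitely generated subgroup $\langle x,g\rangle$, to embed a finitely generated torsion-free nilpotent group into a real unipotent matrix group $UT_m(\R)$, and to read off the statement in logarithmic coordinates, where $A\mapsto A^n$ becomes multiplication by $n$ and "central" is detected on the Lie-algebra level; but this uses Mal'cev theory not yet developed at this point in the text. Alternatively one may simply cite \cite{hall69-1,km79-1,sega83-1}.)
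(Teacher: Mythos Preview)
The paper does not actually prove this lemma: Section~2 opens by saying ``this material is standard and we omit all proofs'', referring the reader to \cite{hall69-1,km79-1,khuk93-1,sega83-1}. So there is nothing in the paper to compare against, and your proposal must stand on its own. Part~(1) is correct and standard, and your reduction of Part~(2) to the single claim ``$Z(G)$ is isolated in any torsion-free nilpotent $G$'' is exactly right.

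The gap you flag in that claim is genuine, and a bit worse than you suggest. The collection formula for $[g,x^n]$ involves not only the iterated commutators $u_k=[g,x,\dots,x]$ but also mixed basic commutators with several occurrences of $g$; your top-down elimination does not address those. Moreover, when $n<c$ some of the exponents $\binom{n}{k}$ vanish, so torsion-freeness yields no information about the corresponding factors and the descent stalls. There is, however, a clean elementary fix that avoids Hall--Petresco entirely. From $[ab,c]=[a,c]^b[b,c]$ one checks by induction that if $x$ commutes with $[x,h]$ then $[x^n,h]=[x,h]^n$. Now prove by \emph{downward} induction on $i$ that $x$ centralises $\gamma_i(G)$: the case $i=c+1$ is vacuous, and for the step, if $h\in\gamma_i(G)$ then $[x,h]\in\gamma_{i+1}(G)$ is centralised by $x$ by hypothesis, whence $1=[x^n,h]=[x,h]^n$, and torsion-freeness gives $[x,h]=1$. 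Taking $i=1$ yields $x\in Z(G)$. This three-line argument uses nothing beyond the commutator identity you already invoked in Part~(1) and removes the acknowledged obstacle without appealing to Mal'cev theory.
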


Note that for a torsion free nilpotent group $N$, it is not necessarily true that $N/\gamma_{i}(N)$ is torsion free.
E.g.\ for any positive integer $n$ we define 
\[ H_n=\left\{ \left( \begin{array}{ccc} 1 & y & \frac{z}{n} \\ 0 & 1 & x \\ 0 & 0 & 1 \end{array} \right) \;
|\; x,y,z \in \Z\right\}.\label{heisn}\]
Then $H_n$ is a torsion free nilpotent group (where $H_1$ is the Heisenberg group $H$ which we already met in the 
example above).
One can compute that 
\[ Z(H_n)=\left\{ \left( \begin{array}{ccc} 1 & 0 & \frac{z}{n} \\ 0 & 1 & 0 \\ 0 & 0 & 1 \end{array} \right) \;
|\; z \in \Z\right\} \mbox{ and }\gamma_2(H_n)=\left\{ \left( \begin{array}{ccc} 1 & 0 & z\\ 0 & 1 & 0 \\ 0 & 0 & 1 \end{array} \right) \;
|\; z \in \Z\right\},\]
from which is follows that 
\[ H_n/Z(H_n)\cong \Z^2 \mbox{ is indeed torsion free, while }H_n/\gamma_2(H_n)=\Z^2\oplus \Z_n\mbox{ is not, when $n\neq 1$.}\]
It follows from the lemma above that for a given finitely generated torsion free nilpotent group $N$ of class $\leq c$, 
we have that 
\[ N =Z_c(N) \supseteq Z_{c-1}(N)  \supseteq \cdots \supseteq Z_2(N) \supseteq Z_{1}(N)\supseteq 1=Z_0(N)\]
is a filtration of $N$ with $Z_{i}(N)/Z_{i-1}(N) \cong \Z^{k_i}$ for some integers $k_i$. 
By refining this filtration (and numbering in the other direction), 
we can find a series of normal subgroups $N_i$ of $N$:
\[ N=N_1\supset N_2 \supset N_3 \cdots \supset N_k\supset N_{k+1}=1\]
with $N_i/N_{i+1}\cong \Z$ and $N_i/N_{i+1} \subseteq Z(N/N_{i+1})$ (which is equivalent to 
$[N,N_i]\subseteq N_{i+1}$).
Now, given a series of subgroups $N_i$ of $N$ satisfying these conditions, we 
fix an element $a_i\in N_i$ for all $i$, in such a way that the natural projection of $a_i$ in $N_i/N_{i+1}$ is a generator of the infinite cyclic group $N_i/N_{i+1}$. It is then obvious to see that 
any element $x\in N$ can be uniquely expressed as an element of the form 
\[ x = a_1^{z_1} a_2^{z_2}\ldots a_k^{z_k} \mbox{ for some }z_1,z_2, \ldots, z_k\in \Z.\] 
We will refer to $a_1, a_2, \ldots , a_k$ as a Mal'cev basis\index{Mal'cev basis} for $N$. 
\begin{Prop}\label{polprod}
Let $N$ be a finitely generated torsion free nilpotent group with Mal'cev basis $a_1,a_2,\ldots,a_k$, then there 
exist polynomials $p_i(x_1,x_2,\ldots,x_{i-1},y_1,y_2,\ldots y_{i-1})$ ($2\leq i \leq k$) with coefficients in $\Q$ and 
polynomials $q_i(x_1,x_2,\ldots,x_{i-1},z)$  ($2\leq i \leq k$) also with coefficients in $\Q$ such that 
for all $x_1,x_2, \ldots x_k,y_1,\ldots y_k,z\in \Z$ we have that 
\begin{enumerate}
\item 
$a_1^{x_1}a_2^{x_2}\ldots a_k^{x_k} a_1^{y_1}a_2^{y_2}\ldots a_k^{y_k}=$
\[ a_1^{x_1+y_1} a_2 ^{x_2+y_2+p_2(x_1,y_1)}\ldots 
a_i^{x_i +y_i+ p_i(x_1,x_2,\ldots,x_{i-1},y_1,y_2,\ldots y_{i-1})} \ldots a_k^{x_k+y_k+p_k(x_1, \ldots , y_{k-1})}.\]
\item $\left( a_1^{x_1}a_2^{x_2}\ldots a_k^{x_k} \right)^z =$
\[ a_1^{z x_1} a_2^{z x_2 +q_1(x_1,z)} \ldots a_i^{z x_i + q_i(x_1,x_2,\ldots, x_{i-1},z)} \ldots 
a_k^{z x_k + q_k(x_1,\ldots ,x_{k-1}, z)}.\]
\end{enumerate}
\end{Prop}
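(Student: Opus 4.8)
The plan is to prove both identities simultaneously by induction on $k$, the length of the Mal'cev basis, exploiting the normal series $N = N_1 \supset N_2 \supset \cdots \supset N_{k+1} = 1$ with $N_i/N_{i+1} \cong \Z$ generated by the image of $a_i$ and $[N, N_i] \subseteq N_{i+1}$. First I would record the two structural facts that drive everything: since $a_1^{z_1} \cdots a_k^{z_k}$ is the unique normal form of each element, the assignments $x \mapsto (z_1, \ldots, z_k)$ give a bijection $N \to \Z^k$, and the subgroup $N_2$ consists exactly of those elements with $z_1 = 0$, i.e.\ $N_2$ has Mal'cev basis $a_2, \ldots, a_k$ and is itself a finitely generated torsion free nilpotent group of shorter length. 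The base case $k = 1$ is trivial: $N \cong \Z$ and there are no polynomials to produce.

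For the inductive step on part (1), I would write $u = a_1^{x_1} a_2^{x_2} \cdots a_k^{x_k}$ and $v = a_1^{y_1} a_2^{y_2} \cdots a_k^{y_k}$ and move all the $a_1$-powers to the front. Concretely, set $w = a_1^{-y_1}(a_2^{x_2} \cdots a_k^{x_k}) a_1^{y_1}$. Since $a_2, \ldots, a_k \in N_2$ and $N_2$ is normal, $w \in N_2$, so $w$ has a normal form $a_2^{x_2'} \cdots a_k^{x_k'}$; the key point is that conjugation by $a_1$ induces an automorphism of $N_2$ which, read through the normal form bijection $N_2 \to \Z^{k-1}$, is \emph{polynomial} in the exponents and in $y_1$ (one proves this by an auxiliary induction moving $a_1$ past one generator at a time, using $[a_1, a_i] \in N_{i+1}$ to see the correction terms land further down the series; the fact that a single conjugation $a_1^{-1} a_i a_1$ lies in $a_i N_{i+1}$ is what makes the resulting corrections polynomial). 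Then $uv = a_1^{x_1+y_1} \cdot (a_2^{x_2'} \cdots a_k^{x_k'})(a_2^{y_2} \cdots a_k^{y_k})$, and the product of the two length-$(k-1)$ tails is handled by the induction hypothesis applied to $N_2$, yielding polynomials in the $x_i', y_i$; composing with the polynomial dependence of the $x_i'$ on the $x_j$ and $y_1$ gives polynomials $p_i$ in $x_1, \ldots, x_{i-1}, y_1, \ldots, y_{i-1}$ of the asserted shape (note the leading exponent $x_1 + y_1$ is forced, and each $p_i$ genuinely involves only the earlier variables because $[N, N_i] \subseteq N_{i+1}$ prevents lower-index generators from being disturbed).

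For part (2), I would deduce the power formula from part (1). The cleanest route: having established that multiplication in the exponent coordinates $\Z^k \cong N$ is given by polynomials (with $\Q$-coefficients) $P_i(x_1,\ldots,x_{i-1},y_1,\ldots,y_{i-1})$, the $z$-th power map for fixed $z \in \Z$ is an iterated product, and one shows by induction on $z \geq 0$ that the exponents of $u^z$ are polynomial in $x_1, \ldots, x_k$ and $z$ — first-coordinate $z x_1$, and the $i$-th coordinate $z x_i + q_i(x_1, \ldots, x_{i-1}, z)$. The inductive step computes $u^{z+1} = u^z \cdot u$ via the $P_i$; since each $P_i$ is a $\Q$-polynomial and the $i$-th coordinate of $u^z$ is already of the stated form, the new $i$-th coordinate is again a $\Q$-polynomial in $x_1,\ldots,x_i,z$, and one checks the coefficient of $x_i$ stays $z+1$ while everything else collects into $q_i$. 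Finally, extend from $z \geq 0$ to all $z \in \Z$: a polynomial identity in $z$ over $\Z$ that holds for all nonnegative integers holds for all integers (a polynomial agreeing on infinitely many points), so the formula with negative $z$ follows automatically — or alternatively verify $u^{-1}$ directly and compose.

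The main obstacle is the conjugation step in part (1): showing that $a_1^{-1}(a_2^{x_2}\cdots a_k^{x_k})a_1$, rewritten in normal form, has exponents that are \emph{polynomial} (not merely well-defined) functions of $x_2, \ldots, x_k$. This is really a statement that the automorphism of $N_2$ given by conjugation by $a_1$ is "unipotent-polynomial" with respect to the central series, and it is proved by peeling off one generator at a time: $a_1^{-1} a_i a_1 = a_i c$ with $c \in N_{i+1}$, and then one must commute the accumulated central-ish factors past the remaining generators, each such commutation introducing only lower (hence, by a downward induction, polynomially controlled) corrections. Keeping the bookkeeping straight — which variables each $p_i$ and $q_i$ is allowed to depend on, and why the dependence truncates exactly at index $i-1$ — is the delicate part; everything else is a routine unwinding of the normal form and the defining relations $[N,N_i]\subseteq N_{i+1}$.
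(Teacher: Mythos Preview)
The paper does not prove this proposition; Section~2 explicitly declares the material standard and defers all proofs to the references (Hall, Segal, etc.). So there is nothing to compare against and your outline must stand on its own.

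Your overall scaffold --- induction on $k$ via the normal subgroup $N_2$ --- is the standard one. The genuine gap is in part~(2). You write that ``one shows by induction on $z\geq 0$ that the exponents of $u^z$ are polynomial in $x_1,\ldots,x_k$ and $z$'', with inductive step $u^{z+1}=u^z\cdot u$. But ``polynomial in $z$'' is a property of the whole function $z\mapsto f_i(z)$, not of a single value, so it cannot serve as the hypothesis of an induction on $z$. Concretely: assuming the $i$-th coordinate of $u^z$ is $zx_i+q_i(x_1,\ldots,x_{i-1},z)$, the product formula gives the $i$-th coordinate of $u^{z+1}$ as $(z{+}1)x_i+q_i(\ldots,z)+p_i\bigl(f_1(z),\ldots,f_{i-1}(z),x_1,\ldots,x_{i-1}\bigr)$, and you have no way to identify this with $q_i(\ldots,z{+}1)$ for the \emph{same} $q_i$, since $q_i$ was never pinned down. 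The correct induction is on the coordinate index $i$: from part~(1) one has the recursion $f_i(z{+}1)-f_i(z)=x_i+p_i\bigl(f_1(z),\ldots,f_{i-1}(z),x_1,\ldots,x_{i-1}\bigr)$, whose right-hand side is a polynomial in $z$ by the induction hypothesis on $i$; then the elementary fact that a function $\Z\to\Q$ with polynomial first difference is itself polynomial (discrete integration, e.g.\ via $\sum_{w=0}^{z-1}\binom{w}{d}=\binom{z}{d+1}$) finishes the job and incidentally explains why the coefficients are only rational.

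The same issue is hiding in your conjugation step for part~(1): you need conjugation by $a_1^{y_1}$ on $N_2$ to be polynomial in $y_1$, i.e.\ the $y_1$-th iterate of a fixed polynomial automorphism of $N_2$ to depend polynomially on $y_1$. That fails for polynomial self-maps in general; it holds here only because the automorphism is unipotent upper-triangular in the Mal'cev coordinates, and the argument is again coordinate-by-coordinate induction plus discrete summation --- essentially the argument for~(2) one level down. Once the induction on the coordinate index is set up correctly, (1) and (2) go through together.
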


Using these polynomials, we can now construct two new groups, $N^\Q$ and $N^\R$. These groups consist formally 
of the elements $a_1^{x_1}a_2^{x_2}\ldots a_k^{x_k}$ where the $x_i\in \Q$ (respectively $\R$) and where the 
product rule is given by means of the same polynomials $p_i$. Both $N^\Q$ and $N^\R$ are torsion free nilpotent groups (but of course, they are no longer finitely generated).

The group $N^\Q$ is a radicable group, this is a group such that any element of $N^\Q$ has a (unique) $m$--th root in $N^\Q$ (for all positive integers $m$). Moreover, $N^\Q$ is the radicable closure of $N$, i.e.\ it is a radicable group containing $N$ and any element of $N^\Q$ has a positive power lying in $N$. This implies that $N^\Q$ is the smallest radicable group containing $N$. This property does determine $N^\Q$ uniquely up to isomorphism. Often $N^\Q$ is referred to as the rational Mal'cev completion\index{rational Mal'cev completion} of $N$.

\begin{Ex}\label{ratmalcev}
It is easy to see that when $N\cong \Z^k$, then $N^\Q\cong \Q^k$. For the integral 
Heisenberg group we have that 
\[ H^\Q = \left\{ \left( \begin{array}{ccc}
1 & y & z \\
0 & 1 & x \\
0& 0 & 1
\end{array} \right) \;|\; x,y,z \in \Q\right\}\]
In fact, for every $H_n$ we have that $H_n^\Q\cong H^\Q$.
\end{Ex}

\begin{Prop}
Let $\varphi: N_1 \to N_2$ be a homomorphism between two finitely generated torsion free nilpotent groups. Then 
$\varphi$ uniquely extends to a homomorphism $\varphi^\Q:N_1^\Q \to N_2^\Q$. Moreover, when $\varphi$ is an isomorphism,
so is $\varphi^\Q$.
\end{Prop}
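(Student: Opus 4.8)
The plan is to use the polynomial description of the group laws from Proposition~\ref{polprod} twice: first to recognize that $\varphi$ itself is a polynomial map with rational coefficients when written in Mal'cev coordinates, and then to extend it by the very same polynomial formulas.

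Fix a Mal'cev basis $a_1,\dots,a_k$ of $N_1$ and $b_1,\dots,b_l$ of $N_2$, so that every element of $N_1$ is uniquely of the form $a_1^{x_1}\cdots a_k^{x_k}$ with $x_i\in\Z$, and likewise for $N_2$; recall that $N_1^\Q$ (resp.\ $N_1^\R$) consists of the same formal products with $x_i\in\Q$ (resp.\ $\R$), with multiplication given by the \emph{same} polynomials $p_i$ from Proposition~\ref{polprod}. First I would note that, since $\varphi$ is a homomorphism, $\varphi(a_1^{x_1}\cdots a_k^{x_k})=\varphi(a_1)^{x_1}\cdots\varphi(a_k)^{x_k}$ for $x_i\in\Z$; writing each fixed element $\varphi(a_i)\in N_2$ in Mal'cev coordinates and applying part (2) of Proposition~\ref{polprod} to compute each power $\varphi(a_i)^{x_i}$ and then part (1) repeatedly to multiply, one sees that the Mal'cev coordinates of $\varphi(a_1^{x_1}\cdots a_k^{x_k})$ are given by polynomials in $(x_1,\dots,x_k)$ with rational coefficients (rational because the $p_i,q_i$ have rational coefficients and the $\varphi(a_i)$ have integer coordinates). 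In other words, in coordinates $\varphi$ is a polynomial map $P:\Z^k\to\Z^l$ with $P$ having coefficients in $\Q$. I would then \emph{define} $\varphi^\Q:N_1^\Q\to N_2^\Q$ (and, verbatim, $\varphi^\R:N_1^\R\to N_2^\R$) by the same formula $P$, now evaluated on $\Q^k$ (resp.\ $\R^k$).

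To check $\varphi^\Q$ is a homomorphism, observe that $\varphi^\Q(g\cdot h)$ and $\varphi^\Q(g)\cdot\varphi^\Q(h)$, once everything is written in Mal'cev coordinates via the polynomials $p_i$ and $P$, are both given by fixed tuples of polynomials with rational coefficients in the coordinates $(x_1,\dots,x_k)$ of $g$ and $(y_1,\dots,y_k)$ of $h$. These two polynomial maps agree on $\Z^k\times\Z^k$ precisely because $\varphi$ is a homomorphism of $N_1$ into $N_2$; since a polynomial in finitely many variables over $\Q$ that vanishes on the whole integer lattice is the zero polynomial, they agree identically, hence also on $\Q^k\times\Q^k$. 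Thus $\varphi^\Q$ is a homomorphism extending $\varphi$. For uniqueness: if $\psi:N_1^\Q\to N_2^\Q$ is any homomorphism with $\psi|_{N_1}=\varphi$, then for $x\in N_1^\Q$ choose $m\geq1$ with $x^m\in N_1$ (possible since $N_1^\Q$ is the radicable closure of $N_1$); then $\psi(x)^m=\psi(x^m)=\varphi(x^m)=\varphi^\Q(x)^m$, and since $m$-th roots are unique in the radicable group $N_2^\Q$ we conclude $\psi(x)=\varphi^\Q(x)$.

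Finally, if $\varphi$ is an isomorphism, apply the construction to $\varphi^{-1}$ to obtain $(\varphi^{-1})^\Q:N_2^\Q\to N_1^\Q$. The composite $(\varphi^{-1})^\Q\circ\varphi^\Q$ is a homomorphism $N_1^\Q\to N_1^\Q$ restricting to $\varphi^{-1}\circ\varphi=\mathrm{id}_{N_1}$ on $N_1$; by the uniqueness just proved (applied with $\varphi$ replaced by $\mathrm{id}_{N_1}$, whose canonical extension is $\mathrm{id}_{N_1^\Q}$) it must equal $\mathrm{id}_{N_1^\Q}$, and symmetrically $\varphi^\Q\circ(\varphi^{-1})^\Q=\mathrm{id}_{N_2^\Q}$, so $\varphi^\Q$ is an isomorphism. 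I expect the only genuine subtlety to lie in the first step: carefully verifying that $\varphi$ is polynomial with \emph{rational} (not merely real) coefficients in Mal'cev coordinates, and that an algebraic identity valid on the integer lattice propagates to $\Q$ — everything afterwards is formal. An alternative, if one prefers to avoid the coordinate bookkeeping, is induction on the nilpotency class, viewing $N_i^\Q$ as a central extension of $(N_i/Z(N_i))^\Q$ by $Z(N_i)^\Q\cong\Q^{m}$ and extending compatibly; but the polynomial argument seems shorter and also yields the $\R$-version for free.
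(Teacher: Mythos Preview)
Your proof is correct and follows exactly the approach the paper sketches: observe that in Mal'cev coordinates $\varphi$ is given by rational polynomials (via Proposition~\ref{polprod}), then define $\varphi^\Q$ by the same polynomial formulas on rational coordinates. You have simply fleshed out the details (the Zariski-density argument for the homomorphism property, the unique-roots argument for uniqueness, and the functoriality argument for the isomorphism case) that the paper leaves to the reader.
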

The proof of the proposition is not difficult, since it is easy to see that the homomorphism $\varphi$ is expressed as a polynomial map in terms of coordinates with respect to chosen Mal'cev bases of $N_1$ and $N_2$. Using the same polynomial map, one then defines the homomorphism between $N_1^\Q$ and $N_2^\Q$  (but now allowing also rational coordinates). 

\medskip

In the example \ref{ratmalcev} above we saw that it is  possible that $N_1\not \cong N_2$, but $N_1^\Q \cong N_2^\Q$. This occurs exactly when the groups $N_1$ and $N_2$ are commensurable.
\begin{Def}
Two groups $N_1$ and $N_2$ are said to be (abstractly) commensurable if there exists finite index subgroups
subgroups $H_1\leq N_1$ and $H_2 \leq N_2$ such that $H_1\cong H_2$.
\end{Def}

\begin{Prop}
let $N_1$ and $N_2$ be two finitely generated torsion free nilpotent groups, then 
\[ N_1^\Q \cong N_2^\Q \Leftrightarrow \mbox{$N_1$ and $N_2$ are commensurable.}\]
\end{Prop}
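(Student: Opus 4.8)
The plan is to prove both implications, using the rational Mal'cev completion as the bridge. For the direction ($\Leftarrow$), suppose $N_1$ and $N_2$ are commensurable, so there are finite index subgroups $H_1\leq N_1$ and $H_2\leq N_2$ with $H_1\cong H_2$. By the Proposition on extending isomorphisms, $H_1^\Q\cong H_2^\Q$. So it suffices to show that if $H\leq N$ is a finite index subgroup of a finitely generated torsion free nilpotent group, then the inclusion $H\hookrightarrow N$ induces an isomorphism $H^\Q\xrightarrow{\sim} N^\Q$. Injectivity: $H^\Q\to N^\Q$ is a homomorphism of radicable groups extending an injection, and one checks it is injective (e.g.\ any element of $H^\Q$ has a power in $H\leq N$, and torsion-freeness of $N^\Q$ rules out a nontrivial kernel). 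Surjectivity is the key point: given $x\in N^\Q$, some power $x^m$ lies in $N$ by the defining property of the rational Mal'cev completion; since $[N:H]<\infty$, a further power $x^{mk}$ lies in $H$; but $H^\Q$ is radicable inside $N^\Q$, so $x$, being the unique $(mk)$-th root of $x^{mk}\in H\subseteq H^\Q$ in the radicable group $N^\Q$, already lies in $H^\Q$. Hence $H^\Q = N^\Q$, and chaining the isomorphisms gives $N_1^\Q\cong H_1^\Q\cong H_2^\Q\cong N_2^\Q$.

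For the direction ($\Rightarrow$), suppose $\psi\colon N_1^\Q\xrightarrow{\sim} N_2^\Q$ is an isomorphism. I want finite index subgroups $H_1\leq N_1$, $H_2\leq N_2$ with $H_1\cong H_2$. The natural candidate is $H_1 := \psi^{-1}\bigl(\psi(N_1)\cap N_2\bigr)$ together with $H_2 := \psi(N_1)\cap N_2$; these are isomorphic via $\psi$ by construction, so everything reduces to showing each has finite index in the respective $N_i$. Equivalently (replacing $N_1$ by $\psi(N_1)\subseteq N_2^\Q$ and working inside $N_2^\Q$), it suffices to prove: if $N$ and $N'$ are two finitely generated subgroups of a common torsion free radicable nilpotent group $G$ with $G = N^\Q = (N')^\Q$, then $N\cap N'$ has finite index in both. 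For this, fix a Mal'cev basis $a_1,\dots,a_k$ of $N$; every element of $N'$ has coordinates in $\Q$ (relative to this basis of $G=N^\Q$), and since $N'$ is finitely generated there is a common denominator $d$ so that $N'\subseteq \langle a_1^{1/d},\dots,a_k^{1/d}\rangle_{\mathrm{poly}}$, a lattice commensurable with $N$ — more precisely, $N'$ and $N$ both lie between the "integer lattice" $N$ and the "$1/d$-lattice", which has finite index over $N$ by the polynomiality of the group operations in Proposition \ref{polprod}. An induction on the nilpotency class (or on the length of the Mal'cev series), using that finite-index-ness is extension-closed, makes this precise: pass to $G/Z(G)$, where the images of $N$ and $N'$ are again finitely generated nilpotent subgroups with the same rational completion, apply the inductive hypothesis, and control the central coordinate directly since $Z(N)$ and $Z(N')$ are finitely generated subgroups of the $\Q$-vector space $Z(G)$ spanning the same subspace.

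The main obstacle is precisely the finite-index claim in the ($\Rightarrow$) direction: unlike the radicability argument in ($\Leftarrow$), it genuinely uses that $N_1$ and $N_2$ are \emph{finitely generated}, and the cleanest route is the induction on nilpotency class via the central quotient, which requires knowing that $N/(N\cap Z(G))$ still has the "same rational completion" property inside $G/Z(G)$ — this follows since $Z(N^\Q) = Z(N)^\Q$ and the rational completion commutes with the quotient by the center, facts that are standard for Mal'cev completions (see \cite{sega83-1}). Once the central case and the inductive step are in place, both directions combine to give the stated equivalence.
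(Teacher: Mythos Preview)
The paper does not actually prove this proposition: it appears in Section~2 among the ``standard material'' on nilpotent groups for which all proofs are explicitly omitted and deferred to references such as \cite{sega83-1}. So there is no in-paper argument to compare against; I can only assess your proof on its own merits.

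Your ($\Leftarrow$) direction is clean and correct: the key point that a finite-index inclusion $H\hookrightarrow N$ induces $H^\Q = N^\Q$ is exactly the characterization of $N^\Q$ as the radicable hull, and your root-extraction argument is the right one.

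Your ($\Rightarrow$) direction has the right architecture but one passage is loose. You write that ``$N'$ and $N$ both lie between the `integer lattice' $N$ and the `$1/d$-lattice','' but $N'$ need not contain $N$, and the common-denominator bound only gives $N'\subseteq L_d$ (the lattice on the $a_i^{1/d}$), not the sandwich you describe. Moreover, having the \emph{generators} of $N'$ with denominator $d$ does not directly give $g^d\in N$ for $g\in N'$, because the power formula in Proposition~\ref{polprod} introduces further rational polynomial corrections in the lower coordinates. You anticipate this by saying the induction on nilpotency class ``makes this precise,'' and indeed that is the correct fix: in the abelian base case the common-denominator argument is literally valid (two full-rank subgroups of $\Q^k$ are commensurable), and the inductive step via $G/Z(G)$ goes through once you know $Z(N)^\Q = Z(N^\Q)$ and that $N/(N\cap Z(G))$ is again a lattice in $G/Z(G)$, both of which are in Segal. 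With that clarification your argument is complete.
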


The group $N^\R$ is called the real Mal'cev completion\index{Mal'cev completion} of $N$. This is a Lie group. Indeed as a manifold $N^\R$ is diffeomorphic to $\R^k$ and the group operations are polynomial in the coordinates, so these are certainly smooth. 
The group $N^\R$ is a simply connected nilpotent Lie group which contains $N$ as a discrete cocompact subgroup.  
Generalizing 
the abelian case, we formulate the following definition.
\begin{Def}
 A lattice of a simply connected Lie group $G$ is a discrete and cocompact subgroup $N$ of $G$.
\end{Def} 
It is known that, up to a Lie group isomorphism, $N^\R$ is the unique connected and simply connected nilpotent Lie group 
containing $N$ as a lattice. At this point we want to remark that there exist nilpotent groups with different rational Mal'cev completions, but with the same real Mal'cev completion.

Analogously to the situation for rational Mal'cev completions, we have the following result:
\begin{Prop}
Let $\varphi: N_1 \to N_2$ be a homomorphism between two finitely generated torsion free nilpotent groups. Then 
$\varphi$ uniquely extends to a continuous homomorphism $\varphi^\R:N_1^\R \to N_2^\R$. Moreover, when $\varphi$ is an isomorphism,
so is $\varphi^\R$.
\end{Prop}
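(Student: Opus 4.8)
The plan is to reduce everything to the fact, already used for $\varphi^\Q$, that $\varphi$ is a polynomial map in Mal'cev coordinates, and then to transfer the relevant identities from integer to real arguments. Fix Mal'cev bases $a_1,\dots,a_k$ of $N_1$ and $b_1,\dots,b_\ell$ of $N_2$. As observed after the corresponding proposition for $N^\Q$, there are polynomials $P_1,\dots,P_\ell$ with rational coefficients so that $\varphi\bigl(a_1^{x_1}\cdots a_k^{x_k}\bigr)=b_1^{P_1(x)}\cdots b_\ell^{P_\ell(x)}$ for all $x=(x_1,\dots,x_k)\in\Z^k$. By construction the group law on $N_1^\R$ and on $N_2^\R$ is given, in these coordinates, by exactly the polynomials $p_i,q_i$ of Proposition~\ref{polprod}. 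Hence the statement ``$\varphi$ is a homomorphism'' unwinds into a finite list of polynomial identities, in the variables $x,y$, relating the $P$'s to the $p$'s (and, for compatibility with powers, to the $q$'s). These identities are known to hold for all integer arguments; since a polynomial that vanishes on all of $\Z^m$ vanishes identically, they hold for all real arguments. Therefore the map $\varphi^\R:N_1^\R\to N_2^\R$ defined on real coordinates by the same polynomials $P_1,\dots,P_\ell$ is a group homomorphism extending $\varphi$.

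Continuity is then immediate: in the global charts $N_1^\R\cong\R^{k}$ and $N_2^\R\cong\R^{\ell}$ the map $\varphi^\R$ is polynomial, hence smooth. For uniqueness, suppose $\psi,\psi':N_1^\R\to N_2^\R$ are continuous homomorphisms both restricting to $\varphi$ on $N_1$. If $x\in N_1^\Q$, then $x^m\in N_1$ for some $m\geq 1$, so $\psi(x)^m=\psi(x^m)=\varphi(x^m)=\psi'(x^m)=\psi'(x)^m$; since in a torsion free nilpotent group the equation $u^m=v$ has at most one solution $u$, this forces $\psi(x)=\psi'(x)$. Thus $\psi$ and $\psi'$ agree on $N_1^\Q$, which in the chart $\R^k$ is the set of points with rational coordinates and hence is dense in $N_1^\R$; being continuous, $\psi=\psi'$. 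In particular $\varphi^\R$ does not depend on the chosen Mal'cev bases.

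Finally, if $\varphi$ is an isomorphism, apply the construction to $\varphi^{-1}$ to obtain a continuous homomorphism $(\varphi^{-1})^\R:N_2^\R\to N_1^\R$. The composite $(\varphi^{-1})^\R\circ\varphi^\R$ is a continuous homomorphism $N_1^\R\to N_1^\R$ restricting to $\mathrm{id}_{N_1}$ on $N_1$, so by the uniqueness just proved (applied to the identity map of $N_1$) it equals $\mathrm{id}_{N_1^\R}$; symmetrically $\varphi^\R\circ(\varphi^{-1})^\R=\mathrm{id}_{N_2^\R}$. Hence $\varphi^\R$ is a Lie group isomorphism.

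The step I expect to require the most care is the first one --- making precise that ``is a homomorphism'' is a polynomial (hence Zariski-closed) condition, so that it survives the passage from $\Z$-points to $\R$-points; once this is set up, continuity, uniqueness and the isomorphism statement are formal. A clean alternative that avoids spelling out those identities is to start from the already-established extension $\varphi^\Q:N_1^\Q\to N_2^\Q$, note that $\varphi^\Q$ is still given by the polynomials $P_i$ in Mal'cev coordinates, and observe that $N_1^\Q$ is dense in $N_1^\R$; the polynomial formulas then extend $\varphi^\Q$ continuously to $N_1^\R$, and since the homomorphism identity is continuous in its two arguments it persists on the closure of $N_1^\Q\times N_1^\Q$, which is all of $N_1^\R\times N_1^\R$.
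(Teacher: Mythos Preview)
Your proof is correct and follows exactly the approach the paper indicates: express $\varphi$ as a polynomial map in Mal'cev coordinates and use the same polynomials over $\R$. The paper itself only gives a one-line sketch for the $\Q$ case and then says the $\R$ case is analogous, so you have in fact supplied considerably more detail than the paper does---in particular, your treatment of uniqueness (via unique roots in torsion-free nilpotent groups plus density of $N_1^\Q$) and of the isomorphism statement (via uniqueness applied to $(\varphi^{-1})^\R\circ\varphi^\R$) go beyond what the paper spells out, but are entirely in its spirit.
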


To study radicable nilpotent groups, like $N^\Q$, or nilpotent Lie groups, like $N^\R$, it is often useful to move over 
to their associated Lie algebras. The rest of this section is devoted to the description of this 
connection in our situation. To explain this, it is easy to make use of the following result: 

\begin{Thm} Let $N$ be a finitely generated torsion free nilpotent group. Then there exists an embedding 
$\varphi:N \to UT_n(\Z)$,  for some $n$. So any finitely generated 
torsion free nilpotent group can be realized as a subgroup of some $UT_n(\Z)$ and any subgroup 
of $UT_n(\Z)$ is a finitely generated torsion free nilpotent group.
\end{Thm}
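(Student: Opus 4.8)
The plan is to treat the two assertions separately, the converse being almost immediate and the embedding being the substantive part. For the converse — that every subgroup of $UT_n(\Z)$ is finitely generated, torsion free and nilpotent — note that $UT_n(\Z)$ is nilpotent of class $\le n-1$ by the Example above, is torsion free because a unipotent integral matrix of finite order is semisimple and hence equal to the identity, and is polycyclic (so finitely generated) via the chain of normal subgroups that kill the superdiagonals one at a time, all of whose factors are copies of $\Z$. All three properties pass to subgroups, which settles this half.

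For the embedding I would proceed in two parts. \emph{Part 1: reduction to the rationals.} It suffices to construct a faithful homomorphism $\varphi\colon N\to UT_n(\Q)$. Indeed, given one, fix a finite generating set of $N$; its $\varphi$-image has all entries in $\tfrac1d\Z$ for a common $d\in\N$, and conjugation by the diagonal matrix $\delta=\mathrm{diag}(d^{n-1},d^{n-2},\dots,d,1)$ multiplies the $(i,j)$ entry of an upper unitriangular matrix by $d^{j-i}$, so it carries each generator — hence the whole group $\delta\varphi(N)\delta^{-1}$ they generate — into $UT_n(\Z)$. Then $\gamma\mapsto\delta\varphi(\gamma)\delta^{-1}$ is the desired embedding into $UT_n(\Z)$.

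\emph{Part 2: the faithful rational representation.} This is the heart of the matter, and I would carry it out using the rational group algebra, deliberately avoiding the Lie-algebra correspondence that this very theorem is meant to bootstrap. Let $c$ be the class of $N$, let $\omega\subseteq\Q N$ be the augmentation ideal, and let $N$ act by left multiplication on $V:=\Q N/\omega^{c+1}$. Since $N$ is finitely generated, $\omega/\omega^2\cong N^{ab}\otimes_\Z\Q$ is finite dimensional and each $\omega^i/\omega^{i+1}$ is a quotient of its $i$-th tensor power, so $\dim_\Q V<\infty$. For $g\in N$ one has $g-1\in\omega$, so the operator ``left multiplication by $g-1$'' on $V$ has $(c+1)$-st power zero; moreover $N$ preserves the filtration $V\supseteq\omega/\omega^{c+1}\supseteq\dots\supseteq\omega^c/\omega^{c+1}\supseteq 0$ and acts trivially on each successive quotient, so in a basis of $V$ adapted to this filtration every $g$ is upper unitriangular. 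This is the required $\varphi\colon N\to UT_n(\Q)$ with $n=\dim_\Q V$, and it only remains to prove $\varphi$ injective.

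The main obstacle is exactly that injectivity. Its kernel is the $(c+1)$-st dimension subgroup $D=\{g\in N : g-1\in\omega^{c+1}\}$, and one must show $D=1$; I would invoke the rational dimension subgroup theorem, which identifies $D$ with the isolator of $\gamma_{c+1}(N)$ in $N$. As $N$ has class $c$, $\gamma_{c+1}(N)=1$, and in a torsion free group the isolator of the trivial subgroup is trivial — this is the one step where torsion freeness of $N$ is genuinely used. (An alternative to Part 2 is the classical Lie-theoretic route: embed the Lie algebra of the simply connected nilpotent Lie group $N^\R$ into the strictly upper triangular matrices by Engel's theorem and exponentiate, using a Mal'cev basis and Proposition \ref{polprod} to keep all structure constants rational so that $N$ lands in $UT_n(\Q)$; Part 1 then applies verbatim. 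I would nonetheless favour the group-algebra argument, since it presupposes none of the material developed in the remainder of this section.)
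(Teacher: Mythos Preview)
The paper does not actually prove this theorem: Section~\ref{voorbeeldsec} opens with the disclaimer that ``this material is standard and we omit all proofs,'' and the theorem is simply stated with references to \cite{hall69-1,km79-1,khuk93-1,sega83-1}. So there is no proof in the paper to compare against, and your write-up must be judged on its own merits.

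Your argument is correct. The converse direction is fine; the reasoning that a unipotent matrix of finite order is diagonalizable (minimal polynomial divides $x^k-1$) and hence the identity is clean. Part~1, the descent from $UT_n(\Q)$ to $UT_n(\Z)$ by diagonal conjugation, is the standard trick and you have the details right: once the generators land in $UT_n(\Z)$, the whole group does because $UT_n(\Z)$ is closed under products and inverses. Part~2, the construction via $\Q N/\omega^{c+1}$, is one of the classical routes (this is essentially Jennings' approach); the finite-dimensionality argument through $\omega/\omega^2\cong N^{\mathrm{ab}}\otimes\Q$ and the surjections from tensor powers is correct, as is the observation that each $g$ acts unipotently with respect to the $\omega$-adic filtration.

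The one place where real content is being imported is your appeal to the rational dimension-subgroup theorem $D_{c+1}(N,\Q)=\sqrt{\gamma_{c+1}(N)}$. That result is genuine and not entirely trivial (it is Jennings' theorem, or can be deduced from Hall's work on the Lie ring of a nilpotent group), so you are trading one black box for another --- but it is a perfectly acceptable black box, and you are honest about invoking it. Your parenthetical alternative via the Lie algebra and Engel's theorem is also standard and would work, though as you rightly note it sits less comfortably with the logical order of the section, since the paper uses the present theorem to \emph{set up} the matrix-Lie-algebra correspondence for $N$.
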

From now onwards, we will assume that 
\[N\subseteq UT_n(\Z) \subseteq UT_n(\Q) \subseteq UT_n(\R).\]
Let us now use $NT_n(R)$ to denote the set of $n\times n$ upper triangular matrices with entries in the ring $R$ and 
with a 0 everywhere on the diagonal. So all elements of $NT_n(R)$ are nilpotent matrices.
By considering the usual  Lie bracket $[A,B]=AB-BA$ for matrices, one easily sees that $NT_n(\Q)$ (resp.\ $NT_n(\R)$)
is a Lie algebra over $\Q$ (resp.\ over $\R$).

\medskip

We introduce two maps:
\[ \exp: NT_n(\R^n) \to UT_n(\R^n): A \mapsto \sum_{k=0}^\infty \frac{1}{k!} A^k \]
and 
\[ \log: UT_n(\R^n) \to NT_n(\R^n): A \mapsto \sum_{k=1}^ \infty \frac{(-1)^{k+1}}{k} (A-I_n)^k\]
where $I_n$ is the $n\times n$ identity matrix.\\
Note that in the definitions above, we have that the matrices $A$ and $A-I_n$ appearing in the expression for $\exp$ and
$\log$ respectively, are nilpotent matrices and so the infinite sums are actually finite sums.
The maps $\exp$ and $\log$ are each others inverse and so are both bijective.

The Campbell--Baker--Hausdorff formula\index{Campbell--Baker--Hausdorff formula} provides a relation between the multiplication in the group $UT_n(\R)$ and 
the Lie bracket in the Lie algebra $NT_n(\R)$.
This formula says that $\forall X,Y\in NT_n(\R): $
\[ \exp(X) \exp(Y) = \exp\left( 
X+Y + \frac{1}{2} [X,Y]+ \frac{1}{12} ([X,[X,Y]] + [Y,[Y,X]] ) + \sum_{k=4}^\infty r_k \right)\]
where each $r_k$ is a rational combination of $k$--fold Lie brackets in $X$ and $Y$. Again the sum is in fact a finite sum,
since $k$-fold brackets are 0 when $k$ is big enough.

\begin{Thm}
Let $N\subseteq UT_n(\Z)$ and take 
\[ \lien^\Q= \Q \log(N) \mbox{ (the $\Q$--vector space generated by $\log(N)$) and}\]
\[ \lien^\R= \R \log(N) \mbox{ (the $\R$--vector space generated by $\log(N)$).}\]
Then
\begin{enumerate}
\item $\lien^\Q$ is a rational Lie algebra and $N^\Q=\exp(\lien^\Q)$.
\item $\lien^\R$ is a real Lie algebra and $N^\R=\exp(\lien^\R)$.
\end{enumerate}
\end{Thm}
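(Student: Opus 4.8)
The plan is to verify both statements simultaneously, treating $\mathbb{Q}$ and $\mathbb{R}$ in parallel; write $k$ for $\mathbb{Q}$ or $\mathbb{R}$ and set $\lien^k = k\log(N)$. First I would establish that $\lien^k$ is closed under the Lie bracket. Fix a Mal'cev basis $a_1,\dots,a_m$ of $N$ and consider the elements $X_i = \log(a_i) \in NT_n(\mathbb{Z}) \subseteq \lien^k$. Since $N \subseteq UT_n(\mathbb{Z})$, every product $a_i a_j$ lies in $N$, so $\log(a_i a_j) \in \log(N)$; applying the Campbell--Baker--Hausdorff formula to $\exp(X_i)\exp(X_j)$ shows that the iterated brackets $[X_i,X_j]$, $[X_i,[X_i,X_j]]$, etc., combine (with rational coefficients) to an element of $\lien^k$. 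A downward induction on the nilpotency class — starting from the top-degree brackets, which are central and for which the BCH formula degenerates — lets one peel off one bracket at a time and conclude that each individual bracket $[X_i,X_j]$ lies in $\lien^k$. Since the $X_i$ span $\lien^k$ over $k$ and the bracket is bilinear, $\lien^k$ is a Lie subalgebra of $NT_n(k)$; it is a rational (resp.\ real) Lie algebra by construction.

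Next I would prove $N^k = \exp(\lien^k)$. The inclusion $\exp(\lien^k) \supseteq N$ is immediate since $\log(N) \subseteq \lien^k$. For the reverse direction in the rational case, recall from the earlier theorem that $N^\mathbb{Q}$ is characterized as the smallest radicable group containing $N$ (equivalently, the radicable closure). So it suffices to show $\exp(\lien^\mathbb{Q})$ is a subgroup of $UT_n(\mathbb{Q})$ which is radicable and in which every element has a positive power in $N$. That $\exp(\lien^\mathbb{Q})$ is a subgroup follows from the Campbell--Baker--Hausdorff formula together with the fact just proved that $\lien^\mathbb{Q}$ is closed under brackets: the BCH series expresses $\log(\exp(X)\exp(Y))$ as a $\mathbb{Q}$-linear combination of iterated brackets of $X,Y \in \lien^\mathbb{Q}$, hence lies in $\lien^\mathbb{Q}$; closure under inverses is clear since $\log(g^{-1}) = -\log(g)$. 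Radicability is equally direct: the $m$-th root of $\exp(X)$ is $\exp(X/m) \in \exp(\lien^\mathbb{Q})$, and uniqueness of roots holds in the torsion free nilpotent group $UT_n(\mathbb{Q})$. Finally, for $X \in \lien^\mathbb{Q}$ one can clear denominators: some positive integer multiple $NX$ lies in the $\mathbb{Z}$-span of $\{X_1,\dots,X_m\}$, and using the CBH/Mal'cev polynomial identities one checks that $\exp(NX)$ — or a suitable further power — is a product of integer powers of the $a_i$, hence lies in $N$. This shows $\exp(\lien^\mathbb{Q})$ sits between $N$ and its radicable closure and is itself radicable, forcing $\exp(\lien^\mathbb{Q}) = N^\mathbb{Q}$.

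For the real case I would argue analogously, using instead the characterization (stated earlier) of $N^\mathbb{R}$ as the unique connected, simply connected nilpotent Lie group containing $N$ as a lattice. The set $\exp(\lien^\mathbb{R})$ is a subgroup of $UT_n(\mathbb{R})$ by the same CBH argument; as the image under the diffeomorphism $\exp$ of the vector space $\lien^\mathbb{R} \cong \mathbb{R}^m$ it is connected and simply connected, and it is a closed nilpotent Lie subgroup. It contains $N$, and $N$ is discrete and cocompact in it because $\exp(\lien^\mathbb{R}) = \exp(\lien^\mathbb{Q} \otimes \mathbb{R})$ while $N^\mathbb{Q} = \exp(\lien^\mathbb{Q})$ is already cocompact-dense in the appropriate sense — more concretely, the Mal'cev coordinates identify $\exp(\lien^\mathbb{R})$ with $\mathbb{R}^m$ and $N$ with a subset containing $\mathbb{Z}^m$, which is visibly a lattice. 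Uniqueness of the real Mal'cev completion then gives $N^\mathbb{R} = \exp(\lien^\mathbb{R})$.

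The main obstacle I anticipate is the bracket-closure step: showing that $\lien^k = k\log(N)$ is actually a Lie subalgebra, rather than merely a spanning set for one, requires genuinely using the group structure of $N$ via Campbell--Baker--Hausdorff and an induction on nilpotency class to disentangle the individual iterated brackets from the single element $\log(a_i a_j)$ that BCH hands you. Everything afterwards — subgroup, radicability, lattice — is then a formal consequence of CBH plus the uniqueness characterizations of $N^\mathbb{Q}$ and $N^\mathbb{R}$ recalled above.
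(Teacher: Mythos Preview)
The paper does not give a proof of this theorem: Section~2 states up front that its material is standard and that all proofs are omitted, referring to Segal's book and the other listed references for details. So there is no in-paper argument to compare yours against; your outline is in the spirit of the standard treatment.

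Your strategy is sound, but there is one logical slip worth flagging. You assert that ``the $X_i$ span $\lien^k$ over $k$'' and then use this to pass from $[X_i,X_j]\in\lien^k$ to bracket-closure of $\lien^k$. But $\lien^k$ is by definition the $k$-span of \emph{all} of $\log(N)$, not just of $\{X_1,\dots,X_m\}$, and the equality of these two spans is not obvious a priori --- indeed it is essentially equivalent to what you are trying to prove. The fix is to reorganise slightly: your downward induction on bracket length actually shows that every iterated Lie bracket of the $X_i$ lies in $\lien^k$, so the Lie subalgebra $L\subseteq NT_n(k)$ generated by $X_1,\dots,X_m$ satisfies $L\subseteq\lien^k$. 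Conversely, every $n\in N$ is a word in the $a_i$, and BCH expresses $\log(n)$ as a $\Q$-combination of iterated brackets of the $X_i$, so $\log(N)\subseteq L$ and hence $\lien^k\subseteq L$. Thus $\lien^k=L$ is a Lie algebra, and after that your identification of $\exp(\lien^\Q)$ with the radicable hull, and of $\exp(\lien^\R)$ with the unique simply connected nilpotent Lie group containing $N$ as a lattice, goes through as you describe.
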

We remark here that the isomorphism type of the Lie algebras $\lien^\Q$ and $\lien^\R$ are independent of the chosen embedding of $N$ into $UT_n(\Z)$ (even into some $UT_n(\Q)$ and this also for various $n$) and so is really an invariant of the group $N$.

This correspondence behaves nicely on the level of (iso)morphisms:
\begin{Thm}\label{differential}
Let $N_1$ and $N_2$ be two finitely generated torsion free nilpotent groups and let $\F=\Q$ or $\R$.
Then, for any group homomorphism $\varphi:N_1^\F \to N_2^\F$ (continuous in the case $\F=\R$) there exists 
a unique Lie algebra homomorphism $\varphi_\ast: \lien_1^\F \to \lien_1^\F$ making the following diagram  commutative:
\[ 
\xymatrix{ N_1^\F \ar[r]^\varphi\ar@<2pt>[d]^\log & N_2^\F\ar@<2pt>[d]^\log\\
\lien_1^\F\ar[r]_{\varphi_\ast}\ar@<2pt>[u]^\exp & \lien_2^\F\ar@<2pt>[u]^\exp }
\]
Conversely, given a Lie algebra homomorphism $\varphi_\ast: \lien_1^\F \to \lien_1^\F$, there is a unique 
(continuous) homomorphism $\varphi:N_1^\F \to N_2^\F$ making the diagram commutative.\\
Under this correspondence, we have that $\varphi$ is bijective (resp. injective, surjective) if and only if $\varphi_\ast$ is.
\end{Thm}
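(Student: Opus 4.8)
The plan is to realise both directions of the correspondence as restrictions of a single bijection. Conjugating by the mutually inverse bijections $\exp$ and $\log$ identifies the set of \emph{all} maps $N_1^\F\to N_2^\F$ with the set of all maps $\lien_1^\F\to\lien_2^\F$, via $\varphi\mapsto\varphi_\ast:=\log\circ\varphi\circ\exp$ and $f\mapsto\exp\circ f\circ\log$; these two assignments are inverse to one another, and the bijectivity of $\exp$ and $\log$ together with commutativity of the diagram makes all uniqueness assertions automatic. When $\F=\R$ both $\exp$ and $\log$ are homeomorphisms, so $\varphi$ is continuous precisely when $\varphi_\ast$ is; and since $\varphi=\exp\circ\varphi_\ast\circ\log$ is a composite of $\varphi_\ast$ with bijections, $\varphi$ is injective (resp.\ surjective, bijective) precisely when $\varphi_\ast$ is. Thus everything reduces to one claim: $\varphi$ is a group homomorphism if and only if $\varphi_\ast$ is a Lie algebra homomorphism.

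The engine is the Campbell--Baker--Hausdorff formula. Write $\exp(X)\exp(Y)=\exp(X\ast Y)$, where $X\ast Y=X+Y+\tfrac12[X,Y]+\cdots$ is the series of iterated brackets furnished by that formula (a finite sum, since $\lien_i^\F$ is nilpotent). Putting $u=\exp(X)$, $v=\exp(Y)$ in $\varphi(uv)=\varphi(u)\varphi(v)$, pushing $\varphi$ through $\exp$ by the diagram, and cancelling the injective $\exp$, one sees that $\varphi$ is a group homomorphism if and only if $\varphi_\ast(X\ast Y)=\varphi_\ast(X)\ast\varphi_\ast(Y)$ for all $X,Y\in\lien_1^\F$. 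So it remains to prove: a map $f\colon\lien_1^\F\to\lien_2^\F$ (continuous when $\F=\R$) is a Lie algebra homomorphism if and only if $f(X\ast Y)=f(X)\ast f(Y)$ for all $X,Y$. One direction is trivial, since a Lie homomorphism respects $+$ and every iterated bracket, hence the whole $\ast$-series term by term.

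For the converse I would first establish homogeneity $f(tX)=tf(X)$ for $t\in\F$: all brackets in $sX\ast tX$ vanish, so $g(t):=f(tX)$ satisfies $g(s+t)=g(s)\ast g(t)$, i.e.\ $t\mapsto\exp(g(t))$ is a homomorphism $(\F,+)\to N_2^\F$; over $\Q$ one gets $g(n)=nf(X)$ for $n\in\Z$ by induction and $g(1/m)=\tfrac1m f(X)$ from uniqueness of $m$-th roots in the radicable group $N_2^\Q$, whence $g(t)=tf(X)$ for all $t$, and over $\R$ one uses that a continuous one-parameter subgroup of the simply connected nilpotent Lie group $N_2^\R$ is an exponential. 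Then additivity and bracket preservation would follow by induction on a common bound $c$ for the nilpotency classes of $\lien_1^\F$ and $\lien_2^\F$. The base case $c=1$ is immediate, since there $X\ast Y=X+Y$. For $c\geq2$: the map $\exp\circ f\circ\log$ is a group homomorphism by the equivalence just established (read in the other direction), so it respects the lower central series, and since $\exp$ identifies $\gamma_j(\lien_i^\F)$ with $\gamma_j(N_i^\F)$, the map $f$ preserves the filtration by lower central terms; in particular $f(\gamma_c(\lien_1^\F))\subseteq\gamma_c(\lien_2^\F)$, so $f$ descends to a map $\bar f$ between the class-$(c-1)$ quotients $\lien_i^\F/\gamma_c(\lien_i^\F)$, which still satisfies the $\ast$-identity and hence, by induction, is a Lie homomorphism --- in particular $\F$-linear. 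Moreover $\gamma_c(\lien_i^\F)$ is central, so $W\ast z=W+z$ for all $W$ whenever $z\in\gamma_c(\lien_i^\F)$; as $f$ also maps $\gamma_c(\lien_1^\F)$ into the centre $\gamma_c(\lien_2^\F)$, the map $f$ is \emph{exactly} additive against elements of $\gamma_c(\lien_1^\F)$, and is linear on $\gamma_c(\lien_1^\F)$ itself. Consequently the defects $a(X,Y)=f(X+Y)-f(X)-f(Y)$ and $b(X,Y)=f([X,Y])-[f(X),f(Y)]$ take values in the central ideal $\gamma_c(\lien_2^\F)$, and by homogeneity $a$ and $b$ are homogeneous of degrees $1$ and $2$ in $(X,Y)$. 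Feeding this back into the $\ast$-identity, and splitting off the central top layer $\gamma_c$ using exact additivity against central elements, produces a relation expressing $a$ through $b$ and terms of strictly higher homogeneity degree; comparing the degree-$1$ component forces $a\equiv0$, and afterwards $b\equiv0$. When $c=2$ this is the one-line computation $a=-\tfrac12 b$, so $t\,a=t^2 a$ for every $t\in\F$ and hence $a\equiv0$.

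The main obstacle is precisely the bookkeeping in this last step when $c\geq3$: the correction $X\ast Y-(X+Y)$ lies in $\gamma_2(\lien_i^\F)$ rather than in the central layer $\gamma_c(\lien_i^\F)$, so the relation between $a$ and $b$ must be untangled by organising the (finitely many) Campbell--Baker--Hausdorff terms degree by degree --- equivalently, by checking that $f$ induces a homomorphism on the associated graded Lie algebras and then lifting that conclusion back to $f$. When $\F=\R$ this can be bypassed altogether: a continuous homomorphism between Lie groups is automatically smooth, its differential is a Lie algebra homomorphism intertwining the exponential maps, and --- $N_2^\R$ being simply connected --- every Lie algebra homomorphism $\lien_1^\R\to\lien_2^\R$ integrates to a unique continuous homomorphism $N_1^\R\to N_2^\R$; the case $\F=\Q$ then reduces to the case $\F=\R$ by verifying (once more by induction on $c$) that a homomorphism $N_1^\Q\to N_2^\Q$ is polynomial in Mal'cev coordinates with rational coefficients, so that the homomorphism relation --- a polynomial identity valid on $\Q^k$ --- persists over $\R$.
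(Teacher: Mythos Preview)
The paper does not actually prove this theorem: it is stated in Section~2 as standard background, with the blanket disclaimer at the start of that section that ``this material is standard and we omit all proofs'', referring the reader to \cite{sega83-1} and the Onishchik--Vinberg volumes. So there is no in-paper proof to compare against.

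Your fallback route is the standard one and is correct: over $\R$ a continuous homomorphism of Lie groups is smooth, its differential is a Lie algebra homomorphism and intertwines $\exp$, and simple connectedness of $N_2^\R$ gives the converse; the $\Q$-case then follows because a group homomorphism between Mal'cev completions is polynomial in Mal'cev coordinates (the paper itself invokes exactly this just after Proposition~\ref{polprod}), so the relevant identities extend from $\Q$-points to $\R$-points. If you present this as the main argument, the proof is complete.

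Your primary CBH approach is more ambitious, and the homogeneity argument together with the class-$2$ computation $a=-\tfrac12 b$, $ta=t^2a$ is clean and correct. But the step you flag as the ``main obstacle'' for $c\ge3$ is a genuine gap in the sketch, not merely bookkeeping: when you substitute $tX,tY$ into $f(X\ast Y)=f(X)\ast f(Y)$, the left-hand side is $f$ applied to a polynomial in $t$, but you do not yet know that $f$ is linear, so you cannot read off homogeneous components in $t$ on the left. What actually makes the argument go through is writing $f=L+e$ with $L$ a linear lift of the inductively known Lie homomorphism $\bar f$ and $e$ a degree-one homogeneous map into the central layer $\gamma_c(\lien_2^\F)$, and then analysing the resulting identity for $e$; this can be done, but it is no shorter than the fallback. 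As written, your primary route is a promising outline rather than a proof; the fallback is the argument to submit.
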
 

So as a corollary we find that 
\begin{Cor}
Let $N$ be a finitely generated torsion free nilpotent group, then 
\[ \Aut(N^\Q)=\Aut(\lien^\Q)\mbox{ \ and \ } \Aut(N^\R) = \Aut(\lien^\R).\]
\end{Cor}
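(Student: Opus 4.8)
\medskip

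The plan is to read the corollary off from the functoriality of the correspondence in Theorem~\ref{differential}, applied with $N_1=N_2=N$. Fix $\F=\Q$ or $\R$. Theorem~\ref{differential} supplies, for each group homomorphism $\varphi:N^\F\to N^\F$ (continuous when $\F=\R$), a unique Lie algebra homomorphism $\varphi_\ast:\lien^\F\to\lien^\F$ with $\varphi_\ast\circ\log=\log\circ\varphi$, and conversely each Lie algebra homomorphism of $\lien^\F$ arises in this way from a unique such $\varphi$. Thus $\varphi\mapsto\varphi_\ast$ is already a bijection between the two sets of (endo)morphisms, and the job is to check that it is multiplicative and carries invertible elements to invertible elements.

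For multiplicativity, I would argue by uniqueness: if $\varphi,\psi:N^\F\to N^\F$ are two such homomorphisms, then $\varphi_\ast\circ\psi_\ast$ is a Lie algebra homomorphism with $(\varphi_\ast\circ\psi_\ast)\circ\log=\varphi_\ast\circ\log\circ\psi=\log\circ\varphi\circ\psi$, so it makes the diagram of Theorem~\ref{differential} commute for the composite $\varphi\circ\psi$; by the uniqueness clause $(\varphi\circ\psi)_\ast=\varphi_\ast\circ\psi_\ast$, and clearly $\mathrm{id}_{N^\F}$ corresponds to $\mathrm{id}_{\lien^\F}$. Hence $\varphi\mapsto\varphi_\ast$ is an isomorphism between the monoid of (continuous) endomorphisms of $N^\F$ and the monoid of endomorphisms of $\lien^\F$.

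It then remains to see this isomorphism restricts to a bijection on units. By the last sentence of Theorem~\ref{differential}, $\varphi$ is bijective iff $\varphi_\ast$ is; and when $\varphi_\ast$ is bijective, the converse part of the theorem yields a (continuous) homomorphism $\psi:N^\F\to N^\F$ with $\psi_\ast=(\varphi_\ast)^{-1}$, whence $(\psi\circ\varphi)_\ast=\mathrm{id}$ forces $\psi\circ\varphi=\mathrm{id}_{N^\F}$ and similarly $\varphi\circ\psi=\mathrm{id}_{N^\F}$, so $\varphi^{-1}=\psi$ is again a (continuous) homomorphism. Therefore $\varphi\in\Aut(N^\F)$ exactly when $\varphi_\ast\in\Aut(\lien^\F)$, and the monoid isomorphism above restricts to the asserted group isomorphism $\Aut(N^\F)\cong\Aut(\lien^\F)$. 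I do not expect a genuine obstacle here: all the real content is packed into Theorem~\ref{differential}. The only point deserving a word of care is the case $\F=\R$, where one must note (as in the argument just given) that a continuous bijective homomorphism of $N^\R$ automatically has continuous inverse, so that $\Aut(N^\R)$ indeed consists of Lie group automorphisms and matches $\Aut(\lien^\R)$ under the differential.
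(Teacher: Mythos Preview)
Your argument is correct and is exactly the spelling-out of what the paper leaves implicit: the corollary is stated without proof, immediately after Theorem~\ref{differential}, as a direct consequence of the functorial bijection $\varphi\leftrightarrow\varphi_\ast$ given there. Your care with multiplicativity via uniqueness and with the continuity of the inverse in the real case is appropriate, though the paper evidently regards all of this as routine.
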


Before we continue let us illustrate all of this by means of a rather elaborate example.

Let us consider the Heisenberg group $H$ again and let 
\[\label{abc} a=\left( \begin{array}{ccc} 1 & 0 & 0 \\
                                                          0 & 1 & 1 \\
                                                          0 &  0 & 1 \end{array} \right), \;\;
b=\left( \begin{array}{ccc} 1 & 1 & 0 \\
                                                          0 & 1 & 0 \\
                                                          0 &  0 & 1 \end{array} \right)\mbox{ and }                                          
c=\left( \begin{array}{ccc} 1 & 0 & 1 \\
                                                          0 & 1 & 0 \\
                                                          0 &  0 & 1 \end{array} \right).\]
Then $a,\;b,\;c$ form a Mal'cev basis of $H$ and one computes that 
\[ a^xb^yc^z= \left(              \begin{array}{ccc} 1 & y & z \\
                                                          0 & 1 & x \\
                                                          0 &  0 & 1 \end{array} \right)    .\]
The group $H$ has a presentation of the form
\[ H=\langle a,b,c \;|\; [b,a]=c,\; [c,a]=[c,b]=1 \rangle,\]
where we define the commutator as $[x,y]=x^{-1} y^{-1} x y$.

\medskip
                                                          
It is easy to check that 
\[\forall x_1,x_2,y_1,y_2,z_1,z_2\in \Z:\; a^{x_1} b^{y_1} c^{z_1} a^{x_2} b^{y_2} c^{z_2} = a^{x_1+x_2} b^{y_1+y_2} c^{z_1+z_2 + x_2 y_1}\] 
and
\[ \forall x,y,z,m\in \Z:\; \left(a^xb^yc^z\right)^m= a^{mx}b^{my}c^{mz + \frac{m(m-1)}{2} xy },\] 
which illustrates Proposition~\ref{polprod}.

\medskip

Now we compute the rational Lie algebra associated to $H$ (the case for the real Lie algebra is completely analogous).
We have that 
\[ \log(a^xb^yc^z) = \log \left(              \begin{array}{ccc} 1 & y & z \\
                                                          0 & 1 & x \\
                                                          0 &  0 & 1 \end{array} \right)=
               \left(              \begin{array}{ccc} 0 & y & z - \frac{xy}{2} \\
                                                          0 & 0 & x \\
                                                          0 &  0 & 0 \end{array} \right).\]
It follows that 
\[ \lieh^\Q = \Q\log(H) = \left\{ \left(   \begin{array}{ccc} 0 & y & z - \frac{xy}{2} \\
                                                          0 & 0 & x \\
                                                          0 &  0 & 0 \end{array} \right)|\, x,y,z\in \Q \right\}=
                                                           \left\{ \left(   \begin{array}{ccc} 0 & y & z  \\
                                                          0 & 0 & x \\
                                                          0 &  0 & 0 \end{array} \right)|\, x,y,z\in \Q \right\}=NT_3(\Q).\]
From this, it follows that $H^\Q=\exp(\lieh^\Q)=UT_3(\Q)$ as claimed already in Example~\ref{ratmalcev}.

\medskip

Now, consider 
\[ \label{auto3}\varphi: H^\Q \to H^\Q:  \left(  \begin{array}{ccc} 1 & y & z \\
 0 & 1 & x \\
 0 &  0 & 1 \end{array} \right) \mapsto 
\left(  \begin{array}{ccc} 1 & x-y & z+x -xy + \frac{y(y-1)}{2} \\
 0 & 1 & -y \\
 0 &  0 & 1 \end{array} \right).\]
One can check that $\varphi \in \Aut(H^\Q)$ (in fact $\varphi(H)=H$, so $\varphi$ can also be seen as an automorphism 
of $H$). Let us now compute the Lie algebra homomorphism $\varphi_\ast:\lieh^\Q\to \lieh^\Q$ from Theorem~\ref{differential}. 
It holds that $\forall X\in \lieh^\Q$ we have that $\varphi_\ast (X) = \log ( \varphi ( \exp(X)))$,
so 
\begin{eqnarray*}
\varphi_*\left(  \begin{array}{ccc} 0 & y & z  \\
                                                          0 & 0 & x \\
                                                          0 &  0 & 0 \end{array} \right) 
& = & \log \left( \varphi \left ( \begin{array}{ccc} 1 & y & z + \frac{xy}{2} \\
                                                          0 & 1 & x \\
                                                          0 &  0 & 1 \end{array} \right) \right) \\
& = & \log \left ( \begin{array}{ccc} 
1 & x-y & z+x -\frac{xy}{2} + \frac{y(y-1)}{2} \\
 0 & 1 & -y \\
 0 &  0 & 1
 \end{array} \right)  \\
 & = & \left(  \begin{array}{ccc} 0 & x-y & z + x - \frac{y}{2}  \\
                                                          0 & 0 & -y \\
                                                          0 &  0 & 0 \end{array} \right).
\end{eqnarray*}
From the above, it is obvious that $\varphi_\ast$ is a linear endomorphsim of $\lieh^\Q$. A small calculation also shows that 
$\varphi_\ast$ is in fact a Lie algebra automorphism. Another computation will show that $\varphi_\ast^3$ is the indentity map on
$\lieh^\Q$. From this it follows that also $\varphi^3$ is the identity map of $H^\Q$.

\medskip

In the sequel, it will be useful to work with another representation of $H$ (and $H^\Q$) as a group of unipotent matrices: 
one easily computes that the map 
\[ \label{reppsi}\psi=UT_3(\Q) \to UT_4(\Q): 
 \left(  \begin{array}{ccc} 1 & y & z \\
    0 & 1 & x \\
    0 &  0 & 1 \end{array} \right)   \mapsto
\left( \begin{array}{cccc}
1 & \frac{y}{2} & -\frac{x}{2} & z - \frac{xy}{2}\\
0 & 1 & 0 & x \\
0 & 0 & 1 & y \\
0 & 0 & 0 & 1 \end{array}\right) \]
is an injective homomorphism of groups.

\medskip
Let 
\[ N = \left\{ \left(\begin{array}{cccc}
1 & \frac{y}{2} & -\frac{x}{2} & z - \frac{xy}{2}\\
0 & 1 & 0 & x \\
0 & 0 & 1 & y \\
0 & 0 & 0 & 1 \end{array}\right)\;|\; x,y,z\in \Z \right\}.\]
Then $H\cong N$ (via $\psi$ above).  As 
\[ \log \left(\begin{array}{cccc}
1 & \frac{y}{2} & -\frac{x}{2} & z - \frac{xy}{2}\\
0 & 1 & 0 & x \\
0 & 0 & 1 & y \\
0 & 0 & 0 & 1 \end{array}\right) = \left(\begin{array}{cccc}
0 & \frac{y}{2} & -\frac{x}{2} & z - \frac{xy}{2}\\
0 & 0 & 0 & x \\
0 & 0 & 0 & y \\
0 & 0 & 0 & 0 \end{array}\right)\]
we find that 
\[ \lien^\Q= \left\{  \left(\begin{array}{cccc}
0 & \frac{y}{2} & -\frac{x}{2} & z - \frac{xy}{2}\\
0 & 0 & 0 & x \\
0 & 0 & 0 & y \\
0 & 0 & 0 & 0 \end{array}\right)\;|\; x,y,z\in \Q \right\}= \left\{  \left(\begin{array}{cccc}
0 & \frac{y}{2} & -\frac{x}{2} & z \\
0 & 0 & 0 & x \\
0 & 0 & 0 & y \\
0 & 0 & 0 & 0 \end{array}\right)\;|\; x,y,z\in \Q \right\}
.\]
It follows that 
\[ N^\Q=  \left\{ \exp \left(\begin{array}{cccc}
0 & \frac{y}{2} & -\frac{x}{2} & z \\
0 & 0 & 0 & x \\
0 & 0 & 0 & y \\
0 & 0 & 0 & 0 \end{array}\right)\;|\; x,y,z\in \Q \right\}=
 \left\{  \left(\begin{array}{cccc}
1 & \frac{y}{2} & -\frac{x}{2} & z \\
0 & 1 & 0 & x \\
0 & 0 & 1 & y \\
0 & 0 & 0 & 1 \end{array}\right)\;|\; x,y,z\in \Q \right\},\]
so we have that $\psi(UT_3(\Q))=\psi(H^\Q)= N^\Q$.\\
Note that 
\[ \psi_\ast: NT_3(\Q) \to NT_4(\Q):
  \left(  \begin{array}{ccc} 0 & y & z \\
    0 & 0 & x \\
    0 &  0 & 0 \end{array} \right)   \mapsto
\left( \begin{array}{cccc}
0 & \frac{y}{2} & -\frac{x}{2} & z \\
0 & 0 & 0 & x \\
0 & 0 & 0 & y \\
0 & 0 & 0 & 0 \end{array}\right)\]
is an injective homomorphism of Lie algebras. In fact $\psi_\ast$ is really the homomorphism making the diagram 
\[ 
\xymatrix{ H^\Q \ar[r]^\psi\ar@<2pt>[d]^\log & N^\Q\ar@<2pt>[d]^\log\\
\lieh^\Q\ar[r]_{\psi_\ast}\ar@<2pt>[u]^\exp & \lien^\Q\ar@<2pt>[u]^\exp }
\]
commutative as in Theorem~\ref{differential}. Note that this observation illustrates that the Lie algebra 
associated to a torsion free nilpotent group is independent of the way this group is represented as a subgroup 
of some $UT_n(\Q)$.

\medskip

We will now indicate a little bit why this second representation of the Heisenberg group as a group of $4 \times 4$ matrices
has some advantage. \label{voordeel}To see this, we consider
\[\label{defA} A= \left( \begin{array}{cccc} 
1 & 1 & -\frac12& 0 \\
0 & 0 & -1 & 0\\
0 & 1 & -1 & 0 \\
0 & 0 & 0 & 1
\end{array}\right). \]
Some computations show that 
\[ A^3=I_4\mbox{ and }
A.\left( \begin{array}{cccc}
1 & \frac{y}{2} & -\frac{x}{2} & z-\frac{xy}{2} \\
0 & 1 & 0 & x \\
0 & 0 & 1 & y \\
0 & 0 & 0 & 1 \end{array}\right).A^{-1} = 
\left( \begin{array}{cccc}
1 & \frac{x-y}{2} & \frac{y}{2} & z+x-\frac{y}{2}-\frac{xy}{2}  \\
0 & 1 & 0 & -y \\
0 & 0 & 1 & x-y \\
0 & 0 & 0 & 1 \end{array}\right)\in N^\Q,\]
from which we can conclude that conjugation with $A$ induces an automorphism of $N^\Q$ of order 3, let us call
this automorphism $\varphi_A$. To see what the corresponding automorphism on $\lien^\Q$ looks like,
let us first of all remark that 
\[ \varphi_{A\ast}(X)= \log ( \varphi_A(\exp (X))) = 
\log (A \exp(X) A^{-1} ) = \log \exp (AXA^{-1}) = AXA^{-1}\]
from which we obtain that also $\varphi_{A\ast}$ is given by conjugation with $A$. Hence we find:
\[
\varphi_{A\ast}  \left( \begin{array}{cccc}
0 & \frac{y}{2} & -\frac{x}{2} & z \\
0 & 0 & 0 & x \\
0 & 0 & 0 & y \\
0 & 0 & 0 & 0 \end{array}\right) = 
 \left( \begin{array}{cccc}
0 & \frac{x-y}{2} & \frac{y}{2} & z+x-\frac{y}{2} \\
0 & 0 & 0 & -y \\
0 & 0 & 0 & x-y  \\
0 & 0 & 0 & 0 \end{array}\right) .\]
It follows that there is a commutative diagram of the form 
\[\xymatrix{ \lieh^\Q\ar[r]^{\varphi_\ast}\ar[d]_{\psi_\ast} & \lieh^\Q\ar[d]^{\psi_\ast}\\
\lien^\Q\ar[r]_{\varphi_{A\ast}} & \lien^\Q },\]
so this means that under the identification of $H^\Q$ with $N^\Q$ via $\psi$, we have that 
$\varphi$ corresponds to the automorphism $\varphi_A$. 
The advantage of viewing the rational Heisenberg group as being the group $N^\Q$ is that any automorphism 
of $N^\Q$ can be described by means of a conjugation by a matrix (of a nice form). We will come back to this fact later in 
section~\ref{computation}.
\section{Infra-nilmanifolds and almost--crystallographic groups}
We are now ready to introduce the main concepts of this text, namely the almost--crystallographic groups and the infra-nilmanifolds. In the setting of the usual crystallographic groups, the basic space was the abelian Lie group 
$\R^n$. Now we generalize this and we consider a connected and simply connected nilpotent Lie group $G$. 
Let $\Aut(G)$ be the group of Lie automorphisms of $G$. Then we can consider the semidirect product
$G\semi \Aut(G)$, where the group operation is given by 
\[ \forall g,h \in G ,\; \forall \alpha, \beta \in\Aut(G):\; (g,\alpha) ( h,\beta) = (g \alpha(h), \alpha \beta).\]
We will refer to this group as the affine group of $G$\index{Affine group of a Lie group} and we denote it by $\Aff(G)$. This affine 
group acts on $G$ in the following way:

\[ \forall (g,\alpha) \in \Aff(G),\; \forall x \in G:\; (g,\alpha)\cdot  x = g \alpha(x).\]
Note that, when $G=\R^n$, all of this agrees with what we earlier introduced for $\Aff(\R^n)$. When,
introducing the crystallographic groups, we regarded $\R^n$ as being a Euclidean space and then we obtained the 
group of isometries $\Isom(\R^n) = \R^n\semi O(n)$ as a subgroup of $\Aff(\R^n)$. 
For general $G$, it is perhaps not so obvious at first what should be the analogue of the Euclidean structure and the corresponding isometry group. To see this, let us focus on the group $O(n)$ (of transformations of the vector space $\R^n$ preserving the inner product).
This group $O(n)$ is a maximal compact subgroup of $\Aut(\R^n)$ and is in this sense uniquely determined up to 
inner conjugation inside $\Aut(\R^n)$. Therefore, when moving on to the setting of almost--crystallographic groups, we fill fix a maximal compact subgroup $C\subseteq \Aut(G)$. Again, this $C$ is determined up to conjugation inside $\Aut(G)$. Although we will not need this in the rest of the text, we remark here that the group $G\semi C$ can also be interpreted as a group of isometries of $G$ in the following way:
We have seen that the group of automorphisms of $G$ is isomorphic to the group of automorphisms 
of its Lie algebra $\lie$. So we can consider $C$ as being a maximal compact subgroup of $\Aut(\lie)$ and then
there exists a positive definite inner product on $\lie$ which is invariant under the action of $G$ (see \cite{vinb89-1}).

The Lie algebra of $G$ can be interpreted as being the tangent space of $G$ at the identity element 1.
Now, we define a metric on $G$ by requiring that this metric is left invariant and that it coincides with the 
inner product at the tangent space $T_1G=\lie$ where the chosen inner product is invariant under the action of $C$.
For this metric, we then have that $G\semi C$ is a group of isometries of $G$.

\begin{Def}
An almost--crystallographic group\index{almost--crystallographic group} (modeled on $G$), is a cocompact and discrete subgroup of $G\semi C$, where $G$ and $C$ are as introduced above.\\
The dimension of an almost--crystallographic group modeled on $G$ is the dimension of $G$.\\
An almost--Bieberbach group\index{almost--Bieberbach group} is a torsion free almost--crystallographic group.
\end{Def}
 
At the end of this section, we give an explicit  example of an almost--crystallographic group (which is not torsion free) and 
of an almost--Bieberbach group.  The interested reader can already look at these examples now and then read the rest of this section.
 
\begin{Prop}
Let $\Gamma$ be an almost--crystallographic group modeled on $G$. Then
\begin{enumerate}
\item $\Gamma$ (as a subgroup of $\Aff(G)$) acts properly discontinuously and cocompactly on $G$.
\item The action of $\Gamma$ on $G$ is free $\Leftrightarrow$ $\Gamma$ is an almost--Bieberbach group.
\end{enumerate}
\end{Prop}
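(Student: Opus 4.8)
The plan is to transcribe, with $\Isom(\R^n)$ replaced by $G\semi C$ and the translation subgroup $\R^n$ replaced by $G$ itself (sitting inside $\Aff(G)$ as $G\times\{1\}$), the proof of the corresponding statement for crystallographic groups. The central gadget is the orbit map at the identity, $\pi\colon G\semi C\to G$, $\pi(f)=f\cdot 1$; explicitly $\pi(g,\alpha)=g\alpha(1)=g$, so under the identification of underlying spaces $G\semi C\cong G\times C$ the map $\pi$ is just the projection onto the first factor. I will first record three properties of $\pi$. (a) It is a continuous surjection, it is proper (preimages of compacta are compacta, since $C$ is compact), and its fibre over $x\in G$ is the coset $(x,1)(\{1\}\times C)$. (b) It is $\Gamma$-equivariant, $\pi(\gamma f)=\gamma\cdot\pi(f)$, when $\Gamma$ acts on $G\semi C$ by left translations and on $G$ by the affine action; this is immediate from associativity of the action. (c) The stabiliser in $G\semi C$ of the point $x=(g,1)\cdot 1=g$ is the conjugate $(g,1)(\{1\}\times C)(g,1)^{-1}$ of $\{1\}\times C\cong C$, hence it is compact.

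For part (1), proper discontinuity will follow from the usual argument built on (a): given a compact $K\subseteq G$, any $\gamma\in\Gamma$ with $\gamma\cdot K\cap K\neq\emptyset$ satisfies $\gamma(x,1)\in\pi^{-1}(K)$ for some $x\in K$, so $\gamma$ lies in the set $\pi^{-1}(K)\cdot\{(x,1)^{-1}\mid x\in K\}$, which is a compact subset of $G\semi C$; discreteness of $\Gamma$ then forces this set of $\gamma$'s to be finite. For cocompactness, note that $\Gamma\bs(G\semi C)$ is compact by the definition of an almost--crystallographic group, and that the $\Gamma$-equivariant continuous surjection $\pi$ of (b) descends to a continuous surjection $\Gamma\bs(G\semi C)\to\Gamma\bs G$; hence $\Gamma\bs G$ is compact.

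For part (2), since an almost--Bieberbach group is by definition a torsion free almost--crystallographic group and $\Gamma$ is already almost--crystallographic, it suffices to prove that the action of $\Gamma$ on $G$ is free if and only if $\Gamma$ is torsion free. The implication ``torsion free $\Rightarrow$ free'' will use (c): if $\gamma\in\Gamma$ fixes some $x\in G$, then $\gamma$ lies in the compact stabiliser of $x$ in $G\semi C$, so $\langle\gamma\rangle$ is a discrete subgroup of a compact group and therefore finite; hence $\gamma$ has finite order, so $\gamma=1$ when $\Gamma$ is torsion free. For ``free $\Rightarrow$ torsion free'' I will argue by contraposition: if $1\neq\gamma\in\Gamma$ has finite order, then $\langle\gamma\rangle$ is a finite, hence compact, subgroup of the Lie group $G\semi C$, which has finitely many connected components (because $G$ is connected and $C$ is compact); since $(G\semi C)/(\{1\}\times C)\cong G$ is contractible, $\{1\}\times C$ is a maximal compact subgroup of $G\semi C$, so by the conjugacy of maximal compact subgroups some conjugate $f_0^{-1}\langle\gamma\rangle f_0$ lies inside $\{1\}\times C$; as $\{1\}\times C$ fixes $1\in G$, the element $\gamma$ fixes $f_0\cdot 1\in G$, and the action is not free.

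The one step that is not a formal copy of the Euclidean case is the last one: the input that a nontrivial finite subgroup of $G\semi C$ fixes a point of $G$. I intend to obtain it, as above, from the conjugacy of maximal compact subgroups of a Lie group with finitely many components (the Cartan--Iwasawa--Mal'cev theorem) together with the identification of $\{1\}\times C$ as such a maximal compact subgroup, which in turn rests on the contractibility of $G$. A self-contained alternative is to invoke Smith theory: it suffices to note that $\langle\gamma\rangle$ contains an element of prime order $p$ and that any $\Z/p$-action on $G\cong\R^n$ has a non-empty (indeed $\Z/p$-acyclic) fixed-point set, which already contradicts freeness. Everything else --- the three properties of $\pi$, the proper-discontinuity and cocompactness arguments, and the ``torsion free $\Rightarrow$ free'' direction --- is routine.
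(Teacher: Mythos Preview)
The paper does not actually prove this proposition: just as with the analogous statement for crystallographic groups in Section~1, it is stated without proof (the paper refers such results to the literature). So there is no ``paper's own proof'' to compare against; I can only assess your argument on its merits.

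Your proof is correct. The orbit map $\pi\colon G\rtimes C\to G$, $(g,\alpha)\mapsto g$, together with its properness and $\Gamma$-equivariance, handles part~(1) cleanly, and the ``torsion free $\Rightarrow$ free'' direction via discreteness inside the compact stabiliser is exactly right. For the converse you invoke the Cartan--Iwasawa--Mal'cev theorem to conjugate the finite cyclic group $\langle\gamma\rangle$ into $\{1\}\times C$; this works, and your justification that $\{1\}\times C$ is maximal compact (via contractibility of $(G\rtimes C)/(\{1\}\times C)\cong G$) is sound. One small remark: in this particular setting there is a more hands-on alternative that avoids the general conjugacy theorem. Since $\exp\colon\lie\to G$ is a diffeomorphism, $G$ has no nontrivial compact subgroups, so any compact subgroup $K\subseteq G\rtimes C$ injects into $C$ under the second projection; one can then average (in $\lie$, via $\log$) over the finite group $\langle\gamma\rangle$ to produce a fixed point directly. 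Your Smith-theory alternative is also perfectly valid. Either way, the argument is complete.
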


\begin{Def}
An infra--nilmanifold\index{infra--nilmanifold} is a quotient space $\Gamma\backslash G$, where $\Gamma$ is an almost--Bieberbach group modeled on $G$. If moreover,  $\Gamma$ is completely contained inside $G$ (so none of the elements in $\Gamma$ have a non-trivial $C$-component), then $\Gamma\backslash G$ is called a nilmanifold.
\end{Def}
So, the class of infra--nilmanifolds is a generalization of the class of flat manifolds and a nilmanifold\index{nilmanifold} is in this sense a generalization of a torus.

\medskip

All three Bieberbach theorems have been generalized to the class of infra-nilmanifols. The first one has a 
straightforward generalization and was proved already in 1960 by L.~Auslander (\cite{ausl60-1}):
\begin{Thm}[\bf Generalized First Bieberbach Theorem]\label{bieb1}
Let $\Gamma \subseteq G\semi C$ be an almost--crystallographic group modeled on $G$, then 
$N=\Gamma \cap G$ is a lattice of $G$ and $\Gamma/N$ is finite.
\end{Thm}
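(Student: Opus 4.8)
The plan is to deduce the generalized first Bieberbach theorem from the classical one by projecting out the nilpotent directions, level by level, using the upper central series of $G$. Write $\lie$ for the Lie algebra of $G$ and recall that $C\subseteq \Aut(G)\cong\Aut(\lie)$ is compact, so every element of $C$ preserves each term $Z_i(\lie)$ of the upper central series of $\lie$ (these are characteristic), and hence each $\alpha\in C$ descends to an automorphism of $G/Z_i(G)$. Thus $\Aff(G)=G\semi C$ acts on each quotient $G/Z_i(G)$, and in particular on the bottom quotient $G/Z_{c-1}(G)$, which is abelian, say isomorphic to $\R^m$. Let $q\colon \Gamma\to \Aff(\R^m)$ be the resulting homomorphism; its image lands in $\R^m\semi C'$ where $C'$ is the (still compact) image of $C$ in $\Aut(\R^m)$.

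First I would show that $q(\Gamma)$ is a crystallographic group in $\R^m$. Discreteness is the delicate point; cocompactness is easier, since $\Gamma\bs G$ is compact and fibers over $q(\Gamma)\bs\R^m$ (the projection $G\to G/Z_{c-1}(G)$ is $\Gamma$-equivariant), so $q(\Gamma)\bs\R^m$ is compact. Granting discreteness for the moment, the classical first Bieberbach theorem applies: the translational part $T=q(\Gamma)\cap\R^m$ is a lattice in $\R^m$ of finite index in $q(\Gamma)$. Pull back: $\Gamma_1:=q^{-1}(T)$ is a finite-index subgroup of $\Gamma$, and $q(\Gamma_1)$ consists of pure translations. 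Now $\Gamma_1$ sits in a short exact sequence
\[
1\to \Ker q\to \Gamma_1\to T\to 1,
\]
where $\Ker q$ is an almost--crystallographic group modeled on the lower-dimensional simply connected nilpotent Lie group $Z_{c-1}(G)$. By induction on $\dim G$ (the base case $\dim G=0$ being trivial, and the abelian case being exactly the classical theorem), $\Ker q\cap Z_{c-1}(G)$ is a lattice in $Z_{c-1}(G)$ of finite index in $\Ker q$. Combining the two lattices and chasing the extension, one gets that $\Gamma_1\cap G$ is a lattice in $G$ of finite index in $\Gamma_1$, hence $N=\Gamma\cap G$ is a lattice of finite index in $\Gamma$; that $N$ is discrete and cocompact in $G$ then follows, and $\Gamma/N$ embeds in the finite group $r(\Gamma_1)\times(\Gamma:\Gamma_1)$-extension, so it is finite. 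A small extra argument identifies $N$ as $\Gamma\cap G$ rather than merely commensurable to it: any element of $\Gamma$ with trivial $C$-part and a power in $N$ already lies in the lattice generated, by torsion-freeness of the ambient nilpotent group $G$.

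The main obstacle is establishing that $q(\Gamma)$ is \emph{discrete} in $\Aff(\R^m)$. Properly discontinuous actions do not automatically push forward to properly discontinuous actions under an equivariant quotient map, because the fibers $Z_{c-1}(G)$ are noncompact. The standard way around this, due to Auslander, is to argue via the structure of $\Gamma$ as a group rather than topologically: one shows that $\Gamma$ is virtually polycyclic (it acts properly discontinuously and cocompactly on the contractible manifold $G$, hence has finite cohomological dimension and is finitely generated, and being a discrete subgroup of the Lie group $G\semi C$ whose identity component is solvable, a theorem of Mostow/Auslander gives virtual polycyclicity), and then that the closure of $q(\Gamma_1)$ in $\R^m$ is a closed subgroup $V\times L$ with $V$ a vector subspace and $L$ a lattice; cocompactness of $q(\Gamma_1)\bs\R^m$ forces $V=0$, giving discreteness. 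I would therefore structure the proof so that this virtual-polycyclicity input is invoked once, at the start, and the rest is the clean inductive descent sketched above. Alternatively — and this is the route I would actually take to keep prerequisites minimal — one cites that a discrete subgroup of a Lie group with finitely many components all of whose connected components are solvable is automatically polycyclic-by-finite (Wang, Mostow), so $\Gamma$ is polycyclic-by-finite from the outset; then $q(\Gamma)$, as a polycyclic-by-finite subgroup of $\Aff(\R^m)$ acting cocompactly, has discrete image because its translational part has finite index and must be finitely generated free abelian of rank $m$ (Hirsch length count), closing the argument without any hard topology.
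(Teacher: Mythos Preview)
The paper does not prove this theorem; it simply attributes it to Auslander (1960) and moves on. So there is no in-text proof to compare against --- you are effectively being measured against Auslander's original argument.

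Your inductive skeleton (project to the top abelian quotient $G/Z_{c-1}(G)\cong\R^m$, apply the classical first Bieberbach theorem, recurse on the kernel) is indeed the shape of the standard proof, and you have correctly isolated the discreteness of $q(\Gamma)$ as the crux. But both of your proposed routes to discreteness have genuine gaps. In route~(a) you invoke $\Gamma_1=q^{-1}(T)$ in the discreteness argument --- yet $T$ was produced by applying the classical Bieberbach theorem to $q(\Gamma)$, which presupposes the very discreteness you are trying to establish; the logic is circular as written. In route~(b), the hypothesis you attribute to Wang--Mostow does not hold: the identity component of $G\rtimes C$ is $G\rtimes C_0$, and $C_0$ can be any compact connected Lie group (e.g.\ $SO(k)$), so it is typically \emph{not} solvable. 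The correct input is that $G\rtimes C$ has \emph{cocompact solvable radical}, which does give that lattices are polycyclic-by-finite --- but even granting this, your final sentence asserts that ``the translational part of $q(\Gamma)$ has finite index'' with no justification, and this is exactly the content of what you want to prove. A polycyclic group can map densely into a compact Lie group ($\Z\hookrightarrow S^1$), so polycyclicity alone does not force the image of $q(\Gamma)$ in $C'$ to be finite.

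What Auslander actually does is analyse the closure $\bar F$ of the image of $\Gamma$ in $C$ (not in $\R^m$): one passes to the finite-index subgroup $\Gamma'=\Gamma\cap(G\rtimes\bar F_0)$, observes that $\Gamma'$ is then a lattice in the \emph{connected} Lie group $G\rtimes\bar F_0$, and uses structure theory of lattices in connected Lie groups to force $\bar F_0=1$. Your instinct to look at closures is right, but it needs to be applied upstairs in $C$, and before any appeal to the classical Bieberbach theorem.
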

We will refer to the group $N=\Gamma\cap G$ as the the group of pure translations of $\Gamma$.

\medskip

Consider the natural homomorphism $p:\Aff(G)=G \semi \Aut(G) \to \Aut(G)$. Let $\Gamma\subseteq G\semi G$ 
be an almost--crystallographic group. Then the first generalized first Bieberbach  says that the kernel 
of the restriction of $p$ to $\Gamma$, namely $G\cap \Gamma$ is a lattice in $G$ and the 
image $p(\Gamma)$ is a finite group. We will call this finite group $F=p(\Gamma)$ the holonomy group\index{holonomy group} of 
$\Gamma$.

\medskip

A useful fact when working with  with almost--crystallographic groups is the fact that the group of pure 
translations of an almost--crystallographic group $\Gamma$ is maximal nilpotent in $\Gamma$. 
Since I was unable to locate a complete proof of this fact in the literature, I will present a proof here.  

For this we will also need the following group theoretical lemma

\begin{Lem}
Let $C_n =\langle t \rangle$ be a cyclic group of finite order $n$ and let $E$ be an extension 
\[ 1 \to \Z^k \to E \to C_n \to 1.\]
Then the following are equivalent 
\begin{enumerate}
\item $E$ is a nilpotent group.
\item $E$ is an abelian group.
\item The action of $C_n$ on $\Z^k$ induced by conjugation in $E$ is the trivial action. 
\end{enumerate}
\end{Lem}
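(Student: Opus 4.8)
The plan is to prove the chain of implications $(1) \Rightarrow (3)$, then $(3) \Rightarrow (2)$, and finally the trivial $(2) \Rightarrow (1)$, which closes the cycle. The implications $(3) \Rightarrow (2)$ and $(2) \Rightarrow (1)$ are routine: if $C_n$ acts trivially by conjugation on $\Z^k$, then every element of $\Z^k$ is central in $E$, so $E/Z(E)$ is a quotient of $C_n$, hence cyclic, forcing $E$ abelian; and every abelian group is nilpotent of class $1$. So the real content is $(1) \Rightarrow (3)$.

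For $(1) \Rightarrow (3)$, I would work with a fixed preimage $\tilde t \in E$ of the generator $t \in C_n$ and let $\sigma \in \Aut(\Z^k) = \GL(k,\Z)$ denote the automorphism induced by conjugation by $\tilde t$. Since $t$ has order $n$ in $C_n$, we have $\tilde t^{\,n} \in \Z^k$, and conjugation by $\tilde t^{\,n}$ is trivial on the abelian group $\Z^k$ (it acts by an inner automorphism of an abelian group), so $\sigma^n = \mathrm{id}$. Thus $\sigma$ is an element of finite order $n$ in $\GL(k,\Z) \subseteq \GL(k,\Q)$, hence $\sigma$ is diagonalizable over $\C$ with eigenvalues that are $n$-th roots of unity. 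Now suppose $E$ is nilpotent; I claim $\sigma = \mathrm{id}$. The cleanest route is to consider the lower central series: in a nilpotent group the action of $E$ (and in particular of $\tilde t$) on the abelianization-type quotients is unipotent. More directly: if $E$ is nilpotent then for every $v \in \Z^k$ the iterated commutators $[\tilde t, v], [\tilde t, [\tilde t, v]], \ldots$ eventually vanish, which says precisely that $(\sigma - \mathrm{id})^m = 0$ on $\Z^k$ for some $m$, i.e.\ $\sigma$ is unipotent. A matrix over $\C$ that is both diagonalizable and unipotent is the identity, so $\sigma = \mathrm{id}$, which is exactly statement $(3)$.

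The main obstacle is making the ``iterated commutators vanish $\Rightarrow$ $\sigma$ unipotent'' step precise and clean. Writing the group multiplicatively, for $v \in \Z^k \trianglelefteq E$ one computes $[\tilde t, v] = \tilde t^{-1} v^{-1} \tilde t\, v = \sigma^{-1}(v)^{-1} v = (\mathrm{id} - \sigma^{-1})(v)$ when $\Z^k$ is written additively; iterating, the $j$-fold commutator $[\tilde t, [\tilde t, \ldots, [\tilde t, v]]]$ equals $(\mathrm{id} - \sigma^{-1})^j(v)$. Since $\gamma_{c+1}(E) = 1$ for some $c$ and these iterated commutators all lie in $\gamma_{j+1}(E) \cap \Z^k$, we get $(\mathrm{id} - \sigma^{-1})^c = 0$ on $\Z^k \otimes \Q$, hence $\sigma^{-1}$ (and so $\sigma$) is unipotent. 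Combined with $\sigma^n = \mathrm{id}$ (so $\sigma$ satisfies $X^n - 1$, a polynomial with distinct roots over $\C$, forcing semisimplicity), we conclude $\sigma = \mathrm{id}$. One should be slightly careful that the relevant commutators genuinely land in $\Z^k$: this holds because $\Z^k$ is normal in $E$, so $[\tilde t, v] \in \Z^k$ already, and all further brackets with $\tilde t$ stay in $\Z^k$. With $\sigma = \mathrm{id}$ established, the induced action of $C_n$ on $\Z^k$ is trivial, giving $(3)$, and the cycle of implications is complete.
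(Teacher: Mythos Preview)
Your proof is correct. Both your argument and the paper's arrive at the same endgame for $(1)\Rightarrow(3)$: the action matrix $\sigma$ is simultaneously unipotent and of finite order, hence the identity. The difference lies in how unipotency is established. The paper argues by induction on $k$: in a nilpotent group any nontrivial normal subgroup meets the center, so $Z=Z(E)\cap\Z^k\neq 0$; one checks $\Z^k/Z$ is torsion free, applies the induction hypothesis to the quotient extension $1\to\Z^k/Z\to E/Z\to C_n\to 1$, and assembles the unipotent action on $\Z^k$ from the trivial action on $Z$ and the unipotent action on $\Z^k/Z$. Your route is more direct and computational: you identify the $j$-fold commutator $[\tilde t,[\tilde t,\ldots,[\tilde t,v]]]$ with $(\mathrm{id}-\sigma^{-1})^j(v)$ and read off $(\mathrm{id}-\sigma^{-1})^c=0$ from $\gamma_{c+1}(E)=1$. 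This avoids the induction and the (easy but not entirely trivial) verification that $\Z^k/Z$ is torsion free, and it makes transparent \emph{why} nilpotency of $E$ translates into unipotency of the action. The paper's structural approach, on the other hand, is the kind of center-meets-normal-subgroup argument that tends to generalize well (and indeed is the template used again in the proof of Proposition~\ref{maxnilp}). You also make $(3)\Rightarrow(2)$ explicit via the ``$G/Z(G)$ cyclic $\Rightarrow$ $G$ abelian'' fact, which the paper leaves as obvious.
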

\begin{proof}
The implications $(3) \Rightarrow (2)$ and $(2) \Rightarrow (1)$ are obvious, so we just have to 
prove the implication $(1) \Rightarrow (3)$. The action of the generator $t$ of $C_n$ on $\Z^k$ is given 
by a $k\times k$ integral matrix $A$. We will show by induction on $k$ that $A$ has to be a unipotent 
matrix. The case $k=0$ is trivially true. As $E$ is nilpotent, $Z(E)\cap \Z^k$ is non trivial, 
since in any nilpotent group it holds that 
the intersection of a non-trivial normal subgroup with the center is non-trivial. Let $Z=Z(E) \cap \Z^k$.
It is not so difficult to see that $\Z^k/Z$ is torsion free, hence $\Z^k/Z\cong \Z^l$ with $l<k$. Also the group 
$E/Z$ is nilpotent and fits in an extension $1\to \Z^l \to E/Z \to C_n \to 1$. By induction, we find that the 
action on $\Z^l$ is unipotent. Now, we have an extension of $C_n$--modules
$0 \to Z \to Z^k \to \Z^l \to 0 $, where $C_n$ acts trivially on $Z$ and unipotently on $\Z^l$. It follows that 
$C_n$ acts unipotently on $\Z^k$, hence $A$ is a unipotent matrix. But as $t$ is of order $n$, we must have that 
$A$ is also of finite order. The only unipotent matrix of finite order is the identity matrix, so $C_n$ acts 
trivially on $\Z^k$.
\end{proof}

\begin{Prop} \label{maxnilp}Let $\Gamma$ be an almost--crystallographic group modeled on $G$.\\
Then, the group of pure translations $N=\Gamma\cap G$ is maximal nilpotent in $\Gamma$.
\end{Prop}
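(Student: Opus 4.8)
The plan is to show that any nilpotent subgroup $H$ of $\Gamma$ containing $N=\Gamma\cap G$ must in fact equal $N$. Since $N$ is a lattice in $G$ and $\Gamma/N$ is finite by the Generalized First Bieberbach Theorem (Theorem~\ref{bieb1}), any subgroup $H$ with $N\subseteq H\subseteq\Gamma$ fits in a short exact sequence $1\to N\to H\to \bar F\to 1$ where $\bar F=H/N$ is a finite subgroup of the holonomy group $F=\Gamma/N$. So it suffices to prove: if $H$ is nilpotent, then $\bar F$ is trivial, i.e.\ $H=N$. Equivalently, I must rule out the existence of any nontrivial element $\gamma\in\Gamma\setminus G$ lying in a nilpotent subgroup together with all of $N$.

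First I would reduce to the cyclic case. Suppose $\bar F\neq 1$ and pick an element $\gamma H$-generating a cyclic subgroup; concretely, choose $\gamma\in H$ whose image in $\bar F$ has prime order $q$, and replace $H$ by the subgroup generated by $N$ and $\gamma$. This is still nilpotent (subgroups of nilpotent groups are nilpotent), still contains $N$, and now surjects onto $C_q$. So I may assume $H$ sits in $1\to N\to H\to C_q\to 1$ with $N$ a finitely generated torsion-free nilpotent group and $H$ nilpotent. The goal becomes: the conjugation action of $C_q$ on $N$ (more precisely on $N^{\mathrm{ab}}$ and then on $N$) is trivial, forcing $\gamma$ to centralize $N$, and then deriving a contradiction with the discreteness/cocompactness — or directly showing $\gamma\in G$.

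The key steps are then: (a) Pass to the abelianization. The extension $1\to N\to H\to C_q\to 1$ induces an extension $1\to N^{\mathrm{ab}}/(\text{torsion})\cong \Z^k\to H/[N,N](\cdots)\to C_q\to 1$ — more carefully, I would work with $N/[H,N]$ or with the induced $C_q$-action on $Z_i(N)/Z_{i-1}(N)$, each of which is a finitely generated abelian group on which a finite-order automorphism acts. (b) Apply the group-theoretical Lemma just proved: in the extension $1\to\Z^k\to E\to C_q\to 1$, nilpotence of $E$ forces the $C_q$-action to be trivial. One applies this first to the bottom quotient $N/Z_{c-1}(N)\cong\Z^{k_c}$ (using Lemma that $N/Z_i(N)$ is torsion free) together with $H/Z_{c-1}(N)$ which is still nilpotent, concluding $\gamma$ acts trivially there, and then induct up the central series so that $\gamma$ acts trivially on all of $N$, i.e.\ $\gamma\in C_\Gamma(N)$. (c) Finally, show that $C_{G\semi C}(N)\cap(G\semi C)\subseteq G$, so that $\gamma$ centralizing the lattice $N$ of $G$ must itself lie in $G$ — hence in $N$ since $\Gamma\cap G=N$ — contradicting $\gamma\notin G$. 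For this last point I would argue that an element $(g,\alpha)$ commuting with all elements $(n,1)$, $n\in N$, must have $\alpha$ fixing $N$ pointwise; since $N$ is a lattice in $G$ it spans the Lie algebra under $\log$ after tensoring with $\R$, so $\alpha_*=\mathrm{id}$ on $\lie$, hence $\alpha=\mathrm{id}$ and $(g,\alpha)=(g,1)\in G$.

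The main obstacle is step (b) — correctly organizing the induction up the upper central series so that at each stage one has a \emph{nilpotent} extension of $C_q$ by a \emph{torsion-free} abelian group to which the Lemma applies, and then splicing the triviality results back together (the ``extension of $C_n$-modules with trivial sub and unipotent quotient'' argument, exactly as in the proof of the Lemma, but now used to propagate triviality rather than unipotence). One has to be a little careful that the relevant quotients $H/Z_i(N)$ are genuinely nilpotent (true, since $H$ is) and that $Z_i(N)$ is $\gamma$-invariant (true, since $N\trianglelefteq H$ and the upper central series is characteristic in $N$). Once $\gamma$ centralizes $N$, step (c) is essentially the linear-algebra observation that $\log N$ spans $\lie$ over $\R$, which is just cocompactness of the lattice. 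So the heart of the matter is the Lemma plus a clean dévissage, and the rest is bookkeeping.
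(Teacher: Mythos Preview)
Your overall strategy is sound and you correctly identify the preceding Lemma as the key tool, but there are two genuine gaps.

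\medskip

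\textbf{Gap in step (b).} Even if you successfully show that conjugation by $\gamma$ is trivial on every quotient $Z_i(N)/Z_{i-1}(N)$, this only says that the automorphism $\phi_\gamma\in\Aut(N)$ induced by $\gamma$ is \emph{unipotent} with respect to the central filtration---not that it is the identity. You would need $\phi_\gamma$ to have finite order in $\Aut(N)$ to conclude $\phi_\gamma=\mathrm{id}$, but $\gamma^q\in N$ only tells you $\phi_\gamma^q$ is an \emph{inner} automorphism of $N$, which is itself unipotent but typically nontrivial. So ``$\gamma\in C_\Gamma(N)$'' does not follow.

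\medskip

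\textbf{Error in step (c).} If $(g,\alpha)$ commutes with every $(n,1)$, the relation $g\alpha(n)=ng$ gives $\alpha(n)=g^{-1}ng$, not $\alpha(n)=n$. So $\alpha$ agrees with the inner automorphism $\mu(g^{-1})$ on $N$ (hence on $G$), not with the identity. One can still finish: $\alpha$ has finite order, so $g^{-1}$ has finite order in $G/Z(G)$, which is torsion free, forcing $g\in Z(G)$ and $\alpha=\mathrm{id}$. But your stated reasoning is incorrect.

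\medskip

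\textbf{Comparison with the paper.} The paper bypasses both difficulties by working with the \emph{lower} central series of $G$ rather than the upper central series of $N$. One applies the Lemma just \emph{once}, to
\[
1\to N/(N\cap\gamma_2(G))\to \langle N,\gamma\rangle/(N\cap\gamma_2(G))\to C_n\to 1,
\]
where $N/(N\cap\gamma_2(G))$ is a lattice in the abelian group $G/\gamma_2(G)$. The crucial point is that on $G/\gamma_2(G)$ conjugation by $g$ is automatically trivial, so the conjugation action of $\gamma$ coincides with the action of $\alpha$ there. The Lemma then gives $\alpha$ trivial on $G/\gamma_2(G)$ directly, and a short Lie-algebra remark (a finite-order automorphism of $\lie$ inducing the identity on $\lie/[\lie,\lie]$ is unipotent, hence the identity) finishes the proof. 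No dévissage through the whole central series is needed, and the separation between ``action of $\gamma$'' and ``action of $\alpha$'' is handled in one stroke by passing to the abelianization of $G$.
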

\begin{proof}
Let us start with a remark: if $\lie$ is a nilpotent Lie algebra and $\alpha\in \Aut(\lie)$ is such that 
$\alpha$ induces the identity automorphism on $\lie/\gamma_2(\lie)$, then by induction on $i$ one gets that 
 $\alpha$ will also induce the identity automorphism on all quotients $\gamma_{i}(\lie)/\gamma_{i+1}(\lie)$ and 
hence $\alpha$ is a unipotent automorphism of $\lie$. If moreover $\alpha$ is of finite order, this will imply that 
$\alpha$ is the trivial automorphism of $\lie$. From this it also follows that for a given simply connected 
nilpotent Lie group $G$ and an automorphism of finite order $\alpha\in \Aut(G)$ inducing the 
identity on $G/\gamma_2(G)$, we have that $\alpha$ is itself the identity. 

\medskip

Now, consider $x\in \Gamma$, such that $\langle N, x \rangle$ is nilpotent. Such an element $x$ is of the 
form $x=(g,\alpha) \in G \semi C$, where $\alpha$ is of finite order $n$.
Now, consider the short exact sequence
\[ 1 \to \frac{N}{N\cap \gamma_2(G)} \to \frac{\langle N, x \rangle}{N\cap \gamma_2(G)} \to 
C_n\cong \frac{\langle N, x \rangle}{N} \to 1.\]
The group $\frac{N}{N\cap \gamma_2(G)}$ is torsion free and hence isomorphic to $\Z^k$ for some $k$.
By the lemma above, we know that the action of $x$ on this group has to be trivial. But this action 
is the same as the action which is induced by $\alpha$ on $\frac{N}{N\cap \gamma_2(G)}$. As 
$\frac{N}{N\cap \gamma_2(G)}$ is a lattice of $G/\gamma_2(G)$, it follows that $\alpha$ induces the identity 
automorphism on $G/\gamma_2(G)$ and by the remark above, we have that $\alpha$ is the identity automorphism.
Hence $x=(g,1)\in N$ and so $N$ is maximal nilpotent.
\end{proof}

\medskip

Also the second Bieberbach Theorem has a straightforward generalization. This theorem was proved by K.B.~Lee 
and F.~Raymond in \cite{lr85-1}.
\begin{Thm}[\bf Generalized Second Bieberbach Theorem]\label{tweedebieb}
Let $\varphi:\Gamma \to \Gamma'$ be an isomorphism between two almost--crystallographic groups, then $\Gamma$ and $\Gamma'$ are modeled on the same Lie group $G$, and moreover, there exists an affine map 
$\alpha\in\Aff(G)$ such that 
\[ \forall \gamma \in \Gamma:\; \varphi(\gamma) = \alpha \circ \gamma \circ \alpha^{-1}.\]
\end{Thm}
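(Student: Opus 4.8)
The plan is to reduce the statement to a rigidity result about the nilpotent lattices, using the same strategy that works for the classical Bieberbach case. The first step is to identify, inside each almost--crystallographic group, a canonical subgroup that an isomorphism must preserve. By Proposition~\ref{maxnilp}, the group of pure translations $N=\Gamma\cap G$ is maximal nilpotent in $\Gamma$; combined with the generalized first Bieberbach theorem (Theorem~\ref{bieb1}), which says $N$ is a lattice in $G$ of finite index in $\Gamma$, one checks that $N$ is in fact the Fitting subgroup of $\Gamma$ (the unique maximal normal nilpotent subgroup) — any normal nilpotent subgroup of $\Gamma$ is virtually abelian-by-finite and, intersecting with the finite-index $N$, is forced into $N$. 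Hence an isomorphism $\varphi:\Gamma\to\Gamma'$ carries $N=\Gamma\cap G$ onto $N'=\Gamma'\cap G'$. Restricting, $\varphi$ gives an isomorphism of the finitely generated torsion free nilpotent groups $N$ and $N'$, so in particular $N$ and $N'$ are commensurable and, by the uniqueness of the real Mal'cev completion, $G=N^{\R}\cong (N')^{\R}=G'$ as Lie groups; this already yields the ``modeled on the same $G$'' claim and lets us assume $G=G'$ and $N,N'$ are both lattices in $G$.

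Next I would build the conjugating affine map. By the Mal'cev rigidity results recalled in the excerpt (the proposition on extending homomorphisms of f.g.\ torsion free nilpotent groups to their real completions), the isomorphism $\varphi|_N:N\to N'$ extends uniquely to a continuous automorphism $\bar{\varphi}:G\to G$, i.e.\ an element of $\Aut(G)$. I claim the affine map $\alpha=(1,\bar{\varphi})\in\Aff(G)$ — possibly corrected by a translation, as explained below — does the job. To see the shape of the argument: for $n\in N$ (acting on $G$ as left translation by $n$), conjugation $\alpha\circ n\circ\alpha^{-1}$ is left translation by $\bar{\varphi}(n)=\varphi(n)$, so $\varphi$ and conjugation-by-$\alpha$ agree on $N$. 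The content is to push this agreement from $N$ to all of $\Gamma$. Pick any $\gamma\in\Gamma$; then $\varphi(\gamma)$ and $\alpha\gamma\alpha^{-1}$ are two elements of $\Aff(G)$ which induce the same conjugation automorphism on $N'=\bar{\varphi}(N)$ (for $\varphi(\gamma)$ this is because $\varphi$ is a homomorphism of $\Gamma$; for $\alpha\gamma\alpha^{-1}$ this is a direct computation). Since $N'$ is Zariski-dense in $G$ (a lattice in a simply connected nilpotent Lie group), an element of $\Aff(G)=G\semi\Aut(G)$ inducing the identity conjugation on $N'$ must induce the identity on $G$, hence lies in $G$ itself (the center-of-the-action argument: the automorphism part is trivial, only a translation survives). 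Therefore $\varphi(\gamma)$ and $\alpha\gamma\alpha^{-1}$ differ by left translation by an element $t(\gamma)\in G$; a short cocycle computation shows $\gamma\mapsto t(\gamma)$ is a crossed homomorphism that is a coboundary (using that the translations $N'$ have finite index in $\Gamma'$, or equivalently that $H^1$ vanishes after restricting to the lattice and then averaging over the finite holonomy), so replacing $\alpha$ by $t_{a}\circ\alpha$ for a suitable $a\in G$ makes $\varphi(\gamma)=\alpha\gamma\alpha^{-1}$ for all $\gamma$.

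The main obstacle, and the place I expect to spend real effort, is precisely this last coboundary step: verifying that the discrepancy cocycle $\gamma\mapsto t(\gamma)\in G$ is trivial. In the abelian (classical Bieberbach) case this is $H^1(F;\R^n)=0$ because $F$ is finite and $\R^n$ is a $\Q$-vector space, which is clean; in the nilpotent setting the ``coefficients'' form the non-abelian group $G$ and one has to argue more carefully, filtering $G$ through its lower central series so that each graded piece is a real vector space on which the finite holonomy acts, and running an induction up the filtration to kill the cocycle layer by layer (at each stage an averaging argument over $F=\Gamma/N$ produces the correcting translation). One must also be slightly careful that the maximal compact $C'\subseteq\Aut(G)$ is respected — but in fact the theorem as stated only asks for an affine conjugacy, not an isometric one, so we may ignore $C$ and $C'$ entirely and work inside $\Aff(G)$, which is what makes the argument go through. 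Alternatively, one can invoke the rigidity of the short exact sequence $1\to N\to\Gamma\to F\to 1$ directly: $\varphi$ and conjugation-by-$\bar\varphi$ induce the same map on $N$ and the same abstract extension data, and the classification of such extensions up to the relevant equivalence is exactly implemented by the translations in $G$; this is the route taken by Lee--Raymond in \cite{lr85-1}, to which the detailed verification can ultimately be deferred.
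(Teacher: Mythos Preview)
Your first paragraph matches the paper exactly: the paper itself does not prove the theorem but cites Lee--Raymond \cite{lr85-1}, and its only added content is the observation (identical to yours) that $\varphi$ must carry $N=\Gamma\cap G$ onto $N'=\Gamma'\cap G'$ since these are the unique maximal normal nilpotent subgroups, whence the Mal'cev completions coincide and $G\cong G'$.

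Your cohomological sketch for the conjugation part is the Lee--Raymond strategy, but there is a concrete error in step~4. You assert that an element of $\Aff(G)$ centralizing the lattice $N'$ must have ``trivial automorphism part, only a translation survives''. This is false once $G$ is non-abelian. If $(h,\nu)\in G\semi\Aut(G)$ commutes with every $(n',1)$, the equation $h\,\nu(n')=n'h$ forces $\nu(n')=h^{-1}n'h$ on $N'$, and by density $\nu=\mu(h^{-1})$ is the \emph{inner} automorphism by $h^{-1}$, not the identity. So the centralizer of $N'$ in $\Aff(G)$ is $\{(h,\mu(h^{-1})):h\in G\}$, the image of $G$ under the right-translation embedding, not the left-translation subgroup $G\times\{1\}$. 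Consequently $\varphi(\gamma)$ and $\alpha\gamma\alpha^{-1}$ need not have the same $\Aut(G)$-component, and your $t(\gamma)$ is not a pure translation.

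This is fixable rather than fatal: the discrepancy still takes values in a copy of $G$, and the non-abelian $1$-cocycle you obtain can indeed be killed by the lower-central-series induction you outline (each graded piece is a real $F$-module with $H^1(F;-)=0$), producing the correcting $d\in G$ so that $\alpha=(d,\bar\varphi)$ works. But the bookkeeping is genuinely more delicate than ``adjust by a translation'', and your write-up as it stands would mislead a reader into thinking the abelian Bieberbach argument transfers verbatim. Since the paper simply defers the detailed verification to \cite{lr85-1} (and later to the more general Theorem~\ref{keyLee}, citing \cite{lee95-2}), you are in good company in not spelling this out --- but you should at least state the centralizer correctly.
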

We remark here that in \cite{lr85-1} it was assumed from the beginning that $\Gamma$ and $\Gamma'$ were modeled on the same nilpotent Lie group. However assume that $\Gamma$ is modeled on $G$ and $\Gamma'$ 
is modeled on $G'$. Then, if $\varphi:\Gamma \to \Gamma'$ is an isomorphism, 
$\varphi$  also induces an isomorphism between $\Gamma\cap G$ and $\Gamma'\cap G'$ because these groups 
are both characterized as being the unique normal and maximal nilpotent subgroup of $\Gamma\cap G$ 
resp.~$\Gamma'\cap G'$. As these groups are lattices in $G$ and $G'$, this isomorphism extends uniquely to 
an isomorphism of $G$ to $G'$ and hence we can assume that $G$ equals $G'$. Note that for this theorem it is not necessary that the same maximal compact subgroup $C\subseteq \Aut(G)$ is used to define both $\Gamma$ and 
$\Gamma'$.

In a following section, we will even further generalize this theorem for general homomorphisms $\varphi$. This will 
lead to a complete understanding of all maps between two given infra-nilmanifolds up to homotopy.

\medskip

The third Bieberbach Theorem is less trivial to generalize. It is no longer true that for a given dimension $n$ there 
are only finitely many $n$-dimensional almost--crystallographic groups 
(or even almost--Bieberbach groups) up to isomorphism. This is not even true when one restricts to
 considering only almost--Bieberbach groups modeled on a given nilpotent Lie group $G$.
Indeed, if one considers the real Heisenberg group $H^\R$, which is the easiest non-abelian nilpotent Lie group, then this group contains all of the groups $H_n$ as a lattice (see page~\pageref{heisn}).  These groups $H_n$ are 
pairwise non-isomorphic almost--Bieberbach groups modeled on $H^\R$ and hence the corresponding 
nilmanifolds $H_n\backslash \R^n$ are pairwise non-homeomorphic. 

\medskip

So the big difference with the abelian case is that a fixed nilpotent Lie group $G$ can have infinitely many non-isomorphic lattices, while all lattices of $\R^n$ are isomorphic to the same group $\Z^n$. Algebraically, we can also formulate the third Bieberbach theorem as follows. Fix a positive integer $n$, then there are only finitely 
many crystallographic groups containing $\Z^n$ as its group of pure translations.  
From this point of view, the third Bieberbach theorem does have a direct generalization.
\begin{Thm}
Let $N$ be a finitely generated, torsion free nilpotent group. Then there are, up to isomorphism, only finitely 
many almost--crystallographic groups $\Gamma$ for which the group of pure translations of $\Gamma$ is 
isomorphic to $N$. 
\end{Thm}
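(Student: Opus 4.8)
The plan is to reduce the statement to a finiteness result about group cohomology, mirroring the classical Zassenhaus-style proof of the third Bieberbach theorem. The key observation is that an almost-crystallographic group $\Gamma$ with group of pure translations $N$ fits in an extension $1 \to N \to \Gamma \to F \to 1$ with $F$ finite, and by Proposition~\ref{maxnilp} the subgroup $N$ is the (unique) maximal normal nilpotent subgroup, so it is characteristic; hence $\Gamma$ determines both $N$ and the extension up to equivalence. Conversely, such an extension is classified by a pair: an abstract kernel $\psi: F \to \Out(N)$ together with a cohomology class in $H^2_\psi(F; Z(N))$ (when the obstruction in $H^3$ vanishes). So to prove finiteness of the set of $\Gamma$ up to isomorphism, it suffices to show (i) there are only finitely many possible abstract kernels $\psi: F \to \Out(N)$ as $F$ ranges over finite groups, and (ii) for each such $\psi$, the relevant second cohomology group is finite.

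For step (i), the main point is that the image of $F$ in $\Out(N)$ must be finite, and $\Out(N)$ — while not finite — contains only finitely many conjugacy classes of finite subgroups. This is where I would invoke the rational Mal'cev completion machinery from Section~\ref{voorbeeldsec}: conjugation gives a map $\Out(N) \to \Out(N^\Q) = \Aut(N^\Q)/\Inn(N^\Q) = \Aut(\lien^\Q)/\Inn$, and a finite subgroup of $\Aut(\lien^\Q)$ is, by an averaging/arithmeticity argument, conjugate into a fixed arithmetic subgroup; since $\Aut(N)$ has finite index questions controlled by $N \subseteq N^\Q$, one gets that the finite subgroups of $\Out(N)$ fall into finitely many conjugacy classes. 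Fixing one such class, the number of homomorphisms $\psi: F \to \Out(N)$ with that image (up to the relevant equivalence) is finite because a finite group has only finitely many subgroups of bounded order and finitely many surjections onto a fixed finite group.

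For step (ii), once $\psi$ is fixed, $F$ acts on the finitely generated abelian group $Z(N)$, and I would argue that $H^2(F; Z(N))$ is finite. Since $Z(N)$ is finitely generated abelian, write $Z(N) = \Z^m \oplus (\text{finite})$; the finite part contributes a finite cohomology group, and for the free part $\Z^m$ one uses that $H^i(F; M)$ is annihilated by $|F|$ for $i \geq 1$ and is finitely generated, hence finite. Assembling: only finitely many $(\psi, \text{class})$ pairs arise, so only finitely many extensions $\Gamma$ up to equivalence, hence only finitely many $\Gamma$ up to isomorphism.

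**The main obstacle** I anticipate is step (i) — pinning down that $\Out(N)$ has only finitely many conjugacy classes of finite subgroups. This requires care: one must pass to $\Aut(\lien^\R)$ or $\Aut(\lien^\Q)$, use that a compact (finite) subgroup of the real points preserves an inner product and so lies in a maximal compact, which is unique up to conjugacy, and then descend the conjugating element back to an \emph{integral} (or at least rational) automorphism using that $N$ is a lattice and $\Aut(N)$ has finite index behaviour inside $\Aut(N^\Q)$ controlled by the chosen Mal'cev basis. The arithmetic of "conjugate over $\R$ implies conjugate over $\Q$ up to finitely many classes" is the delicate point and is essentially where the original Zassenhaus argument does its real work; everything else is bookkeeping with the extension/cohomology dictionary.
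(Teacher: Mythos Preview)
The paper does not actually prove this theorem; it only states it and refers to \cite{lee88-1} and \cite{dim92-2}. So there is no in-paper proof to compare against. Your outline is the standard cohomological argument that those references use, and it is essentially correct.

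A few small points. First, since $N$ is torsion free, so is $Z(N)$; there is no finite summand, and $Z(N)\cong\Z^m$. Your step~(ii) then goes through cleanly: $H^2(F;\Z^m)$ is finitely generated and annihilated by $|F|$, hence finite. Second, you should make explicit that the abstract kernel $\psi:F\to\Out(N)$ is \emph{injective}. This follows from the maximal-nilpotency of $N$ (Proposition~\ref{maxnilp}): if $\psi(f)=1$ then a lift $\gamma\in\Gamma$ of $f$ acts on $N$ by an inner automorphism, say conjugation by $n\in N$, so $n^{-1}\gamma$ centralises $N$, whence $\langle N,n^{-1}\gamma\rangle$ is nilpotent and thus $\gamma\in N$. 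Once you know $\psi$ is injective, the ``image'' of $F$ in $\Out(N)$ determines $F$ up to isomorphism, so your discussion of ``finitely many surjections onto a fixed finite group'' is unnecessary.

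Your identification of step~(i) as the real obstacle is accurate. The clean way to fill it is: $\Aut(N)$ is an arithmetic group (it embeds in $\Aut(\lien^\Q)\subseteq\GL_k(\Q)$ and preserves a lattice coming from the Mal'cev basis), and arithmetic groups have only finitely many conjugacy classes of finite subgroups. Every finite subgroup of $\Out(N)$ lifts to a finite subgroup of $\Aut(N)$ (the relevant extension by $\Inn(N)\cong N/Z(N)$ splits, by the same mechanism as Lemma~\ref{split}), and $\Aut(N)$-conjugate lifts project to $\Out(N)$-conjugate images; this bounds the number of $\Out(N)$-conjugacy classes of finite subgroups by the number in $\Aut(N)$. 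Your ``conjugate over $\R$ implies conjugate over $\Q$ up to finitely many classes'' sketch is one route to arithmeticity, but invoking the arithmetic-group fact directly is cleaner than the ad hoc descent you describe.
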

This theorem was first formulated by K.B.~Lee in \cite{lee88-1} (but see also \cite{dim92-2}) using the concept of an essential extension.
An essential extension of a finitely generated, torsion free nilpotent group $N$ is an extension of the form 
\[ 1 \to N \to E \to F\to 1\]
in which $F$ is a finite group and $N$ is maximal nilpotent in $E$. So any almost 
crystallographic group $\Gamma$ modeled on $G$ gives rise to such an essential extension:
$1 \to N=\Gamma \cap G \to \Gamma \to F = \Gamma/(G\cap N) \to 1$ by the generalized first Bieberbach theorem and Proposition~\ref{maxnilp}. Later on, we will see that any group $E$ which is obtained as such an essential 
extension of $N$ is (isomorphic to) an almost--crystallographic group. 

\bigskip

As already promised, we end this section with some explicit examples of almost--crystallographic and almost--Bieberbach groups.

Let $N=H$ be the Heisenberg group, which is a lattice of its Mal'cev completion $H^\R=UT_3(\R)$. 
Let $\varphi\in \Aut(H^\R)$ be the automorphism of order 3, which we introduced on page~\pageref{auto3} (on that 
page we defined it on $H^\Q$, but the definition also works on $H^\R$ of course). 
Let $F=\{1,\varphi,\varphi^2\}\subseteq \Aut(H^\R)$, then $F\cong \Z_3$. As we already saw, we have that 
$\varphi(N)=N$. It follows that we can form the semidirect product 
\[\Gamma_1=N \semi F\subset H^\R \semi F \subseteq \Aff(H^\R).\]
It is not difficult to see that $\Gamma_1$ is discrete and cocompact in $H^\R \semi F$ and hence
$\Gamma_1$ is an almost-crystallographic group. It is not an almost-Bieberbach group since $\Gamma_1$ has torsion 
(e.g.\ $\varphi$ is an element of order 3 in $\Gamma_1$).

\medskip

Take
\[ c^{\frac13}=\left( \begin{array}{ccc} 1 & 0 & \frac13 \\ 0 & 1 & 0 \\
0 & 0 & 1 \end{array} \right)\in H^{\R}\]
(This notation comes from the fact that $(c^{\frac13})^3=c$, where $c$ (and $a,b$) were defined on page~\pageref{abc}).

Let $\alpha=c^{\frac13}\varphi\in H^\R\semi F$. Let 
\[ \Gamma_2 =\langle N , \alpha\rangle \]
One computes that $\alpha^3=c$ and that $\alpha N \alpha^{-1} =N$. Using this, it is not so difficult to see 
that $N$ is a normal subgroup of $\Gamma_2$ of index 3 and that 
$\Gamma_2$ is indeed an almost-crystallographic group. Note that for any element $\gamma$ of $\Gamma_2$ there exists 
a $\lambda\in N$ such that one of the three following expressions holds:
\[\gamma=\lambda,\;\; \gamma=\lambda\alpha\mbox{ \ or \ }\gamma=\lambda\alpha^2.\]

Moreover, we claim that this group $\Gamma_2$ is 
torsion free and hence
$\Gamma_2$ is an almost-Bieberbach group and  $\Gamma_2\backslash H^\R$ is an infra-nilmanifold with 
holonomy group $F\cong\Z_3$. How do we check that $\Gamma_2$ is torsion free? It turns out that this 
can be done quite easily by using the $4$-dimensional matrix representation $\psi$ of $H^\R$ which we defined on 
page~\pageref{reppsi} (where one has to replace $\Q$ by $\R$).
We can extend this representation $\psi:H^\R \to \GL(\R^4)$ to a representation (which we also denote by $\psi$)
\[ \psi:H^\R\semi F \to \GL(\R^4)\]
by mapping $\alpha$ to $A$ (see page~\pageref{defA}). The fact that this is really a representation follows from the 
fact that $\psi(\varphi(h))= A \psi(h) A^{-1}$ for all $h\in H^\R$ (which was explained at the end of section~\ref{voorbeeldsec}).
Now let us assume that $\Gamma_2$ is not torsion free, then there has to exist a non-trivial  element 
$\gamma$ of $\Gamma_2$ 
which is of finite order. As $\gamma^3\in N$ and $N$ is torsion free, we must have that 
$\gamma^3=1$. Moreover, $\gamma$ cannot belong to $N$, so we must have that there exists 
a $\lambda\in N$ such that 
$\gamma=\lambda\alpha$ or $\gamma=\lambda\alpha^2$. By replacing $\gamma$ with $\gamma^{-1}$ if necessary, me may assume that $\gamma=\lambda\alpha$.
It follows that 
\[\psi(\gamma)=\psi(\lambda\alpha)= 
\left(\begin{array}{cccc} 
1 & 1 - \frac{x}{2} & \frac{x-y-1}{2} & \frac13 -\frac{xy}{2} +z\\ 0 & 0 & -1 & x \\
0 & 1 & -1 & y \\
0 & 0 & 0 & 1
\end{array}
\right),\mbox{ for some }x,y,z\in\Z.\]
Using this we find that 
\[ \label{gebruikrep}\psi(\gamma^3)=
\left(\begin{array}{cccc} 
1 & 0 & 0 & 1+\frac{ 3(x-y) -(x+y)^2 }{2} +3 z\\
0 & 1 & 0 & 0 \\
0 & 0 & 1 & 0 \\
0 & 0 & 0 & 1 
\end{array}
\right).\]
We leave it to the reader to check that for any $x,y\in \Z$ the expression $\frac{ 3(x-y) -(x+y)^2 }{2} $ is
either equal to $0$ modulo $3$ or equal to $1$ modulo $3$. In any case, this means that 
$\psi(\gamma^3)$ is not the indentity matrix for any choice of $x,y,z\in \Z$, from which we deduce that $\gamma^3\neq 1$ 
and hence $\Gamma_2$ is torsion free.
\section{Characterizations of almost--crystallographic groups}
In this section we want to describe several algebraic characterizations of almost--crystallographic groups.
As any almost--crystallographic group $E$ can be seen as an extension $1\to N \to E \to F\to 1$ where $N$ is 
a nilpotent group, it is useful to recall the basic theory of group extensions with a non-abelian kernel $N$ (see e.g~\cite[Section IV.6]{brow82-1}).

For the moment, let 
\[ 1 \to N \to E \to F \to 1\]
be any extension of groups, with no restriction on the groups involved. 
There is a natural homomorphism $\phi: E\to \Aut(N)$ which is induced by conjugation inside $E$:
we have that $\phi(e):N \to N: n \mapsto \phi(e)(n)= e n e^{-1}$, where we will consider 
$N$ as a subgroup of $E$. As $\phi(N)=\Inn(N)$, this induces 
a homomorphism $\psi: F \to \Out(N)=\Aut(N)/\Inn(N)$. We will call a homomorphism $\psi:F \to \Out(N)$ an abstract kernel and we
say that the extension $1 \to N \to E \to F \to 1$ is compatible with the abstract kernel $\psi$ (which we obtained from $\phi$).

\medskip

Now given two groups $F$ and $N$ and an abstract kernel $\psi:F \to \Out(N)$, we want to classify all
group extensions $1 \to N \to E \to F \to 1$ which are compatible with $\psi$. Let us recall that 
two extensions  $1 \to N \to E \to F \to 1$ and  $1 \to N \to E' \to F \to 1$ are said to be equivalent if and 
only if there exists a homomorphism $\alpha: E \to E'$ (which will be necessarily an isomorphism), such that the 
following diagram commutes:
\[
\xymatrix{ 1\ar[r] &  N \ar[r] \ar@{=}[d] & E \ar[r]\ar[d]_\alpha & F \ar[r]\ar@{=}[d] & 1\\
1\ar[r] &  N \ar[r] & E' \ar[r] & F \ar[r] & 1.}
\]
Let us denote by $\Ext_\psi(F,N)$ the set of equivalence classes of extensions of $N$ by $F$ compatible with $\psi$. It might happen that this set is empty. But suppose for the moment that it is not empty and fix such an extension
$1 \to N \to E \to F \to 1$. Choose a function $s:F \to E$, such that $s(1)=1$ and $s(f)$ is mapped to $f$ under the 
projection $E\to F$. This $s$ determines a function $\varphi:F \to \Aut(N): f \mapsto \varphi(f)$, where 
$\varphi (f):N\to N: n \mapsto s(f) n s(f)^{-1} $.  If $p:\Aut(N) \to \Out(N)$ denotes the natural projection, we then have that $p\circ \varphi = \psi$. The function $s$ also determines a second function, {\em a non-abelian 2-cocycle}, 
$c: F\times F \to N$, which is given by $c(f,g) s(fg) = s(f) s(g)$ for all $f,g\in F$.  The function $\varphi$ and 
$c$ satisfy the following two conditions:
\begin{enumerate}
\item $\forall f,g\in F$: $\varphi(f)\varphi(g) = \mu(c(f,g)) \varphi(fg)$ where $\mu(x):N\to N :n \mapsto x n x^{-1} $ 
denotes the inner autormorphism determined by $x$.
\item $\forall f,g,h\in F$: $c(f,g) c(fg,h) = (\varphi(f) ( c(g,h) )) c(f,gh)$.
\end{enumerate}
Conversely, given two functions $c:F\times F \to N$ and $\varphi:F \to \Aut(N)$ with $p\circ \varphi = \psi$, satisfying the two conditions above, one 
constructs the extension $E_{c,\varphi}$, where the underlying set is $N\times F$ and the 
product is given by 
\[ \forall m,n\in N,\;\forall f,g\in F:\; (n,f)(m,g)= ( n\, (\varphi(f)(m)) \,c(f,g), fg).\]
E.g., when there is a homomorphism $\varphi:F \to \Aut(N)$, lifting $\psi$, then one can take $c(f,g)=1$ for all $f,g\in F$ and 
then $E_{c,\varphi}$ is just $N\semi_\varphi F$.

\medskip

The main result on the classification of extensions with a non-abelian kernel says that if $\Ext_\psi(F,N)$ is non-empty, then this set is in 1-1 correspendence with $H^2(F,Z(N))$.

\medskip

A proof of the following result can be found in \cite[Lemma 3.1.2]{deki96-1} for the case $\F=\R$, but the proof also works for $\F=\Q$.
\begin{Lem}\label{split} Let $N$ be a finitely generated torsion free nilpotent group and let $F$ be a finite group. Let $\F=\Q$ or $\R$ and 
suppose that $\psi: F \to \Out(N^\F)$ is an abstract kernel. Then 
\begin{enumerate}
\item $\psi$ lifts to a homomorphism $\varphi:F \to \Aut(N^\F)$ and
\item any extension of $N^\F$ by $F$ is a split extension (i.e.\ is equivalent to a semi-direct product $N^\F\semi_\varphi F$).
\end{enumerate}
\end{Lem}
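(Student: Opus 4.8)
The plan is to prove both statements together by induction on the nilpotency class $c$ of $N^\F$, exploiting the fact that $\F$-vector space cohomology of a finite group vanishes in positive degrees, i.e. $H^i(F, V) = 0$ for $i \geq 1$ whenever $V$ is a $\Q F$- or $\R F$-module (Maschke's theorem, since $\F$ has characteristic $0$ and $F$ is finite). First I would handle the base case $c = 1$, where $N^\F$ is an $\F$-vector space, $\Aut(N^\F) = \GL(N^\F)$, $\Inn(N^\F) = 1$, so $\Out(N^\F) = \Aut(N^\F)$ and the abstract kernel $\psi$ is already a homomorphism into $\Aut(N^\F)$; then $H^2(F, Z(N^\F)) = H^2(F, N^\F) = 0$ forces every compatible extension to be equivalent to the split one $N^\F \semi_\psi F$. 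This settles (1) and (2) simultaneously in the abelian case.

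For the inductive step, let $Z = Z(N^\F)$, which is a nontrivial $\F$-vector space and a $\Q F$- (resp. $\R F$-) module via $\psi$, and consider the central quotient $\bar{N} = N^\F/Z$. Note $\bar{N}$ is again radicable torsion free nilpotent of class $c-1$ (it is the rational or real Mal'cev completion of $N/(N \cap Z)$), and $\psi$ descends to an abstract kernel $\bar{\psi}: F \to \Out(\bar{N})$ — one has to check here that the action of $\Aut(N^\F)$ preserves $Z$, which it does since the center is characteristic, giving a map $\Aut(N^\F) \to \Aut(\bar N)$ carrying $\Inn$ to $\Inn$. By the induction hypothesis, $\bar\psi$ lifts to a homomorphism $\bar\varphi: F \to \Aut(\bar N)$. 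The task for part (1) is then to lift $\bar\varphi$ further, through the map $\Aut(N^\F) \to \Aut(\bar N)$, to a homomorphism $F \to \Aut(N^\F)$; the obstruction to doing so, and the ambiguity once it exists, live in the cohomology of $F$ with coefficients in abelian groups built out of $Z$ and $\mathrm{Hom}(\bar N, Z)$-type objects — all $\F$-vector spaces — hence vanish. More concretely, I would invoke the standard theory of extensions with non-abelian kernel as recalled just above in the excerpt: $\psi$ itself is an abstract kernel for the group $N^\F$, the obstruction to $\Ext_\psi(F, N^\F) \neq \emptyset$ lies in $H^3(F, Z) = 0$ so a compatible extension $E$ exists, and the set of equivalence classes is then a torsor under $H^2(F, Z) = 0$, so $E$ is unique up to equivalence.

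It remains to produce one compatible extension that visibly splits, and simultaneously extract from it the lift $\varphi$ of $\psi$. Here I would argue as follows: once we know (from $H^3 = 0$) that some extension $1 \to N^\F \to E \to F \to 1$ compatible with $\psi$ exists, choose a set-theoretic section and obtain the pair $(\varphi, c)$ with $\varphi: F \to \Aut(N^\F)$ lifting $\psi$ (as $p \circ \varphi = \psi$) and $c: F \times F \to N^\F$ the non-abelian $2$-cocycle satisfying the two compatibility conditions displayed in the excerpt. Condition (1), $\varphi(f)\varphi(g) = \mu(c(f,g))\varphi(fg)$, shows that $\varphi$ fails to be a homomorphism only up to the inner automorphism $\mu(c(f,g))$; composing with the projection $\Aut(N^\F) \to \Out(N^\F)$ kills this, which is just the statement $p \circ \varphi = \psi$ is a homomorphism. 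To upgrade $\varphi$ to an honest homomorphism and to split $c$, I would push everything into the center: by averaging over $F$ (dividing by $|F|$, legitimate since $\F \supseteq \Q$) one constructs a correction making $c$ cohomologically trivial, i.e. replacing the section $s$ by $s'(f) = z(f)\, s(f)$ for a suitable $z: F \to Z$ — the existence of such $z$ killing $c$ up to a coboundary is exactly the vanishing $H^2(F, Z) = 0$ unwound, and the same correction turns the twisted pair $(\varphi, c)$ into $(\varphi', 1)$, so $\varphi'$ is a genuine homomorphism lifting $\psi$ and $E \cong N^\F \semi_{\varphi'} F$. I expect the main obstacle to be bookkeeping the non-abelian cocycle identities carefully enough to see that the center-valued averaging argument simultaneously fixes both the failure of $\varphi$ to be multiplicative and the nontriviality of $c$; conceptually this is forced by the vanishing of the relevant $\F$-vector-space cohomology groups, but making the correction explicit (especially verifying condition (2) is respected) is where the care is needed. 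An alternative to the explicit averaging, which I would fall back on, is to cite the general principle that an extension $1 \to N \to E \to F \to 1$ with $F$ finite and $N$ possessing a characteristic filtration with $\F$-vector space quotients on which cohomology vanishes must split — proved by induction up the filtration using the five-term exact sequence — which is precisely the content of \cite[Lemma 3.1.2]{deki96-1}.
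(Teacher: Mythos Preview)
The paper does not give its own proof of this lemma; it simply refers to \cite[Lemma~3.1.2]{deki96-1}. Your fallback approach --- induction up the (upper) central series, using that each successive quotient is an $\F$-vector space and that $H^i(F,V)=0$ for $i\geq 1$ whenever $V$ is such a space --- is correct and is essentially the argument found in that reference. Spelled out: given $1 \to N^\F \to E \to F \to 1$, pass to $1 \to N^\F/Z \to E/Z \to F \to 1$, which splits by the inductive hypothesis; the preimage $E'\subseteq E$ of the resulting copy of $F$ in $E/Z$ then sits in $1 \to Z \to E' \to F \to 1$, which splits since $H^2(F,Z)=0$; hence $F$ embeds in $E$, and conjugation by this copy of $F$ furnishes the lift $\varphi$ of $\psi$.

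Your primary line of attack, however, has a genuine gap. The non-abelian $2$-cocycle $c:F\times F \to N^\F$ takes values in all of $N^\F$, not just in $Z$. Modifying the section by a center-valued map $z:F\to Z$ changes $c$ to
\[
c'(f,g)=c(f,g)\cdot\bigl(z(f)\,\varphi(f)(z(g))\,z(fg)^{-1}\bigr),
\]
and since the bracketed factor lies in $Z$, you can only obtain $c'=1$ when $c$ was already $Z$-valued to begin with. The vanishing of $H^2(F,Z)$ therefore tells you nothing about $c$ until you have \emph{first} forced $c$ to land in $Z$ --- and that preliminary reduction is exactly the inductive splitting of $E/Z$ that your fallback performs. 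So the averaging you describe is not a shortcut around the filtration argument; it is its final step. A related issue afflicts your attempt to lift $\bar\varphi$ directly through $\Aut(N^\F)\to\Aut(\bar N)$: this map is not surjective in general (e.g.\ for $N^\F$ with Lie algebra spanned by $x_1,\dots,x_4,z$ and relations $[x_1,x_2]=[x_3,x_4]=z$, the image in $\GL_4$ is only the symplectic similitude group), so the obstruction theory there is not purely a matter of cohomology of $F$ with $\F$-vector-space coefficients. Your instinct to retreat to the filtration argument is the right one.
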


\medskip

We are now going to prove our first algebraic characterization of almost--crystallographic groups. The proof of this theorem is 
of a constructive nature and has thus importance from this point of view also.

\begin{Thm}\label{algchar}
Let $E$ be a finitely generated virtually nilpotent group. Then, the following are equivalent:
\begin{enumerate}
\item $E$  is (isomorphic to) an almost--crystallographic group. 
\item $E$ contains a finitely generated torsion free nilpotent normal  subgroup $N$ such that $N$ is maximal nilpotent in $E$ and $[E:N]<\infty$. (In other words if and only if $E$ can be seen as an essential extension).
\item $E$ does not contain a nontrivial finite normal subgroup.
\end{enumerate}
\end{Thm}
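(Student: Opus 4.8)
The plan is to establish the cycle of implications $(1)\Rightarrow(2)\Rightarrow(3)\Rightarrow(1)$, where the first two implications are essentially bookkeeping and the last one is the substantive construction.

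For $(1)\Rightarrow(2)$: if $E$ is an almost--crystallographic group modeled on $G$, then by the Generalized First Bieberbach Theorem (Theorem~\ref{bieb1}) the group $N=E\cap G$ is a lattice of $G$, hence a finitely generated torsion free nilpotent group, and $[E:N]<\infty$; by Proposition~\ref{maxnilp}, $N$ is maximal nilpotent in $E$. So $E$ is an essential extension $1\to N\to E\to F\to 1$.

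For $(2)\Rightarrow(3)$: suppose $E$ has the subgroup $N$ as in $(2)$, and let $K\trianglelefteq E$ be finite. Then $K\cap N$ is a finite subgroup of the torsion free group $N$, so $K\cap N=1$; hence $K$ embeds in $E/N=F$ and the subgroup $NK\leq E$ is a group in which $N$ has finite index and $K$ is a finite normal subgroup with $NK/N\cong K$. Since $K$ centralizes... more directly: $NK$ is virtually nilpotent with $N$ maximal nilpotent of finite index, and $[NK,K]\subseteq K$; I would argue that $\langle N,K\rangle$ is nilpotent, contradicting maximality unless $K=1$. Concretely, $K$ acts on $N/(N\cap\gamma_2(G))\cong\mathbb{Z}^k$ — or rather on the appropriate lower central quotients of $N$ — by an action of finite order; since $N$ is normal in $NK$ and $K$ is finite and normal, one uses the Lemma preceding Proposition~\ref{maxnilp} (applied to the various infinite cyclic sections) to conclude the action of each element of $K$ on the abelianization-type quotients is trivial, forcing $\langle N,k\rangle$ nilpotent for each $k\in K$, so $k\in N$ by maximal nilpotency, whence $k=1$.

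The main obstacle is $(3)\Rightarrow(1)$, which requires building an embedding of $E$ into some $\mathrm{Aff}(G)$. The plan: since $E$ is finitely generated virtually nilpotent, it contains a finitely generated torsion free nilpotent normal subgroup $N_0$ of finite index (passing to the nilpotent radical of a finite-index nilpotent subgroup and intersecting its conjugates; torsion-freeness is arranged by using condition $(3)$ to kill finite normal subgroups, or rather the existence of such $N_0$ is standard for finitely generated virtually nilpotent groups). Let $G=N_0^{\mathbb{R}}$ be its real Mal'cev completion, a connected simply connected nilpotent Lie group. The extension $1\to N_0\to E\to F\to 1$ with $F$ finite yields an abstract kernel $\psi:F\to\mathrm{Out}(N_0)\hookrightarrow\mathrm{Out}(N_0^{\mathbb{R}})=\mathrm{Out}(G)$; by Lemma~\ref{split} this lifts to a homomorphism $\varphi:F\to\mathrm{Aut}(G)$, and the induced extension $1\to G\to G\semi_\varphi F\to F\to 1$ is split, so by comparing the two extensions via the $H^2(F,Z(G))$ classification — using that $H^2(F,Z(G))$ with $F$ finite and $Z(G)$ a uniquely divisible (radicable) group is trivial — one finds an embedding $j:E\to G\semi_\varphi F$ extending the inclusion $N_0\hookrightarrow G$, with $j(E)$ discrete and cocompact. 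Finally one replaces $G\semi_\varphi F$ by $G\semi_\varphi F'$ inside $\mathrm{Aff}(G)$ where $F'$ lies in a maximal compact $C\subseteq\mathrm{Aut}(G)$: since $F$ is finite, $\varphi(F)$ lies in some maximal compact subgroup up to conjugation, so after conjugating by an element of $\mathrm{Aut}(G)$ we may assume $\varphi(F)\subseteq C$, and then $j(E)\subseteq G\semi C$ is discrete and cocompact, i.e.\ $E$ is almost--crystallographic. The delicate points are verifying that $j(E)$ is discrete and cocompact in $G\semi C$ (discreteness because $N_0$ is a lattice in $G$ and has finite index in $j(E)$; cocompactness likewise), and checking that condition $(3)$ is exactly what guarantees the finite-index torsion free nilpotent normal subgroup $N_0$ can be chosen so that the whole of $E$ embeds — here condition $(3)$ ensures that the kernel of the "canonical" map $E\to G\semi_\varphi F$ coming from the Mal'cev completion is a finite normal subgroup, hence trivial, so the map is injective.
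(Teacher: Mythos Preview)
Your implications $(1)\Rightarrow(2)$ and $(3)\Rightarrow(1)$ are correct and follow essentially the same route as the paper: the first uses Theorem~\ref{bieb1} and Proposition~\ref{maxnilp} verbatim, and the third builds the embedding via the Mal'cev completion $G=N_0^{\mathbb R}$, Lemma~\ref{split} to split the induced extension over $G$, and condition~(3) to force injectivity of the resulting map $E\to G\rtimes C$. Your invocation of $H^2(F,Z(G))=0$ is exactly the mechanism behind Lemma~\ref{split}, and your remark that the torsion of a finite-index nilpotent normal subgroup is a finite normal subgroup of $E$ (hence trivial by~(3)) is the paper's way of arranging $N_0$ torsion free.

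Your argument for $(2)\Rightarrow(3)$, however, has a genuine gap. You propose to use the Lemma preceding Proposition~\ref{maxnilp} to show that each $k\in K$ acts trivially on the abelian sections of $N$, and hence that $\langle N,k\rangle$ is nilpotent. But that Lemma runs the other way: it says that \emph{if} an extension $1\to\mathbb Z^k\to E\to C_n\to 1$ is nilpotent \emph{then} the action is trivial. You are trying to conclude nilpotency, so invoking the Lemma here is circular. The correct (and much shorter) argument is the one you half-started and then abandoned: since both $K$ and $N$ are normal in $E$, the commutator $[K,N]$ lies in $K\cap N$, and you already observed $K\cap N=1$ because $N$ is torsion free. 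Thus $K$ centralizes $N$, so for every $k\in K$ the group $\langle N,k\rangle$ is nilpotent (it is $N\times\langle k\rangle$ when $k\notin N$), contradicting maximal nilpotency of $N$ unless $K=1$. This is precisely the paper's argument.
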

\begin{proof}
(1. $\Rightarrow$ 2.) When $E\cong \Gamma$, then the generalized first Bieberbach Theorem \ref{bieb1} and Proposition \ref{maxnilp} show the 
existence of the subgroup $N$ (which corresponds to the group of pure translation $\Gamma\cap G$ in $\Gamma$).

\medskip

(2. $\Rightarrow$ 3.) Assume that $E$ does contain a non-trivial finite normal subgroup $G$, then 
$[G,N]\subseteq N \cap G =1 $ which implies that $\forall g\in G$ the group $\langle N, g \rangle$ is nilpotent, contradicting 
the maximal nilpotency of $N$.

\medskip

 (3. $\Rightarrow$ 1.) Now, assume that $N$ is a nilpotent normal subgroup of finite index in $E$. Then $N$ is torsion free, because 
 the set of torsion elements of $N$ is a finite normal subgroup of $E$. Then we consider the  extension
\[ 1 \to N \to E \to F \to 1\]
and choose a section $s:F \to E$ giving rise to a map $\varphi: F \to \Aut(N)$ (which is a lift of the abstract kernel of the 
extension) and a non-abelian 2-cocycle $c:F\times F\to N$ satisfying the two conditions mentioned above.
If we denote the extension of $F$ by $N$ determined by $c$ and $\varphi$ as $E_{c,\varphi}$
 as before, we know that there is a
 commutative diagram of the form (where $\alpha$ is an isomorphism):
 \[ \xymatrix{  1\ar[r] &  N \ar[r] \ar@{=}[d] & E \ar[r]\ar[d]_\alpha & F \ar[r]\ar@{=}[d] & 1\\
1\ar[r] &  N \ar[r] & E_{c,\varphi} \ar[r] & F \ar[r] & 1.}\]
Now, since any automorphism $\varphi(f)\in \Aut(N)$ can be extended uniquely to an automorphism 
$\tilde\varphi(f) \in \Aut(N^\R)$, we obtain a map $\tilde{\varphi}:F\to \Aut(N^\R)$, such that 
the composition $\psi:F \to  \Aut(N^\R) \to \Out(N^\R)$ is a homomorphism (so an abstract kernel).

The map 
$c: F \times F \to N$ can also be interpreted as a map $\tilde{c}: F \times F \to N^\R$ (using the embedding of $N$ into $N^\R$). Of course, the pair $\tilde{c}, \tilde{\varphi}$ still satisfies the two 2-cocycle conditions and we can define the 
group $\tilde{E}_{\tilde{c},\tilde{\varphi}}$ which is an extension of $N^\R$ by $F$. Now, let 
$i:N\to N^\R$ denote the embedding, then we have a commutative diagram 
 \[ \xymatrix{  1\ar[r] &  N\ar[r] \ar@{ >->}_i [d]  & E_{c,\varphi} \ar[r]\ar@{ >->}[d]^{i\times 1_F} & F \ar[r]\ar@{=}[d] & 1\\
1\ar[r] &  N^\R \ar[r] & \tilde{E}_{\tilde{c},\tilde{\varphi}} \ar[r] & F \ar[r] & 1.}\]
By Lemma~\ref{split}, we know that the bottom sequence splits and so there is a commutative diagram 
in which $\beta$ is an isomorphism and $\tilde{\varphi}':F\to \Aut(N^\R)$ is another lift of the abstract kernel 
$\psi:F\to \Out(N^\R)$, which is a homomorphism of groups:
\[ \xymatrix{  1\ar[r] &  N^\R\ar[r] \ar@{ =}[d]  & \tilde{E}_{\tilde{c},\tilde{\varphi}}  \ar[d]^{\beta}\ar[r] &
 F \ar[r]\ar@{=}[d] & 1\\
1\ar[r] &  N^\R \ar[r] & N^\R\semi_{\tilde{\varphi}'} F \ar[r] & F \ar[r] & 1.}\]
Let $\gamma=1 \times \tilde{\varphi}': N^\R \semi_{\tilde{\varphi}'} F \to N^\R\semi \Aut(N^\R): (n,f) \mapsto 
(n,\tilde{\varphi}'(f))$. This is a homomorphism, inducing a commutative diagram 
\[ \xymatrix{  1\ar[r] &  N^\R\ar[r] \ar@{ =}[d]  & N^\R\semi_{\tilde{\varphi}'} F  \ar[d]^{\gamma}\ar[r] &
 F \ar[r] \ar[d]_{\tilde{\varphi}'} & 1\\
1\ar[r] &  N^\R \ar[r] & N^\R\semi \Aut(N^\R) \ar[r] & \Aut(N^\R) \ar[r] & 1.}\]
Now, let $j:E\to N^\R \semi \Aut(N^\R)$ be the composition
$j= \gamma \circ \beta \circ (i\times 1_F) \circ \alpha$:
\[\xymatrix{
E\ar[r]^\alpha 
\ar@{<}@<0pt> `u[r] `[rrrr]^j  [rrrr]
 & E_{c,\alpha}\ar[r]^{i\times 1_F} &\tilde{E}_{\tilde{c},\tilde{\varphi}} \ar[r]^\beta & 
N^\R\semi_{\tilde{\varphi}'} F \ar[r]^\gamma & N^\R \semi \Aut(N^\R)  \\
N \ar[r]_{1_N}\ar@{ >->}[u]   & N \ar@{ >->}[r]_{i} \ar@{ >->}[u]  & N^\R \ar[r]_{1_{N^\R}}\ar@{ >->}[u]   &
 N^\R \ar[r]_{1_{N^\R}}\ar@{ >->}[u] & N^\R \ar@{ >->}[u]
}\]
Note that when we restrict $j$ to $N$, we obtain the embedding 
$j_{|N}=i:N\to N^{\R}$ of $N$ into its real Mal'cev completion. This shows $\Ker(j) \cap N=1$, from which it follows that 
$\Ker(j)$ is finite. However, as we are assuming that $E$ does not have any non-trivial finite subgroups, this implies that 
$j$ is injective and hence $E\cong j(E)$. Note that $j(E)\subseteq N^\R \semi \tilde{\varphi}'(F)$. As $\tilde{\varphi}'(F)$
is finite, we can choose a maximal compact subgroup $C\subseteq \Aut(\R^n)$ containing  $\tilde{\varphi}'(F)$ and
then we have that $j(E)\subseteq N^\R \semi C$. Now, $j(E)\cap N^\R$ is a group containing $N$ as a subgroup 
of finite index. As $N$ is a lattice in $N^\R$, also $j(E)\cap N^\R$ is a lattice of $N^\R$. And hence 
$j(E)\cap N^\R$ is a discrete and cocompact subgroup of $N^\R \semi C$. However, as $j(E)$ contains
$j(E)\cap N^\R$ as a subgroup of finite index, also $j(E)$ itself is  discrete and cocompact in $N^\R \semi C$, showing
that $E\cong j(E)$ is an almost--crystallographic group modeled on $N^\R$.

\end{proof}

The above characterization of almost-crystallographic groups immediately implies the following description of 
almost-Bieberbach groups.

\begin{Cor}
A group $E$ is isomorphic to an almost--Bieberbach group if and only if $E$ is a finitely generated torsion free virtually nilpotent group.
\end{Cor}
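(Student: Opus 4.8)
The plan is to read this off directly from the algebraic characterization in Theorem~\ref{algchar}, the only real work being to line up the hypotheses with the definitions.

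For the ``only if'' direction, suppose $E$ is isomorphic to an almost--Bieberbach group $\Gamma$, modeled on a connected, simply connected nilpotent Lie group $G$. By definition an almost--Bieberbach group is a \emph{torsion free} almost--crystallographic group, so $E$ is torsion free. By the Generalized First Bieberbach Theorem~\ref{bieb1}, the group of pure translations $N=\Gamma\cap G$ is a lattice of $G$ — hence a finitely generated torsion free nilpotent group — and $\Gamma/N$ is finite. Thus $\Gamma$ contains a finitely generated subgroup of finite index, so $\Gamma$ itself is finitely generated, and $\Gamma$ is virtually nilpotent. Therefore $E$ is a finitely generated torsion free virtually nilpotent group.

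For the ``if'' direction, let $E$ be a finitely generated torsion free virtually nilpotent group. Since $E$ is torsion free it contains no nontrivial finite subgroup, and in particular no nontrivial finite normal subgroup; that is, $E$ satisfies condition~(3) of Theorem~\ref{algchar}. As $E$ is moreover finitely generated and virtually nilpotent, that theorem applies and its implication (3)$\Rightarrow$(1) yields an isomorphism of $E$ onto an almost--crystallographic group $\Gamma$. Because $E$, and hence $\Gamma$, is torsion free, $\Gamma$ is by definition an almost--Bieberbach group, so $E$ is isomorphic to an almost--Bieberbach group.

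I do not expect any genuine obstacle here: the corollary is essentially bookkeeping on top of Theorem~\ref{algchar}. The two points worth keeping in mind are that a lattice in a simply connected nilpotent Lie group is automatically finitely generated and torsion free (as recalled in Section~\ref{voorbeeldsec}), which makes the ``only if'' direction go through, and the elementary observation that torsion freeness of $E$ forbids nontrivial finite normal subgroups, which is precisely the hypothesis needed to invoke the implication (3)$\Rightarrow$(1) of Theorem~\ref{algchar}.
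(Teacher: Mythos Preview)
Your argument is correct and is exactly the unwinding the paper has in mind: the text simply states that the corollary ``immediately implies'' from Theorem~\ref{algchar}, and your two directions spell out precisely that implication using the definition of almost--Bieberbach group together with (3)$\Rightarrow$(1). There is nothing to add.
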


Also the following result is easy to deduce now:

\begin{Cor}
Let $\Gamma$ be an almost--crystallographic group and let $E$ be any normal subgroup of $\Gamma$, then $E$ is also 
an almost--crystallographic group.
\end{Cor}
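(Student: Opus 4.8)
The plan is to reduce everything to the algebraic characterization of almost--crystallographic groups in Theorem~\ref{algchar}: a finitely generated virtually nilpotent group is almost--crystallographic if and only if it has no nontrivial finite normal subgroup. So it suffices to verify three things about $E$: that it is finitely generated, that it is virtually nilpotent, and that it contains no nontrivial finite normal subgroup.

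First I would dispose of the two standing hypotheses of Theorem~\ref{algchar}. Since $\Gamma$ is finitely generated and virtually nilpotent, it contains a finitely generated nilpotent — hence polycyclic — normal subgroup $N$ of finite index, so $\Gamma$ is polycyclic-by-finite; consequently $\Gamma$ satisfies the maximal condition and every subgroup of $\Gamma$, in particular $E$, is finitely generated. Moreover $E\cap N$ is a nilpotent subgroup of finite index in $E$, so $E$ is virtually nilpotent.

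The main point is the absence of a nontrivial finite normal subgroup in $E$. Here I would show that $E$ has a largest finite normal subgroup $K$: the subgroup generated by all finite normal subgroups of $E$ is finitely generated (as $E$ satisfies max), hence it is already a product $K_1\cdots K_m$ of finitely many finite normal subgroups, and a product of finite normal subgroups is again finite and normal; thus $K$ is itself a finite normal subgroup, necessarily the unique maximal one. Being singled out by intrinsic properties, $K$ is characteristic in $E$, and since $E$ is normal in $\Gamma$ this makes $K$ normal in $\Gamma$. But $\Gamma$ is almost--crystallographic, so by Theorem~\ref{algchar} it has no nontrivial finite normal subgroup; hence $K=1$. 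Applying the implication (3.\,$\Rightarrow$\,1.) of Theorem~\ref{algchar} to $E$ now yields that $E$ is isomorphic to an almost--crystallographic group.

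I expect the only delicate point to be the existence of the maximal finite normal subgroup $K$ and the remark that it is characteristic; once that is in place, normality in $\Gamma$ and the contradiction with the fact that $\Gamma$ has no finite normal subgroup are immediate. A more hands-on route — trying to exhibit $E$ directly as a discrete cocompact subgroup of some $N^\R\semi C$ — is less convenient, because the translational part $E\cap G$ need not be a lattice of the ambient $G$ but only of a proper Mal'cev subgroup, so one would have to pin down the correct model Lie group for $E$ by hand; passing through Theorem~\ref{algchar} avoids this entirely.
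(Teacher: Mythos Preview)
Your argument is correct and follows essentially the same route as the paper's proof: both reduce to the algebraic characterization in Theorem~\ref{algchar}, observe that $E$ is finitely generated virtually nilpotent, and then argue that the (unique) maximal finite normal subgroup of $E$ is characteristic in $E$, hence normal in $\Gamma$, hence trivial. The paper simply asserts the existence of this maximal finite normal subgroup as a known fact about polycyclic-by-finite groups, whereas you supply a short justification via the maximal condition.
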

\begin{proof}
As $\Gamma$ is finitely generated virtually nilpotent, the group $E$ is also finitely generated and virtually nilpotent.
Let $G$ be the unique maximal finite normal subgroup of $E$ (which exists in any polycyclic-by-finite group), then $G$ is 
characteristic in $E$ and hence normal in $\Gamma$. But since $\Gamma$ is almost--crystallographic, this implies that 
$G=1$, which in turn implies that $E$ is almost--crystallographic.
\end{proof}

\medskip

The next result we want to mention is useful for constructing almost--crystallographic groups by induction 
on the Hirsch length (which equals the dimension).

To explain this result, we need some more background on nilpotent groups. 
Let $N$ be a finitely generated torsion free nilpotent group, then generally $N/\gamma_i(N)$ is not torsion free.
But we can introduce the groups 
\[ \sqrt{\gamma_i(N)} = \{ n \in N\;|\; \exists k>0:\; n^k \in \gamma_i(N)\}.\]
Then $\sqrt{\gamma_i(N)} = p^{-1} \tau (N/\gamma_i(N))$, where $\tau (N/\gamma_i(N))$ is the finite group 
of all torsion elements of $N/\gamma_i(N)$ and $p:N \to N/\gamma_i(N)$ is the natural projection. It follows 
that $N/\sqrt{\gamma_i(N)}$ is torsion-free and $\gamma_i(N)$ is contained in $\sqrt{\gamma_i(N)}$ as a subgroup 
of finite index. In fact, $\sqrt{\gamma_i(N)}$ is the smallest normal subgroup $H$ of $N$, containing $\gamma_i(N)$ as a 
subgroup of finite index and for which $N/H$ is torsion free.

This group can also be described using the rational or real Mal'cev completion of $N$:
\begin{Lem} Let $N$ be a finitely generated torsion free nilpotent group, then for all $i>0$:
\[ \sqrt{\gamma_i(N)} = N \cap \gamma_i(N^\Q) = N \cap \gamma_i(N^\R) .\]
\end{Lem}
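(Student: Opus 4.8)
The plan is to prove the two equalities $\sqrt{\gamma_i(N)} = N\cap\gamma_i(N^\Q)$ and $N\cap\gamma_i(N^\Q) = N\cap\gamma_i(N^\R)$ separately, and in each case to exploit the fact (from Theorem~\ref{differential} and its corollaries) that homomorphisms of $N^\F$ correspond to Lie algebra homomorphisms of $\lien^\F$, so that the lower central series of $N^\F$ is carried by $\exp$ to the lower central series $\gamma_i(\lien^\F)$ of the Lie algebra. First I would record the auxiliary fact that $\gamma_i(N^\Q)$ is the rational Mal'cev completion of $\gamma_i(N)$, i.e.\ $\gamma_i(N^\Q) = (\gamma_i(N))^\Q$ sitting inside $N^\Q$, and likewise $\gamma_i(N^\R) = (\gamma_i(N))^\R$. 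This follows because $\gamma_i$ commutes with the $\F$-points construction: a commutator of elements $a_1^{x_1}\cdots a_k^{x_k}$ with $x_j\in\F$ is computed by the universal commutator polynomials, so $\gamma_i(N^\F)$ consists exactly of the elements expressible via $\F$-coordinates in the Mal'cev basis of $\gamma_i(N)$, while $\gamma_i(N)$ is a finitely generated torsion free nilpotent group whose rational completion is obtained by the same polynomials. Equivalently one can phrase this on the Lie algebra side: $\log(\gamma_i(N^\F)) = \gamma_i(\lien^\F) = \F\cdot\log(\gamma_i(N))$, since $\gamma_i$ of a Lie algebra over $\F$ is spanned by $i$-fold brackets and these are $\F$-linear.

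Next, for the inclusion $\sqrt{\gamma_i(N)}\subseteq N\cap\gamma_i(N^\Q)$: if $n\in N$ and $n^k\in\gamma_i(N)$ for some $k>0$, then in the radicable group $N^\Q$ the element $n$ has a unique $k$-th root, and that root must be $n$ itself since $n\in N^\Q$ and $(n)^k = n^k\in\gamma_i(N)\subseteq\gamma_i(N^\Q)$; because $\gamma_i(N^\Q)$ is itself radicable (being a rational Mal'cev completion, by the auxiliary fact), the unique $k$-th root of $n^k$ in $N^\Q$ lies in $\gamma_i(N^\Q)$, hence $n\in\gamma_i(N^\Q)$, so $n\in N\cap\gamma_i(N^\Q)$. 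For the reverse inclusion $N\cap\gamma_i(N^\Q)\subseteq\sqrt{\gamma_i(N)}$: if $n\in N$ lies in $\gamma_i(N^\Q) = (\gamma_i(N))^\Q$, then by the defining property of the rational Mal'cev completion some positive power $n^k$ lies in $\gamma_i(N)$, which is exactly the statement $n\in\sqrt{\gamma_i(N)}$. This settles the first equality.

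For the second equality $N\cap\gamma_i(N^\Q) = N\cap\gamma_i(N^\R)$, one inclusion is immediate since $\gamma_i(N^\Q)\subseteq\gamma_i(N^\R)$ under the natural inclusion $N^\Q\subseteq N^\R$, giving $N\cap\gamma_i(N^\Q)\subseteq N\cap\gamma_i(N^\R)$. For the other direction, suppose $n\in N$ and $n\in\gamma_i(N^\R) = (\gamma_i(N))^\R$. Working in the Lie algebra, $\log(n)\in\gamma_i(\lien^\R) = \R\cdot\log(\gamma_i(N))$. But $\log(n)$ lies in $\lien^\Q = \Q\cdot\log(N)$, and $\gamma_i(\lien^\R) = \gamma_i(\lien^\Q)\otimes_\Q\R$ inside $\lien^\R = \lien^\Q\otimes_\Q\R$, so a vector of $\lien^\Q$ that lies in the $\R$-span of $\gamma_i(\lien^\Q)$ already lies in $\gamma_i(\lien^\Q)$ itself (a rational subspace is the intersection of its real span with the ambient rational space). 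Hence $\log(n)\in\gamma_i(\lien^\Q)$, so $n = \exp(\log(n))\in\exp(\gamma_i(\lien^\Q)) = \gamma_i(N^\Q)$, and therefore $n\in N\cap\gamma_i(N^\Q)$.

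The main obstacle I anticipate is the auxiliary fact that $\gamma_i(N^\F)$ really is the Mal'cev completion of $\gamma_i(N)$ (equivalently, that $\log\gamma_i(N^\F) = \F\cdot\log\gamma_i(N)$); everything else is then a short formal manipulation. Establishing it cleanly is easiest on the Lie algebra side: $\gamma_i$ of $\lien^\F$ is by definition the $\F$-span of $i$-fold brackets of elements of $\lien^\F = \F\cdot\log(N)$, and by multilinearity of the bracket this equals the $\F$-span of $i$-fold brackets of elements of $\log(N)$; comparing with the group side via the Campbell--Baker--Hausdorff correspondence (and using that $\gamma_i(N)$ generates $\gamma_i(\lien^\Z)$ rationally) gives $\gamma_i(\lien^\F) = \F\cdot\log(\gamma_i(N)) = (\lie[\gamma_i(N)])^\F$, which is exactly what is needed. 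Once this is in hand, the rest of the argument is the bookkeeping outlined above.
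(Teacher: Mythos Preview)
The paper states this lemma without proof, so there is no argument of the paper's to compare against. Your approach is sound and the two equalities are handled correctly once the auxiliary fact is granted.

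The real content, as you rightly identify, is the claim that $\gamma_i(N^\F)$ is the radicable hull of $\gamma_i(N)$ inside $N^\F$, equivalently that $\log\gamma_i(N^\F)=\gamma_i(\lien^\F)=\F\cdot\log\gamma_i(N)$. Your Lie-algebra outline is correct but terse; to make it airtight, argue by downward induction on $i$ starting at the nilpotency class $c$: for an $i$-fold group commutator one has
\[
\log\bigl([g_1,\ldots,g_i]\bigr)=[\log g_1,\ldots,\log g_i]\pmod{\gamma_{i+1}(\lien^\F)},
\]
so at $i=c$ the two sides are equal, and for smaller $i$ the error term lies in $\gamma_{i+1}(\lien^\F)=\F\cdot\log\gamma_{i+1}(N)\subseteq\F\cdot\log\gamma_i(N)$ by the inductive hypothesis. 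The parenthetical ``$\gamma_i(N)$ generates $\gamma_i(\lien^\Z)$ rationally'' is vague (there is no $\lien^\Z$ in the paper's setup); replace it with this explicit induction. Also note that for the inclusion $\sqrt{\gamma_i(N)}\subseteq N\cap\gamma_i(N^\Q)$ you only need that $\gamma_i(N^\Q)$ is radicable, which already follows from $\gamma_i(N^\Q)=\exp\gamma_i(\lien^\Q)$ without invoking the full auxiliary fact.

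The second equality is handled exactly right: the linear-algebra observation that a $\Q$-subspace equals the intersection of its $\R$-span with the ambient $\Q$-space gives $\gamma_i(\lien^\R)\cap\lien^\Q=\gamma_i(\lien^\Q)$, and exponentiating finishes it.
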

Using this lemma it follows immediately that  $[ \sqrt{\gamma_i(N)}, \sqrt{\gamma_j(N)}]\subseteq 
\sqrt{\gamma_{i+j}(N)}$. 

\begin{Thm} Let $\Gamma$ be an almost--crystallographic group with translation subgroup $N$. Then for 
any $i>1$, the group $\Gamma/\sqrt{\gamma_i(N)}$ is also an almost--crystallographic group.
\end{Thm}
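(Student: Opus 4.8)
The plan is to reduce the statement to the algebraic characterization of almost-crystallographic groups given in Theorem~\ref{algchar}, namely that a finitely generated virtually nilpotent group is almost-crystallographic if and only if it has no nontrivial finite normal subgroup. So let $\Gamma$ be almost-crystallographic with translation subgroup $N$, fix $i>1$, and write $\bar{\Gamma}=\Gamma/\sqrt{\gamma_i(N)}$. First I would check that $\sqrt{\gamma_i(N)}$ is actually normal in $\Gamma$, not merely in $N$: since $N$ is characteristic in $\Gamma$ (it is the unique normal maximal nilpotent subgroup), and since $\sqrt{\gamma_i(N)}=N\cap\gamma_i(N^\R)$ by the Lemma just above the statement, conjugation by any $\gamma\in\Gamma$ extends to an automorphism of $N^\R$ which preserves the characteristic subgroup $\gamma_i(N^\R)$, hence preserves $\sqrt{\gamma_i(N)}$. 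So $\bar{\Gamma}$ is a well-defined group, and it is finitely generated and virtually nilpotent as a quotient of such a group.

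Next I would identify the translation subgroup of $\bar{\Gamma}$. The image $\bar{N}=N/\sqrt{\gamma_i(N)}$ is a finitely generated torsion free nilpotent group (torsion freeness is exactly the defining property of $\sqrt{\gamma_i(N)}$), it is normal in $\bar{\Gamma}$, and it has finite index since $[\Gamma:N]<\infty$. It remains to see $\bar{N}$ is \emph{maximal} nilpotent in $\bar{\Gamma}$; equivalently, by Theorem~\ref{algchar}, it suffices to show $\bar{\Gamma}$ has no nontrivial finite normal subgroup. Suppose $\bar{K}$ is a finite normal subgroup of $\bar{\Gamma}$, and let $K$ be its preimage in $\Gamma$, so $\sqrt{\gamma_i(N)}\trianglelefteq K\trianglelefteq\Gamma$ with $K/\sqrt{\gamma_i(N)}$ finite. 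Then $K$ is a normal subgroup of $\Gamma$ containing $\sqrt{\gamma_i(N)}$ with finite index; since $\sqrt{\gamma_i(N)}$ is nilpotent (a subgroup of $N$), $K$ is virtually nilpotent, in fact $K$ is polycyclic-by-finite, so $K$ has a unique maximal finite normal subgroup $G$, which is then characteristic in $K$ and hence normal in $\Gamma$. Because $\Gamma$ is almost-crystallographic, $G=1$, so $K$ is torsion free. Now I must conclude $K=\sqrt{\gamma_i(N)}$, i.e.\ that $\bar{K}$ is trivial.

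The last step is the main obstacle, and the natural way to handle it is to show $K\subseteq N$; then a torsion free finite extension of $\sqrt{\gamma_i(N)}$ inside the torsion free nilpotent group $N$, with the property that $N/\sqrt{\gamma_i(N)}$ is torsion free, forces $K=\sqrt{\gamma_i(N)}$ directly (any $k\in K$ has $k^m\in\sqrt{\gamma_i(N)}$ for some $m>0$, hence its image in the torsion free group $N/\sqrt{\gamma_i(N)}$ is trivial). To get $K\subseteq N$: the image of $K$ in $\Gamma/N\cong F$ is a finite group, and we may pass to the preimage $N\cdot K$; since $K$ is torsion free and normal and $\sqrt{\gamma_i(N)}\leq K$ has finite index in $K$, the group $\langle N,K\rangle=NK$ fits in an extension $1\to N\to NK\to \overline{K}_F\to 1$ with $\overline{K}_F$ finite abelian (a quotient of the finite group $K/\sqrt{\gamma_i(N)}$). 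I would then argue, using the maximal nilpotency of $N$ in $\Gamma$ together with torsion freeness of $K$, that $NK=N$: any $k\in K\setminus N$ would, modulo $N\cap\gamma_2(N^\R)$, act on the torsion free abelian group $N/(N\cap\gamma_2(N^\R))$ through a finite-order automorphism induced by an element of the holonomy $F$, and the Lemma on cyclic extensions ($1\to\Z^k\to E\to C_n\to 1$ nilpotent $\Rightarrow$ trivial action) applied along a cyclic subgroup of $K/(K\cap N)$ forces that action to be trivial; by the unipotence-plus-finite-order remark in the proof of Proposition~\ref{maxnilp} the corresponding holonomy element is the identity, so $k\in N$ after all, a contradiction unless $K\subseteq N$. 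With $K\subseteq N$ established, the previous paragraph gives $K=\sqrt{\gamma_i(N)}$, so $\bar{K}=1$; by Theorem~\ref{algchar} this shows $\bar{\Gamma}$ is almost-crystallographic, with translation subgroup $\bar{N}$ as expected.
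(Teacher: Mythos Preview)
Your overall strategy matches the paper's: reduce to Theorem~\ref{algchar} and show $\bar\Gamma=\Gamma/\sqrt{\gamma_i(N)}$ has no nontrivial finite normal subgroup. The setup (normality of $\sqrt{\gamma_i(N)}$ in $\Gamma$, torsion freeness of $\bar N$) is fine. But the execution of the key step has two problems.

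First, a minor one: from ``the maximal finite normal subgroup $G$ of $K$ is trivial'' you conclude ``$K$ is torsion free''. This inference is invalid in general (the infinite dihedral group has no nontrivial finite normal subgroup yet has torsion), and nothing special about $K$ rescues it here: all you know is that $K$ is almost-crystallographic, which does not imply torsion free.

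Second, and this is the real gap: in your argument that $K\subseteq N$ you invoke the cyclic-extension Lemma ($1\to\Z^k\to E\to C_n\to 1$ nilpotent $\Rightarrow$ trivial action) ``along a cyclic subgroup of $K/(K\cap N)$'', but you never identify an extension $E$ that is actually known to be \emph{nilpotent}. The obvious candidate $\langle N,k\rangle/\sqrt{\gamma_2(N)}$ is precisely what you are trying to prove something about; assuming it nilpotent is circular. Without this, nothing forces the holonomy element of $k$ to act trivially on $N/\sqrt{\gamma_2(N)}$, and the argument stalls.

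The paper closes this gap with one clean observation you are missing: if $\bar K\trianglelefteq\bar\Gamma$ is finite, then $[\bar K,\bar N]\subseteq\bar K\cap\bar N=1$, since $\bar K$ is finite, $\bar N$ is torsion free, and both are normal. Hence any lift $e\in\Gamma$ of a nontrivial element of $\bar K$ satisfies $e\notin N$ (else its image lies in the torsion free $\bar N$), and conjugation by $e$ is already trivial on $N/\sqrt{\gamma_i(N)}$, hence on $N/\sqrt{\gamma_2(N)}$ (here $i>1$ is used). Now the unipotence-plus-finite-order remark from Proposition~\ref{maxnilp} applies directly to the holonomy part of $e$, giving $e\in N$, a contradiction. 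No appeal to the cyclic-extension Lemma, and no need to analyse $K$ at all.
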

\begin{proof}
There is a short exact sequence $1 \to N \to \Gamma \to F\to 1$ where $F$ is a finite group and $N$ is maximal 
nilpotent in $\Gamma$. This gives rise to a short exact sequence
$1 \to N /\sqrt{\gamma_i(N)}\to \Gamma/\sqrt{\gamma_i(N)} \to F \to 1$, so that $\Gamma/\sqrt{\gamma_i(N)}$
contains the finitely generated torsion free nilpotent group $N /\sqrt{\gamma_i(N)}$ as a subgroup of finite index.
By Theorem~\ref{algchar} it suffices to show that $\Gamma/\sqrt{\gamma_i(N)}$ does not contain 
a nontrivial finite normal subgroup $G$. Suppose on the contrary that such a $G$ does exist, then 
$[ N /\sqrt{\gamma_i(N)}, G]=1$. So any element $g\in G$ commutes with $N /\sqrt{\gamma_i(N)}$.
Now, let $e\in E$, such that its image in $\Gamma/\sqrt{\gamma_i(N)} $ is a nontrivial element in 
$G$. Then $e\not\in N$ and conjugation with $e$ in $E$ induces the identity automorphism on 
$N/\sqrt{\gamma_i(N)}$. It follows that it also induces the identity automorphism on $N/\sqrt{\gamma_2(N)}$.
From, this it now follows that conjugation with $e$ induces the identity automorphism on each of the 
quotients $\sqrt{\gamma_j(N)}/\sqrt{\gamma_{j+1}(N)}$. This is enough to conclude that the group 
$\langle N, e\rangle$ is nilpotent, contradicting the maximal nilpotency of $N$ in $E$. 
\end{proof}

\medskip

This theorem shows that if $\Gamma$ is an almost--crystallographic group, with a $c$--step nilpotent 
translation subgroup $N$, $\Gamma$ fits in a short exact sequence 
\[ 1 \to \sqrt{\gamma_c(N)} \to \Gamma \to \Gamma/\sqrt{\gamma_c(N)} \to 1. \]
Here $\sqrt{\gamma_c(N)}\cong \Z^k$ (for some $k$) and $\Gamma/\sqrt{\gamma_c(N)}$ is an almost--crystallographic subgroup containing $N/\sqrt{\gamma_c(N)}$ as its translation subgroup. The short exact sequence
is almost central, since $N/\sqrt{\gamma_c(N)}$ acts trivially on $\sqrt{\gamma_c(N)}$. This implies that to construct all possible $\Gamma$, we need to construct all possible almost-central extensions of $\Z^k$ by all possible 
$\Gamma/\sqrt{\gamma_c(N)}$, which leads to a classification procedure by induction on the dimension. This approach was used to obtain the classification in dimensions 3 and 4 in \cite{deki96-1}.

\medskip

To end this section, we introduce the so called rational representation of an almost--crys\-tal\-lo\-gra\-phic group.
In the proof of Theorem~\ref{algchar}, we constructed for any abstract crystallographic group 
an embedding $j: E \to N^\R \semi \Aut(N^\R)$. One can also do this proof by using $N^\Q$ in stead of $\N^\R$.
In this way we will obtain an embedding $i:E \to N^\Q \semi \Aut(N^\Q)$, in such a way that the 
composite map $E\to N^\Q \semi \Aut(N^\Q) \to \Aut(N^\Q)$ has a finite image, say $F$. 
If we start this construction with $N$ being the unique normal and maximal nilpotent subgroup of $E$, then 
$F=E/N$. If $N'$ is another normal nilpotent group of finite index in $E$, then $N'\subseteq N$, $N'^\Q=N^\Q$ 
and $E/N'$ is a group which maps onto $F$ (indeed $(E/N')/(N/N')=F$).

\begin{Def} Let $\Gamma$ be an almost--crystallographic group. A rational realization of $\Gamma$ is an embedding 
\[ i: \Gamma \to N^\Q\semi \Aut(N^\Q) \] 
where $N$ is a finitely generated torsion free nilpotent group and the composition 
$\Gamma \to N^\Q\semi \Aut(N^\Q) \to \Aut(N^\Q)$ has finite image, say $F$.
\end{Def}
We just explained that any almost-crystallographic group has a rational representation. Moreover,
if we consider the composition $j: \Gamma \stackrel{i}{\to}N^\Q\semi \Aut(N^\Q) \to
N^\R\semi \Aut(N^\R) $, then $j(\Gamma)$ is a genuine almost--crystallographic subgroup with translation subgroup 
$j(\Gamma)\cap N^\R = i(\Gamma) \cap N^\Q$. It follows that the group $F$ is the holonomy group of $\Gamma$ 
(see the generalized first Bieberbach theorem). The induced representation 
\[ \phi: F \to \Aut(N^\Q) \] 
is called the rational holonomy representation\index{rational holonomy representation}. This representation depends on the choice of $\varphi$ and 
its chosen lift $\tilde{\varphi}'$. Other choices will alter $\varphi$ by an inner automorphism of $N^\Q$.
Note that since $\Aut(N^\Q)\cong \Aut(\lien^\Q)$, we can view the rational holonomy representation as a 
representation into a rational vector space. Analogously, one can define the real holonomy representation\index{real holonomy representation}  (which 
is often just called the holonomy representation\index{holonomy representation}). 

\medskip

It is often useful to consider only the induced representation 
\[ \varphi_{ab}:F \to \Aut(N^\Q /[N^\Q,N^\Q]) = \Aut(\lien^\Q/[\lien^\Q,\lien^\Q]) \]
which will be referred to as the abelianized rational holonomy representation.\label{abrathol} This abelianized
representation is independent of any choices made for $\varphi$ or $\tilde{\varphi}'$.

\section{Maps on infra--nilmanifolds}\label{maps}
In this section we will describe all possible maps between two infra--nilmanifolds up to homotopy.
In order to do this, we need to generalize the second Bieberbach theorem even further.

Let $G_1$ and $G_2$ be two simply connected nilpotent Lie group, then we will use 
$\End(G_1,G_2)$ to denote the set of continuous homomorphisms from $G_1$ to $G_2$.
We now let $\aff(G_1,G_2)=G_2\times \End(G_1,G_2)$ and any $(d,\delta)\in \aff(G_1,G_2)$ 
(so $d\in G_2$ and $\delta\in \End(G_1,G_2)$) determines a so called affine map
\[ (d,\delta):G_1 \to G_2: g \mapsto d\cdot \delta (g).\]
Note that in case $G_1=\R^k$ and $G_2=\R^l$, $\aff(\R^k,\R^l)$ is the usual set of affine maps from $\R^k$ to $\R^l$.

When $G_1=G_2=G$ we will use $\aff(G)$ to denote $\aff(G,G)$. In this case $\aff(G)=G\semi \End(G)$ (where $\End(G)=\End(G,G)$) is a semigroup containing the group $\Aff(G)$. Then, $\Aff(G)$ consists exactly of the invertible elements of $\aff(G)$. It was K.B.~Lee who first proved the more general version 
of the generalized second Bieberbach theorem in \cite{lee95-2} in the case $G_1=G_2$. A slightly adapted proof for the case 
$G_1\neq G_2$ can be found e.g.\ in \cite{ll09-1}. (In fact, in this last paper the groups $G_1$ and $G_2$ are assumed to 
be solvable of type (R), which is even more general than being nilpotent). As I cannot improve their arguments, I refer 
to those papers for the proof.
\begin{Thm}\label{keyLee}
Take $i=1$ or $2$ and Let $G_i$ be a simply connected nilpotent Lie group, $C_i\subseteq \Aut(G_i)$ be a maximal 
compact subgroup and $\Gamma_i\subseteq G_i\semi C_i$ be an almost--crystallographic group.
Then, for any homomorphism $\theta:\Gamma_1\to \Gamma_2$, there exists 
an affine map $(d,\delta) \in \aff(G_1,G_2)$ such that 
\[ \forall \gamma\in \Gamma_1: \; \theta(\gamma) \circ (d,\delta) = (d, \delta) \circ \gamma.\]
\end{Thm}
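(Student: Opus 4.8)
The plan is to reduce the statement to a fact about the (real) Mal'cev completions and then to the linear algebra of maps between simply connected nilpotent Lie groups. First I would pass from the almost--crystallographic groups $\Gamma_i$ to their translation subgroups $N_i = \Gamma_i \cap G_i$, which are lattices of $G_i$ by the generalized first Bieberbach theorem (Theorem~\ref{bieb1}). The homomorphism $\theta \colon \Gamma_1 \to \Gamma_2$ need not map $N_1$ into $N_2$ directly, but $\theta(N_1)$ is a finitely generated virtually nilpotent subgroup of $\Gamma_2$, and one shows that the group $\theta^{-1}\!\big(\theta(N_1)\cap N_2\big) \cap N_1$ has finite index in $N_1$; replacing $N_1$ by this finite-index subgroup if necessary (which does not change the Mal'cev completion $N_1^\R = G_1$), we may assume $\theta(N_1) \subseteq N_2$, hence $\theta$ restricts to a homomorphism $N_1 \to N_2$. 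More carefully, since $N_i$ is normal and maximal nilpotent in $\Gamma_i$ with $\Gamma_i/N_i$ finite, one argues that the preimage under $\theta$ of $N_2$ is a finite-index nilpotent normal subgroup of $\Gamma_1$ and therefore contained in $N_1$ up to finite index. The restriction $\theta|_{N_1}$ then extends uniquely to a continuous homomorphism $\delta_0 \colon G_1 = N_1^\R \to N_2^\R = G_2$ by the Mal'cev extension property quoted in the excerpt.

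Next I would promote $\delta_0$ to the sought affine map $(d,\delta)$ on all of $\Gamma_1$. Write a general element of $\Gamma_1$ as $\gamma = (n, \bar\gamma)$ with $n \in G_1$, $\bar\gamma \in C_1$, and similarly for $\Gamma_2$. The conjugation action of $\Gamma_1$ on $N_1$ (equivalently on $G_1$), together with the conjugation action of $\Gamma_2$ on $N_2$ via $\theta$, forces a compatibility: for each $\gamma \in \Gamma_1$ there is an automorphism of $G_2$ implementing $\theta$-equivariance of $\delta_0$ on the subgroup $N_1$. The key structural input is that a continuous homomorphism between simply connected nilpotent Lie groups is determined by, and corresponds to, a Lie algebra homomorphism (Theorem~\ref{differential} and its consequences), so all of these compatibilities can be checked at the level of $\mathfrak{g}_1 = \mathrm{Lie}(G_1)$ and $\mathfrak{g}_2 = \mathrm{Lie}(G_2)$, where they become linear statements. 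One then sets $\delta \in \End(G_1,G_2)$ to be (a possibly corrected version of) $\delta_0$ and chooses the translational part $d \in G_2$ so that the identity $\theta(\gamma)\circ(d,\delta) = (d,\delta)\circ\gamma$ holds; checking this on generators of $\Gamma_1$ lying in $N_1$ is exactly the Mal'cev extension, and checking it on a finite set of coset representatives of $N_1$ in $\Gamma_1$ is a finite computation. Finally, one invokes a cohomological vanishing (or an averaging over the finite group $F_1 = \Gamma_1/N_1$) to adjust $d$ so that the cocycle obstruction measuring failure of equivariance on the non-translational part is killed; this is where the finiteness of $F_1$ and the torsion-freeness/divisibility of $N_2^\R$ are used, much as in the proof of Theorem~\ref{algchar} where a similar splitting argument via Lemma~\ref{split} appears.

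The main obstacle, and the step I expect to require the most care, is exactly this last point: extending the equivariance from the lattice to the full group. On $N_1$ the map $\delta_0$ is canonically determined, but $\theta(\bar\gamma)$ for $\bar\gamma$ in the holonomy part need not be an automorphism of $G_2$ at all — it is only an element of $\Gamma_2 \subseteq G_2 \semi C_2$ — so one must show that the "correction" needed is realized by an actual translation $d \in G_2$ rather than something more complicated. This is handled by the standard trick: the potential obstruction lives in $H^1(F_1, G_2)$ or $H^2(F_1, Z(G_2))$ with $F_1$ finite acting on the uniquely divisible group $G_2$ (or its center), and such cohomology vanishes, so the required $d$ exists. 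Since, as the excerpt itself notes, this argument is carried out in full in \cite{lee95-2} for $G_1 = G_2$ and in \cite{ll09-1} for the general (even solvable type (R)) case, and I cannot improve on their treatment, I would at this point cite those references for the detailed verification of the cohomological step and the compatibility bookkeeping, having reduced the statement to it.
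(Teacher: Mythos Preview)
The paper does not actually prove this theorem: immediately before the statement it says ``As I cannot improve their arguments, I refer to those papers for the proof,'' citing \cite{lee95-2} and \cite{ll09-1}. So there is no proof in the paper to compare against beyond the citation.

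Your proposal ends in the same place --- citing \cite{lee95-2} and \cite{ll09-1} for the detailed verification --- but you add an outline that is broadly faithful to what those papers do: pass to a finite-index sublattice of $N_1$ that $\theta$ maps into $N_2$ (the paper does exactly this in the paragraph \emph{after} the theorem, using the subgroup generated by $k$-th powers with $k=|F_1|\cdot|F_2|$), extend to $\delta\colon G_1\to G_2$ via the Mal'cev completion, and then solve for the translational part $d$ using that the relevant nonabelian $H^1$ of a finite group with coefficients in a simply connected nilpotent Lie group vanishes. That last step is indeed the heart of Lee's argument. Two small points of looseness in your sketch: your ``more carefully'' sentence claiming $\theta^{-1}(N_2)$ is nilpotent is not right (it contains $\ker\theta$, which need not be nilpotent-compatible in that way) --- your first formulation, intersecting with $N_1$, was already the correct one; and the cocycle is twisted by the holonomy of $\theta(\Gamma_1)$ rather than by $F_1$ acting on $G_2$ na\"ively, though the vanishing argument is the same. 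Neither of these is a genuine gap given that you defer the details to the cited references, exactly as the paper does.
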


Let $\Gamma_i$ ($i=1,2$) be as in the theorem, then there are short exact sequences 
$1 \to N_i \to \Gamma_i \to F_i\to 1$, where $N_i$ is the translation subgroup of $\Gamma_i$ and 
$F_i$ is the finite holonomy group of $\Gamma_i$. Let $k=|F_1|\cdot |F_2|$. 
let $N$ be the subgroup of $\Gamma_1$ which is generated by the 
set $\{\gamma^k\;|\; \gamma \in \Gamma_1\}$. Then $N\subseteq N_1$ is a subgroup of finite index in 
$\Gamma_1$ (and hence also in $N_1$) and $\theta(N_1)\subseteq N_2$. Let $\alpha=\theta_{|N}$, then 
as $N^\R=N_1^\R=G_1$, there is a unique extension of  $\alpha$ to a homomorphism $\tilde{\alpha}:G_1 \to G_2$.
For all $n\in N$ we have that 
\[
(\tilde{\alpha}(n) d, \delta) = (\tilde{\alpha} (n),1)  \circ (d,\delta)  =  \theta (n,1) \circ (d,\delta) \\
 =  (d, \delta ) \circ (n,1) = (d \delta(n) ,\delta) 
\]
showing that $\tilde{\alpha}(n)=d \delta(n) d^{-1}$ for all $n\in N$. Since $N$ is 
a lattice in $G_1$, it follows that $\tilde{\alpha}(g)=d \delta(g) d^{-1}$ for all $g\in G_1$. Hence,
$\delta$ is determined, up to an inner automorphism of $G_2$, by $\alpha$, the restriction of 
$\theta$ to $N$. We have the following:
\begin{Lem}
With the notations of Theorem~\ref{keyLee}:
\begin{enumerate}
\item $\delta$ is injective $\Leftrightarrow$ $\theta$ is injective $\Leftrightarrow$ $\theta_{| H}$ is injective, where 
$H$ is any finite index subgroup of $\Gamma_1$.
\item $\delta$ is surjective if and only if $\theta(\Gamma_1)$ is of finite index in $\Gamma_2$.
\end{enumerate}
\end{Lem}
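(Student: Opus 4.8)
The plan is to exploit the affine map $(d,\delta)$ from Theorem~\ref{keyLee} together with the relation $\tilde\alpha(g) = d\,\delta(g)\,d^{-1}$ already established on $G_1$, where $\tilde\alpha:G_1\to G_2$ is the Mal'cev extension of $\alpha = \theta_{|N}$. Since $\delta$ and $\tilde\alpha$ differ only by the inner automorphism $\mu_d$ of $G_2$, the map $\delta$ is injective (resp.\ surjective) if and only if $\tilde\alpha$ is. So for part~(1) it suffices to relate injectivity of $\tilde\alpha$, injectivity of $\alpha$, injectivity of $\theta_{|H}$ for an arbitrary finite index subgroup $H$, and injectivity of $\theta$.

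\medskip

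For part~(1), I would argue as follows. First, $\tilde\alpha$ injective $\Leftrightarrow$ $\alpha$ injective: one direction is trivial since $N\subseteq G_1$; for the converse, if $\alpha=\tilde\alpha_{|N}$ is injective on the lattice $N$ then $\Ker\tilde\alpha$ is a closed normal subgroup of $G_1$ meeting the lattice $N$ trivially, hence $\Ker\tilde\alpha$ is discrete in $G_1$; but a connected Lie group has no nontrivial discrete normal subgroup contained in a... actually more directly, $\Ker\tilde\alpha$ is a connected subgroup (kernel of a Lie homomorphism) meeting $N$ trivially, and since $N$ is a lattice (cocompact), a nontrivial connected subgroup must meet $N$ nontrivially unless it is trivial --- so $\tilde\alpha$ is injective. (Alternatively: pass to Lie algebras via Theorem~\ref{differential}; $\alpha_\ast$ is injective on the $\Q$- or $\R$-span of $\log N$, which is all of $\lien_1^\R$, so $\tilde\alpha_\ast$ is injective, hence $\tilde\alpha$ is.) Next, $\alpha$ injective $\Leftrightarrow$ $\theta_{|H}$ injective for any finite index $H\leq\Gamma_1$: since $N$ has finite index in $\Gamma_1$, both $N\cap H$ is finite index in $N$ and in $H$; a homomorphism from a torsion-free (virtually) nilpotent group, in particular from $N$ or $H$, is injective iff its restriction to any finite index subgroup is injective --- this is because $\Ker(\theta_{|H})\cap(N\cap H)$ is trivial forces $\Ker(\theta_{|H})$ to be finite, hence trivial by torsion-freeness, and conversely. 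Finally, taking $H=\Gamma_1$ gives the equivalence with $\theta$ injective.

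\medskip

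For part~(2), the key is that $\theta(\Gamma_1)$ has finite index in $\Gamma_2$ $\Leftrightarrow$ $\theta(N)$ has finite index in $N_2$ (since $N,N_2$ have finite index in $\Gamma_1,\Gamma_2$ and $\theta(N_1)\subseteq N_2$), i.e.\ $\Leftrightarrow$ $\alpha(N)$ is a finite index subgroup of the lattice $N_2$ in $G_2$. A finite index subgroup of a lattice is again a lattice, in particular still cocompact and of the same Hirsch length / dimension; conversely if $\alpha(N)$ is a lattice in $G_2$ then it has finite index in $N_2$ (two lattices in the same simply connected nilpotent Lie group, one containing the other, are commensurable, and containment forces finite index). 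Now $\alpha(N) = \tilde\alpha(N)$ is a lattice in $G_2$ precisely when $\tilde\alpha(G_1)$ is all of $G_2$: if $\tilde\alpha$ is surjective then $\tilde\alpha(N)$ is a cocompact discrete subgroup, i.e.\ a lattice; if $\tilde\alpha$ is not surjective then $\tilde\alpha(G_1)$ is a proper closed connected subgroup, so $\tilde\alpha(N)$ is contained in a proper closed subgroup of $G_2$ and cannot be cocompact. And $\tilde\alpha$ surjective $\Leftrightarrow$ $\delta$ surjective, closing the loop. I would phrase the ``lattice image'' facts via the Lie algebra correspondence (Theorem~\ref{differential}): $\tilde\alpha(G_1)=G_2$ iff $\tilde\alpha_\ast(\lien_1^\R)=\lien_2^\R$, which is a linear surjectivity condition, and the image of the lattice $N$ spans $\tilde\alpha_\ast(\lien_1^\R)$ over $\R$.

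\medskip

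The main obstacle I anticipate is the step ``$\tilde\alpha(N)$ cocompact in $G_2$ $\Rightarrow$ $\tilde\alpha$ surjective'': one must rule out that a lattice of $G_2$ could sit inside a proper closed connected subgroup, which is true but requires knowing that lattices are not contained in proper closed subgroups of simply connected nilpotent Lie groups (equivalently, that a cocompact subgroup generates a dense, hence full, connected subgroup). This follows from standard Mal'cev theory --- the $\R$-span of $\log$ of a lattice is the whole Lie algebra --- and I would simply cite the structure theory recalled earlier (the theorem identifying $N^\R=\exp(\R\log N)$) rather than reprove it. Everything else reduces to the torsion-freeness of $N$ and the finite-index bookkeeping among $N,N_i,\Gamma_i,H$.
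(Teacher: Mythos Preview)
Your approach is essentially the same as the paper's: both reduce everything to properties of $\tilde\alpha$ via the conjugation relation $\tilde\alpha = \mu_d\circ\delta$, and both translate between $\tilde\alpha$, $\alpha$, $\theta_{|H}$, and $\theta$ using finite-index arguments and the lattice/Mal'cev correspondence. Your treatment of part~(2) matches the paper's almost verbatim, and the ``obstacle'' you anticipate (a lattice cannot lie in a proper closed connected subgroup) is indeed handled by the Mal'cev theory recalled earlier.

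There is one genuine slip in part~(1). You conclude that $\Ker(\theta_{|H})$, once known to be finite, must be trivial ``by torsion-freeness''. But $H$ is an arbitrary finite-index subgroup of the almost-crystallographic group $\Gamma_1$, and $\Gamma_1$ need not be torsion-free (Theorem~\ref{keyLee} does not assume $\Gamma_1$ is almost-Bieberbach), so neither need $H$ be. The paper avoids this by working directly with $\Ker\theta$ rather than $\Ker(\theta_{|H})$: if $\theta_{|H}$ is injective then $\Ker\theta\cap H=1$, so $\Ker\theta$ is a finite \emph{normal} subgroup of $\Gamma_1$, and Theorem~\ref{algchar} (an almost-crystallographic group has no nontrivial finite normal subgroup) forces $\Ker\theta=1$. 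Your chain $\alpha\text{ inj.}\Rightarrow\theta_{|N\cap H}\text{ inj.}\Rightarrow\theta_{|H}\text{ inj.}$ should be rerouted through $\theta$ itself in the same way: $\alpha$ injective gives $\Ker\theta\cap N=1$, hence $\Ker\theta$ finite and normal in $\Gamma_1$, hence trivial, hence $\theta_{|H}$ injective for every $H$. With this correction your argument is complete and coincides with the paper's.
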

\begin{proof} Let $H$ be a finite index subgroup of $\Gamma_1$
We first of all show that $\theta$ is injective $\Leftrightarrow$ $\theta_{| H}$ is injective. Of course, one direction is 
obvious. Now suppose that $\theta_{| H}$ is injective. Since $H$ is of finite index in $\Gamma_1$, this implies that 
the kernel of $\theta$ is finite. But $\Gamma_1$ does not contain any  non trivial finite normal subgroup,
so $\theta$ is injective. Now, let $N$ be the subgroup of $\Gamma_1$ with $\theta(N)\subseteq N_2$ as mentioned 
above and let $\tilde{\alpha}:G_1 \to G_2$  be the unique lift of $\alpha=\theta_{|N}$. Then 
$\tilde{\alpha}$ is injective if and only if $\alpha$ is injective. Moreover, as explained above
$\tilde{\alpha}(g)=d \delta(g) d^{-1}$ for all $g\in G_1$ from which it follows that $\tilde{\alpha}$ is injective if and 
only if $\delta$ is injective. We can conclude that $\delta$ is injective if and only if $\alpha$ is injective 
if and only if $\theta$ is injective.

\medskip

Analogously, $\delta$ is surjective if and only if $\tilde{\alpha}$ is. Now
$\tilde{\alpha}$ is surjective if and only if $\alpha(N)$ is a lattice of $G_2$. As 
$\alpha(N)=\theta(N)\subseteq N_2$, we have that $\theta(N)$ is a lattice of $G_2$ if and only if $\theta(N)$ 
is of finite in $N_2$, which is equivalent to being of finite index in $\Gamma_2$. But $\theta(N)$ being 
of finite index in $\Gamma_2$ is equivalent to $\theta(\Gamma_1)$ being of finite index in $\Gamma_2$, so we proved that 
$\delta$ is surjective if and only if $[\Gamma_2:\theta(\Gamma_1)]<\infty$.
\end{proof}

\begin{Rmk}
When $G_1=G_2$ and $\theta:\Gamma_1\to \Gamma_2$ is an isomorphism, then the above lemma 
implies that $\delta\in \Aut(G)$ and so $(d,\delta)\in \Aff(G)$ and we have that 
\[ \forall \gamma \in \Gamma_1: \; \theta(\gamma) \circ (d,\delta) = (d,\delta) \circ \gamma \mbox{ or 
equivalently } \theta(\gamma) = (d,\delta ) \gamma (d,\delta)^{-1},\]
which is exactly what Theorem~\ref{tweedebieb} says.
\end{Rmk}

Now, we will use this in the study of maps between two infra-nilmanifolds and therefore from now onwards
we assume that $\Gamma_1$ and $\Gamma_2$ are almost-Bieberbach groups modeled on $G_1$ and $G_2$ respectively.
Let $p_i:G_i \to \Gamma_i\backslash G_i$ ($i=1,2$) denote the natural projections ($G_i$ is the 
universal covering space of $\Gamma_i\backslash G_i$).

\medskip

The generalized second Bieberbach Theorem gives a  way of constructing maps between two infra--nilmanifolds
\begin{Lem} \label{affmap}
For $i=1,2$, let $\Gamma_i$ be an almost--Bieberbach group modeled on a simply connected nilpotent Lie group $G_i$ and
let $\theta:\Gamma_1\to \Gamma_2$ be any homomorphism. Consider  $(d,\delta)\in \aff(G_1,G_2)$ with 
\[ \forall \gamma\in \Gamma_1: \; \theta(\gamma) \circ (d,\delta) = (d, \delta) \circ \gamma,\]
then $(d,\delta)$ induces a map 
\[ \overline{(d,\delta)} :\; \Gamma_1\backslash G_1 \to \Gamma_2\backslash G_2: \Gamma_1\cdot g \mapsto \Gamma_2 \cdot ((d,\delta)\cdot g)=
\Gamma_2 \cdot d\delta(g).\]
\end{Lem}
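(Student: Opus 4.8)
The plan is to show that the continuous map $(d,\delta):G_1\to G_2$ is $\theta$--equivariant with respect to the $\Gamma_1$-- and $\Gamma_2$--actions, and then to pass to the orbit spaces via the universal property of the quotient topology. Concretely, I would first note that since $\Gamma_1$ and $\Gamma_2$ act freely and properly discontinuously on $G_1$ and $G_2$, the projections $p_i:G_i\to\Gamma_i\backslash G_i$ are covering maps, in particular open quotient maps. Hence it suffices to produce a continuous map $G_1\to\Gamma_2\backslash G_2$ that is constant on $\Gamma_1$--orbits: the natural candidate is the composite $p_2\circ(d,\delta)$, which is continuous because $(d,\delta)$ is, by definition of $\aff(G_1,G_2)$, a genuine continuous map $G_1\to G_2$.

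The heart of the verification is then a one--line computation. Fix $g\in G_1$ and $\gamma\in\Gamma_1$, and evaluate the intertwining hypothesis $\theta(\gamma)\circ(d,\delta)=(d,\delta)\circ\gamma$ at $g$:
\[(d,\delta)\cdot(\gamma\cdot g)=\bigl((d,\delta)\circ\gamma\bigr)(g)=\bigl(\theta(\gamma)\circ(d,\delta)\bigr)(g)=\theta(\gamma)\cdot\bigl((d,\delta)\cdot g\bigr).\]
Since $\theta(\gamma)\in\Gamma_2$, the points $(d,\delta)\cdot(\gamma\cdot g)$ and $(d,\delta)\cdot g$ lie in the same $\Gamma_2$--orbit, so $p_2\bigl((d,\delta)\cdot(\gamma\cdot g)\bigr)=p_2\bigl((d,\delta)\cdot g\bigr)$. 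As every point of the $\Gamma_1$--orbit of $g$ has the form $\gamma\cdot g$ for some $\gamma\in\Gamma_1$, this shows that $p_2\circ(d,\delta)$ is constant on each $\Gamma_1$--orbit in $G_1$.

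Consequently there is a unique set--map $\overline{(d,\delta)}:\Gamma_1\backslash G_1\to\Gamma_2\backslash G_2$ with $\overline{(d,\delta)}\circ p_1=p_2\circ(d,\delta)$, and reading off the formula it is precisely $\Gamma_1\cdot g\mapsto\Gamma_2\cdot d\delta(g)$; its continuity is immediate because $p_1$ is a quotient map and $p_2\circ(d,\delta)$ is continuous. I do not expect any real obstacle here: the only points that require a little care are the direction in which the intertwining identity is used (it pushes $\Gamma_1$--translates forward to $\Gamma_2$--translates along $\theta$, which is why $\theta(\gamma)$ rather than $\gamma$ appears on the image side) and the implicit use of the fact, guaranteed by Theorem~\ref{keyLee}, that such a $(d,\delta)$ exists in the first place, so that the statement is non--vacuous.
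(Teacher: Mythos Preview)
Your proof is correct and follows exactly the same approach as the paper: the core is the one-line computation
\[
(d,\delta)\cdot(\gamma\cdot g)=\bigl((d,\delta)\circ\gamma\bigr)(g)=\bigl(\theta(\gamma)\circ(d,\delta)\bigr)(g)=\theta(\gamma)\cdot\bigl((d,\delta)\cdot g\bigr),
\]
which is precisely what the paper does. You add an explicit discussion of continuity via the quotient topology, which the paper leaves implicit; your closing remark about Theorem~\ref{keyLee} guaranteeing existence is unnecessary for the lemma itself (it is a conditional statement), but harmless.
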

\begin{proof}
We have to show that  $\overline{(d,\delta)}$ is well-defined. So take $g\in G_1$ and $\gamma\in \Gamma_1$, then 
we need to check that $(d,\delta)\cdot g$ and $(d,\delta) \cdot ( \gamma\cdot g)$ lie in the same $\Gamma_2$-orbit.
But this is obvious since:
\[ (d,\delta) \cdot ( \gamma\cdot g) = ((d,\delta)\circ \gamma) \cdot g = (\theta(\gamma) \circ (d,\delta))\cdot g =
\theta(\gamma) \cdot ( (d,\delta)\cdot g) \in \Gamma_2 \cdot ((d,\delta)\cdot g).\]
\end{proof}

\begin{Def}
We will refer to a map $\overline{(d,\delta)}:\Gamma_1\backslash G_1 \to \Gamma_2\backslash G_2$ as in the lemma above, as an
affine map between the two infra-nilmanifolds\index{affine map between infra-nilmanifolds}.
\end{Def}

\medskip

Consider now any (continuous) map
\[ f: \Gamma_1\backslash G_1 \to \Gamma_2\backslash G_2\]
and fix a lift $\tilde{f}_0:G_1 \to G_2$  of $f$. So we have a commutative diagram 
\[
\xymatrix{ G_1 \ar[r]^{\tilde{ f}_0} \ar[d]_{p_1} & G_2\ar[d]^{p_2}\\
\Gamma_1\backslash G_1 \ar[r]_f & \Gamma_2\backslash G_2}
\]
Such a lift $\tilde{f}_0$ always exists and if we compose $\tilde{f}_0$ with $\beta\in \Gamma_2$, then $\tilde{f}=
\beta \circ \tilde{f}_0$ will be another lift of $f$. And in fact all lifts $\tilde{f}$ of $f$ can be uniquely written 
as a composition $\beta \circ \tilde{f}_0$. So there is a 1--1 correspondence between the lifts of $f$ and $\Gamma_2$.
On the other hand, for any $\alpha \in \Gamma_1$, we also have that $\tilde{f}_0 \circ \alpha$ is a lift of 
$f$. Hence, there exists a $\beta \in \Gamma_2$ such that $\tilde{f}_0 \circ \alpha = \beta \circ \tilde{f}_0$.

It follows that the map $f$ induces a homomorphism $f_\ast: \Gamma_1 \to \Gamma_2$ which is 
determined by 
\[ \forall \gamma\in \Gamma_1: \; \ \tilde{f}_0 \circ \gamma = f_\ast(\gamma) \circ \tilde{f}_0 .\]
The homomorphism $f_\ast$ depends on the choice of the reference lift $\tilde{f}_0$. Another choice of lift will change 
$f_\ast$ by an inner automorphism of $\Gamma_2$.
\begin{Def}
We will refer to $f_\ast:\Gamma_1\to \Gamma_2$ as the induced homomorphism by $f$ (with respect to the reference 
lift $\tilde{f}_0$).
\end{Def}

\begin{Rmk}
Let $\overline{(d,\delta)}$ be an affine map (associated to $\theta:\Gamma_1 \to \Gamma_2$) as in
Lemma~\ref{affmap}. If one takes $(d,\delta)$ as the reference lift of $\overline{(d,\delta)}$, then the 
equation $ \theta(\gamma) \circ (d,\delta) = (d, \delta) \circ \gamma$ shows that 
\[ \overline{(d,\delta)}_\ast = \theta.\]
It follows that for any homomorphism $\theta: \Gamma_1 \to \Gamma_2$, we can construct 
a map $f:\Gamma_1\backslash G_1 \to \Gamma_2\backslash G_2$ such that $f_\ast = \theta$ (where we can take $f$ to be an 
affine map).
\end{Rmk}

\begin{Rmk}
Under the right identifications of $\Gamma_i$ with the fundamental group of $\Gamma_i\backslash G_i$ ($i=1,2$) we 
have that $f_\ast$ exactly corresponds to the induced map on the 
fundamental group $f_\ast:\Pi_1(\Gamma_1\backslash G_1,x) \to \Pi_1(\Gamma_2\backslash G_2,f(x))$.
\end{Rmk}

\begin{Thm}
For $i=1,2$, let $\Gamma_i$ be an almost--Bieberbach group modeled on a simply connected nilpotent Lie group $G_i$ and
let $f,g:\Gamma_1\backslash G_1 \to \Gamma_2\backslash G_2$ be two maps such that $f_\ast = g_\ast:\Gamma_1 \to \Gamma_2$
(where $f_\ast$ and $g_\ast$  are the induced homomorphisms w.r.t.\ some reference lifts $\tilde{f}$ and $\tilde{g}$).\\
Then $f$ and $g$ are homotopic.
\end{Thm}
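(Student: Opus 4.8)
The plan is to lift both maps to the universal covers $G_1, G_2$, construct an explicit homotopy there that is equivariant for the $\theta$-twisted actions, and push it down to the infra-nilmanifolds.

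Set $\theta := f_\ast = g_\ast : \Gamma_1 \to \Gamma_2$. By the definition of the induced homomorphism, the reference lifts satisfy $\tilde f\circ\gamma = \theta(\gamma)\circ\tilde f$ and $\tilde g\circ\gamma = \theta(\gamma)\circ\tilde g$ for all $\gamma\in\Gamma_1$, where each $\Gamma_i$ acts on $G_i$ through its inclusion in $\Aff(G_i)$. Since $G_2$ is simply connected nilpotent, $\exp:\lie_2\to G_2$ and $\log:=\exp^{-1}$ are global diffeomorphisms, so for each $x\in G_1$ there is a canonical path in $G_2$ from $1$ to $\tilde f(x)^{-1}\tilde g(x)$. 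I would then define
\[ H:G_1\times[0,1]\to G_2:\;(x,t)\mapsto \tilde f(x)\cdot\exp\!\big(t\,\log(\tilde f(x)^{-1}\tilde g(x))\big),\]
which is continuous in $(x,t)$ and satisfies $H(\cdot,0)=\tilde f$ and $H(\cdot,1)=\tilde g$.

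The key step is to check that $H$ is $\theta$-equivariant, i.e.\ $H(\gamma\cdot x,t)=\theta(\gamma)\cdot H(x,t)$ for all $\gamma\in\Gamma_1$. Writing $\theta(\gamma)=(c,\mu)\in G_2\semi C_2$, so that its action on $G_2$ is $y\mapsto c\,\mu(y)$, one first gets $\tilde f(\gamma x)^{-1}\tilde g(\gamma x)=\mu\big(\tilde f(x)^{-1}\tilde g(x)\big)$. Then one uses that the Lie group automorphism $\mu$ is intertwined by $\log$ and $\exp$ with its differential $\mu_\ast\in\Aut(\lie_2)$, and that $\mu_\ast$ commutes with scalar multiplication by $t$ on $\lie_2$; combining these gives $H(\gamma x,t)=c\,\mu\big(H(x,t)\big)=\theta(\gamma)\cdot H(x,t)$. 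This equivariance is the heart of the proof: a naive straight-line homotopy in a single global chart for $G_2$ need not be equivariant and hence would not descend, whereas the compatibility of $\exp$ and $\log$ with automorphisms is exactly what makes the above choice work. I expect this computation to be the only nontrivial point.

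Finally, since $\theta(\gamma)\in\Gamma_2$, equivariance gives $p_2\big(H(\gamma x,t)\big)=p_2\big(H(x,t)\big)$, so $p_2\circ H$ is constant on the $\Gamma_1$-orbits in the $x$-variable and therefore factors through the open quotient map $p_1\times\mathrm{id}$ as a continuous map $\bar H:(\Gamma_1\backslash G_1)\times[0,1]\to\Gamma_2\backslash G_2$. Because $\tilde f$ and $\tilde g$ are lifts of $f$ and $g$, we get $\bar H(\cdot,0)=f$ and $\bar H(\cdot,1)=g$, so $\bar H$ is the required homotopy. (Alternatively, one may invoke that each $\Gamma_i\backslash G_i$ is aspherical, being a quotient of the contractible $G_i$, so that maps from the finite CW complex $\Gamma_1\backslash G_1$ to the $K(\Gamma_2,1)$ space $\Gamma_2\backslash G_2$ are classified up to homotopy by the induced map on $\pi_1$ up to conjugacy; this yields the statement immediately. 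The explicit homotopy above is, however, more in keeping with the constructive flavour of this text.) Continuity of $H$ and the descent to the quotient are routine checks.
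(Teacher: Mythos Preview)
Your proof is correct and essentially identical to the paper's: the paper defines the same homotopy $\tilde H(x,t)=\tilde f(x)\big(\tilde f(x)^{-1}\tilde g(x)\big)^t$ with $y^t=\exp(t\log y)$, verifies the same $\theta$-equivariance via the identity $\alpha(y)^t=\alpha(y^t)$ for an automorphism $\alpha$, and descends to the quotient; it also mentions the $K(\Pi,1)$ alternative you note. The only difference is cosmetic---you phrase the key step via the intertwining of $\mu$ with $\exp/\log$, while the paper writes out the line-by-line computation.
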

\begin{proof}

In fact this theorem is a special case of a more general result for $K(\Pi,1)$--spaces (see \cite{whit78-1}).

\medskip

There is also a constructive proof of this result. Let $\tilde{f}$ and $\tilde{g}$ be the reference 
lifts for $f$ and $g$ respectively, then, since $g_\ast= f_\ast$:
\[ \forall \gamma\in \Gamma:\; f_\ast(\gamma)\circ \tilde{f} = \tilde{f} \circ \gamma\mbox{ and }
  f_\ast(\gamma)\circ \tilde{g} = \tilde{g} \circ \gamma.\]

Consider the homotopy 
\[ \tilde{H}:\; G_1\times I \to G_2 : (x,t) \mapsto \tilde{f}(x) \left( (\tilde{f}(x))^{-1} \tilde{g}(x) \right)^t ,\]
where for all $y\in G_2$, we let $y^t= \exp ( t \log (y))$. Then $\tilde{H}(x,0)=\tilde{f}(x)$ and 
$\tilde{H}(x,1)=\tilde{g}(x)$, so $\tilde{H}$ is a homotopy between $\tilde{f}$ and $\tilde{g}$.
We claim that $\tilde{H}$ induces a homotopy 
\[ H : \Gamma_1\backslash G_1 \to \Gamma_2\backslash G_2: (\Gamma_1\cdot x, t) \mapsto 
\Gamma_2\cdot \tilde{H}(x,t)\]
between $f$ and $g$. It is obvious that $H(\Gamma_1\cdot x,0)=\Gamma_2\cdot \tilde{f}(x) = f(\Gamma_1\cdot x)$ and $H(\Gamma_1\cdot x,1) = g(\Gamma_1 \cdot x)$, so the only thing left to show is that 
$H$ is well defined. To do this, fix a $\gamma_1\in \Gamma_1$ and let $f_\ast(\gamma_1)= (a,\alpha) \in \Gamma_2 \subseteq \Aff(G_2)$. Then $\forall x\in G_1$:
\begin{eqnarray*}
\tilde{H}(\gamma_1\cdot x, t ) & = &  
  \tilde{f}(\gamma_1\cdot x) \left( (\tilde{f}(\gamma_1\cdot x))^{-1} \tilde{g}(\gamma_1 \cdot x) \right)^t\\
&=& \left( f_\ast(\gamma_1) (\tilde{f}(x)) \right)
\left( \left( f_\ast(\gamma_1) (\tilde{f}(x))\right)^{-1} \left(\raisebox{4mm}{$ $}f_\ast(\gamma_1) (\tilde{g}(x)) \right)\right)^t\\
& = & \left( a \alpha( \tilde{f}(x))\right)
\left(  \left(a \alpha( \tilde{f}(x))\right)^{-1} \left(\raisebox{4mm}{$ $} a \alpha( \tilde{g}(x))\right)
\right)^t \\
& = & \left( a \alpha( \tilde{f}(x))\right)
\left(   \alpha( \tilde{f}(x)^{-1} \tilde{g}(x)) \right)^t\\
& = & a \alpha \left( \tilde{f}(x) \left((\tilde{f}(x))^{-1} \tilde{g}(x)  \right)^t \right)\\
& = & f_\ast(\gamma_1) \cdot \tilde{H}(x,t) \in \Gamma_2\cdot \tilde{H}(x,t)
\end{eqnarray*}
from which it follows that $H$ is a well defined homotopy between $f$ and $g$.
\end{proof}

Combining all of the above, we now find that any map between to infra-nilmanifolds is homotpic to an affine map.

\begin{Cor}
For $i=1,2$, let $\Gamma_i$ be an almost--Bieberbach group modeled on a simply connected nilpotent Lie group $G_i$ and
let $f:\Gamma_1\backslash G_1 \to \Gamma_2\backslash G_2$ be any map. Then there exists an affine map $\overline{(d,\delta)}:
\Gamma_1\backslash G_1 \to \Gamma_2\backslash G_2$ which is homotopic to $f$.
\end{Cor}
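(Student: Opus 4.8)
The plan is to assemble the three ingredients developed in this section. First I would fix a reference lift $\tilde{f}_0:G_1\to G_2$ of $f$ and let $\theta=f_\ast:\Gamma_1\to\Gamma_2$ be the induced homomorphism with respect to this lift, so that $\tilde{f}_0\circ\gamma=\theta(\gamma)\circ\tilde{f}_0$ for all $\gamma\in\Gamma_1$. Since $\Gamma_1\subseteq G_1\semi C_1$ and $\Gamma_2\subseteq G_2\semi C_2$ are in particular almost--crystallographic groups, Theorem~\ref{keyLee} applies to $\theta$ and produces an affine map $(d,\delta)\in\aff(G_1,G_2)$ satisfying $\theta(\gamma)\circ(d,\delta)=(d,\delta)\circ\gamma$ for every $\gamma\in\Gamma_1$.

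Next, by Lemma~\ref{affmap}, this $(d,\delta)$ descends to an affine map $\overline{(d,\delta)}:\Gamma_1\backslash G_1\to\Gamma_2\backslash G_2$ between the two infra--nilmanifolds. Taking $(d,\delta)$ itself as the reference lift of $\overline{(d,\delta)}$, the defining relation $\theta(\gamma)\circ(d,\delta)=(d,\delta)\circ\gamma$ is exactly the assertion that the homomorphism induced by $\overline{(d,\delta)}$ equals $\theta$; this is the content of the Remark following Lemma~\ref{affmap}, namely $\overline{(d,\delta)}_\ast=\theta$. Hence, with these particular choices of reference lifts, the two maps $f$ and $\overline{(d,\delta)}$ induce one and the same homomorphism $\theta:\Gamma_1\to\Gamma_2$.

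Finally, I would invoke the homotopy theorem proved just above: two maps between the infra--nilmanifolds $\Gamma_1\backslash G_1$ and $\Gamma_2\backslash G_2$ that induce the same homomorphism (with respect to some reference lifts) are homotopic. Applying this with the pair $(f,\overline{(d,\delta)})$ and the reference lifts $\tilde{f}_0$ and $(d,\delta)$ respectively yields that $f$ is homotopic to the affine map $\overline{(d,\delta)}$, which is the claim.

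There is no genuine obstacle here: the corollary is a bookkeeping consequence of Theorem~\ref{keyLee}, Lemma~\ref{affmap} with its Remark, and the homotopy theorem. The only point requiring a little care is the handling of reference lifts, since the induced homomorphism of a map is only well defined up to an inner automorphism of $\Gamma_2$; one must therefore select the reference lift $(d,\delta)$ for $\overline{(d,\delta)}$ so that its induced homomorphism is literally the same $\theta$ arising from $\tilde{f}_0$ — not merely conjugate to it — before feeding the pair into the homotopy theorem. With that alignment in place the argument closes immediately.
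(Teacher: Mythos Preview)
Your proof is correct and follows essentially the same route as the paper's own argument: take the induced homomorphism $f_\ast$, apply Theorem~\ref{keyLee} to obtain $(d,\delta)$, note via Lemma~\ref{affmap} and its Remark that $\overline{(d,\delta)}_\ast=f_\ast$, and then invoke the homotopy theorem. Your version is actually a bit more careful than the paper's in spelling out the role of reference lifts, but the structure is identical.
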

\begin{proof}
Let $f_\ast:\Gamma_1 \to \Gamma_2$ be the homomorphism induced by $f$ and let $(d,\delta) \in \aff(G_1,G_2)$ be 
the affine map with 
\[ \forall \gamma\in \Gamma_1: \; f_\ast(\gamma) \circ (d,\delta) = (d, \delta) \circ \gamma\]
as in the generalized second Bieberbach Theorem~\ref{keyLee}. Then we know that $(d,\delta)$ determines
an affine map $\overline{(d,\delta)}:\Gamma_1\backslash G_1 \to \Gamma_2\backslash G_2$ with $\overline{(d,\delta)}_\ast = f_\ast$.
It follows that $f$ and $\overline{(d,\delta)}$ are homotopic.
\end{proof}

\begin{Rmk}
In the literature, there has been quite some confusion concerning maps between two infra-nilmanifolds (or selfmaps
of a given infra-nilmanifold). This confusion seems to boil down to the fact that some authors  believed that any 
such map was homotopic to one induced by a homomorphism $\delta: G_1 \to G_2$ (so by an affine map $(1,\delta)$,
with trivial translational part $d=1$), which is incorrect. 
We refer the reader to \cite{deki11-1}  and \cite{deki11-2}
for more information on this.
\end{Rmk}

\section{Computing with almost--crystallographic groups}\label{computation}
When working with infra-nilmanifolds  it is often needed to be able
to make explicit computations involving almost--Bieberbach groups.

\medskip

In this section, we want to explain how this can be done at least in case the Lie group on which the 
almost--Bieberbach group is modeled is of low nilpotency class. 

\medskip

First of all I would like  to mention here the GAP package {\em aclib} \cite{de12-1, gap16-1}. This package contains a 
library of almost--crystallographic groups of dimension at most 4 (including all almost--Bieberbach groups) 
together with some algorithms to compute with them. These almost--crystallographic groups are 
available in two formats. First of all, there is a faithful matrix representation for each of them. Secondly, all of these groups are polycyclic and they are also stored via a polycyclic presentation. As a consequence, all methods of the 
{\em polycyclic} package \cite{en13-1} are also available for these groups. We refer the reader to the manual
of these two packages for more information.

\medskip

The rest of this section will be devoted to the construction of an embedding of the affine group 
$\Aff(G)$ of a 2-step nilpotent Lie group $G$ into the usual affine group $\Aff(\R^n)$, where $n$ is 
the  dimension of $G$. Having also in mind the generalized second Bieberbach theorem, such an embedding is 
very useful to work with explicit examples of infra-nilmanifolds $\Gamma\bs G$ modeled on a 2-step nilpotent 
Lie group $G$ and for constructing, understanding, \ldots\ selfmaps of the manifold $\Gamma\bs G$.
This embedding was first constructed in \cite{deki94-2}, where the proof was quite 
technical. Here we present a new approach to the same embedding.

\medskip

So for the rest of this section we fix a 2-step nilpotent Lie group $G$. 
Let $\lie$ denote the Lie algebra of $G$, then $[\lie,[\lie,\lie]]=0$ and so 
$\lie$ fits in a short exact sequence 
\[ 0 \to [\lie,\lie] \to \lie \to \lie/[\lie,\lie] \to 0 \]
where both $[\lie,\lie]$ and $\lie/[\lie,\lie]$ are abelian Lie algebras. Choose a basis 
$v_1,v_2,\ldots, v_k, w_1, w_2, \ldots, w_l$ for $\lie$, where 
$v_1,v_2, \ldots, v_k$ form a basis of $[\lie,\lie]$, so $k+l=n$ is the dimension of $G$. 
In principle one can choose any basis of $\lie$, but the embedding looks nicer, when using this choice.
With such a choice of basis,  the matrix representation of $\ad_X:\lie \to \lie: Y\to \ad_X(Y)=[X,Y]$ is of the form:
\[ \ad_X=\left(\begin{array}{cc}
0 & M \\ 0 & 0 \end{array}\right)\]
where the 0's denote blocks of zeroes and  $M$ is a $k\times l$ matrix with possibly non-zero entries.

\medskip

Let 
\[ H =\left\{ \left( \begin{array}{cc}
A & a \\ 0 & 1 \end{array}\right) \;|\; A\in \GL(\R^n),\; a\in \R^n \right\} ,\]
then it is easy to see that 
\[\psi: \Aff(\R^n) \to H: (a,A) \mapsto \left( \begin{array}{cc}
A & a \\ 0 & 1 \end{array}\right)\]
is an isomorphism of (Lie) groups.
We will in fact construct an embedding $\varphi: \Aff(G) \to H$.

\medskip

The first step is to define $\varphi$ on $\Aut(G)$. For this we just take the map $\varphi$ which sends an 
automorphism $\alpha$ to its differential $\alpha_\ast$ and we add an extra row and column:
\[ \varphi_1: \Aut(G) \to H: \alpha \mapsto 
\left( \begin{array}{cc} \alpha_\ast & 0 \\ 
0 & 1 \end{array} \right)\]
where $\alpha_\ast$ should be interpreted as the matrix representation of $\alpha_\ast$ with respect to the chosen basis.
It is obvious that $\varphi_1$ is an injective homomorphism.

\medskip

For the translational part, we will first define an embedding of the Lie algebra $\lie$ into $NT_{n+1}(\R)$. 
Consider the following map:
\[ \varphi_{2\ast}: \lie \to NT_{n+1}(\R):X \to \left( \begin{array}{cc}
\frac12 \ad_X & X \\0 & 0\end{array} \right)\]
In this definition, $\ad_X$ is actually the matrix representation of $\ad_X$ and $X$ is the column vector of the 
coordinates of $X$ with respect to the chosen basis. A short computation shows that $\varphi_{2\ast}$ is
a Lie algebra homomorphism (which is obviously injective). Indeed, consider any $X,Y\in \lie$, then  
\begin{eqnarray*}
[\varphi_{2\ast}(X) , \varphi_{2\ast}(Y)] & = & \left[ 
 \left( \begin{array}{cc}
\frac12 \ad_X & X \\0 & 0\end{array} \right)\,,\,
 \left( \begin{array}{cc}
\frac12 \ad_Y & Y \\0 & 0\end{array} \right)\right] \\
 & = & 
 \left( \begin{array}{cc}
\frac14 (\ad_X \ad_Y - \ad_Y\ad_X) & \frac12\ad_X(Y) - \frac12\ad_Y(X) \\0 & 0\end{array} \right)\\
& = & \left( \begin{array}{cc}
\frac14 \ad_{[X,Y]} & \frac12 [X,Y] - \frac12[Y,X] \\0 & 0\end{array} \right)\\
& = & \left( \begin{array}{cc}
\frac12 \ad_{[X,Y]} & [X,Y]  \\0 & 0\end{array} \right)\mbox{ \ \ \ \ \ \ \ \ \ \ (since $\ad_{[X,Y]}=0$)}\\
& = & \varphi_{2\ast} ([X,Y])
\end{eqnarray*}
We let $\varphi_2: G \to UT_{n+1}(\R)\subseteq H$ be the (injective) Lie group homomorphism, whose corresponding differential is $\varphi_{2\ast}$, so 
\[\varphi_2:G \to H: \;g \mapsto \exp( \varphi_{2\ast}( \log (g))).\]
\begin{Thm}\label{explicit}
The map 
\[ \varphi: \Aff(G)=G\semi \Aut(G) \to H:\;(g,\alpha) \mapsto \varphi_2(g) \varphi_1(\alpha)\]
is an injective homomorphism of Lie groups.
\end{Thm}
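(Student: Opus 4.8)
The plan is to verify directly that $\varphi$ is both a homomorphism and injective, with the homomorphism property being the real content. I would set things up by writing, for $g \in G$ and $\alpha \in \Aut(G)$, the element $\varphi(g,\alpha) = \varphi_2(g)\varphi_1(\alpha)$ as the block matrix obtained by multiplying
\[
\begin{pmatrix} U(g) & \varphi_2'(g) \\ 0 & 1 \end{pmatrix}
\begin{pmatrix} \alpha_\ast & 0 \\ 0 & 1 \end{pmatrix}
=
\begin{pmatrix} U(g)\,\alpha_\ast & \varphi_2'(g) \\ 0 & 1 \end{pmatrix},
\]
where I abbreviate $\varphi_2(g) = \left(\begin{smallmatrix} U(g) & \varphi_2'(g) \\ 0 & 1 \end{smallmatrix}\right)$; here the top-left block $U(g) \in UT_n(\R)$ and the column $\varphi_2'(g) \in \R^n$ are extracted from $\varphi_2(g) = \exp(\varphi_{2\ast}(\log g))$. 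Because $\varphi_{2\ast}(X) = \left(\begin{smallmatrix} \frac12\ad_X & X \\ 0 & 0\end{smallmatrix}\right)$ and $G$ is $2$-step nilpotent, the relevant powers are easy: if $X = \log g$ then $\varphi_{2\ast}(X)^2 = \left(\begin{smallmatrix} 0 & \frac12\ad_X(X) \\ 0 & 0 \end{smallmatrix}\right) = 0$ since $\ad_X(X) = [X,X] = 0$, so $\varphi_2(g) = I_{n+1} + \varphi_{2\ast}(X)$, giving $U(g) = I_n + \frac12\ad_{\log g}$ and $\varphi_2'(g) = \log g$ (using that $\log$ on the abelian-modulo-centre picture is essentially coordinates). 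This truncation is the first key step and it makes all subsequent computations finite.

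**Next**, I would compute $\varphi_2$ on a product. Since $\varphi_{2\ast}$ is a Lie algebra homomorphism (proved in the excerpt) and $\varphi_2 = \exp\circ\varphi_{2\ast}\circ\log$, the map $\varphi_2$ is automatically a homomorphism $G \to UT_{n+1}(\R)$ by Theorem~\ref{differential}; I will simply invoke that. What I actually need for the homomorphism property of $\varphi$ is the compatibility of $\varphi_2$ with the $\Aut(G)$-action, i.e.\ the identity
\[
\varphi_1(\alpha)\,\varphi_2(g)\,\varphi_1(\alpha)^{-1} = \varphi_2(\alpha(g))
\qquad\text{for all } g\in G,\ \alpha\in\Aut(G).
\]
Indeed, once this is in hand, the semidirect-product law in $\Aff(G)$, namely $(g,\alpha)(h,\beta) = (g\,\alpha(h),\alpha\beta)$, matches the product in $H$:
\[
\varphi(g,\alpha)\varphi(h,\beta) = \varphi_2(g)\varphi_1(\alpha)\varphi_2(h)\varphi_1(\beta)
= \varphi_2(g)\,\varphi_2(\alpha(h))\,\varphi_1(\alpha)\varphi_1(\beta)
= \varphi_2(g\,\alpha(h))\,\varphi_1(\alpha\beta) = \varphi(g\,\alpha(h),\alpha\beta).
\]
To prove the compatibility identity, conjugating the block matrix $\left(\begin{smallmatrix} \alpha_\ast & 0 \\ 0 & 1\end{smallmatrix}\right)$ against $\left(\begin{smallmatrix} \frac12\ad_X & X \\ 0 & 0\end{smallmatrix}\right)$ gives $\left(\begin{smallmatrix} \frac12\,\alpha_\ast\ad_X\alpha_\ast^{-1} & \alpha_\ast X \\ 0 & 0\end{smallmatrix}\right)$, so it suffices to observe $\alpha_\ast \ad_X \alpha_\ast^{-1} = \ad_{\alpha_\ast X}$ (a standard fact, since $\alpha_\ast$ is a Lie algebra automorphism) and $\alpha_\ast(\log g) = \log(\alpha(g))$ (the definition of the differential via $\log$/$\exp$). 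Thus conjugation sends $\varphi_{2\ast}(\log g)$ to $\varphi_{2\ast}(\log\alpha(g))$, hence — exponentiating — sends $\varphi_2(g)$ to $\varphi_2(\alpha(g))$.

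**Finally**, injectivity: if $\varphi(g,\alpha) = I_{n+1}$ then comparing blocks gives $\varphi_2'(g) = 0$ and $U(g)\alpha_\ast = I_n$. From $\varphi_2'(g) = \log g = 0$ we get $g = 1$, hence $U(g) = I_n$, hence $\alpha_\ast = I_n$, hence $\alpha = \mathrm{id}$ since $\varphi_1$ is injective; so $(g,\alpha)$ is trivial. Alternatively one can note $\varphi_2$ and $\varphi_1$ are each injective (stated in the excerpt) and that $\varphi_2(G) \cap \varphi_1(\Aut(G))$ is trivial because the former lies in $UT_{n+1}$ with a generically nonzero last column while the latter has last column $e_{n+1}$, together with the fact that $\varphi_2'(g)=0 \Rightarrow g = 1$. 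I expect the main obstacle to be purely bookkeeping: keeping the block decomposition consistent — in particular being careful that $\varphi_2'(g)$ really equals $\log g$ in coordinates rather than something corrected by an $\ad$-term — and confirming $\alpha_\ast\ad_X\alpha_\ast^{-1} = \ad_{\alpha_\ast X}$ in the matrix model with the chosen adapted basis; none of this is deep, but the blocks must be tracked precisely for the conjugation identity to close.
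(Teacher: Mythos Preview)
Your proposal is correct and follows essentially the same route as the paper: reduce the homomorphism property to the conjugation identity $\varphi_1(\alpha)\varphi_2(g)\varphi_1(\alpha)^{-1}=\varphi_2(\alpha(g))$, prove this at the Lie algebra level via $\alpha_\ast\ad_X\alpha_\ast^{-1}=\ad_{\alpha_\ast X}$ and $\alpha_\ast(\log g)=\log(\alpha(g))$, and then handle injectivity by inspecting the last column. The only cosmetic difference is that you make the truncation $\varphi_2(g)=I_{n+1}+\varphi_{2\ast}(\log g)$ explicit (using $\ad_X^2=0$ in a 2-step algebra), whereas the paper leaves $\exp$ unexpanded and invokes $A\exp(M)A^{-1}=\exp(AMA^{-1})$; your explicit form is what makes the identification $\varphi_2'(g)=\log g$ transparent for the injectivity step.
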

\begin{proof}
In order to show that $\varphi$ is a homomorphism, we consider any two elements $(x,\alpha),(b,\beta)\in G\semi \Aut(G)$ and have to show that 
\[ \varphi(a \alpha(b), \alpha \beta) = \varphi(a,\alpha)\varphi(b,\beta).\]
Knowing already that $\varphi_1$ and $\varphi_2$ are homomorphisms, this is equivalent to showing that 
\[ \varphi_1(\alpha) \varphi_2(b) \varphi_1(\alpha)^{-1}= \varphi_2 ( \alpha(b) ).\]  
We have (with $B=\log(b)$)
\begin{eqnarray*}
\varphi_1(\alpha) \varphi_2(b) \varphi_1(\alpha)^{-1} & = & 
\left( \begin{array}{cc}
\alpha_\ast & 0 \\ 0 & 1 
\end{array} \right) \exp\left( \begin{array}{cc}
\frac12 \ad_B & B \\0 & 0 
\end{array}\right)\left( \begin{array}{cc}
\alpha_\ast & 0 \\ 0 & 1 
\end{array} \right)^{-1}\\
& = & \exp\left( 
\left( \begin{array}{cc}
\alpha_\ast & 0 \\ 0 & 1 
\end{array} \right) \left( \begin{array}{cc}
\frac12 \ad_B & B \\0 & 0 
\end{array}\right)\left( \begin{array}{cc}
\alpha_\ast & 0 \\ 0 & 1 
\end{array} \right)^{-1}
\right)\\
& = & \exp\left( \begin{array}{cc}
\frac12 \alpha_\ast \ad_B \alpha_\ast^{-1} & \alpha_\ast (B)\\
0 & 0 \end{array}\right)\\
&=& \varphi_2(\alpha(b))
\end{eqnarray*}
where we used that $\alpha_\ast \ad_B \alpha_\ast^{-1}=\ad_{\alpha_\ast(B)}$ and 
$\alpha_\ast(B)= \log(\alpha(b))$.

\medskip

In order to see that $\varphi$ is injective, it is enough to check that the intersection of the image of $\varphi_1$ and 
$\varphi_2$ contains only the identity element. To see this, it is enough to check that the first $n$ entries of the last column of 
$\varphi_2(g)$ are all equal to 0 if and only if $g=1$, while for any $\alpha \in \Aut(G)$, we have that the first $n$ entries of the last column of $\varphi_1(\alpha)$ are always equal to 0.
\end{proof}

\begin{Rmk}
This theorem is a special case of the more general result that for any nilpotent Lie group $G$, it holds that 
$\Aff(G)$ is a linear group. Also, in the discrete case and in the more general case of polycyclic groups $\Gamma$, it can be shown that $\Gamma\rtimes \Aut(\Gamma)$ is linear (see \cite{ab67-1,baum69-1,merz70-1,wehr74-1} for results in this direction). The advantage of the above theorem~\ref{explicit} is that it provides a very explicit representation $\varphi$ of $\Aff(G)$ as a subgoup of $\Aff(\R^n)$, where $n=\dim(G)$. Moreover, it is easy to that $\varphi(G)$ acts simply transitively on $G$. It follows that if $\Gamma \subseteq \Aff(G)$ is a subgroup acting properly discontinuously (resp. cocompactly) on $G$ then $\varphi(\Gamma)$ acts   properly discontinuously (resp. cocompactly) on $\R^n$.
\end{Rmk}

At this point we want to remark that the 4-dimensional representation that 
was introduced on page \pageref{voordeel} is in fact the  representation $\varphi$ of theorem~\ref{explicit}.
We then further used this representation on page~\pageref{gebruikrep} to show that a certain 
almost-crystallographic group is torsion-free. This was in fact the technique which was used to classify all
almost-Bieberbach groups in dimensions $\leq 4$ in \cite{deki96-1}.

\medskip

Of course having a matrix representation of $\Aff(G)$ for a 2-step nilpotent group is extremely useful to 
study maps on infra-nilmanifolds $\Gamma\backslash G$, since by the generalized second Bieberbach Theorem, 
many maps are constructed by taking an element $(d,\delta)\in \Aff(G)$ such that 
$(d,\delta)\Gamma(d,\delta)^{-1}\subseteq \Gamma$. The obtained matrix representation provides a very effective 
way to make these computations involved in this construction possible.

\section{Expanding maps and Anosov diffeomorphisms on infra--nilmanifolds}

As a first illustration of the use of infra-nilmanifolds, we will see in this section that they  play a crucial role in the study of expanding maps and Anosov diffeomorphisms.

\medskip

We first discuss the case of expanding maps.
\begin{Def}
Let $M$ be a closed smooth Riemannian  manifold. A $C^1$-map $f:M\to M$ is said to be an expanding map\index{expanding map}
if there exist real constants $C>0$ and $\lambda>1$ such that 
\[ \forall v\in TM:\; \forall n\in \N:\; \| Df^n(v)\|\geq C\lambda^n \|v \|.\]
\end{Def}
It can be shown that whether or not a map $f$ is expanding does not depend on the choice of the  Riemannian structure on
$M$.

\medskip

There is a standard way of constructing expanding maps on infra-nilmanifolds.
\begin{Def} Let $M=\Gamma\backslash G$ be an infra-nilmanifold.
An affine map $\overline{(d,\delta)}$ of $\Gamma\backslash G$ is said to be an expanding infra-nilmanifold endomorphism 
of $\Gamma\backslash G$ if and only if for every eigenvalue $\lambda$ of $\delta_\ast$ it holds that 
$|\lambda|>1$.
\end{Def}
The adjective ``expanding'' for these kind of affine maps is well chosen (\cite{shub69-1}):
\begin{Thm}
An expanding infra-nilmanifold endomorphism\index{expanding infra-nilmanifold endomorphism} of an infra-nilmanifold $M$ is indeed an expanding map of that 
infra-nilmanifold.
\end{Thm}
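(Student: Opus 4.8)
The plan is to lift everything to the universal cover $G$ and reduce the statement to a spectral estimate for $\delta_\ast$. Since whether a map is expanding does not depend on the chosen Riemannian metric on $M$, I may equip $M=\Gamma\backslash G$ with the metric induced from the left-invariant metric on $G$ attached to a $C$-invariant inner product on $\lie$ (the metric introduced earlier for simply connected nilpotent Lie groups). With this choice the covering projection $p:G\to M$ is a local isometry. The affine map $f=\overline{(d,\delta)}$ lifts to the affine map $(d,\delta):G\to G,\ g\mapsto d\,\delta(g)$; from $p\circ(d,\delta)=f\circ p$ one gets $p\circ(d,\delta)^n=f^n\circ p$ for all $n$, and $(d,\delta)^n$ is again an affine map of $G$, with linear part $\delta^n$, since $(d_1,\delta_1)(d_2,\delta_2)=(d_1\,\delta_1(d_2),\delta_1\delta_2)$. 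As every eigenvalue of $\delta_\ast$ has absolute value $>1$, the map $\delta_\ast$ is invertible, hence $\delta\in\Aut(G)$ and $(d,\delta)\in\Aff(G)$. Because $Dp$ is fibrewise an isometry and iterates of $f$ lift to iterates of $(d,\delta)$, it suffices to produce constants $C>0$ and $\mu>1$ with $\|D((d,\delta)^n)_g\,v\|\ge C\mu^n\|v\|$ for every $g\in G$, every $v\in T_gG$ and every $n\in\N$.

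First I would compute this differential in the left trivialization $TG\cong G\times\lie$ given by $v\mapsto (DL_{g^{-1}})_g v$, in which the metric is the constant inner product on $\lie$. Writing $(d,\delta)=L_d\circ\delta$ and using that $\delta$ is a homomorphism, i.e.\ $\delta\circ L_g=L_{\delta(g)}\circ\delta$, the chain rule yields $D(d,\delta)_g=(DL_{d\delta(g)})_1\circ\delta_\ast\circ(DL_{g^{-1}})_g$. Hence in the left trivialization $D(d,\delta)_g$ is nothing but the fixed linear map $\delta_\ast:\lie\to\lie$, independent of $g$; iterating (equivalently, using $(\delta^n)_\ast=(\delta_\ast)^n$), the differential $D((d,\delta)^n)_g$ is, in this trivialization, the map $(\delta_\ast)^n$. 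Consequently $\|D((d,\delta)^n)_g\,v\|=\|(\delta_\ast)^n\xi\|$ and $\|v\|=\|\xi\|$, where $\xi\in\lie$ is the vector corresponding to $v$.

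It then remains to check the purely linear statement: if all eigenvalues of a linear automorphism $\delta_\ast$ of a finite-dimensional inner product space have absolute value $>1$, then there exist $C>0$ and $\mu>1$ with $\|(\delta_\ast)^n\xi\|\ge C\mu^n\|\xi\|$ for all $\xi$ and all $n\ge 0$. This follows from the spectral radius formula applied to $(\delta_\ast)^{-1}$, whose spectral radius is some $\rho<1$: there is $C'>0$ with $\|(\delta_\ast)^{-n}\|\le C'\rho^n$, whence $\|\xi\|=\|(\delta_\ast)^{-n}(\delta_\ast)^n\xi\|\le C'\rho^n\|(\delta_\ast)^n\xi\|$, i.e.\ $\|(\delta_\ast)^n\xi\|\ge (1/C')(1/\rho)^n\|\xi\|$; take $C=1/C'$ and $\mu=1/\rho>1$. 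Pushing the resulting uniform estimate on $G$ down through the local isometry $p$ gives the expanding inequality on $M$, and since $f$ is smooth and $M$ is compact, $f$ is an expanding map. The only genuine computation is the formula for $D(d,\delta)_g$ in the left trivialization; the point that needs a little care is the bookkeeping with iterates — that a lift of $f^n$ may be taken to be $(d,\delta)^n$, which is again affine with linear part $\delta^n$ so that its differential is $(\delta_\ast)^n$ in the trivialization — after which the spectral estimate is entirely routine.
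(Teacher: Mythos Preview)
The paper does not give its own proof of this theorem; it simply attributes the result to Shub \cite{shub69-1} and states it. So there is nothing to compare against directly. Your argument is correct and self-contained: the reduction to $G$ via the $C$-invariant left-invariant metric (so that both the covering map and the $\Gamma$-action are isometric), the computation that in the left trivialization the differential of any affine map $(d,\delta)$ is the constant linear map $\delta_\ast$, and the spectral-radius estimate for $(\delta_\ast)^{-1}$ are all sound. The only point worth stressing explicitly (which you do mention) is that the elements of $\Gamma\subseteq G\rtimes C$ act by isometries for this particular metric precisely because the inner product on $\lie$ is $C$-invariant; this is what ensures that the metric actually descends to $M$ and makes $p$ a local isometry, so that the expanding estimate on $G$ pushes down. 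With that observation in place, your proof is essentially the standard one, carried out carefully in the infra-nil setting.
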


Moreover, as a corollary to his famous theorem on groups of polynomial growth, M.~Gromov showed that 
these kind of expanding maps are essentially (i.e.~up to topological conjugacy) 
the only ones (\cite{grom81-1}, but see also \cite{deki11-1}):
\begin{Thm}
Let $f:M\to M$ be an expanding map on a closed smooth Riemannian manifold $M$, then $f$ is topologically conjugate to an expanding infra-nilmanifold endomorphism. I.e.\ there exists an infra-nilmanifold $\Gamma\backslash G$, an expanding 
infra-nilmanifold endomorphism $\overline{(d,\delta)}$ on $\Gamma\backslash G$ and a homeomorphism
$h:M\to \Gamma\backslash G$, such that the following diagram commutes:
\[\xymatrix{ M \ar[d]_h \ar[r]^f & M \ar[d]^h\\
\Gamma\backslash G\ar[r]_{\overline{(d,\delta)}}& \Gamma\backslash G
}\]
\end{Thm}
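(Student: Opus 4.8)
The plan is to reduce the statement to the algebraic and geometric machinery developed above, in three stages. First I would show that $f$ forces $\Gamma:=\pi_1(M)$ to be a finitely generated, torsion free, virtually nilpotent group; by the corollary to Theorem~\ref{algchar} this makes $\Gamma$ (isomorphic to) an almost--Bieberbach group, hence the fundamental group of an infra--nilmanifold $Y=\Gamma\bs G$. Then I would use the generalized second Bieberbach theorem to turn the induced endomorphism $f_\ast:\Gamma\to\Gamma$ into an affine self--map $\overline{(d,\delta)}$ of $Y$ and check the eigenvalue condition. Finally I would upgrade the resulting homotopy equivalence between $M$ and $Y$ to an honest topological conjugacy. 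To begin: since $f$ is a local diffeomorphism of a compact manifold it is a covering map of some degree $k$, and $k\geq 2$ because $|\det Df|>1$ everywhere forces $k\cdot\mathrm{vol}(M)>\mathrm{vol}(M)$; hence $f_\ast$ is a monomorphism with $[\Gamma:f_\ast(\Gamma)]=k$. Passing to the universal cover $q:\tilde M\to M$ and replacing the metric by Shub's adapted metric (\cite{shub69-1}), the lift $\tilde f:\tilde M\to\tilde M$ is a homeomorphism expanding distances by a uniform factor $\mu>1$; in particular $\tilde f^{-1}$ is a contraction, so it has a unique fixed point $p$ (which I take as basepoint), $\tilde M$ retracts onto $p$, and (again by \cite{shub69-1}) $\tilde M\cong\R^n$. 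Thus $M$ is aspherical and $\Gamma$ is torsion free.

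Next I would prove that $\Gamma$ has polynomial growth. From $f_\ast(\gamma)=\tilde f\circ\gamma\circ\tilde f^{-1}$ and $\tilde f(p)=p$ one gets $d(p,f_\ast^m(\gamma)\cdot p)=d(\tilde f^m p,\tilde f^m(\gamma\cdot p))\geq\mu^m d(p,\gamma\cdot p)$ for all $m$ and all $\gamma\in\Gamma$; by the Milnor--{\v S}varc equivalence of the orbit metric $\gamma\mapsto d(p,\gamma\cdot p)$ with the word metric this reads $|f_\ast^m(\gamma)|\geq c\,\mu^m|\gamma|$. Decomposing $\Gamma$ into its $k$ cosets $c_i f_\ast(\Gamma)$, each $\gamma$ with $|\gamma|\leq R$ is uniquely $c_i f_\ast(\eta)$ with $|\eta|\leq\mu^{-1}(R+C_0)+C_1$, so the growth function obeys $\beta(R)\leq k\,\beta(\mu^{-1}R+C)$; iterating roughly $\log_\mu R$ times gives $\beta(R)=O(R^{\log k/\log\mu})$. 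By Gromov's theorem (\cite{grom81-1}) a finitely generated group of polynomial growth is virtually nilpotent, so $\Gamma$ is finitely generated, torsion free and virtually nilpotent; by the corollary to Theorem~\ref{algchar} it is an almost--Bieberbach group modeled on some simply connected nilpotent Lie group $G=N^\R$, and $Y=\Gamma\bs G$ is an infra--nilmanifold with $\pi_1(Y)\cong\Gamma$.

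Now I would apply Theorem~\ref{keyLee} to $\theta=f_\ast:\Gamma\to\Gamma$, obtaining $(d,\delta)\in\aff(G)$ with $f_\ast(\gamma)\circ(d,\delta)=(d,\delta)\circ\gamma$ for all $\gamma$; since $f_\ast$ is injective with image of finite index, the lemma following Theorem~\ref{keyLee} gives $\delta\in\Aut(G)$, so $(d,\delta)\in\Aff(G)$ and, as observed after Lemma~\ref{affmap}, $\overline{(d,\delta)}:Y\to Y$ is an affine map with $\overline{(d,\delta)}_\ast=f_\ast$. For the eigenvalue condition I would restrict the relation to the translation subgroup $N=\Gamma\cap G$: the defining equation forces $f_\ast(n)=d\,\delta(n)\,d^{-1}\in N$ for every $n\in N$, so $f_\ast(N)\subseteq N$ (with index $k$) and the Mal'cev extension of $f_\ast|_N$ to $G$ is $F:=c_d\circ\delta\in\Aut(G)$, where $c_d$ denotes conjugation by $d$; since $\mathrm{Ad}(d)$ is unipotent, $F_\ast$ and $\delta_\ast$ have the same eigenvalues. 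Feeding powers $n^t$ with $n\notin[N,N]$ into the estimate $|f_\ast^m(n)|\geq c\,\mu^m|n|$ and using the asymptotics $|x^t|\sim t\,\|\bar x\|$ as $t\to\infty$ (for $\bar x\neq 0$ in $\lie/[\lie,\lie]$) transports it to $\|(\delta_\ast^{\mathrm{ab}})^m v\|\geq c'\mu^m\|v\|$ for all $v\in\lie/[\lie,\lie]$ and all $m$; hence every eigenvalue of $\delta_\ast^{\mathrm{ab}}$ has modulus $\geq\mu$. Since the Lie bracket induces $\delta_\ast$--equivariant surjections $(\lie/[\lie,\lie])\otimes(\gamma_{i-1}(\lie)/\gamma_i(\lie))\twoheadrightarrow\gamma_i(\lie)/\gamma_{i+1}(\lie)$, the eigenvalues of $\delta_\ast$ on each graded piece of the lower central series are products of eigenvalues of $\delta_\ast^{\mathrm{ab}}$, so all eigenvalues of $\delta_\ast$ have modulus $>1$: $\overline{(d,\delta)}$ is an expanding infra--nilmanifold endomorphism.

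Finally, since $M$ and $Y$ are closed aspherical manifolds with fundamental group $\Gamma$, there is a homotopy equivalence $\phi_0:Y\to M$ inducing the chosen identification on $\Gamma$, and since $M$ is aspherical $f\circ\phi_0\simeq\phi_0\circ\overline{(d,\delta)}$ (both induce $f_\ast$). Lifting to the universal covers ($\tilde Y=G$, $\tilde M\cong\R^n$) gives a $\Gamma$--equivariant proper map $\tilde\phi_0:G\to\tilde M$ with $\sup_{z\in G}d_{\tilde M}\bigl(\tilde f\tilde\phi_0(z),\tilde\phi_0((d,\delta)z)\bigr)=:\varepsilon_0<\infty$, and I would set
\[ \tilde\phi \;=\; \lim_{m\to\infty}\tilde f^{\,-m}\circ\tilde\phi_0\circ(d,\delta)^m . \]
Because $\tilde f^{-1}$ is a $\mu^{-1}$--contraction and $\varepsilon_0$ is uniform, successive terms differ by at most $\mu^{-(m+1)}\varepsilon_0$, so the limit converges uniformly; $\tilde\phi$ is continuous, $\Gamma$--equivariant, proper (at bounded distance from $\tilde\phi_0$) and satisfies $\tilde f\circ\tilde\phi=\tilde\phi\circ(d,\delta)$, hence descends to $\phi:Y\to M$ with $f\circ\phi=\phi\circ\overline{(d,\delta)}$. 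Running the symmetric telescope---now legitimate, since $(d,\delta)^{-1}$ is itself a contraction by the eigenvalue condition just established---produces a $\Gamma$--equivariant $\tilde\psi:\tilde M\to G$ with $\tilde\psi\circ\tilde f=(d,\delta)\circ\tilde\psi$; then $\tilde\psi\circ\tilde\phi$ and $\tilde\phi\circ\tilde\psi$ commute with the relevant maps, are $\Gamma$--equivariant and at bounded distance from the identity, so a final telescoping forces them to equal the identity. Therefore $\tilde\phi$, and with it $\phi$, is a homeomorphism, and $h:=\phi^{-1}$ is the desired conjugacy. The conceptual heart of the argument is the deduction that $\pi_1(M)$ has polynomial growth together with the black--box input of Gromov's theorem; the most delicate bookkeeping is the eigenvalue computation in the third paragraph and the verification in the last paragraph that the constructed semi--conjugacy is genuinely bijective.
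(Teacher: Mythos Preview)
The paper does not prove this theorem: it is stated as a result of Gromov, with references to \cite{grom81-1} and \cite{deki11-1}, and no argument is given in the text. So there is no ``paper's own proof'' to compare against.

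That said, your outline is the standard route to this result and is essentially correct: Shub's work \cite{shub69-1} gives that $\tilde M\cong\R^n$ and that the lifted inverse is a contraction; the growth inequality $\beta(R)\le k\,\beta(\mu^{-1}R+C)$ yields polynomial growth; Gromov's theorem \cite{grom81-1} then makes $\Gamma$ virtually nilpotent, hence almost--Bieberbach by the corollary to Theorem~\ref{algchar}; Theorem~\ref{keyLee} produces the affine model; and the Franks--Shub telescoping argument upgrades the homotopy equivalence to a conjugacy. Two places deserve a little more care. First, in the eigenvalue step, the Milnor--\v{S}varc comparison gives $|f_\ast^m(\gamma)|\ge c\,\mu^m|\gamma|-C$ with an additive constant, and you should say explicitly that passing to $n^t$ with $t\to\infty$ kills that constant; also note that $f_\ast$ induces an injective (finite--index) endomorphism of $N/[N,N]$, so $\overline{f_\ast^m(n)}\neq 0$ when $\bar n\neq 0$, which your asymptotic needs. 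Second, in the last paragraph the two telescopes use contractions on \emph{different} spaces ($\tilde f^{-1}$ on $\tilde M$, $(d,\delta)^{-1}$ on $G$), so you should fix once and for all a left--invariant metric on $G$ for which $(d,\delta)^{-1}$ is a genuine contraction (possible exactly because you have just shown every eigenvalue of $\delta_\ast$ has modulus $>1$) before running the symmetric limit; after that, the uniqueness argument forcing $\tilde\psi\circ\tilde\phi=\mathrm{id}$ and $\tilde\phi\circ\tilde\psi=\mathrm{id}$ goes through as you indicate.
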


This theorem shows that in order to understand the homeomorphism type of those manifolds which admit an expanding map it is enough to study the class of infra-nilmanifolds. We do however remark that F.T.~Farrell and L.E.~Jones constructed examples of expanding maps on exotic tori \cite{fj78-2}. So these manifolds are not diffeomorphic to a torus (or any other infra-nilmanifold), but they are homeomorphic to a torus.

In \cite{es68-1} it was shown that all flat manifolds admit an expanding map and in \cite{ll02-1} this positive result
was extended to all infra-manifolds modeled on a 2-step nilpotent Lie group $G$. This result can not be 
extended to higher nilpotency classes, since it already no longer holds for all nilmanifolds modeled on a 3-step 
nilpotent Lie group. Indeed, in \cite{dl57-1} the authors constructed a 3-step nilpotent Lie algebra $\lie$ all of whose
derivations are nilpotent. It follows that any automorphism $\delta_\ast$ of $\lie$ only has eigenvalues which are 
roots of unity. It follows that none of the nilmanifolds modeled on the corresponding Lie group $G$ (and this 
Lie group does have lattices!), admits an expanding map.

\medskip

It follows that the question of which infra-nilmanifolds $\Gamma\backslash G$ admit an expanding map is a non-trivial one.
In fact only very recently there has been some real progress in this question. It turns out that the question 
whether or not an infra-nilmanifold $\Gamma\backslash G$ admits an expanding map only depends on the 
Lie group $G$ (or the Lie algebra $\lie$) and not on the almost-Bieberbach group $\Gamma$. So either 
all infra-nilmanifolds modeled on $G$ admit an expanding map or none of them.
This was proved in \cite{dere14-1} and \cite{corn14-1} and uses the notion of gradings on a Lie algebra.
A Lie algebra is said to be graded over the integers if $\lie$ admits a vector space 
decomposition as a direct sum $\displaystyle \lie=\bigoplus_{i\in\Z} \lie_i$ such that 
$[\lie_i,\lie_j]\subseteq \lie_{i+j}$ for all $i,j\in \Z$. Such a grading is said to be positive if $\lie_i=0$ for all
$i\leq 0$. Using this terminology, we have:

\begin{Thm}\label{exp-main}
Let $M=\Gamma\backslash G$ be an infra-nilmanifold modeled on the simply connected Lie group $G$ and let $\lie$ be the 
Lie algebra of $G$, then 
\begin{center}
$M$ admits an expanding map\\
$\Updownarrow$ \\
$\lie$ admits a positive grading.
\end{center}
\end{Thm}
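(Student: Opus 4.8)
The plan is to establish the two implications separately, using the generalized second Bieberbach theorem to pass between affine self-maps of $M$ and their linear parts, and the theorems of Shub and Gromov quoted above to pass between arbitrary expanding maps and expanding infra-nilmanifold endomorphisms. Write $M=\Gamma\backslash G$ with translation lattice $N=\Gamma\cap G$, and let the holonomy group $F=\Gamma/N$ act on $\lie$ through a real realization of $\Gamma$, so that $F$ is a finite subgroup of $\Aut(G)\cong\Aut(\lie)$.

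$(\Rightarrow)$ Assume $M$ carries an expanding map $f$. By Gromov's theorem $f$ is topologically conjugate to an expanding infra-nilmanifold endomorphism of some $\Gamma'\backslash G'$; then $M\simeq\Gamma'\backslash G'$ as $K(\pi,1)$-spaces, hence $\Gamma\cong\Gamma'$, and Theorem~\ref{tweedebieb} lets us take $G'=G$ and $\Gamma'=\alpha\Gamma\alpha^{-1}$ with $\alpha\in\Aff(G)$. Pushing the endomorphism forward through $\alpha$ yields an expanding infra-nilmanifold endomorphism $\overline{(d,\delta)}$ of $M$ itself. Since every eigenvalue of $\delta_\ast$ has modulus $>1$, the linear map $\delta_\ast$ is invertible, so $\delta_\ast\in\Aut(\lie)$. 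Over $\C$, decompose $\lie\otimes_\R\C$ into generalized eigenspaces of $\delta_\ast$ and collect them according to the absolute value of the corresponding eigenvalue; this gives $\lie\otimes_\R\C=\bigoplus_{r}\lie_{(r)}$ over a finite set of reals $r>1$, with $[\lie_{(r)},\lie_{(r')}]\subseteq\lie_{(rr')}$ (standard for an automorphism) and each $\lie_{(r)}$ stable under complex conjugation (as $\delta_\ast$ is real), hence the complexification of a real subspace. Re-indexing by $\log r>0$ produces an $\R$-grading of $\lie$ with strictly positive weights; as the weights are finite in number, replacing the embedding into $\R$ of the subgroup they generate by a nearby rational homomorphism (still positive on each weight) and scaling to clear denominators gives a positive $\Z$-grading of $\lie$.

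$(\Leftarrow)$ Assume $\lie$ has a positive grading. The key input is the lemma (see \cite{dere14-1, corn14-1}) that such a $\lie$, together with a finite group of automorphisms $F$, admits an $F$-invariant positive grading $\lie=\bigoplus_{i\geq 1}\lie_i$, which we may moreover take to be defined over the rational form $\lien^\Q$ of $\lie$. For an integer $t\geq 2$ the map $\delta_\ast^{(t)}$ acting on $\lie_i$ as multiplication by $t^i$ is an automorphism of $\lie$, commuting with $F$, whose eigenvalues $t^i$ all have modulus $>1$; it integrates to $\delta^{(t)}\in\Aut(G)$ commuting with $F\subseteq\Aut(G)$. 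Since the grading is rational and the eigenvalues are integers, taking $t$ sufficiently divisible forces $\delta^{(t)}(N)\subseteq N$. One then checks that for a suitable translation $d\in G$, chosen so as to carry the chosen coset representatives of $F$ in $\Gamma$ back into $\Gamma$, the affine map $(d,\delta^{(t)})$ satisfies $(d,\delta^{(t)})\,\Gamma\,(d,\delta^{(t)})^{-1}\subseteq\Gamma$; by Lemma~\ref{affmap} it induces an affine self-map $\overline{(d,\delta^{(t)})}$ of $M$, which is an expanding infra-nilmanifold endomorphism because all eigenvalues of $\delta_\ast^{(t)}$ have modulus $>1$. By Shub's theorem $M$ then admits an expanding map.

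The main obstacle lies in $(\Leftarrow)$. First, the $F$-invariance upgrade of a positive grading is the genuine crux: it is not a mere averaging of gradings but relies on the structure of finite automorphism groups of nilpotent Lie algebras, and I would import it as a cited lemma rather than reprove it. Second, the descent step — producing $(d,\delta^{(t)})$ that normalizes $\Gamma$ rather than merely $N$ — requires care with the holonomy cosets, and here the translational part $d$ cannot in general be taken trivial: this is precisely the phenomenon flagged in the Remark closing Section~\ref{maps}, and the computation involved is of the same kind as the torsion-freeness check carried out for $\Gamma_2$ earlier in the text. The direction $(\Rightarrow)$ is comparatively soft once Gromov's theorem is available; its only mildly delicate point, the rationalization of a real grading to a $\Z$-grading, is elementary.
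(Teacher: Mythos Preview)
The paper does not actually contain a proof of this theorem: it is stated as a result imported from \cite{dere14-1} and \cite{corn14-1}, with only the surrounding remark that the answer depends on $G$ alone and not on $\Gamma$. So there is no ``paper's own proof'' to compare against; what I can do is assess your outline on its merits and against what those cited references do.

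Your $(\Rightarrow)$ direction is essentially complete and correct. One small simplification: you do not need to push the expanding endomorphism back to $M$ via $\alpha$. Gromov's theorem already gives you an expanding infra-nilmanifold endomorphism on some $\Gamma'\backslash G'$ with $\Gamma'\cong\Gamma$; since the translation subgroup is the unique maximal normal nilpotent subgroup, the Mal'cev completions agree and $\lie'\cong\lie$, which is all you need. The passage from the absolute-value grading to a positive $\Z$-grading is fine: the finitely many weights $\log r>0$ generate a free abelian subgroup of $\R$, and any sufficiently good rational approximation of its inclusion into $\R$ keeps all weights positive.

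Your $(\Leftarrow)$ direction is an honest outline, and you are right that the crux is the existence of an $F$-invariant positive grading defined over the rational form. Two comments. First, this lemma is genuinely nontrivial and is indeed one of the main contributions of the cited papers; naive averaging of the associated derivation over $F$ fails because a sum of semisimple derivations need not be semisimple, and positivity of the spectrum is not preserved under such sums either. The actual arguments go through the structure of the maximal torus of $\Aut(\lie)$ (as an algebraic group) and a careful analysis of when it contains a one-parameter subgroup acting with positive weights. Second, once you have an $F$-centralizing expanding $\delta$ with $\delta(N)\subseteq N$, the existence of a suitable $d$ making $(d,\delta)\Gamma(d,\delta)^{-1}\subseteq\Gamma$ is not automatic from ``care with holonomy cosets'' alone: one typically argues instead by constructing the desired homomorphism $\theta:\Gamma\to\Gamma$ abstractly (using that $\delta$ commutes with $F$ and a cohomological vanishing for $H^1(F,N^\R)$) and then invoking Theorem~\ref{keyLee} to produce $(d,\delta)$. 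Your awareness that $d=1$ generally fails is correct and important.

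In short: your sketch is a faithful roadmap of how the theorem is proved in the literature, with the genuinely hard step correctly located and honestly black-boxed, but the paper itself offers nothing to compare it to beyond the citations.
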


Although this result reduces the existence problem of an expanding map on a 
infra-nilmanifold modeled on a given Lie group $G$ completely
 to a pure algebraic condition on the Lie algebra $\lie$ of $G$, it is 
sometimes still difficult to see exactly which Lie algebras do admit such a positive grading and still a lot of work can be 
done on this topic.

\medskip

A somewhat related kind of maps which have been studied are the Anosov diffeomorphisms.
\begin{Def}
Let $M$ be a closed smooth Riemannian  manifold. A $C^1$-diffeomorphism $f:M\to M$ is said to be an Anosov diffeomorphism\index{Anosov diffeomorphism} if and only if there exists a $Df$--invariant continuous splitting of the tangent bundle $TM=E^u \oplus E^s$
and real  constants $C>0$ and $1>\lambda>0$ such that 
\[ \forall v\in E^u :\; \forall n\in \N:\; \| Df^n(v)\|\geq C\lambda^{-n} \|v \|  \mbox{ and }
\forall v\in E^s :\; \forall n\in \N:\; \| Df^n(v)\|\leq \frac1C\lambda^{n} \|v \|.\]
\end{Def}
$E^u$ is called the unstable (or expanding) part and $E^s$ the stable (or contracting) part.
Like in the case of expanding maps, the condition of being an Anosov diffeomorphism does not depend on the chosen Riemannian metric. 

Also for Anosov diffeomorphisms there is an algebraic way of constructing them:

\begin{Def}
Let $M=\Gamma\backslash G$ be an infra-nilmanifold.
An affine diffeomorphism $\overline{(d,\delta)}$ of $\Gamma\backslash G$ is said to be a hyperbolic
 infra-nilmanifold automorphism\index{hyperbolic infra-nilmanifold automorphism} of $\Gamma\backslash G$ if and only if for every eigenvalue $\lambda$ of $\delta_\ast$ it holds that 
$|\lambda|\neq1$.
\end{Def}
\begin{Rmk}
The fact that we require $\overline{(d,\delta)}$ to be a diffeomorphism is equivalent to 
requiring that $\delta$ is invertible and $(d,\delta)\Gamma(d,\delta)^{-1} = \Gamma$. In case $\overline{(d,\delta)}$  is  a hyperbolic
 infra-nilmanifold automorphism  of $\Gamma\backslash G$, then for some of the eigenvalues $\lambda$ of $\delta$ we will have that $|\lambda|>1$, while
 for others we will have that $|\lambda|<1$. 
\end{Rmk}

The following result shows that these hyperbolic infra-nilmanifold automorphisms give us really a  way of constructing 
Anosov diffeomorphisms (\cite{fran70-1} and \cite{smal67-1}):
\begin{Thm}
A hyperbolic infra-nilmanifold automorphism of an infra-nilmanifold $M$ is an Anosov diffeomorphism of that 
infra-nilmanifold.
\end{Thm}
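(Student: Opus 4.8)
The plan is to pull everything back to the universal cover $G$ and exploit the fact that, in the left-invariant trivialization of the tangent bundle, the derivative of an affine map is a \emph{constant} linear map.

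First I would fix a lift and identify the derivatives. By the Remark preceding the statement, $\overline{(d,\delta)}$ being a diffeomorphism means $\delta\in\Aut(G)$ and $(d,\delta)\Gamma(d,\delta)^{-1}=\Gamma$, and a lift of $\overline{(d,\delta)}$ to $G$ is the affine map $\bar f=(d,\delta)\colon G\to G$, $g\mapsto d\,\delta(g)$. Using the trivialization $TG\cong G\times\lie$ coming from left translations, and differentiating the relations $\delta\circ L_h=L_{\delta(h)}\circ\delta$ (and likewise $A\circ L_h=L_{A(h)}\circ A$ for $A\in\Aut(G)$), one checks that the differential of an affine map $(c,\gamma)$ is, in this trivialization, the constant map $\gamma_\ast\in\Aut(\lie)$. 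Since iterates of affine maps are affine, $d\bar f^{\,n}$ is the constant map $\delta_\ast^{\,n}$. Moreover the deck group $\Gamma\subseteq G\semi C$ acts on $G\times\lie$ with linear part the finite group $\overline F\subseteq\Aut(\lie)$ obtained as the image of the holonomy group $F=p(\Gamma)$ under the isomorphism $\Aut(G)\cong\Aut(\lie)$.

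Next I would build the invariant splitting. Since $\delta_\ast$ has no eigenvalue of modulus $1$, set $\lie=\lie^u\oplus\lie^s$, where $\lie^u$ (resp.\ $\lie^s$) is the sum of the generalized eigenspaces of $\delta_\ast$ with eigenvalue of modulus $>1$ (resp.\ $<1$); both summands are $\delta_\ast$-invariant. From $(d,\delta)\Gamma(d,\delta)^{-1}=\Gamma$ one obtains $\delta_\ast\overline F\delta_\ast^{-1}=\overline F$, and since $\overline F$ is finite, some power $\delta_\ast^{\,k}$ ($k\ge 1$) centralizes $\overline F$. As $\lie^u$ and $\lie^s$ are also the corresponding sums of generalized eigenspaces of $\delta_\ast^{\,k}$, they are invariant under anything commuting with $\delta_\ast^{\,k}$, in particular under $\overline F$. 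Hence the left-invariant distributions on $G$ with constant values $\lie^u$ and $\lie^s$ are $\Gamma$-invariant, and they descend to a smooth, $d\overline{(d,\delta)}$-invariant splitting $TM=E^u\oplus E^s$.

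Finally I would choose the metric and read off the estimates. Pick an inner product on $\lie$ which is $\overline F$-invariant, makes $\lie^u\perp\lie^s$, and is adapted to $\delta_\ast$, meaning $\|\delta_\ast v\|\ge a\|v\|$ for $v\in\lie^u$ and $\|\delta_\ast v\|\le b\|v\|$ for $v\in\lie^s$, with $a>1>b>0$; such an inner product exists by taking a Lyapunov norm on each summand and then averaging over $\overline F$ (averaging preserves adaptedness because conjugation by $\delta_\ast$ permutes $\overline F$). The associated left-invariant Riemannian metric on $G$ is preserved by every deck transformation, since its left-trivialized derivative lies in $\overline F$, and thus descends to a metric on $M$. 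With respect to this metric the differential of $\overline{(d,\delta)}^{\,n}$ equals $\delta_\ast^{\,n}$ in the trivialization, so $\|D\overline{(d,\delta)}^{\,n}v\|\ge a^n\|v\|$ for $v\in E^u$ and $\|D\overline{(d,\delta)}^{\,n}v\|\le b^n\|v\|$ for $v\in E^s$; with $\lambda=\max(b,a^{-1})\in(0,1)$ and $C=1$ this yields exactly the two inequalities in the definition of an Anosov diffeomorphism. The one step that genuinely needs care is the holonomy-invariance of the splitting, i.e.\ the passage to the power $\delta_\ast^{\,k}$, since $\overline F$ need not commute with $\delta_\ast$ itself; the rest is bookkeeping in the left-invariant trivialization.
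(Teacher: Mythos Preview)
The paper does not actually prove this theorem: it simply states the result and cites \cite{fran70-1} and \cite{smal67-1}. So there is no ``paper's own proof'' to compare against; your proposal stands on its own as a self-contained argument.

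Your argument is correct. The computation that an affine map $(c,\gamma)$ has constant derivative $\gamma_\ast$ in the left-invariant trivialization is right, and the crucial step---showing that the generalized stable/unstable subspaces $\lie^s,\lie^u$ of $\delta_\ast$ are invariant under the holonomy $\overline F$---is handled properly: from $(d,\delta)\Gamma(d,\delta)^{-1}=\Gamma$ one reads off on rotational parts that $\delta_\ast$ normalizes $\overline F$, hence some power $\delta_\ast^{\,k}$ centralizes the finite group $\overline F$, and $\lie^u,\lie^s$ are equally well the unstable/stable sums of generalized eigenspaces for $\delta_\ast^{\,k}$. Your averaging argument for the adapted inner product is also correct; the key identity is that for $\alpha\in\overline F$ and $v\in\lie^u$ one has $\|\alpha\delta_\ast v\|=\|\delta_\ast\beta v\|\ge a\|\beta v\|$ with $\beta=\delta_\ast^{-1}\alpha\delta_\ast\in\overline F$, and summing over $\alpha$ is the same as summing over $\beta$. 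One small remark: the splitting you produce is even smooth (it comes from constant subspaces in a smooth trivialization), which is stronger than the mere continuity required in the definition.
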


The most famous example of such a hyperbolic infra-nilmanifold endomorphism is most probably Arnold's cat map,
which is the map induced on the 2-dimensional torus $T^2=\Z^2\backslash \R^2$ by the linear 
map $\delta:\R^2 \to \R^2$, where $\delta$ is given by the matrix 
$\left( \begin{array}{cc} 2 & 1 \\ 1 & 1 \end{array}\right)$.

There is a long standing conjecture (\cite{fran70-1}) that in fact the hyperbolic infra-nilmanifold  endomorphisms are essentially the only examples of Anosov diffeomorphisms:

\begin{con}
Let $f:M\to M$ be an Anosov diffeomorphism on a closed manifold $M$, then $f$ is topologically conjugate to a
hyperbolic  infra-nilmanifold automorphism.
\end{con}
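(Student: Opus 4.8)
The statement is the classical \emph{Anosov conjecture}, which is still open in full generality; what follows is therefore a strategy, not a complete argument, built by analogy with the (successful) treatment of expanding maps described above. The proof would split into two halves, exactly paralleling the expanding-map story: first, show that a closed manifold $M$ carrying an Anosov diffeomorphism $f$ is homeomorphic to an infra-nilmanifold; second, granting $M=\Gamma\backslash G$, upgrade $f$ to a topological conjugacy with a hyperbolic infra-nilmanifold automorphism. The first half is the analogue of the role played by Gromov's polynomial-growth theorem in the expanding case, and it is exactly there that the conjecture is stuck.

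For the second half one carries out the Franks--Manning argument. The steps are: (i) lift $f$ to $\tilde f:G\to G$ on the universal cover; by the generalized second Bieberbach Theorem~\ref{keyLee} the induced homomorphism $f_\ast:\Gamma\to\Gamma$ is compatible with an affine map $(d,\delta)\in\Aff(G)$, and since $f$ is a diffeomorphism one gets $\delta\in\Aut(G)$, so $\overline{(d,\delta)}$ is an affine diffeomorphism of $M$; (ii) use hyperbolicity of $Df$ together with the nilpotent Lie group structure of $G$ to see that $\delta_\ast$ has no eigenvalue on the unit circle, i.e.\ $\overline{(d,\delta)}$ is a hyperbolic infra-nilmanifold automorphism; (iii) build a conjugacy $h$ between $\tilde f$ and $(d,\delta)$ on $G$ by the classical proper-homotopy / shadowing method — write $\tilde f=(d,\delta)\cdot(\text{bounded error})$, look for $h=\mathrm{id}\cdot(\text{bounded})$, and solve the resulting functional equation by a fixed-point argument that exploits the contracting/expanding splitting, just as for toral automorphisms; (iv) check that $h$ descends to $M$ because it intertwines the two $\Gamma$-actions, using that $f_\ast$ equals $\overline{(d,\delta)}_\ast$ up to the adjustments already built in. None of this is conceptually new: it is the standard rigidity package, and the infra-nilmanifold case reduces to the nilmanifold case by passing to a finite cover.

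The genuine obstacle is the first half. For an expanding map $f$ one has a free hand: $f$ being expanding forces $\pi_1(M)$ to have polynomial growth, whence Gromov's theorem applies and $M$ is an infra-nilmanifold. For an Anosov diffeomorphism there is no comparable input — the non-wandering set being all of $M$ together with a hyperbolic splitting of $TM$ gives global recurrence and structural stability, but nothing that visibly bounds the growth of $\pi_1(M)$, nor even that $M$ is aspherical. The plan would be to try to prove directly that $\pi_1(M)$ is virtually nilpotent (equivalently, by the results of this text, that $M$ is an infra-nilmanifold up to homeomorphism) and then run the second half; partial results in this direction are known (codimension-one Anosov diffeomorphisms force $M$ to be a torus, by Franks and Newhouse; Anosov diffeomorphisms of infra-nilmanifolds are completely understood, by Manning; several low-dimensional cases are settled), but supplying the virtually-nilpotent fundamental group in general, or ruling out a hypothetical ``exotic'' Anosov manifold, is precisely the open part, and I would not expect a short argument there.
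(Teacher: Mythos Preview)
You have correctly identified the situation: the paper does \emph{not} prove this statement. It is explicitly labeled as a \texttt{Conjecture} (the long-standing conjecture of Franks, \cite{fran70-1}) and is presented without any proof or proof sketch; the paper merely states it and then moves on to the question of which infra-nilmanifolds admit Anosov diffeomorphisms. Your observation that the statement is still open in full generality is accurate and matches the paper's treatment.

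Your outline of a strategy is reasonable and well-informed --- the Franks--Manning rigidity package for the second half, and the missing ``virtually nilpotent fundamental group'' step for the first half --- but since the paper offers no argument at all, there is nothing to compare against. The only minor remark is that the paper does not even sketch the partial results you mention (Franks--Newhouse, Manning), so in that sense your write-up goes beyond what the paper provides.
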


Having this conjecture in mind it is natural to ask which infra-nilmanifolds $M$ admit an Anosov diffeomorphism.
It has been shown that, for infra-nilmanifolds, admitting an Anosov diffeomorphism is indeed equivalent to admitting a
hyperbolic infra-nilmanifold automorphism (\cite{deki11-1}).

The following lemma is quite easy to prove:
\begin{Lem}\label{anotorus}
A $n$-dimensional torus admits an Anosov diffeomorphism if and only if $n\geq 2$.
\end{Lem}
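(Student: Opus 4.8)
The proof splits into the two implications, and the content of the ``if'' direction is entirely channelled through the theorem quoted just above, namely that a hyperbolic infra--nilmanifold automorphism of an infra--nilmanifold is an Anosov diffeomorphism. So for $n\geq 2$ I would argue as follows. It suffices to exhibit a matrix $A\in\GL(n,\Z)$ none of whose (complex) eigenvalues has modulus $1$. Such an $A$ preserves the lattice $\Z^{n}\subseteq\R^{n}$, so (via conjugation $\gamma\mapsto A\gamma A^{-1}$ on $\Z^{n}$ and the construction of affine maps between infra--nilmanifolds) the affine map $\overline{(0,A)}$ is a well--defined self--map of $T^{n}=\Z^{n}\backslash\R^{n}$; it is in fact a diffeomorphism, since $A$ is invertible and $(0,A)\,\Z^{n}\,(0,A)^{-1}=\Z^{n}$. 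By definition $\overline{(0,A)}$ is a hyperbolic infra--nilmanifold automorphism (its $\delta_\ast$ equals $A$), hence by the quoted theorem it is an Anosov diffeomorphism of $T^{n}$.

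It remains to produce such an $A$ for every $n\geq 2$. I would take $A$ to be the companion matrix of the polynomial $x^{n}-x-1\in\Z[x]$; its constant term is $-1$, so $\det A=\pm 1$ and indeed $A\in\GL(n,\Z)$. The one step that is genuinely not bookkeeping is checking that no eigenvalue of $A$, i.e.\ no root $z$ of $x^{n}-x-1$, lies on the unit circle. If $|z|=1$ and $z^{n}=z+1$, then $|z+1|=|z^{n}|=1$, and together with $|z|=1$ this forces $z=e^{\pm 2\pi i/3}$; but then $z^{n}$ is a cube root of unity whereas $z+1=e^{\pm i\pi/3}$ is a primitive sixth root of unity, a contradiction. (Alternatively one can build $A$ by hand as a block--diagonal sum of copies of $\left(\begin{smallmatrix}2&1\\ 1&1\end{smallmatrix}\right)$ when $n$ is even, using one such block fewer together with the companion matrix of $x^{3}-x-1$ when $n$ is odd; this is just the cat map construction repeated.)

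For the converse I would show that the circle $T^{1}=S^{1}$ carries no Anosov diffeomorphism. Suppose $f$ were one, with $Df$--invariant continuous splitting $T(S^{1})=E^{u}\oplus E^{s}$. Since the fibre dimension is $1$ and the ranks of the subbundles are locally constant, hence constant on the connected manifold $S^{1}$, one of $E^{u},E^{s}$ is the zero bundle; replacing $f$ by $f^{-1}$ if necessary, we may assume $E^{u}=T(S^{1})$, so that $f$ is an expanding map. Now a closed manifold admits no expanding diffeomorphism: from $\|Df^{n}(v)\|\geq C\lambda^{n}\|v\|$ with $\lambda>1$, choosing $n$ with $C\lambda^{n}>1$ gives $\|Df^{-n}(w)\|<\|w\|$ for all $w\neq 0$, whence $|\det Df^{-n}_{x}|<1$ for every $x$ and therefore $\int_{S^{1}}(f^{-n})^{\ast}\,dV<\int_{S^{1}}dV$; but $f^{-n}$ is a diffeomorphism of $S^{1}$, so the change of variables formula gives $\int_{S^{1}}(f^{-n})^{\ast}\,dV=\int_{S^{1}}dV$, a contradiction. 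I expect the only mildly delicate points to be the elementary number theory guaranteeing a hyperbolic integer matrix in every dimension $\geq 2$, and, in the $n=1$ case, the observation that a rank--one tangent bundle cannot support a nontrivial invariant splitting; everything else is routine.
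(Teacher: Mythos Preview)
Your argument is correct and follows the same overall strategy as the paper: reduce the ``if'' direction to exhibiting, for each $n\geq 2$, a matrix in $\GL(n,\Z)$ with no eigenvalue on the unit circle, and handle $n=1$ separately. The paper does this by taking Arnold's cat map $\left(\begin{smallmatrix}2&1\\1&1\end{smallmatrix}\right)$ for $n=2$, an explicit $3\times 3$ matrix for $n=3$, and block-diagonal sums for larger $n$ --- exactly the alternative you sketch in parentheses. Your primary construction via the companion matrix of $x^{n}-x-1$ is a cleaner uniform choice; the little computation showing that $|z|=|z+1|=1$ forces $z=e^{\pm 2\pi i/3}$ and then reaching a contradiction is a nice touch.

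The one place you go further than the paper is the $n=1$ case: the paper simply declares it ``obvious, since the tangent bundle has to split in an expanding part $E^{u}$ and a contracting part $E^{s}$'', implicitly using the standard convention that both subbundles are nonzero. You instead allow the possibility that one summand is trivial and rule it out via a volume argument. That is more scrupulous given how the definition is actually phrased in the text, and your argument is sound (the only cosmetic point is that you should take the absolute value in the Jacobian, i.e.\ compare $\int |\det Df^{-n}|\,dV$ with $\int dV$, to avoid a sign issue if $f$ reverses orientation).
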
 
\begin{proof}
That $n$ must be at least 2 is obvious, since the tangent bundle has to split in an expanding part $E^u$ and a
contracting part $E^s$. To show that each $n$-dimensional torus, with $n\geq 2$, admits an Anosov diffeomorphism,
 it is enough to show that 
there exists a matrix $\delta_n\in \GL(\Z^n)$ with no eigenvalue of modulus 1. For $n=2$, we can take $\delta_2$ to be the 
matrix of Arnold's cat map given above. For $n=3$ we can e.g.\ take 
\[ \delta_3 = \left( \begin{array}{ccc} 0 & 0 & 1\\ 1 & 0 & 0 \\ 0 & 1 & 1\end{array}\right). \]
For higher values of $n$, we can choose a block diagonal matrix with blocks $\delta_2$ and/or $\delta_3$ on the diagonal.
\end{proof}

There has been a lot of research on the existence question of Anosov diffeomorphisms for nilmanifolds, especially in low dimensional cases or for 
nilmanifolds modeled on very special types of nilpotent Lie groups (See e.g. \cite{cks99-1,dani78-1,  deki99-1,  dm05-1,  dd03-2, dv09-1,gorb05-1,laur03-1,laur08-1,  lw08-1, lw09-1, main06-1, main12-1,
malf97-3, mw07-1, payn09-1}). The situation is much more difficult than in the case of expanding maps. As an 
illustration of how complicated the situation can be let us remark here that for $G=H^\R\times H^\R$, the direct product 
of the Heisenberg Lie group with itself, there are lattices $N_1,N_2 \subseteq G$ such that 
the nilmaniolfd $N_1\backslash G$ admits an Ansosov diffeomorphism, while 
$N_2\backslash G$ does not admit an Anosov difeomorphism (see  \cite{cks99-1,malf97-3}). 
So, in the case of Anosov diffeomorphisms there is no hope to find a result in the sense of Theorem~\ref{exp-main}.

\medskip

As it is already quite complicated to study Anosov diffeomorphisms on nilmanifolds, not that much research has been done
in the case of infra-nilmanifolds. Nevertheless there is a complete answer in the flat case, due to H.~Porteous (\cite{port72-1}).
\begin{Thm}
Let $\Gamma \subseteq \Isom(\R^n)$ be a Bieberbach group with rational holonomy representation 
$\varphi: F \to \GL(\Q^n)$ (see page~\pageref{ratholon}) and let $M=\Gamma\backslash \R^n$ be the 
corresponding flat manifold. Then we have:
\begin{center}
$M$ admits an Anosov diffeomorphism \\
$\Updownarrow$\\
Each $\Q$--irreducible component of $\varphi$ which appears with multiplicity one is reducible over $\R$.
\end{center}
\end{Thm}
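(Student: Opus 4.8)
The plan is to translate the existence of an Anosov diffeomorphism on $M=\Gamma\backslash\R^n$ into a purely linear-algebraic statement about the holonomy representation, using the generalized second Bieberbach theorem to reduce everything to data at the level of $\Gamma$. An affine diffeomorphism $\overline{(d,\delta)}$ of $M$ is given by an element $(d,\delta)\in\Aff(\R^n)$ with $(d,\delta)\Gamma(d,\delta)^{-1}=\Gamma$; it is a hyperbolic infra-nilmanifold automorphism exactly when $\delta$ (here a genuine element of $\GL(\R^n)$, since $G=\R^n$) has no eigenvalue of modulus $1$. By the result quoted in the excerpt (\cite{deki11-1}), $M$ admits an Anosov diffeomorphism iff it admits a hyperbolic infra-nilmanifold automorphism, so it suffices to determine when such a $\delta$ exists. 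The conjugation condition $(d,\delta)\Gamma(d,\delta)^{-1}=\Gamma$ forces $\delta$ to normalize the translation lattice $t(\Gamma)\cong\Z^n$ (which is characteristic in $\Gamma$), hence $\delta\in\GL(\Z^n)$ after the usual identification, and moreover $\delta$ must commute with the holonomy action up to the permutation of $\Gamma$ it induces. The cleanest way to organize this: $\delta$ conjugating $\Gamma$ into itself induces an automorphism of the finite group $F=\Gamma/t(\Gamma)$, and after replacing $\delta$ by a suitable power we may assume this induced automorphism is trivial, so that $\delta$ centralizes the image of $\varphi$ in $\GL(\Q^n)$.

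The heart of the argument is then: a hyperbolic integer matrix $\delta$ commuting with $\varphi(F)$ exists if and only if the stated condition on the $\Q$-irreducible components holds. First I would decompose $\Q^n=\bigoplus_i V_i^{\,m_i}$ into $\varphi$-isotypic pieces, where the $V_i$ are the distinct $\Q$-irreducible constituents and $m_i$ their multiplicities. By Schur's lemma the commutant algebra of $\varphi(F)$ is $\prod_i M_{m_i}(D_i)$, where $D_i=\End_{\Q F}(V_i)$ is a division algebra over $\Q$. An element $\delta$ of this commutant is hyperbolic (as a real matrix) iff each block component is, so the problem decouples over $i$. For a component with multiplicity $m_i\geq 2$, the block $M_{m_i}(D_i)$ contains, in particular, matrices conjugate to the $\delta_2,\delta_3$ of Lemma~\ref{anotorus} tensored suitably, and one can always build a hyperbolic element (this is where the "block diagonal with $\delta_2$'s and $\delta_3$'s" idea, already used for tori, reappears) — so multiplicity-$\geq 2$ components never obstruct. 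For a component with $m_i=1$, the available endomorphisms are just $D_i$ acting on $V_i$; a hyperbolic one exists iff $D_i$ contains an element all of whose real-embedding eigenvalues on $V_i\otimes\R$ have modulus $\neq 1$, and a short argument with units of orders / Dirichlet's unit theorem shows this is possible precisely when $V_i\otimes\R$ is \emph{not} $\R$-irreducible, i.e. when the $\Q$-irreducible $V_i$ becomes reducible over $\R$. (If $V_i\otimes\R$ is $\R$-irreducible, the commutant is $\R$ or the quaternions, a compact group of units mod scalars, forcing eigenvalues of modulus $1$ after normalizing the determinant; one must also rule out a nontrivial-determinant escape, which is handled because a single real eigenvalue block of size $1$ is forced to be $\pm1$ to preserve the lattice — this is the flat analogue of the rank-$1$ obstruction.)

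Assembling these per-component facts yields: a hyperbolic $\delta$ in the commutant exists iff every $\Q$-irreducible component occurring with multiplicity one is $\R$-reducible, which is exactly the asserted criterion. Finally I would check the two directions match the desired equivalence: if the criterion holds, take the hyperbolic $\delta$ just constructed, note it automatically preserves $t(\Gamma)$ and centralizes $\varphi(F)$, lift it to $(d,\delta)\in\Aff(\R^n)$ normalizing $\Gamma$ by solving the relevant cohomological equation for $d$ (possible since $\delta$ commutes with the holonomy, so the obstruction in $H^1(F,\R^n)=0$ vanishes as $F$ is finite), and conclude $\overline{(d,\delta)}$ is a hyperbolic infra-nilmanifold automorphism, hence $M$ is Anosov; conversely an Anosov diffeomorphism gives, via \cite{deki11-1}, such a $(d,\delta)$, whose linear part after a power centralizes $\varphi(F)$ and is hyperbolic, forcing the criterion by the multiplicity-one analysis. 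The main obstacle is the multiplicity-one case: proving that $\R$-irreducibility of $V_i\otimes\R$ genuinely obstructs hyperbolicity requires controlling the unit group of the division algebra $D_i$ (a compactness argument once the determinant is normalized), and conversely that $\R$-reducibility suffices requires producing a unit of infinite order with the right archimedean behavior — this is the number-theoretic core of Porteous's theorem and the only place where the argument is not formal.
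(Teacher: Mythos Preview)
The paper does not prove this theorem: it is quoted as Porteous's result with a bare citation to \cite{port72-1}, and the text immediately moves on to the free-nilpotent generalisation. So there is no in-paper argument to compare your proposal against.

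That said, your outline is the standard route to Porteous's theorem and is broadly sound. Two places deserve tightening. First, in the multiplicity $m_i\geq 2$ case you say one can ``always build a hyperbolic element'' of $M_{m_i}(D_i)$ by mimicking the $\delta_2,\delta_3$ trick; but the element you produce must lie in the unit group of an \emph{order} in $M_{m_i}(D_i)$ (so as to preserve the lattice $t(\Gamma)$ and have determinant $\pm1$), and arranging this while controlling the archimedean eigenvalues is again a Dirichlet-type statement, not just a rational construction. Second, the lifting step is misphrased: having a hyperbolic $\delta\in\GL(n,\Z)$ centralising $\varphi(F)$ does not by itself give an automorphism of $\Gamma$, because $\Gamma$ is the extension of $F$ by $\Z^n$ determined by a class in $H^2(F,\Z^n)$, and $\delta$ must fix that class. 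The vanishing of $H^1(F,\R^n)$ is the wrong cohomology group here. The correct fix is that $H^2(F,\Z^n)$ is finite, so some power $\delta^k$ (still hyperbolic) fixes the class, and then the second Bieberbach theorem supplies the required $(d,\delta^k)\in\Aff(\R^n)$ normalising $\Gamma$. With these two points addressed, your sketch matches Porteous's original argument.
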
  
When we want to generalize the above theorem to the case of infra-nilmanifolds, we need to restrict ourselves 
to certain classes of them e.g.\ by considering only infra-nilmanifolds modeled on well understood 
nilpotent Lie groups $G$. A successful generalization was obtained in case $G$ is a free nilpotent group.
\begin{Def}
A Lie group $G$ is said to be a free nilpotent Lie group of class $c$ on $r$ generators in case 
its Lie algebra $\lie$ is a free nilpotent Lie algebra of class $c$ on $r$ generators. We denote the 
free nilpotent Lie group of class $c$ on $r$ generators by $G_{c,r}$.
\end{Def} 
We have a complete understanding of which nilmanifolds modeled on a free nilpotent Lie group $G_{c,r}$ do 
admit an Anosov diffeomorphism. It turns out that this only depends on the values of $c$ and $r$ and not on the 
specific lattice which was chosen to construct the nilmanifold (\cite{dani78-1, deki99-1, dv09-1})
\begin{Thm}
Let $N$ be a lattice in the free $c$-step nilpotent Lie group $G_{c,r}$ on $r$ generators. Then, the 
nilmanifold $N\backslash G_{c,r}$ admits an Anosov diffeomorphism if and only if $r>c$.
\end{Thm}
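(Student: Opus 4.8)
The plan is to translate the geometric question into one about automorphisms of the free $c$-step nilpotent Lie algebra $\lie_{c,r}$ of $G_{c,r}$. By the quoted result, an infra-nilmanifold admits an Anosov diffeomorphism if and only if it admits a hyperbolic infra-nilmanifold automorphism, and for $M=N\bs G_{c,r}$ such an automorphism is some $\overline{(d,\delta)}$ with $\delta\in\Aut(G_{c,r})$, $(d,\delta)N(d,\delta)^{-1}=N$ and $\delta_\ast$ hyperbolic. The key structural fact I would use is that, since $\lie_{c,r}$ is free on a generating subspace $V\cong\R^r$, every $\phi\in\Aut(\lie_{c,r})$ preserves the lower central series and acts on $\gamma_i(\lie_{c,r})/\gamma_{i+1}(\lie_{c,r})\cong L_i(V)$ as $L_i(A)$, where $L_i$ is the $i$-th homogeneous component of the free Lie algebra functor and $A:=\phi|_V\in\GL(V)$ is the induced map on the abelianization. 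Hence the eigenvalues of $\phi$ are exactly the products $\lambda_{j_1}\cdots\lambda_{j_k}$ (with $1\le k\le c$) of eigenvalues $\lambda_1,\dots,\lambda_r$ of $A$ that occur in $\bigoplus_{i=1}^{c}L_i(V)$, so $\phi$ is hyperbolic iff none of these products lies on the unit circle. I would also record that $\det A=\lambda_1\cdots\lambda_r$ always occurs as an eigenvalue of $L_r(A)$ — for diagonal $A$ the nonzero element $[e_1,[e_2,[\dots,[e_{r-1},e_r]]\dots]]\in L_r(V)$ is a weight vector of weight $(1,\dots,1)$, and this persists in general because the characteristic polynomial of $L_r(A)$ depends polynomially on $A$ — provided $r\le c$ so that $L_r(V)\neq 0$.

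For the direction ``$r\le c\Rightarrow$ no Anosov'', suppose $\overline{(d,\delta)}$ is a hyperbolic infra-nilmanifold automorphism of $N\bs G_{c,r}$. Then $\delta(N)=d^{-1}Nd$, and since conjugation by $d\in G_{c,r}$ is trivial on the abelianization, $A=\delta_\ast|_V$ maps the lattice $N/\sqrt{\gamma_2(N)}\cong\Z^r$ inside $V$ onto itself; thus $A$ (in a lattice basis) lies in $\GL_r(\Z)$ and $|\det A|=1$. By the previous paragraph $\det A$ is then an eigenvalue of $\delta_\ast$ of modulus $1$, contradicting hyperbolicity. Hence for $r\le c$ no nilmanifold modeled on $G_{c,r}$ admits an Anosov diffeomorphism.

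For the converse, $r>c$, I would first realise a hyperbolic automorphism on the free lattice $N_{c,r}$. Choose $A\in\GL_r(\Z)$ whose characteristic polynomial is irreducible with all roots $\lambda_1,\dots,\lambda_r$ real, where the $\lambda_i$ are the conjugates of a unit in a totally real number field of degree $r$, chosen so that the only $\Q$-linear relation among $\log|\lambda_1|,\dots,\log|\lambda_r|$ is $\sum_i\log|\lambda_i|=0$. Then any nonzero combination $\sum_i m_i\log|\lambda_i|$ with $m_i\ge 0$ and $1\le\sum_i m_i\le c<r$ is nonzero, so every product of at most $c$ of the $\lambda_i$ has modulus $\neq 1$; by the structural fact, the automorphism of $N_{c,r}$ induced by $A$ via $\Aut(F_r)\twoheadrightarrow\GL_r(\Z)$, whose differential on $\lie_{c,r}$ is the functorial extension $\hat A$ of $A$, is hyperbolic. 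Since this automorphism preserves $N_{c,r}$, the affine map $\overline{(1,\hat A)}$ is a hyperbolic infra-nilmanifold automorphism of $N_{c,r}\bs G_{c,r}$, hence an Anosov diffeomorphism. The existence in every degree of a totally real unit whose conjugates are multiplicatively independent modulo the norm relation is classical but not a formal consequence of what precedes; establishing it (cf. \cite{dani78-1,deki99-1}) is the part of the argument I expect to require genuine work, and really the only non-bookkeeping step.

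Finally I would remove the dependence on the lattice. Given an arbitrary lattice $N\subseteq G_{c,r}$, lift a $\Z$-basis of $N/\sqrt{\gamma_2(N)}\cong\Z^r$ to elements $n_1,\dots,n_r\in N$ and set $N_0=\langle n_1,\dots,n_r\rangle$; being generated by $r$ elements that span $N^\Q/[N^\Q,N^\Q]$, $N_0$ has $N_0^\Q=N^\Q$, hence is of finite index in $N$, is torsion-free $c$-step nilpotent, and has the same Hirsch length as $N_{c,r}$, so the canonical epimorphism $N_{c,r}\to N_0$ has trivial kernel and $N_0\cong N_{c,r}$. The subgroup $\{\phi\in\Aut(N):\phi(N_0)=N_0\}$ has finite index in $\Aut(N)$, and restriction to $N_0$ identifies it with the stabiliser of $N$ in $\Aut(N_0)\cong\Aut(N_{c,r})$, which again has finite index (both $N$ and $N_0$ lie among the finitely many groups between $N_0$ and $N_0^\Q$ of the relevant index). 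Hence some power $\hat A^m$ of the hyperbolic automorphism $\hat A$ of $N_{c,r}$ lies in this finite-index subgroup, is again hyperbolic, and corresponds to an automorphism $\phi\in\Aut(N)$ whose real extension is hyperbolic; then $\overline{(1,\phi^\R)}$ is a hyperbolic infra-nilmanifold automorphism of $N\bs G_{c,r}$. Combined with the first direction, this shows that $N\bs G_{c,r}$ admits an Anosov diffeomorphism precisely when $r>c$.
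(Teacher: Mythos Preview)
The paper does not actually prove this theorem; it only states it and refers to \cite{dani78-1, deki99-1, dv09-1} for the proof. So there is no ``paper's own proof'' to compare against, and your proposal has to be judged on its own merits and against those references.

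Your outline is essentially the argument of Dani and Dekimpe, and it is correct. A few comments:

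For the direction $r\le c$, your observation that $\delta_\ast$ is block upper-triangular with diagonal blocks $L_i(A)$ ($A$ the induced map on $V=\lie_{c,r}/\gamma_2(\lie_{c,r})$), together with the fact that the nested bracket $[e_1,[e_2,\ldots[e_{r-1},e_r]\ldots]]$ is a nonzero weight vector in $L_r(V)$ of weight $\lambda_1\cdots\lambda_r=\det A$, is exactly the standard obstruction. Your reduction of $A$ to $\GL_r(\Z)$ via the abelianised action on the lattice $N/\sqrt{\gamma_2(N)}$ is clean and correct.

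For $r>c$, you have correctly isolated the only genuinely nontrivial ingredient: producing a unit $\epsilon$ in a totally real number field of degree $r$ whose Galois conjugates $\lambda_1,\ldots,\lambda_r$ satisfy no $\Q$-linear relation among the $\log|\lambda_i|$ other than $\sum_i\log|\lambda_i|=0$. This is precisely what \cite{dani78-1} establishes (and \cite{deki99-1} uses), so citing it here is appropriate; just be explicit that Dirichlet's unit theorem alone gives rank $r-1$ for the full unit group, and one still needs to locate a \emph{single} unit whose log-vector avoids all rational hyperplanes of $\{\sum x_i=0\}$---a density argument in the log-lattice.

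Your transfer to an arbitrary lattice $N$ is sound. The step ``$N_0^\Q=N^\Q$'' follows from the standard fact that in a nilpotent group a subgroup surjecting onto the abelianisation is the whole group, applied inside $N^\Q$; you might state this explicitly. The Hirsch-length count then forces the epimorphism $N_{c,r}\to N_0$ to be an isomorphism (the kernel, being torsion-free of Hirsch length $0$, is trivial). The finite-index stabiliser argument is also correct: $\Aut(N)$ permutes the finitely many subgroups of $N$ of index $[N:N_0]$, and $\Aut(N_0)$ permutes the finitely many overgroups of $N_0$ inside $N_0^\Q$ of that same index, so passing to a power of $\hat A$ does the job.

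In summary: no gaps beyond the one you flag yourself, and your argument is the one in the cited literature.
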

When $c=1$, then $G_{1,r}=\R^r$ and the above theorem says that a torus $N\backslash \R^r$ (with 
$N\cong \Z^r$ a lattice in $\R^r$) admits an Anosov diffeomorphism if and only if $r>1$. This is exactly the content of 
Lemma~\ref{anotorus}.

The Heisenberg group $H^\R$ is  equal to $G_{2,2}$. The theorem above says that no 
nilmanifold (and hence also no infra-nilmanifold) 
modeled on the Heisenberg group admits an Anosov diffemorphism. Actually, no infra-nilmanifold, which is not a flat 
manifold, in dimension $\leq 5$ admits an Anosov diffeomorphism (see e.g.\ \cite{malf97-3}).

Based on  \cite{dv08-1} and \cite{dv11-1}, we were very recently able to prove in \cite{dd13-1} a full generalization of 
Porteous' result to the case of infra-nilmanifolds modeled on a free nilpotent Lie group.

\begin{Thm}
Let $\Gamma\subseteq G_{c,r}\semi F$ be an almost--Bieberbach group modeled on the free $c$-step nilpotent Lie group 
with $r$ generators and let $\varphi: F\to \GL(\Q^r)$ be the associated abelianized rational holonomy representation (see page~\pageref{abrathol}), then 
\begin{center}
The infra-nilmanifold $\Gamma\backslash G_{c,r}$ admits an Anosov diffeomorphism\\
$\Updownarrow$\\
Every $\Q$--irreducible component of $\varphi$ that occurs with multiplicity $m$, splits in strictly more than $\frac{c}{m}$ components when viewed as a representation over $\R$. 
\end{center}
\end{Thm}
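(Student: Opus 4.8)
The plan is to convert the existence of an Anosov diffeomorphism on $\Gamma\backslash G_{c,r}$ into a concrete statement about automorphisms of the free nilpotent Lie algebra $\lie=\lie_{c,r}$ of $G_{c,r}$, and then to settle that statement using the representation theory of the finite holonomy group together with a Dirichlet--unit--theorem argument in the style of \cite{dv08-1,dv11-1}. First I would invoke the equivalence recalled above (\cite{deki11-1}): $\Gamma\backslash G_{c,r}$ carries an Anosov diffeomorphism if and only if it carries a hyperbolic infra-nilmanifold automorphism $\overline{(d,\delta)}$, i.e.\ some $(d,\delta)$ with $(d,\delta)\Gamma(d,\delta)^{-1}=\Gamma$, $\delta$ invertible, and no eigenvalue of $\delta_\ast\in\Aut(\lie)$ on the unit circle. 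Since $N=\Gamma\cap G_{c,r}$ is characteristic in $\Gamma$ and $\Gamma/N\cong F$ is finite, conjugating $\Gamma$ by $(d,\delta)$ shows that $\delta$ normalises $N$ up to an inner automorphism and normalises $F$ inside $\Aut(G_{c,r})$; replacing $(d,\delta)$ by a suitable power — still a hyperbolic infra-nilmanifold automorphism, as iterates of Anosov maps are Anosov and eigenvalue moduli stay off $1$ — I may assume that $\delta$ induces the identity on $F$, so that $\delta_\ast$ \emph{centralises} the holonomy action of $F$ on $\lie$.

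Because $\lie$ is the \emph{free} $c$-step nilpotent Lie algebra on $V:=\lie/[\lie,\lie]$, the automorphism $\delta_\ast$ is determined by, and can be prescribed arbitrarily through, its restriction $D:=\delta_\ast|_V\in\GL(V)$; and by the classical Lyndon basis of a free nilpotent Lie algebra the eigenvalues of $\delta_\ast$ on $\lie$ are exactly the products $\mu_{i_1}\cdots\mu_{i_j}$ over all Lyndon words $i_1\cdots i_j$ of length $j\le c$, where $\mu_1,\dots,\mu_r$ are the eigenvalues of $D$. Moreover $D$ preserves the lattice $N/[N,N]$, hence is (up to conjugacy) an element of $\GL_r(\Z)$, and it commutes with the image of the abelianised rational holonomy representation $\varphi\colon F\to\GL(\Q^r)$. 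So the problem reduces to: \emph{for which $\Gamma$ does there exist an integral $D$ centralising $\varphi(F)$ such that no product of at most $c$ of the eigenvalues of $D$ has modulus $1$?}

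Next I would split $\Q^r=\bigoplus_\rho W_\rho$ into the $\Q$-isotypic components of $\varphi$. An integral $D$ centralising $\varphi(F)$ preserves each $W_\rho$ and lies in $\End_{\Q F}(W_\rho)^\times=\GL_{m_\rho}(K_\rho)$, where $m_\rho$ is the multiplicity of $\rho$ and $K_\rho=\End_{\Q F}(\rho)$ is, by Schur, a division algebra over $\Q$; since $D$ preserves a full lattice in $W_\rho$ we have $\det(D|_{W_\rho})=\pm1$. The arithmetic heart of the argument, which I would take from \cite{dv08-1,dv11-1}, is then: if one groups the eigenvalues of $D|_{W_\rho}$ on $W_\rho\otimes\R$ by modulus, there are exactly $m_\rho s_\rho$ such ``slots'', where $s_\rho$ is the number of $\R$-irreducible constituents of $\rho$; and as $D|_{W_\rho}$ runs over the integral units of $\GL_{m_\rho}(K_\rho)$, the vector of slot-logarithms runs, Zariski-densely, over the hyperplane of $\R^{m_\rho s_\rho}$ cut out by the single relation ``the sum of all $m_\rho s_\rho$ slot-logarithms is zero'' — equivalently, the product of one eigenvalue per slot has modulus $1$. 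That this relation has uniform coefficients (rather than a mixture of $1$'s and $2$'s) is where one uses that the field $\Q(\chi_\rho)$ generated by the character of $\rho$ is abelian over $\Q$, hence totally real or CM, and that a quaternionic $K_\rho$ over a totally real $\Q(\chi_\rho)$ is forced by the Frobenius--Schur indicator to be totally definite.

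Finally I would run the combinatorial threshold. Put $S=\prod_\rho H_\rho$ for the product of these hyperplanes. A would-be modulus-$1$ eigenvalue of $\delta_\ast$ on $\lie_j$ ($j\le c$) is the value at our chosen point of $S$ of the linear functional $\phi_w$ = ``sum of the slot-logarithms used by the Lyndon word $w$'', $|w|=j$; and $\phi_w$ vanishes identically on $S$ precisely when $w$ uses all slots of some component $\rho$ with a common multiplicity $\lambda\ge1$, which forces $|w|\ge\lambda\,m_\rho s_\rho\ge m_\rho s_\rho$. Hence: if $m_\rho s_\rho>c$ for every $\rho$, then no $\phi_w$ with $|w|\le c$ vanishes on all of $S$ (and $S$ is contained in none of the hyperplanes $\{|\mu_i|=1\}$ either, since $m_\rho s_\rho\ge2$), so an integral $D$ whose slot-logarithm vector is a Zariski-generic point of $S$ has no modulus-$1$ eigenvalue product of length $\le c$; this yields a hyperbolic $\delta_\ast$ and therefore an Anosov diffeomorphism. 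Conversely, if some $\rho_0$ has $m_{\rho_0}s_{\rho_0}\le c$, take $w$ to be the strictly increasing (hence Lyndon) word that uses one eigenvector from each of the $m_{\rho_0}s_{\rho_0}$ slots of $W_{\rho_0}$: then $|w|=m_{\rho_0}s_{\rho_0}\le c$, its bracket is a nonzero element of $\lie_{|w|}$, and $\phi_w$ is exactly the defining relation of $H_{\rho_0}$, hence vanishes on all of $S$; so for \emph{every} admissible $D$ the corresponding eigenvalue of $\delta_\ast$ has modulus $1$, and no hyperbolic infra-nilmanifold automorphism — a fortiori no Anosov diffeomorphism — exists. Rewriting $m_\rho s_\rho>c$ as $s_\rho>c/m_\rho$ gives the stated equivalence. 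I expect the main obstacle to be precisely the arithmetic input on $\GL_{m_\rho}(K_\rho)$ — pinning down the slot count as $m_\rho s_\rho$, proving the uniform-weight relation, and the Zariski-density of realisable slot-logarithms — together with the routine but delicate verification, via the generalized second Bieberbach Theorem~\ref{keyLee} and \cite{deki11-1}, that the algebraic automorphism $\delta_\ast$ one constructs actually underlies a hyperbolic infra-nilmanifold automorphism of the given $\Gamma\backslash G_{c,r}$ (possibly after one more passage to a power).
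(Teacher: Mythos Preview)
The paper does not actually prove this theorem: it is stated as a result obtained in \cite{dd13-1}, building on \cite{dv08-1} and \cite{dv11-1}, with no argument given here beyond the remark that it generalises Porteous' criterion. So there is no ``paper's own proof'' to compare against line by line.

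That said, your sketch follows precisely the route the paper points to --- the reduction via \cite{deki11-1} to hyperbolic infra-nilmanifold automorphisms, passage to the abelianisation of the free nilpotent Lie algebra, and the Dirichlet--unit/arithmetic analysis of $F$-equivariant integral automorphisms carried out in \cite{dv08-1,dv11-1} --- and your formulation of the threshold $m_\rho s_\rho>c$ is exactly the content of the stated criterion. The places you flag as ``main obstacles'' are the real ones: (i) the uniform-weight form of the determinant relation on each isotypic block (your CM/totally-definite remark is the right mechanism, but it needs a careful case analysis of the Schur index and the archimedean places of the character field, as in \cite{dv11-1,dd13-1}); (ii) the Zariski-density of realisable slot-logarithms, which is not literally Dirichlet's unit theorem once $K_\rho$ is noncommutative and requires the arithmetic-group input from those papers; and (iii) the passage from an abstract $F$-equivariant $D\in\GL_r(\Z)$ back to an element $(d,\delta)$ that genuinely normalises the given $\Gamma$ rather than merely its translation lattice $N$ --- this last step is handled in \cite{dd13-1} and is not quite ``routine'', since one must realise the holonomy-equivariant automorphism of $N$ as conjugation by an affine element compatible with the chosen extension class of $\Gamma$. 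One small caution on your converse: the ``one eigenvector per slot'' Lyndon word has length $m_{\rho_0}s_{\rho_0}$ only because the uniform-weight relation holds; without step~(i) established, the defining relation of $H_{\rho_0}$ could have unequal coefficients and the word you need might be longer, so the two directions are more entangled than your write-up suggests.
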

Let us see what this condition says in case $c=1$. In that case, we are dealing with flat manifolds and the 
abelianized rational holonomy reresentation is just the rational holonomy representation. \\
The above theorem says that a $\Q$-irreducible component that appears with multiplicity $m=1$ has to split in more than 
$1$ component over $\R$ (so is reducible over $\R$), while a component that appears with multiplicity $m>1$ has to 
split in more than $1/m<1$ components, which means it does not need to split (so there are no conditions on these components). This is exactly the same result as what H.~Porteous proved.

\medskip

Apart from the above, no real general results on the existence of Anosov diffeomorphism on infra-nilmanifolds has been 
obtained and still  a lot of research can be done in this direction.

\medskip

In general one has the feeling that admitting an Anosov diffeomorphism is a much stronger condition than 
admitting an expanding map. However, recently J.~Der\'e constructed an example of a nilmanifold admitting 
an Anosov diffeomorphims, but no expanding map (\cite{dere14-1}).

\section{Nielsen fixed point theory on infra--nilmanifolds}
A second domain in which infra-nilmanifolds have been studied quite a lot  is that of topological fixed point theory, in particular Nielsen fixed point theory. 

In this section, we consider 
a compact manifold $X$ (more general spaces are allowed in Nielsen theory, but in our case it suffices to consider only 
compact manifolds).

Let $f:X\to X$ be a selfmap and let $\Fix(f)=\{x\in X \; |\; f(x)=x\}$ denote the fixed point set of $f$. The goal of Nielsen 
fixed point theory is to study the minimal number of fixed points of all maps $f'$ which are homotopic to $f$. 

A first indication on the number of fixed points is the Lefschetz number\index{Lefschetz number} of $f$, denoted by $L(f)$, which is defined as
the alternating sum of the traces of the linear maps induced by $f$ on the homology spaces $H_i(X,\Q)$ (which are finite
dimensional rational vectorspaces):
\[ L(f) = \sum_{i=0}^{\dim X} (-1)^i {\rm Trace} (f_{\ast,i}: H_i(X,\Q) \to H_i(X,\Q)\,).\]
The famous Lefschetz fixed point theorem states that in case $L(f)\neq 0$, then $f$ has at least one fixed point.
Moreover, since $L(f')=L(f)$ for all maps $f'$ which are homotopic to $f$, the non-vanishing of $L(f)$ ensures that 
any map $f'$ homotopic to $f$ has at least one fixed point. Unfortunately, the Lefschetz number does not give any 
information on the exact number of fixed points. In fact, it is even possible that $L(f)=0$, while any map homotopic to $f$ 
does have at least one fixed point.

There is a second number, called the Nielsen number\index{Nielsen number} of $f$, denoted by $N(f)$, also homotopy invariant and which contains more information than $L(f)$. Unfortunately, $N(f)$ is in general more difficult to compute than $L(f)$. 

To define the Nielsen number of $f$, we divide $\Fix(f)$ into fixed point classes\index{fixed point class}. We say that two elements 
$x,y\in \Fix(f)$ are Nielsen equivalent if there exists a path $\alpha:[0,1]\to X$ with $\alpha(0)=x$, $\alpha(1)=y$ and such 
that $\alpha$ and $f\circ \alpha$ are path homotopic: 
\begin{center}
\includegraphics[width=0.3\textwidth]{nielsen.0}
\end{center}
It is easy to see that being Nielsen equivalent is an equivalence relation on the set $\Fix(f)$. The equivalence classes are called the fixed point classes of $f$. To each fixed point class ${\mathbf F}$, one associates an integer $I(f,{\mathbf F})$, the fixed point index of ${\mathbf F}$. This is done in an axiomatic way (see \cite{brow71-1,jian64-1}) and is not so easily explained in a few words. To give some idea about the index, we can mention that when $X$ is a differentiable manifold, $f:X\to X$ is a differentiable function and ${\mathbf F}=\{x_0\}$ is a fixed point class consisting of one isolated fixed point, then 
\[ I(f,{\mathbf F})={ \rm sgn} \det(1 - df_{x_0})\]
where $df_{x_0}:T_{x_0}X\to T_{x_0}X$ is the differential of $f$ at $x_0$, $1$ denotes the identity map of $T_{x_0}X$ and 
sgn$(r)=-1, 0$ or $1$ when $r<0$, $r=0$ and $r>0$ respectively.

A fixed point class is said to be 
essential  when the index $I(f,{\mathbf F})\neq 0$. The idea is that essential fixed point classes cannot vanish under a homotopy, while unessential ones might disappear (become empty). The Nielsen number of $f$ is defined as
\[ N(f)=\mbox{ the number of essential fixed point classes of $f$} =\# \{ {\mathbf F}\;|\; 
 I(f,{\mathbf F})\neq 0 \}.\]
It is obvious that $N(f)\leq \# \Fix(f)$. Moreover, it can be shown that $N(f)$ is a homotopy invariant, from which 
it follows that $N(f) \leq \#\Fix(f')$ for all maps $f'$ which are homotopic to $f$, so we have
\[  N(f) \leq {\rm min} \{ \# \Fix(f')\;|\; f'\sim f \} .\]
So $N(f)$ is a lower bound for the number of fixed points of any map homotopic to $f$. It turns out that in many cases
this lower bound is sharp, since there is the following theorem due to Wecken (\cite{weck42-1}):
\begin{Thm}
Let $f:X\to X$ be a map on a closed manifold of dimension $\geq 3$. Then there exists a map $f'$ homotopic to $f$  such that 
$N(f)=\# \Fix(f')$.
\end{Thm}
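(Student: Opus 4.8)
The plan is to carry out the classical geometric argument of Wecken, which reduces the theorem to two local surgeries on $f$: one that fuses two fixed points lying in a common Nielsen class into a single fixed point whose index is the sum of the two, and one that deletes an isolated fixed point of index $0$. First I would put $f$ in general position: by an arbitrarily small homotopy we may assume that $f$ is smooth, or at least that its graph is transverse to the diagonal of $X\times X$, so that $\Fix(f)$ is a finite set. Each $x\in\Fix(f)$ is then an isolated fixed point carrying a local index $I(f,x)$ (equal to $\operatorname{sgn}\det(1-df_x)$ in the differentiable case, and to the local degree of $z\mapsto z-f(z)$ in a chart in general), and the index of a fixed point class $\mathbf F$ is $I(f,\mathbf F)=\sum_{x\in\mathbf F}I(f,x)$. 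Since $N$ is a homotopy invariant this replacement does not change $N(f)$.

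The coalescing step is next. Suppose $x_0,x_1\in\Fix(f)$ lie in the same Nielsen class $\mathbf F$, so there is a path $\gamma$ from $x_0$ to $x_1$ with $f\circ\gamma\simeq\gamma$ rel endpoints. Because $\dim X=n\ge 3$, general position for a $1$-dimensional arc in an $n$-manifold lets us take $\gamma$ to be a smooth embedded arc meeting $\Fix(f)$ only at its endpoints; I then choose a tubular neighbourhood $U\cong[0,1]\times D^{n-1}$ of $\gamma$ whose closure contains no further fixed point and on whose boundary $f$ is fixed-point free. The path homotopy $f\circ\gamma\simeq\gamma$ supplies exactly the data needed to normalize $f$ on $U$ and then to drag $x_1$ along $\gamma$ into $x_0$ through a homotopy of $f$ supported in $U$, after which $f$ has a single fixed point in $U$, of index $I(f,x_0)+I(f,x_1)$: on the ball $U$ the fixed point data is detected by an element of $\pi_{n-1}(S^{n-1})\cong\Z$ equal to that sum, and any such class is realized by a map with one zero. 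Iterating over all such pairs, I may assume that each fixed point class $\mathbf F$ consists of a single point $x_{\mathbf F}$ of index $I(f,\mathbf F)$.

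The cancellation step then finishes the proof. If $\mathbf F$ is inessential, then $x_{\mathbf F}$ is an isolated fixed point of index $0$; choosing a coordinate ball around it that contains no other fixed point, the displacement map on the boundary sphere has degree $I(f,\mathbf F)=0$, hence extends to a nowhere-zero map on the ball, and the corresponding homotopy of $f$ supported in that ball removes $x_{\mathbf F}$ without introducing new fixed points. Performing this for every inessential class, the resulting map $f'$ is homotopic to $f$ and has exactly one fixed point for each essential class, so $\#\Fix(f')=N(f)$; together with the always-valid inequality $N(f)\le\#\Fix(f')$ this yields $N(f)=\#\Fix(f')$, as required.

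The hard part will be making the coalescing step rigorous: one must select the connecting arc and a controlled tubular neighbourhood, carry out the local homotopy that drags one fixed point into the other using the homotopy $f\circ\gamma\simeq\gamma$, and verify that the index adds correctly. It is precisely here that the hypothesis $n\ge 3$ is essential, since for surfaces a $1$-dimensional arc cannot in general be made to avoid the remaining fixed points and one loses the freedom to coalesce — which is exactly the mechanism behind the known failures of the Wecken property in dimension $2$.
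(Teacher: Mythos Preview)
The paper does not actually prove this theorem: it merely attributes it to Wecken and cites \cite{weck42-1}, then moves on to discuss the low-dimensional infra-nilmanifolds not covered by the statement. So there is no ``paper's own proof'' to compare against.

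That said, what you have sketched is exactly the classical Wecken argument as found in the original source and in standard expositions (Jiang, Brown). The three-step structure --- isolate the fixed points, coalesce Nielsen-equivalent fixed points along an embedded arc using a tubular neighbourhood, then cancel the resulting index-zero fixed points --- is correct, and you have identified the right place where the hypothesis $n\geq 3$ enters (general position for arcs). Your own caveat about the coalescing step is well placed: turning the path homotopy $f\circ\gamma\simeq\gamma$ into an honest compactly supported homotopy of $f$ that merges the two fixed points, while keeping track of indices and not creating new fixed points, is the genuinely delicate part and requires more than the one sentence you give it. As a proof \emph{outline} it is fine; as a complete proof it would need that local model worked out explicitly.
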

But although the circle, the torus and the Klein bottle, the only infra-nilmanifolds in dimensions $<3$, are not included in the above theorem of Wecken, one can still show (since it is easy to determine all maps of these manifolds up to homotopy), that the result of Wecken's theorem is still valid for these manifolds. 
So in case we are working with infra-nilmanifolds, the Nielsen number $N(f)$ is always a sharp lower bound for the number of fixed points of maps homotopic to $f$.

For maps on nilmanifolds, D. Anosov proved that the computation of $N(f)$ is as easy as the computation of the 
Lefschetz number (\cite{anos85-1}):
\begin{Thm}
Let $G$ be a simply connected nilpotent Lie group and let $N$ be a lattice of $G$. \\
Assume that $f:N\backslash G \to N\backslash G$ is a map inducing 
a homomorphism $f_\ast :N \to N$. Let $\delta:G \to G$ be the unique extension of $f_\ast$ to 
a homomorphism $\delta:G\to G$ and let $\delta_\ast:\lie\to \lie$ be the differential of $\delta$, then 
\[ L(f)= \det (1-\delta_\ast) \mbox{ and } N(f)=| \det (1-\delta_\ast)|.\]
\end{Thm}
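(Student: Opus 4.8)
The plan is to replace $f$ by an affine representative and then treat $L(f)$ and $N(f)$ separately, the Nielsen computation splitting into the regular case $\det(1-\delta_\ast)\neq 0$ and the degenerate case $\det(1-\delta_\ast)=0$. First I would invoke the results of Section~\ref{maps}: applied with $G_1=G_2=G$ and $\Gamma_1=\Gamma_2=N$, they say $f$ is homotopic to an affine map $\overline{(d,\delta)}$ with $\overline{(d,\delta)}_\ast=f_\ast$, and by uniqueness of the extension this $\delta$ is exactly the one in the statement. Since $L(\cdot)$ and $N(\cdot)$ are homotopy invariants, it suffices to prove both formulas for $f=\overline{(d,\delta)}$; note the translational part $d$ will be irrelevant to both formulas, only to which individual points are fixed. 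A lift of $\overline{(d,\delta)}$ to $G$ has the form $\tilde f=L_c\circ\delta$ with $c=\gamma d$, $\gamma\in N$, where $L_c$ denotes left translation; I will use these lifts throughout.

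For the Lefschetz number: $N\bs G$ is a $K(N,1)$ and $N^\R=G$, so Nomizu's theorem identifies $H^\ast(N\bs G;\R)$ with the Chevalley--Eilenberg cohomology of $\lie$, i.e.\ the cohomology of the complex $(\Lambda^\bullet\lie^\ast,d)$ of left-invariant forms. Since $\overline{(d,\delta)}$ is $y\mapsto L_d(\delta(y))$ and left translations preserve left-invariant forms, it acts on this complex as $\Lambda^\bullet(\delta_\ast^{\top})$, where $\delta_\ast^{\top}$ is the transpose of $\delta_\ast$ on $\lie^\ast$. The Hopf trace formula then gives $L(f)=\sum_i(-1)^i\mathrm{Trace}\bigl(\Lambda^i\delta_\ast^{\top}\bigr)$, and the elementary identity $\sum_i(-1)^i\mathrm{Trace}(\Lambda^iA)=\det(1-A)$ for an endomorphism $A$ of a finite-dimensional vector space yields $L(f)=\det(1-\delta_\ast^{\top})=\det(1-\delta_\ast)$.

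For the Nielsen number with $\det(1-\delta_\ast)\neq0$: a short computation using the homomorphism identity $\delta\circ L_y=L_{\delta(y)}\circ\delta$ gives $(d\tilde f)_y=(dL_{c\delta(y)})_e\circ\delta_\ast\circ(dL_y)_e^{-1}$, so at any fixed point of $\tilde f$ (where $c\delta(y)=y$) the differential is conjugate to $\delta_\ast$; hence every fixed point of $f$ is nondegenerate — in particular isolated, so $\Fix(f)$ is finite — with index $\varepsilon:=\mathrm{sgn}\det(1-\delta_\ast)$. Moreover $\Fix(\delta)=\exp(\ker(\delta_\ast-1))=\{e\}$ here, so any two fixed points of one lift differ by an element of $\Fix(\delta)$ and coincide; thus each fixed point class is a single point, hence essential. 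Therefore $N(f)=\#\Fix(f)$, and since $L(f)=\varepsilon\,\#\Fix(f)$ while $L(f)=\det(1-\delta_\ast)$, we get $N(f)=|L(f)|=|\det(1-\delta_\ast)|$.

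Finally the degenerate case $\det(1-\delta_\ast)=0$, where $L(f)=0$: it suffices to homotope $f$ to a fixed-point-free map (then $N(f)=0$ by homotopy invariance). I would induct on the nilpotency class of $G$ via the central extension $1\to Z(G)\to G\to\bar G\to1$. If $\delta_\ast$ fixes a nonzero $v\in Z(\lie)$, then $s\mapsto\overline{(\exp(sv)\,d,\delta)}$ is a well-defined homotopy of $f$ (centrality of $v$ preserves the condition $d\delta(N)d^{-1}\subseteq N$), and for suitable $s$ translation by $\exp(sv)$ removes every fixed point, exactly as one clears the fixed points of a torus map $\bar A$ with $\det(A-1)=0$ by translating in a $1$-eigendirection. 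Otherwise $\det(\bar\delta_\ast-1)=0$ on $\bar\lie=\lie/Z(\lie)$, so by induction the induced map $\bar f$ on $\bar N\bs\bar G$ is homotopic to a fixed-point-free map; since $N\bs G\to\bar N\bs\bar G$ is a fiber bundle with torus fibers and a fixed point of $f$ would project to one of $\bar f$, that homotopy lifts to a homotopy from $f$ to a fixed-point-free map. Either way $N(f)=0=|L(f)|$. I expect this last case to be the main obstacle: making the induction precise — the fibered homotopy lifting and the verification that a well-chosen central translation genuinely clears every fixed point class — is where the real work lies, whereas the reduction to affine maps, the Lefschetz computation, and the regular Nielsen case are essentially formal.
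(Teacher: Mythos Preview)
The paper does not actually prove this theorem: it is stated with a citation to Anosov, and the only hint given is the remark that Nomizu's theorem explains the Lefschetz formula. So there is no ``paper's proof'' to compare against beyond that hint. Your reduction to an affine map $\overline{(d,\delta)}$ and your computation of $L(f)$ via Nomizu and the Hopf trace identity are correct and are exactly what the paper's remark is pointing to. Your treatment of the regular case $\det(1-\delta_\ast)\neq 0$ is also fine: each lift has at most one fixed point, each fixed point is nondegenerate with index $\mathrm{sgn}\det(1-\delta_\ast)$, so $N(f)=\#\Fix(f)=|L(f)|$.

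The genuine gap is in your degenerate Case~A. The claim ``translating in a $1$-eigendirection clears the fixed points'' is already false on tori. Take $G=\R^2$, $N=\Z^2$, $\delta=A=\left(\begin{smallmatrix}1&1\\0&1\end{smallmatrix}\right)$, $d=0$. Here $\ker(1-A)=\mathrm{im}(1-A)=\R e_1$, so the only central $1$-eigenvector is $v=e_1$, but the map $x\mapsto Ax+se_1$ has fixed set $\{x_2\equiv -s\}$, a full circle for every $s$. What does work here is translating by $se_2$, which is \emph{not} a $1$-eigenvector. In general, when $\ker(1-\delta_\ast|_{Z(\lie)})\subseteq\mathrm{im}(1-\delta_\ast|_{Z(\lie)})$ your recipe produces no fixed-point-free representative, and the induction stalls at the base (abelian) case. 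You correctly flagged this step as the hard one, but the specific mechanism you propose does not work.

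The clean way to handle $\det(1-\delta_\ast)=0$ is to bypass the explicit homotopy altogether and use that nilmanifolds are Jiang spaces: $G$ acts on $N\backslash G$ by right translations, all homotopic to the identity, so the Jiang subgroup is the full fundamental group and every fixed point class of $f$ has the same index. Since $L(f)$ is the sum of these indices, $L(f)=0$ forces every index to be $0$, hence $N(f)=0$. This replaces your entire degenerate-case induction with a one-line argument and is the standard route in the literature. Alternatively, if you want to keep the fibration picture, the right tool is a Nielsen-number product formula for the fiber bundle $T\backslash Z(G)\hookrightarrow N\backslash G\to \bar N\backslash\bar G$, not an ad~hoc translation.
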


\begin{Rmk}
Note that $\delta$ is a homomorphism such that $f$ is homotopic to $\overline{(1,\delta)}$ (see section~\ref{maps}).\\
The reader who is familiar with Nomizu's work (\cite{nomi54-1}) can see where the formula for the Leftschetz number comes  from. 
\end{Rmk}

So for nilmanifolds, we always have that $N(f)=|L(f)|$ and hence in this case the Lefschetz number does contain 
all the information we want. 
\begin{Def}
Let $f:X\to X$ be a selfmap of a closed manifold. We say that $f$ satisfies the Anosov relation when $N(f)=|L(f)|$. \\
We say that a manifold $X$ satisfies the Anasov relation, in case the Anosov relations holds for any selfmap $f$ of $X$
\end{Def}

We have just seen that any nilmanifold satisfies the Anosov relation. This is no longer true for infra-nilmanifolds as we shall see below.

For maps on infra-nilmanifolds, it suffices to consider affine maps, since any map is homotopic to an affine 
map. By using the fact that any infra-nilmanifold is finitely covered by a nilmanifold S.W.~Kim, J.B.~Lee and K.B.~Lee 
were able to prove a nice averaging formula to compute both the Lefschetz and the
 Nielsen number of a selfmap of an infra-nilmanifold 
(see \cite{kll05-1} and \cite{ll06-1}).
\begin{Thm}\label{averaging} Let $G$ be a simply connected nilpotent Lie group. Assume that 
$\Gamma \subseteq G\semi C$ (with $C$ a compact subgroup of $\Aut(G)$) is an almost--Bieberbach group 
with holonomy group $F$ (where we view $F\subseteq C\subseteq \Aut(G)$ as the subgroup of rotational parts of the 
elements of $\Gamma$). \\  
Assume that $f=\overline{(d,\delta)}$ is an affine selfmap of the infra-nilmanifold $\Gamma\backslash G$, then 
\[ L(f)=\frac{1}{|F|} \sum_{\alpha\in F} \det(1-\alpha_\ast \delta_\ast)\mbox{ \ \ and \  \ } 
 N(f)=\frac{1}{|F|} \sum_{\alpha\in F} |\det(1-\alpha_\ast \delta_\ast)|\]
\end{Thm}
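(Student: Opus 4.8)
The plan is to push the computation up to the nilmanifold that finitely covers $M=\Gamma\backslash G$, to identify the relevant lifts there as affine self-maps, and then to invoke Anosov's theorem for nilmanifolds together with averaging principles for the Lefschetz and Nielsen numbers under a finite regular covering. First I would set up the covering: write $N=\Gamma\cap G$ for the translation lattice, so that by the generalized first Bieberbach theorem (Theorem~\ref{bieb1}) and Proposition~\ref{maxnilp} there is a short exact sequence $1\to N\to\Gamma\to F\to 1$ and $\widehat M=N\backslash G$ is a nilmanifold covering $M$ regularly with deck group $F$. Evaluating the defining relation of $f=\overline{(d,\delta)}$ on a translation $n\in N\subseteq G$ shows that $f_\ast(n)$ is left translation by $d\,\delta(n)\,d^{-1}\in G$; hence $f_\ast(N)\subseteq N$, the affine map $(d,\delta)$ carries $N$-orbits to $N$-orbits, and $f$ lifts to a map $\widehat f\colon\widehat M\to\widehat M$. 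Since the deck group is $F$, the lifts of $f$ to $\widehat M$ are exactly the $|F|$ maps $\widehat f_\alpha$ ($\alpha\in F$), where $\widehat f_\alpha$ is $\widehat f$ followed by the deck transformation induced by a chosen lift $(n_\alpha,\alpha)\in\Gamma$; each $\widehat f_\alpha$ is again affine, induced on $\widehat M$ by $(n_\alpha,\alpha)(d,\delta)=(n_\alpha\alpha(d),\alpha\circ\delta)$, with linear part the homomorphism $\alpha\circ\delta\colon G\to G$.

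Next I would evaluate the two invariants for each $\widehat f_\alpha$. It is a self-map of the nilmanifold $\widehat M$ whose induced endomorphism of $\pi_1=N$ extends to the homomorphism $g\mapsto c_\alpha\,(\alpha\circ\delta)(g)\,c_\alpha^{-1}$ of $G$, with $c_\alpha=n_\alpha\alpha(d)$, so Anosov's theorem for nilmanifolds gives $L(\widehat f_\alpha)=\det(1-\mathrm{Ad}(c_\alpha)\alpha_\ast\delta_\ast)$ and $N(\widehat f_\alpha)=|\det(1-\mathrm{Ad}(c_\alpha)\alpha_\ast\delta_\ast)|$. Now $\mathrm{Ad}(c_\alpha)$ is unipotent with respect to the lower central series filtration of $\lie$, a filtration which $\alpha_\ast\delta_\ast$ preserves; since the value $\det(1-B)$ of a filtration-preserving operator $B$ depends only on the action of $B$ on the associated graded, one gets $\det(1-\mathrm{Ad}(c_\alpha)\alpha_\ast\delta_\ast)=\det(1-\alpha_\ast\delta_\ast)$. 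Thus the translational part of the lift is irrelevant and $L(\widehat f_\alpha)=\det(1-\alpha_\ast\delta_\ast)$, $N(\widehat f_\alpha)=|\det(1-\alpha_\ast\delta_\ast)|$.

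For the Lefschetz number of $f$ itself I would invoke the transfer of the finite covering: $H_i(M;\Q)$ is the $F$-invariant subspace of $H_i(\widehat M;\Q)$, and the endomorphism induced there by $f$ is the restriction of $\frac1{|F|}\sum_{\alpha\in F}(\widehat f_\alpha)_{\ast,i}$, an operator whose image already lies in that invariant subspace and which therefore has the same trace as its restriction. Taking alternating traces yields $L(f)=\frac1{|F|}\sum_{\alpha\in F}L(\widehat f_\alpha)=\frac1{|F|}\sum_{\alpha\in F}\det(1-\alpha_\ast\delta_\ast)$, which is the first formula.

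The main obstacle is the analogous averaging identity for the Nielsen number, $N(f)=\frac1{|F|}\sum_{\alpha\in F}N(\widehat f_\alpha)$, which is not automatic: it can fail for general finite regular coverings, since essential fixed point classes downstairs need not correspond to essential ones upstairs and conversely. Here I would follow Kim--Lee--Lee (\cite{kll05-1,ll06-1}), relating each fixed point class of $f$ on $M$ to the $F$-orbit of fixed point classes of the lifts $\widehat f_\alpha$ lying over it, and comparing their fixed point indices. The decisive extra input is that each $\widehat f_\alpha$ is a self-map of a \emph{nilmanifold}, for which all essential fixed point classes carry fixed point indices of one and the same sign --- precisely the phenomenon behind the identity $N=|L|$ on nilmanifolds --- so that collecting the contributions of the lifts over a fixed point class of $f$ produces no cancellation and essential classes are counted with the correct multiplicity. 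Granting this averaging formula and combining it with $N(\widehat f_\alpha)=|\det(1-\alpha_\ast\delta_\ast)|$ gives $N(f)=\frac1{|F|}\sum_{\alpha\in F}|\det(1-\alpha_\ast\delta_\ast)|$. Finally, both sums are independent of the reference lift $(d,\delta)$ and of the maximal compact subgroup $C$ used to realize $\Gamma$, since changing either of these replaces $\delta_\ast$ and the set $\{\alpha_\ast:\alpha\in F\}$ by simultaneous conjugates and so leaves every determinant unchanged.
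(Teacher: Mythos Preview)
The paper does not give its own proof of this theorem: it simply states the averaging formula and refers to \cite{kll05-1} and \cite{ll06-1}. Your sketch reconstructs exactly the strategy of those references --- pass to the nilmanifold cover $N\backslash G$, apply Anosov's formula to each of the $|F|$ lifts, and then average --- so there is no ``different route'' to compare; you have filled in what the paper outsources.

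One step is not fully justified. From $f_\ast(n)(d,\delta)=(d,\delta)(n,1)=(d\delta(n),\delta)$ with $f_\ast(n)=(m,\mu)\in\Gamma$ you obtain $\mu\circ\delta=\delta$ and $m\mu(d)=d\delta(n)$; this forces $\mu=1$ only when $\delta$ is injective, so your sentence ``$f_\ast(n)$ is left translation by $d\delta(n)d^{-1}$, hence $f_\ast(N)\subseteq N$'' is a genuine argument only in the non-degenerate case $\det\delta_\ast\neq 0$. The conclusion $f_\ast(N)\subseteq N$ is nonetheless correct in general --- it is the statement that the translation lattice of an almost--crystallographic group is fully invariant, proved for instance as Lemma~1.2 of \cite{lee95-2} --- but it needs that extra input. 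An alternative that avoids the issue altogether is to replace $N$ by the fully invariant finite-index subgroup $\Lambda=\langle\gamma^{|F|}:\gamma\in\Gamma\rangle\subseteq N$; the covering $\Lambda\backslash G\to\Gamma\backslash G$ then has deck group $\Gamma/\Lambda$, the lifts of $f$ are still affine with linear parts $\alpha\circ\delta$ ($\alpha\in F$) each repeated $[N:\Lambda]$ times, and the averaging formulas come out identical.

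The remaining steps are sound: the unipotency of $\mathrm{Ad}(c_\alpha)$ relative to the lower-central filtration does kill the translational contribution to $\det(1-\cdot)$, the transfer argument for $L(f)$ is the standard one, and you correctly isolate the Nielsen averaging identity $N(f)=\frac{1}{|F|}\sum_\alpha N(\widehat f_\alpha)$ as the non-formal ingredient, appropriately deferring its index-theoretic proof to \cite{kll05-1}.
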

From this formula it is clear  that the Anosov relation holds if and only if  the terms $\det(1-\alpha_\ast \delta_\ast)$ are 
either all non-negative or all non-positive. We formulate this as a corollary:
\begin{Cor}
Let $f=\overline{(d,\delta)}$ be as in Theorem~\ref{averaging}, then 
\[N(f)=|L(f)|\Leftrightarrow \forall \alpha,\beta \in F: \det(1-\alpha_\ast \delta_\ast)\det(1-\beta_\ast \delta_\ast)\geq 0\]
\end{Cor}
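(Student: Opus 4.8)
The plan is to read off the two averaging formulas from Theorem~\ref{averaging} and to observe that the claimed equivalence is nothing more than the equality case of the triangle inequality applied to the finitely many real numbers $d_\alpha := \det(1-\alpha_\ast \delta_\ast)$, $\alpha\in F$.

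First I would introduce the abbreviation $d_\alpha = \det(1-\alpha_\ast\delta_\ast)$, so that Theorem~\ref{averaging} reads $L(f)=\frac{1}{|F|}\sum_{\alpha\in F} d_\alpha$ and $N(f)=\frac{1}{|F|}\sum_{\alpha\in F}|d_\alpha|$. Since $|F|>0$, the equation $N(f)=|L(f)|$ is then equivalent to $\sum_{\alpha\in F}|d_\alpha| = \left|\sum_{\alpha\in F} d_\alpha\right|$, and everything has been reduced to a statement about real numbers.

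Next I would prove the elementary fact that for finitely many reals $x_1,\dots,x_m$ one has $\sum_i|x_i| = \left|\sum_i x_i\right|$ if and only if $x_ix_j\geq 0$ for all $i,j$. For the ``if'' direction, pairwise non-negativity of the products forces all the $x_i$ to lie in $[0,\infty)$ or all in $(-\infty,0]$ (a strictly positive term together with a strictly negative term would give a negative product), and in either case the identity is immediate. For the ``only if'' direction I would argue by contraposition: if $x_i>0>x_j$ for some $i,j$, split the sum into its non-negative part $P>0$ and its negative part $-M$ with $M>0$; then $\sum_k|x_k| = P+M$ whereas $\left|\sum_k x_k\right| = |P-M| < P+M$, the inequality being strict because both $P$ and $M$ are strictly positive. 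Applying this with $x_\alpha = d_\alpha$ yields exactly the stated equivalence.

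There is no genuine obstacle here; the only point requiring a moment's care is the degenerate bookkeeping when some $d_\alpha=0$, but zero terms contribute nothing on either side and automatically satisfy $d_\alpha d_\beta\geq 0$, so they need no separate discussion. I would close by remarking that this is precisely why the corollary is phrased with ``$\geq 0$'' rather than with a single strict sign: vanishing terms, which occur exactly when $1$ is an eigenvalue of $\alpha_\ast\delta_\ast$ for some $\alpha\in F$, are permitted.
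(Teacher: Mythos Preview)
Your proposal is correct and is exactly the argument the paper has in mind: the paper simply remarks, immediately before stating the corollary, that ``the Anosov relation holds if and only if the terms $\det(1-\alpha_\ast \delta_\ast)$ are either all non-negative or all non-positive,'' and leaves the triangle-inequality verification implicit. You have spelled out precisely that verification, so there is nothing to add.
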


Let us illustrate the averaging formula on the Klein bottle. Recall that the Klein bottle was constructed 
as a quotient $\Gamma\backslash \R^2$ where 
$\Gamma$ is generated by 
\[ a=\left( \begin{array}{ccc} 1 & 0 & 1\\ 0 & 1& 0 \\ 0 & 0 & 1
\end{array}\right), \; 
b=\left( \begin{array}{ccc} 1 & 0 & 0\\ 0 & 1& 1 \\ 0 & 0 & 1
\end{array}\right) \mbox{ and }
\alpha=\left( \begin{array}{ccc} -1 & 0 & 0\\ 0 & 1& \frac12 \\ 0 & 0 & 1
\end{array}\right)\]
where we represent elements $(a,A)$ of $\Aff(\R^2)$ as a $3\times 3$ matrix $
\left(\begin{array}{cc} A & a \\ 0 & 1 \end{array} \right) $ as in section~\ref{computation}.

Consider the affine map $(d,\delta)=\left( \begin{array}{ccc}
2 & 0 & -\frac12 \\
0 & 3 & 0 \\
0 & 0 & 1\end{array} \right)$ (also seen as a $3\times 3$-matrix). 
We easily compute that 
\[ (d,\delta) a (d,\delta)^{-1} = a^2,\; (d,\delta) b (d,\delta)^{-1} = b^3\mbox{ and }
(d,\delta) \alpha (d,\delta)^{-1} = a^{-1} b \alpha .\]
This implies that $(d,\delta) \Gamma (d,\delta)^{-1} \subseteq \Gamma$ and so
$(d,\delta)$ induces a map $f=\overline{(d,\delta)}$ on the Klein bottle $\Gamma\backslash \R^2$.
To compute $L(f)$ and $N(f)$ note that $\delta_\ast=\delta$ and that
 \[F=\left\{\alpha_1=\alpha_{1\ast}=I_2,\;
\alpha_2=\alpha_{2\ast}=\left( \begin{array}{cc} -1 & 0 \\0 & 1\end{array} \right)
 \right\}\]
  and use this in the following computation involving the averaging formulas of Theorem~\ref{averaging}:
\begin{eqnarray*}
L(f) & = & \frac12\left[ \det\left( I_2-  \left( \begin{array}{cc} 2 & 0 \\ 0 & 3 \end{array}\right)\right) +
                         \det\left( I_2 - \alpha_2 \left( \begin{array}{cc} 2 & 0 \\ 0 & 3 \end{array}\right)\right)    \right]\\
& = &  \frac12\left[ \det\left( I_2 -  \left( \begin{array}{cc} 2 & 0 \\ 0 & 3 \end{array}\right)\right) +
                         \det\left( I_2 -  \left( \begin{array}{cc} -2 & 0 \\ 0 & 3 \end{array}\right)\right)    \right]\\
& = & \frac12 \left[ \det\left( \begin{array}{cc} -1 & 0 \\ 0 & -2 \end{array}\right) + \det\left( \begin{array}{cc} 3 & 0 \\ 0 & -2 \end{array}\right) \right]\\
& = & \frac12[ 2 +(-6)]= -2\\[2mm]
 N(f) & = & \frac12[ |2| +|-6|]=4.
\end{eqnarray*}
One clearly sees that this $f$ does not satisfy the Anosov relation.

\medskip

Let us conclude this section by discussing some situations for which the 
Anosov relation does hold. We can do this by looking for conditions on the map or on the manifold.

\medskip
Let us first use the averaging formula to give a new and short proof for the Anosov relation in case 
of  homotopically periodic maps on infra-nilmanifolds. A map $f:X\to X$ is said to be homotopically periodic if
 there exists a positive integer $k$ such that $f^k$ is homotopic to the identity.
The following theorem was first proved by S.~Kwasik and K.B.~Lee in 1988 (\cite{kl88-1}).

\begin{Thm}
Let $f:X\to X$ be a homotopically periodic map on an infra-nilmanifold $X$, then $N(f)=L(f)$.
\end{Thm}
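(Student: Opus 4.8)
The plan is to run everything through the averaging formula of Theorem~\ref{averaging}. Since $f$ is homotopically periodic, a suitable power of $f$ is a homotopy inverse of $f$, so $f$ is a homotopy self-equivalence and $f_\ast$ is an automorphism of $\Gamma$; by the Lemma following Theorem~\ref{keyLee}, the affine representative $f\simeq\overline{(d,\delta)}$ then has linear part $\delta\in\Aut(G)$ and satisfies $(d,\delta)\Gamma(d,\delta)^{-1}=\Gamma$. Because the translation subgroup $N=\Gamma\cap G$ is characteristic in $\Gamma$, conjugation by $(d,\delta)$ permutes the holonomy group: for $\gamma_\alpha=(t_\alpha,\alpha)\in\Gamma$ the automorphism part of $(d,\delta)\gamma_\alpha(d,\delta)^{-1}$ is $\delta\alpha\delta^{-1}$, and this is again the holonomy part of an element of $\Gamma$, so $\delta F\delta^{-1}=F$ inside $\Aut(G)$. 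By the Corollary to Theorem~\ref{averaging}, it now suffices to prove that $\det(1-\alpha_\ast\delta_\ast)\ge 0$ for every $\alpha\in F$.

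To control $\delta$, I would use that $\overline{(d,\delta)}^{\,k}=\overline{(d,\delta)^k}\simeq\mathrm{id}_X$. By the homotopy classification of Section~\ref{maps}, this means that $(d,\delta)^k$ and some $\gamma_0\in\Gamma$ induce the same homomorphism of $\Gamma$, i.e.\ $\gamma_0^{-1}(d,\delta)^k$ centralizes $\Gamma$ in $\Aff(G)$. In particular it centralizes the lattice $N$, hence---$N$ being Zariski dense in the nilpotent group $G$---it centralizes all of $G$, which forces it to be a right translation, an element of $\Aff(G)$ whose automorphism part is an inner automorphism $\mu$ of $G$. Writing $\gamma_0=(c_0,\alpha_0)$ with $\alpha_0\in F$, we get $\delta^k=\alpha_0\circ\mu$, so the image $\bar\delta$ of $\delta$ in $\Out(G)$ satisfies $\bar\delta^{\,k}=\overline{\alpha_0}\in\overline F$, a finite subgroup; hence $\bar\delta$ has finite order. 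Combined with $\bar\delta\,\overline F\,\bar\delta^{-1}=\overline F$, the subgroup $\langle\overline F,\bar\delta\rangle\le\Out(G)$ is finite, so for every $\alpha\in F$ some positive power $(\alpha\delta)^r$ is an inner automorphism of $G$.

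The conclusion is then quick: an inner automorphism of the nilpotent Lie group $G$ has differential $\mathrm{Ad}(\exp Y)=\exp(\ad_Y)$, which is unipotent since $\ad_Y$ is nilpotent on $\lie$. Thus $(\alpha_\ast\delta_\ast)^r=((\alpha\delta)_\ast)^r$ is unipotent, so every eigenvalue $\lambda$ of $\alpha_\ast\delta_\ast$ is an $r$-th root of unity, in particular $|\lambda|=1$. Pairing the non-real eigenvalues as $\lambda,\bar\lambda$ (each pair contributing $|1-\lambda|^2\ge 0$) and noting the real ones are $\pm1$ (contributing $0$ or $2$), we get $\det(1-\alpha_\ast\delta_\ast)=\prod_\lambda(1-\lambda)\ge 0$ for all $\alpha\in F$. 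Feeding this into Theorem~\ref{averaging},
\[ N(f)=\frac1{|F|}\sum_{\alpha\in F}|\det(1-\alpha_\ast\delta_\ast)|=\frac1{|F|}\sum_{\alpha\in F}\det(1-\alpha_\ast\delta_\ast)=L(f).\]

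I expect the main obstacle to be the middle paragraph: one must show not merely that $\delta$ has finite order in $\Out(G)$, but that $\alpha\delta$ does for every $\alpha\in F$, and this genuinely uses both that $\delta$ normalizes $F$ and that a power of $\delta$ lands in $F\cdot\Inn(G)$. (Alternatively, one can package the same fact geometrically: $f$ lifts to a homotopically periodic selfmap of the nilmanifold cover $N\backslash G$, such selfmaps of a nilmanifold satisfy the Anosov relation with non-negative Lefschetz number by Anosov's formula, and the averaging formula then finishes the proof.)
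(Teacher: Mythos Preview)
Your proof is correct and follows the same route as the paper: reduce via the averaging formula to showing $\det(1-\alpha_\ast\delta_\ast)\ge 0$ for all $\alpha\in F$, obtain that each $\alpha\delta$ has finite order up to inner automorphisms, and conclude by the root-of-unity eigenvalue count. In fact you are more careful than the paper on the key step: the paper asserts directly that $\delta^k=1_G$, but $f^k\simeq\mathrm{id}$ only yields that $(d,\delta)^k$ differs from an element of $\Gamma$ by something in the centralizer of $\Gamma$, so one only gets $\delta^k\in F\cdot\Inn(G)$; your passage to $\Out(G)$ and the observation that $\langle\bar F,\bar\delta\rangle$ is finite is exactly the honest way to recover that $(\alpha_\ast\delta_\ast)^r$ is unipotent and hence that the eigenvalues are roots of unity. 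Your parenthetical alternative via the nilmanifold cover also works once one notes that the lift is homotopically periodic with period $k|F|$ rather than $k$ (since $\gamma_0^{|F|}\in N$).
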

\begin{proof}
Let $X=\Gamma\backslash G$ where $\Gamma$ is an almost-Bieberbach group modeled on a nilpotent Lie group $G$.
We can assume that $f$ is homotopic to an affine map $\overline{(d,\delta)}$, where $(d,\delta)\in \Aff(G)$.
The map $(d,\delta)$ satisfies
\[ \forall \gamma  \in \Gamma: \; f_\ast (\gamma) (d,\delta) = (d,\delta) \gamma.\]
Let $k$ be the positive integer for which $f^k$ is homotopic to the identity, then $f_\ast^k=(d,\delta)^k$ is 
the identity, from which it follows that $\delta^k=1_G$. It follows that $\delta$ is invertible, and so 
$\forall \gamma=(a,\alpha) \in \Gamma:\; f_\ast (\gamma) = (d,\delta) \gamma (d,\delta)^{-1}$. From this, it  follows that 
$F=\delta F \delta^{-1}$. Now, let $\alpha\in F$, then $\exists \alpha'\in F$:
\[ (\alpha \delta)^{k|F|} =(\alpha'\delta^k)^{|F|}=\alpha'^{|F|}=1.\]
So, any $\alpha_\ast \delta_\ast$ is a matrix of finite order, having only roots of unity $\lambda$ as eigenvalues.
Hence $\det(I_n-\alpha_\ast \delta_\ast)=\prod_{i=1}^n(1 -\lambda_i)$ with $|\lambda_i|=1$, We claim that 
this determinant is always $\geq 0$. Indeed, we have the following possibilities for $\lambda_i$:
\begin{itemize}
\item $\lambda_i=1$, this gives a contribution $1-1=0$ to the product (so when $\alpha_\ast \delta_\ast$ has 1 as an eigenvalue, the determinant $\det(I_n-\alpha_\ast \delta_\ast)=0$).
\item $\lambda_i=-1$, this gives a contribution $1-(-1)=2>0$ to the product.
\item $\lambda_i\in \C\backslash \R$, then also its complex conjugate $\overline{\lambda_i}$ is an eigenvalue
of $\alpha_\ast \delta_\ast$. The contribution of the pair $\{\lambda_i, \overline{\lambda_i} \}$ to the 
product is $(1-\lambda_i)(1-\overline{\lambda_i})=2 -2 {\rm Re}(\lambda_i)>0$.
\end{itemize}
It follows that for $\alpha\in F$, $\det(I_n-\alpha_\ast\delta_\ast) \geq 0$, from which we can deduce that $L(f)=N(f)$.
\end{proof}

Analogously, one can prove the following result on expanding maps which was first obtained in \cite{ddm05-1}.
\begin{Thm}
Let $f$ be an expanding map on an infra-nilmanifold $X$, then 
\[ N(f)=L(f) \Leftrightarrow X\mbox{ is orientable.}\]
In case $X$ is not orientable then $N(f)\neq |L(f)|$.
\end{Thm}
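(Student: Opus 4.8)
The plan is to use the averaging formula of Theorem~\ref{averaging} together with the expanding hypothesis, which forces every eigenvalue of $\delta_\ast$ to have modulus $>1$. First I would fix an affine representative $f=\overline{(d,\delta)}$ of the homotopy class and consider the terms $\det(1-\alpha_\ast\delta_\ast)$ for $\alpha$ running through the holonomy group $F\subseteq C$. Since each $\alpha_\ast$ is orthogonal (it lies in a maximal compact subgroup, so preserves the chosen inner product on $\lie$) and every eigenvalue $\mu$ of $\delta_\ast$ satisfies $|\mu|>1$, the eigenvalues of the product $\alpha_\ast\delta_\ast$ all have modulus $>1$; in particular $1$ is never an eigenvalue, so each term $\det(1-\alpha_\ast\delta_\ast)$ is nonzero. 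Grouping complex-conjugate eigenvalue pairs $\{\nu,\bar\nu\}$ contributes $(1-\nu)(1-\bar\nu)=|1-\nu|^2>0$, while a real eigenvalue $\nu$ with $|\nu|>1$ contributes $1-\nu$, which is negative exactly when $\nu>1$ and positive when $\nu<-1$. So the sign of $\det(1-\alpha_\ast\delta_\ast)$ is $(-1)^{k(\alpha)}$, where $k(\alpha)$ is the number of real eigenvalues of $\alpha_\ast\delta_\ast$ exceeding $1$, and one checks that this sign equals $(-1)^{n}\,\mathrm{sgn}\det(\alpha_\ast\delta_\ast)$ because the negative real eigenvalues and the complex pairs contribute a positive amount to $\det(\alpha_\ast\delta_\ast)$ while each eigenvalue $>1$ contributes a positive factor and each pair/negative contributes positively too --- more precisely, $\mathrm{sgn}\det(1-\alpha_\ast\delta_\ast)=(-1)^{n}\mathrm{sgn}\det(\alpha_\ast\delta_\ast)$ since $1-\nu$ and $-\nu$ have the same sign whenever $|\nu|>1$.

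Next I would observe that $\det(\alpha_\ast\delta_\ast)=\det(\alpha_\ast)\det(\delta_\ast)$, and that $\det(\alpha_\ast)=\pm1$ since $\alpha_\ast$ is orthogonal. Hence all the terms in the averaging sum have the same sign if and only if $\det(\alpha_\ast)$ is the same for every $\alpha\in F$, i.e.\ if and only if $\det|_F:F\to\{\pm1\}$ is trivial, i.e.\ $\det\alpha_\ast=1$ for all $\alpha\in F$. Now I would invoke the standard fact that orientability of the infra-nilmanifold $X=\Gamma\backslash G$ is precisely the condition that the holonomy group $F$ acts on $\lie$ (equivalently on $G$) by orientation-preserving maps, i.e.\ $\det\alpha_\ast=1$ for all $\alpha\in F$: a local orientation on $G$ descends to $X$ exactly when it is $\Gamma$-invariant, and since the translations act trivially on orientation this reduces to invariance under $F$. (For $\delta$ itself, expandingness gives $\det\delta_\ast>0$ automatically once $n$ is even or $\delta_\ast$ has an even number of eigenvalues below $-1$; in fact one notes $\mathrm{sgn}\det\delta_\ast$ is a fixed nonzero number that factors out of every term, so it does not affect whether the terms share a sign.)

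Putting this together: by the Corollary to Theorem~\ref{averaging}, $N(f)=|L(f)|$ holds iff all $\det(1-\alpha_\ast\delta_\ast)$ have the same sign, which by the computation above happens iff $\det\alpha_\ast$ is constant on $F$, i.e.\ iff $X$ is orientable. In the orientable case all terms are, say, of sign $\epsilon=(-1)^n\mathrm{sgn}\det\delta_\ast$, so $L(f)=\epsilon\cdot\frac{1}{|F|}\sum_\alpha|\det(1-\alpha_\ast\delta_\ast)|=\epsilon\,N(f)$; to get $N(f)=L(f)$ exactly (not merely $N(f)=|L(f)|$) I would check that $\epsilon=+1$, which follows because for an expanding map the extension $\delta_\ast$ and all $\alpha_\ast\delta_\ast$ have positive determinant in the orientable case --- indeed $\det\alpha_\ast\delta_\ast=\det\delta_\ast>0$ and one verifies directly that $\mathrm{sgn}\det(1-\alpha_\ast\delta_\ast)=(-1)^n\mathrm{sgn}\det(\alpha_\ast\delta_\ast)$ combined with $\det(\alpha_\ast\delta_\ast)>0$ gives the terms the sign $(-1)^n$, and the parity works out so that $L(f)=N(f)$ rather than $L(f)=-N(f)$ by comparing with the known nilmanifold case (finite cover) where $L=\det(1-\delta_\ast)$ and $N=|\det(1-\delta_\ast)|$ must agree up to the same sign. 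When $X$ is non-orientable, the terms genuinely take both signs (there is at least one $\alpha$ with $\det\alpha_\ast=-1$, giving a term of sign opposite to the $\alpha=1$ term), so $|L(f)|<N(f)$, i.e.\ $N(f)\neq|L(f)|$.

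\textbf{Main obstacle.} The delicate point I expect to be the crux is the sign bookkeeping: showing cleanly that $\mathrm{sgn}\det(1-\alpha_\ast\delta_\ast)=(-1)^{n}\mathrm{sgn}\det(\alpha_\ast\delta_\ast)$ for matrices all of whose eigenvalues have modulus $>1$, and then pinning down that the common sign in the orientable case is exactly $+1$ (so that $N(f)=L(f)$ on the nose, matching the statement, rather than $N(f)=-L(f)$). This requires carefully tracking how real eigenvalues in $(1,\infty)$, real eigenvalues in $(-\infty,-1)$, and complex-conjugate pairs each contribute to the two determinants; the orthogonality of $\alpha_\ast$ (hence $|\det\alpha_\ast|=1$) and the identification of orientability with $\det|_F\equiv 1$ are the structural inputs that make it work.
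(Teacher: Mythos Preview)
Your overall strategy --- use the averaging formula, show each $\det(1-\alpha_\ast\delta_\ast)$ is nonzero with sign $(-1)^n\,\mathrm{sgn}\det(\alpha_\ast)\,\mathrm{sgn}\det(\delta_\ast)$, and conclude via the corollary --- is exactly the route the paper intends (it says the proof is ``analogous'' to the homotopically-periodic case just above). Two points need repair, and the first is more serious than the one you flagged.

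\medskip
\noindent\textbf{A real gap.} The assertion ``since each $\alpha_\ast$ is orthogonal and every eigenvalue of $\delta_\ast$ has modulus $>1$, the eigenvalues of $\alpha_\ast\delta_\ast$ all have modulus $>1$'' is \emph{false} as a bare linear-algebra statement: take $\delta_\ast=\left(\begin{smallmatrix}2&100\\0&2\end{smallmatrix}\right)$ and the rotation $\alpha_\ast=\left(\begin{smallmatrix}0&1\\-1&0\end{smallmatrix}\right)$; then $\alpha_\ast\delta_\ast$ has an eigenvalue of modulus $\approx 0.04$. What makes the claim true in your setting is something you never invoked: from $(d,\delta)\Gamma(d,\delta)^{-1}\subseteq\Gamma$ one reads off on rotational parts that $\delta F\delta^{-1}\subseteq F$, hence $\delta F\delta^{-1}=F$ by finiteness. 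Then for every $m\geq1$ one has $(\alpha\delta)^m=\gamma_m\delta^m$ with $\gamma_m\in F$, so in the $F$-invariant norm $\|(\alpha_\ast\delta_\ast)^m v\|=\|\delta_\ast^m v\|$, which forces every eigenvalue of $\alpha_\ast\delta_\ast$ to have modulus $>1$. This normalisation step (the exact parallel of the line ``$F=\delta F\delta^{-1}$'' in the homotopically-periodic proof) is the missing ingredient; without it your sign identity has no footing.

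\medskip
\noindent\textbf{The sign issue.} Your identity $\mathrm{sgn}\det(1-M)=(-1)^n\,\mathrm{sgn}\det M$ for $M$ with spectrum outside the closed unit disc is correct, and it yields $N(f)=|L(f)|$ precisely when $\det\alpha_\ast=1$ for all $\alpha\in F$, i.e.\ when $X$ is orientable. But your attempt to upgrade this to $N(f)=L(f)$ by asserting ``$\det\delta_\ast>0$'' is unjustified and in fact wrong: already on the circle the doubling map has $L(f)=1-2=-1$ while $N(f)=1$. The intended statement (and the content of \cite{ddm05-1}, whose very title refers to the \emph{Anosov relation} $N=|L|$) is $N(f)=|L(f)|\Leftrightarrow X$ orientable; the displayed equation in the theorem is missing absolute-value bars, as its second sentence (which does write $|L(f)|$) confirms. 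You were right to flag this as delicate, but the resolution is not a further argument --- the unsigned equality $N(f)=L(f)$ simply does not hold for all expanding maps on orientable infra-nilmanifolds.
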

\begin{Rmk}
An infra-nilmanifold $X=\Gamma\backslash G$ is orientable if and only if $\forall \alpha \in F:\;\det(\alpha_\ast)=1$.
\end{Rmk}

The previous two results were about the Anosov relation for specific types of maps. Now we list some results concerning specific manifolds. The first theorem was proved in \cite{ddm05-1}: 
\begin{Thm}
Let $\Gamma\backslash G$ be an infra-nilmanifold with an odd order holonomy group $F$, then 
$N(f)=|L(f)|$ for any selfmap $f$ of $\Gamma\backslash G$.
\end{Thm}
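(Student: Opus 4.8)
The plan is to reduce the statement to the averaging formula of Theorem~\ref{averaging} and then to a sign argument. Since every selfmap of an infra-nilmanifold is homotopic to an affine map and both $L$ and $N$ are homotopy invariants, we may assume that $f=\overline{(d,\delta)}$ for some $(d,\delta)$, where $X=\Gamma\bs G$ and $\Gamma$ has holonomy group $F$. Theorem~\ref{averaging} then gives $L(f)=\frac{1}{|F|}\sum_{\alpha\in F}\det(1-\alpha_\ast\delta_\ast)$ and $N(f)=\frac{1}{|F|}\sum_{\alpha\in F}|\det(1-\alpha_\ast\delta_\ast)|$, so it suffices to show that the real numbers $\det(1-\alpha_\ast\delta_\ast)$, $\alpha\in F$, all have the same sign (a vanishing term being compatible with either sign); then $\sum_\alpha|\det(1-\alpha_\ast\delta_\ast)|=\bigl|\sum_\alpha\det(1-\alpha_\ast\delta_\ast)\bigr|$ and $N(f)=|L(f)|$.

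To analyse $\delta_\ast$, recall from the discussion in Section~\ref{maps} that $f_\ast$ carries the translation subgroup into itself and so induces a homomorphism $\bar f_\ast\colon F\to F$ with $\delta_\ast\circ\alpha_\ast=\bar f_\ast(\alpha)_\ast\circ\delta_\ast$ on $\lie$ for every $\alpha\in F$. Now I would exploit that $|F|$ is odd: each $\alpha\in F$ has odd order, so $\alpha_\ast$ is conjugate to an orthogonal matrix of odd order; hence $\det(\alpha_\ast)=1$ (in particular $X$ is orientable) and $\alpha_\ast$ has no eigenvalue $-1$. More importantly, a group of odd order has no nontrivial self-dual complex irreducible representation, so every nontrivial real irreducible $F$-representation is of complex type. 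One decomposes $\lie=V_0\oplus V_{+}$, where $V_0$ is the trivial isotypic component and $V_{+}$ is the sum of the nontrivial (complex type) isotypic components; then $V_{+}$ carries an $F$-invariant complex structure $J$, so every $\alpha_\ast|_{V_{+}}$ is $\C$-linear. Both summands are invariant under every $\alpha_\ast$ and, because $\delta_\ast$ sends isotypic components to isotypic components (and trivial ones to trivial ones), also under $\delta_\ast$.

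Consequently $\det(1-\alpha_\ast\delta_\ast)=\det(1-\delta_\ast|_{V_0})\cdot\det(1-\alpha_\ast\delta_\ast|_{V_{+}})$, and the first factor is a fixed real number, independent of $\alpha$. For the second factor one observes that on $V_{+}$ the map $\delta_\ast$, followed around each $\bar f_\ast$-orbit of isotypic components, is either $\C$-linear or $\C$-conjugate-linear with respect to $J$. In the $\C$-linear case $\alpha_\ast\delta_\ast|_{V_{+}}$ is $\C$-linear, whence $\det(1-\alpha_\ast\delta_\ast|_{V_{+}})=|\det\nolimits_{\C}(1-\alpha_\ast\delta_\ast|_{V_{+}})|^{2}\ge 0$. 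In the conjugate-linear case the model is the map $z\mapsto z-b\bar z$ on $\C$, whose real determinant is $1-|b|^{2}$ and is unchanged when $b$ is multiplied by a unit complex number; since the $\alpha_\ast$ contribute $\C$-linear isometries (of complex determinant of modulus $1$), the value of $\det(1-\alpha_\ast\delta_\ast|_{V_{+}})$ then does not depend on $\alpha$. In every case $\det(1-\alpha_\ast\delta_\ast)$ is a fixed real number times a nonnegative (possibly $\alpha$-dependent) quantity, so all these numbers share the sign of $\det(1-\delta_\ast)$ or vanish, which is exactly what we needed.

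The step I expect to be the main obstacle is the last one: making the ``$\C$-linear or conjugate-linear'' dichotomy precise when $\bar f_\ast$ is not bijective (so that the isotypic decompositions of $\lie$ for the actions $\alpha\mapsto\alpha_\ast$ and $\alpha\mapsto\bar f_\ast(\alpha)_\ast$ need not coincide, and $\delta_\ast$ need not even preserve $V_0$), together with verifying the asserted uniformity in $\alpha$ for isotypic components of multiplicity greater than one. This is handled by passing to the cyclic composition of $\delta_\ast$ around each $\bar f_\ast$-orbit of components and using repeatedly that $|\det\nolimits_{\C}(\alpha_\ast|_{\mathrm{block}})|=1$; it is precisely here, and not merely in the orientability of $X$, that the hypothesis $|F|$ odd is used.
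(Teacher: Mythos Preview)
The paper does not actually prove this theorem; it merely states the result and refers to \cite{ddm05-1} for the proof. So there is no ``paper's own proof'' to compare against, and your attempt must stand on its own.

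Your overall strategy---reduce to an affine map, invoke the averaging formula, and show that all terms $\det(1-\alpha_\ast\delta_\ast)$ share a sign---is exactly the right framework, and the observation that odd-order groups have no nontrivial self-dual complex irreducibles (hence $V_+$ carries an $F$-invariant complex structure) is the right representation-theoretic input. However, the argument as written has a genuine gap, one you yourself flag in the final paragraph but do not resolve: the claim that $\delta_\ast$ preserves the splitting $\lie=V_0\oplus V_+$ is false in general. The intertwining relation $\delta_\ast\alpha_\ast=\bar f_\ast(\alpha)_\ast\delta_\ast$ only says that $\delta_\ast$ is $F$-equivariant from $(\lie,\rho)$ to $(\lie,\rho\circ\bar f_\ast)$, and when $\bar f_\ast$ is not surjective the trivial isotypic component for $\rho\circ\bar f_\ast$ strictly contains $V_0$. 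So $\delta_\ast(V_0)$ need not land in $V_0$, and the factorisation $\det(1-\alpha_\ast\delta_\ast)=\det(1-\delta_\ast|_{V_0})\cdot\det(1-\alpha_\ast\delta_\ast|_{V_+})$ is unjustified. Your proposed fix---``passing to the cyclic composition of $\delta_\ast$ around each $\bar f_\ast$-orbit of components''---is not spelled out and it is unclear what it means when $\delta_\ast$ mixes components; the conjugate-linear model $z\mapsto z-b\bar z$ on $\C$ also does not obviously extend to blocks of higher complex dimension or to the non-block-diagonal situation.

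To repair the argument you need to work with the \emph{two} isotypic decompositions simultaneously (for $\rho$ and for $\rho\circ\bar f_\ast$) and analyse $1-\alpha_\ast\delta_\ast$ in an appropriate block-upper-triangular form, or else find an argument that avoids splitting $\lie$ altogether. Until that is done, the proof is incomplete.
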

Note that any infra-nilmanifold with an odd order holonomy group is orientable, so this is compatible with the 
previous result.

\medskip

As a final illustration, we want to mention a result on infra-nilmanifolds with a holonomy group 
of even order. In any dimension $n\geq 2$ there exist flat manifolds with a holonomy group 
isomorphic to  $\Z_2^{n-1}$ and there are no flat manifolds with a holonomy group isomorphic 
to $\Z_2^k$ with $k\geq n$. The number of such manifolds grows exponentially with the dimension $n$ 
(\cite{mr99-1}). In dimension 2, there is the non-orientable Klein bottle, in dimension 3, there is 
one orientable flat manifold with holonomy group $\Z_2^2$ and two non-orientable ones. The orientable one is 
known as the Hantsche-Wendt manifold. Therefore, one refers to a $n$-dimensional flat manifold with holonomy group $\Z_2^{n-1}$ as a generalized Hantsche-Wendt manifold. For these manifolds, we have proven in \cite{ddm04-1}
the following theorem:
\begin{Thm}
Let $X$ be a generalized Hantsche-Wendt manifold, then 
\begin{itemize}
\item $N(f)=|L(f)|$ for all selfmaps $f$ of $X$ in case $X$ is orientable.
\item In case $X$ is non-orientable, there exists a selfmap of $X$ such that $N(f)\neq |L(f)|$ (e.g.\ one can take $f$ to be an expanding map).
\end{itemize}
\end{Thm}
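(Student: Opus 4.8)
The non-orientable half of the statement follows at once from results recorded above: a generalized Hantsche--Wendt manifold is a flat manifold, hence it admits an expanding map by \cite{es68-1}, and for an expanding map on a non-orientable infra-nilmanifold we have already seen that $N(f)\neq|L(f)|$; so that bullet is settled. The content is therefore the orientable case, and the plan is to deduce it from the averaging formula of Theorem~\ref{averaging}. Every self-map $f$ of $X=\Gamma\backslash\R^n$ is homotopic to an affine map $\overline{(d,\delta)}$, and by the corollary to Theorem~\ref{averaging} it is enough to show that the numbers $\det(1-\alpha_\ast\delta_\ast)$, for $\alpha\in F$, all have the same sign (some of them being allowed to vanish), where $F\cong\Z_2^{n-1}$ is the holonomy group and $\delta_\ast$ the linear part.

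The first step would be to pin down the shape of $\delta$. For a generalized Hantsche--Wendt manifold the holonomy representation is diagonal: there is a basis $e_1,\dots,e_n$ of the translation lattice $\Z^n\subseteq\Gamma$ in which each $\alpha_\ast$ equals $\mathrm{diag}\bigl(\epsilon_1(\alpha),\dots,\epsilon_n(\alpha)\bigr)$ for pairwise distinct non-trivial characters $\epsilon_i\colon F\to\{\pm1\}$, and orientability of $X$ says exactly that $\prod_{i=1}^n\epsilon_i$ is the trivial character (equivalently $\det\alpha_\ast=1$ for all $\alpha$). The defining relation $f_\ast(\gamma)\circ(d,\delta)=(d,\delta)\circ\gamma$ forces on linear parts $\delta\circ\alpha_\ast=\overline{f}(\alpha)_\ast\circ\delta$ for all $\alpha\in F$, where $\overline f\colon F\to F$ is the induced endomorphism of the holonomy group. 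Comparing matrix entries in the basis $\{e_i\}$ and using that the $\epsilon_i$ are pairwise distinct, each row of $\delta$ has at most one non-zero entry; together with the fact that $\delta$ preserves $\Z^n$ this lets one reduce to the case in which $\delta$, after a permutation of the basis, is block diagonal with blocks that are either nilpotent or integer monomial cycles. Expanding $\det(1-\alpha_\ast\delta_\ast)$ block by block, a nilpotent block contributing a factor $1$, yields
\[ \det(1-\alpha_\ast\delta_\ast)=\prod_{C}\bigl(1-\chi_C(\alpha)\,\mu_C\bigr), \]
the product running over the monomial cycles $C$ of $\delta$, where $\mu_C\in\Z\setminus\{0\}$ is the product of the entries of $\delta$ around $C$ and $\chi_C=\prod_{i\in C}\epsilon_i$ is again a character of $F$.

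With this formula the sign analysis becomes concrete. A factor $1-\chi_C(\alpha)\mu_C$ has a sign independent of $\alpha$ whenever $|\mu_C|=1$ (it then lies in $\{0,2\}$) or $\chi_C$ is trivial; only a cycle with $|\mu_C|\geq2$ and $\chi_C$ non-trivial makes its factor change sign as $\alpha$ runs over $F$, since it then takes the opposite-sign values $1-\mu_C$ and $1+\mu_C$. So $N(f)=|L(f)|$ reduces to showing that the product over these "bad" cycles has constant sign, and this is where orientability must be used: the relations $\epsilon_{\rho(j)}=\epsilon_j\circ\overline f^{-1}$ along each cycle, the pairwise distinctness of the $\epsilon_i$, and $\prod_i\epsilon_i=\mathbf 1$ should be combined to force the bad cycles to organize themselves (roughly, to occur with matching characters $\chi_C$) so that the sign changes cancel in pairs. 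This bookkeeping — proving that orientability rules out a net sign change among the bad cycles — is the main obstacle, and for the complete argument I would refer to the detailed combinatorial analysis of \cite{ddm04-1}. For the converse, if $X$ is non-orientable one can check directly that the scalar map $\delta=mI_n$, for a suitable integer $m>1$ making $\overline{(0,mI_n)}$ a well-defined (expanding) self-map, already gives $\det(1-\alpha_\ast\delta_\ast)$ of sign $(-1)^n$ for $\alpha=1$ and of the opposite sign for any $\alpha$ with $\det\alpha_\ast=-1$, which re-proves the second bullet.
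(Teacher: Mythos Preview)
The paper does not actually prove this theorem: it is merely stated and attributed to \cite{ddm04-1}, with no argument given in the text. So there is no in-paper proof to compare your attempt against; both you and the paper ultimately defer to \cite{ddm04-1} for the substantive work.

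Your treatment of the non-orientable bullet is correct and uses exactly the tools already recorded in the paper (existence of expanding maps on flat manifolds from \cite{es68-1}, combined with the expanding-map theorem just above). Your outline for the orientable case---reduction via the averaging formula, diagonalisation of the holonomy, constraining the shape of $\delta$ through the intertwining relation, and a sign analysis over monomial cycles---is indeed the natural strategy and is essentially the one carried out in \cite{ddm04-1}. Two small points on the sketch: the intertwining identity reads $\delta\alpha_\ast=\overline f(\alpha)_\ast\delta$, so entrywise one gets $\epsilon_j=\epsilon_i\circ\overline f$ whenever $\delta_{ij}\neq0$; since $\overline f$ is only an endomorphism of $F$, writing $\overline f^{-1}$ is not justified. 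Second, the assertion that the diagonal characters $\epsilon_i$ are pairwise distinct and non-trivial is a structural fact about generalized Hantsche--Wendt manifolds that itself requires proof (it is established in \cite{mr99-1,ddm04-1}) and is precisely what makes the ``at most one non-zero entry per row'' step go through. Since you explicitly hand off the final sign-cancellation bookkeeping to \cite{ddm04-1}, your proposal is, in the end, no less complete than what the paper itself offers.
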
 

Of course, many more results about Nielsen fixed point theory for infra-nilmanifolds have been proved and the above 
theorems are mainly meant to illustrate what kind of results can be obtained.

 Besides fixed point theory, one has also studied (and is still studying) periodic points and coincidences.
 \begin{Def} 
 Let $f:X\to X$ and $g:X\to Y$ be a maps. \\
 A periodic point of $f$ is a point $x\in X$ such that $f^n(x)=x$ for some positive integer $n$.\\
 A pair of points $(x,y)$ with $x,y\in X$ is called a coincidence pair of $g$ if $g(x)=g(y)$.
 \end{Def}
 In literature one can find quite some results on Nielsen periodic point theory
  (\cite{heat99-1,hk97-1,hk00-1,hk02-1,jkm04-1,jezi03-1,jm02-1,jm06-1,hkl10-1,kl88-1}) and Nielsen coincidence theory (\cite{dp10-1, gonc98-1,gw01-1, gw05-1,hlp11-1, hlp12-1, jezi90-1,kl05-1,kl07-1, wong04-1} for maps on/between infra-nilmanifolds. 

%\bibliography{/home/u0006953/algebra/ref}
%\bibliographystyle{/home/u0006953/algebra/ref}

%\printindex

\end{document}